\newtheorem{theorem}{Theorem}[section]
\newtheorem{proposition}[theorem]{Proposition}
\newtheorem{lemma}[theorem]{Lemma}
\newtheorem{maindef}[theorem]{Main Definition}
\newtheorem{defthm}[theorem]{Definition-Theorem}
\newtheorem{claim}[theorem]{Claim}
\newtheorem*{claim*}{Claim}
\newtheorem{corollary}[theorem]{Corollary}
\newtheorem{Main Conjecture}[theorem]{Main Conjecture}
\newtheorem{conjecture}[theorem]{Conjecture}
\newtheorem{problem}[theorem]{Problem}
\theoremstyle{definition}
\newtheorem{definition}[theorem]{Definition}
\theoremstyle{remark}
\newtheorem{example}[theorem]{Example}
\newtheorem{remark}[theorem]{Remark}
\theoremstyle{plain}
\newtheorem{MainThm}[theorem]{Main Theorem}
\newcommand\complexes{{\mathbb C}}
\newcommand\integers{{\mathbb Z}}
\newcommand{\cellsize}{12}
\newlength{\cellsz} \setlength{\cellsz}{\cellsize\unitlength}
\newsavebox{\cell}
\sbox{\cell}{\begin{picture}(\cellsize,\cellsize)
\put(0,0){\line(1,0){\cellsize}}
\put(0,0){\line(0,1){\cellsize}}
\put(\cellsize,0){\line(0,1){\cellsize}}
\put(0,\cellsize){\line(1,0){\cellsize}}
\end{picture}}
\newcommand\cellify[1]{\def\thearg{#1}\def\nothing{}%
\ifx\thearg\nothing
\vrule width0pt height\cellsz depth0pt\else
\hbox to 0pt{\usebox{\cell} \hss}\fi%
\vbox to \cellsz{
\vss
\hbox to \cellsz{\hss$#1$\hss}
\vss}}
\newcommand\tableau[1]{\vtop{\let\\\cr
\baselineskip -16000pt \lineskiplimit 16000pt \lineskip 0pt
\ialign{&\cellify{##}\cr#1\crcr}}}
\newcommand{\sellsize}{5}
\newlength{\sellsz} \setlength{\sellsz}{\sellsize\unitlength}
\newsavebox{\sell}
\sbox{\sell}{\begin{picture}(\sellsize,\sellsize)
\put(0,0){\line(1,0){\sellsize}}
\put(0,0){\line(0,1){\sellsize}}
\put(\sellsize,0){\line(0,1){\sellsize}}
\put(0,\sellsize){\line(1,0){\sellsize}}
\end{picture}}
\newcommand\sellify[1]{\def\thearg{#1}\def\nothing{}%
\ifx\thearg\nothing
\vrule width0pt height\sellsz depth0pt\else
\hbox to 0pt{\usebox{\sell} \hss}\fi%
\vbox to \sellsz{
\vss
\hbox to \sellsz{\hss$#1$\hss}
\vss}}
\newcommand\stableau[1]{\vtop{\let\\\cr
\baselineskip -16000pt \lineskiplimit 16000pt \lineskip 0pt
\ialign{&\sellify{##}\cr#1\crcr}}}
\newcommand{\gap}{\hspace{1in} \\ \vspace{-.2in}}
\newcommand{\excise}[1]{}
\font\co=lcircle10
\def\jr{\smash{\raise2pt\hbox{\co \rlap{\rlap{\char'005} \char'007}}
               \raise6pt\hbox{\rlap{\vrule height5pt}}
               \raise2pt\hbox{\rlap{\hskip4pt \vrule height0.4pt depth0pt
                width5.7pt}}
               \raise2pt\hbox{\rlap{\hskip-9.5pt \vrule height.4pt depth0pt
                width6.2pt}}
               \lower6pt\hbox{\rlap{\vrule height4.5pt}}}}
\def\rj{\smash{\raise2pt\hbox{\co \rlap{\rlap{\char'004} \char'006}}
               \raise6pt\hbox{\rlap{\vrule height5pt}}
               \raise2pt\hbox{\rlap{\hskip4pt \vrule height0.4pt depth0pt
                width5.7pt}}
               \raise2pt\hbox{\rlap{\hskip-9.5pt \vrule height.4pt depth0pt
                width6.2pt}}
               \lower6pt\hbox{\rlap{\vrule height4.5pt}}}}
\def\je{\smash{\raise2pt\hbox{\co \rlap{\rlap{\char'005}
                \phantom{\char'007}}}\raise6pt\hbox{\rlap{\vrule height5pt}}
               \raise2pt\hbox{\rlap{\hskip-9.5pt \vrule height.4pt depth0pt
                width6.2pt}}}}
\def\ej{\smash{\raise2pt\hbox{\co \rlap{\rlap{\char'004}\phantom{\char'006}}}
               \raise2pt\hbox{\rlap{\hskip-9.5pt \vrule height.4pt depth0pt
                width6.2pt}}
               \lower6pt\hbox{\rlap{\vrule height4.5pt}}}}
\def\er{\smash{\raise2pt\hbox{\co \rlap{\rlap{\phantom{\char'005}} \char'007}}
               \raise2pt\hbox{\rlap{\hskip4pt \vrule height0.4pt depth0pt
                width5.7pt}}
               \lower6pt\hbox{\rlap{\vrule height4.5pt}}}}
\def\re{\smash{\raise2pt\hbox{\co \rlap{\rlap{\phantom{\char'004}} \char'006}}
               \raise6pt\hbox{\rlap{\vrule height5pt}}
               \raise2pt\hbox{\rlap{\hskip4pt \vrule height0.4pt depth0pt
               width5.7pt}}}}
\def\+{\smash{\lower6pt\hbox{\rlap{\vrule height17pt}}
                \raise2pt
                \hbox{\rlap{\hskip-9pt \vrule height.4pt depth0pt
                width18.7pt}}}}
\def\hor{\smash{\raise2pt\hbox{\rlap{\hskip-9.5pt \vrule height.4pt depth0pt
                width19.2pt}}}}
\def\ver{\smash{\lower6pt\hbox{\rlap{\vrule height17pt}}}}
\def\ho{\smash{\hbox{\rlap{\vrule height5pt}}
                \raise2pt
                \hbox{\rlap{\hskip-9pt \vrule height.4pt depth0pt
                width18.7pt}}}}
\def\textcross{\ \smash{\lower4pt\hbox{\rlap{\hskip4.15pt\vrule height14pt}}
                \raise2.8pt\hbox{\rlap{\hskip-3pt \vrule height.4pt depth0pt
                width14.7pt}}}\hskip12.7pt}
\def\textelbow{\ \hskip.1pt\smash{\raise2.75pt%
                \hbox{\co \hskip 4.15pt\rlap{\rlap{\char'004} \char'006}
                \lower6.8pt\rlap{\vrule height3.5pt}
                \raise3.6pt\rlap{\vrule height3.5pt}}
                \raise2.8pt\hbox{%
                  \rlap{\hskip-7.15pt \vrule height.4pt depth0pt width3.5pt}%
                  \rlap{\hskip4.05pt \vrule height.4pt depth0pt width3.5pt}}}
                \hskip8.7pt}
\newcommand{\C}{\mathbb{C}}
\newcommand{\Z}{\mathbb{Z}}
\newcommand{\inv}{^{-1}}
\newcommand{\desc}{\mathrm{Desc}}
\newcommand{\init}{\mathrm{init}}
\DeclareRobustCommand
\DeclareRobustCommand
\begin{document}
\pagestyle{plain}
\title{Representations from matrix varieties, and filtered RSK}
\author{Abigail Price}
\author{Ada Stelzer}
\author{Alexander Yong}
\address{Dept.~of Mathematics, U.~Illinois at Urbana-Champaign, Urbana, IL 61801, USA}
\email{price29@illinois.edu, astelzer@illinois.edu, ayong@illinois.edu}
\date{October 6, 2025}

\begin{abstract}

Matrix Schubert varieties [Fulton ’92] carry natural actions of Levi groups. Their coordinate rings thereby decompose as direct sums of irreducible representations. What is a combinatorial rule for the multiplicities of these irreducibles? When the Levi group is a torus, [Knutson--Miller '04] gives an answer. We offer a general solution, a common refinement of the multigraded Hilbert series, the Cauchy identity, and the Littlewood-Richardson rule. Our result applies to 
``bicrystalline'' algebraic varieties; we define these using the operators of [Kashiwara '95] and of 
[Danilov--Koshevoi '05, van Leeuwen '06]. The proof introduces a ``filtered’’ generalization of the RSK correspondence.
\end{abstract}

\maketitle

\section{Introduction}\label{sec:intro}

\subsection{Overview}
Fix finite-dimensional vector spaces  $V$ and $W$ over ${\mathbb C}$; ${\bf GL}=GL(V)\times GL(W)$ acts naturally on the tensor product $V\boxtimes W$, as does the maximal torus ${\bf T}=T(V)\times T(W)$. These groups then act on the coordinate ring ${\rm Sym}^*(V\boxtimes W)$ and the \emph{Cauchy identity} equates the ${\bf T}$-weight space decomposition of this
${\bf GL}$-module to its decomposition into ${\bf GL}$-irreducibles.
This identity is equivalent to \emph{Schur--Weyl duality} of irreducible representations of general linear and symmetric groups, and to the \emph{First Fundamental Theorem of Invariant Theory for $GL_n$} (see, e.g., \cite{Howe}). In symmetric function theory
it implies the existence of a bijection between nonnegative integer matrices and pairs of semistandard Young tableaux, which is realized by the \emph{Robinson--Schensted--Knuth correspondence} (RSK).

We propose a two-fold generalization. In one direction, replace $GL(V)$ with the \emph{Levi subgroups} $L$ associated to parabolic subgroups $P\subseteq GL(V)$. These Levi groups give an
``interpolation'' of reductive groups between $GL(V)$ and $T(V)$; if $P=GL(V)$ then $L=GL(V)$ and if $P=B$ is a Borel subgroup containing $T(V)$, then $L=T(V)$. We replace ${\bf GL}$ with ${\bf L}=L(V)\times L(W)$. The second direction lifts from (multi)linear algebra to algebraic geometry:
replace the affine space $V\boxtimes W$ with a variety ${\mathfrak X}\subseteq V\boxtimes W$ on which ${\bf L}$ acts rationally. Hence $\mathbf{L}$ acts on the coordinate ring $\C[{\mathfrak X}]$. In commutative algebra, the 
${\bf T}$-character is the \emph{(${\mathbb Z}^m\times {\mathbb Z}^n$- multigraded) Hilbert series} of $\C[{\mathfrak X}]$. Writing this Hilbert series in terms of irreducible characters of ${\bf L}$ \emph{abstractly} generalizes the
Cauchy identity.

Concretely, we generalize the Cauchy identity by introducing a  ``filtered'' refinement of RSK.  Kashiwara's \emph{crystal operators} (see, e.g., \cite{BS}) ``pull-back'' to \emph{bicrystal operators} on the set ${\sf Mat}_{m, n}(\Z_{\geq 0})$ of nonnegative integer matrices, by work of Danilov--Koshevoi \cite{bicrystal2} and of van Leeuwen \cite{bicrystal1}. The Hilbert series of ${\mathbb C}[{\mathfrak X}]$ is the generating series of torus weights for a basis of \emph{standard monomials}. Identifying $V\boxtimes W$ with the space ${\sf Mat}_{m,n}$ of $m\times n$ complex matrices, this basis is encoded by ${\sf Mat}_{m, n}(\Z_{\geq 0})$. Define ${\mathfrak X}\subseteq {\sf Mat}_{m,n}$ to be ${\bf L}$-\emph{bicrystalline} if it is ${\bf L}$-stable and its standard monomial basis is closed under the bicrystal operators. Our main result is that for such ${\mathfrak X}$,  filtered RSK determines multiplicities of the irreducibles in the ${\bf L}$-module ${\mathbb C}[{\mathfrak X}]$. 

Questions about \emph{determinantal varieties} motivate this work. The space ${\mathfrak X}={\mathfrak X}_k$ of rank $\leq k$ matrices in ${\sf Mat}_{m,n}$ appears in invariant theory, representation theory, and algebraic geometry; see, e.g., \cite{Harris, Procesi, Weyman}. Much has been achieved in understanding them, including their Hilbert series. Each ${\mathfrak X}_k$ is ${\bf GL}$-bicrystalline. More generally, our main source of 
${\bf L}$-bicrystalline varieties, for various ${\bf L}\supseteq {\bf T}$, are ${\mathfrak X}\subset {\sf Mat}_{m,n}$ that are ${\bf B}=B_m\times B_n$ stable, where $B_m\leq GL_m$ and $B_n\leq GL_n$ are the Borel groups of lower triangular matrices. These (possibly reducible) ${\bf B}$-stable varieties include
the \emph{matrix Schubert varieties} of Fulton \cite{Fulton:duke}; our main results are new even for these cases.

This paper initiates our use of the ${\bf L}$-actions on matrix Schubert varieties to attack the open problem of 
determining their Betti numbers and minimal free resolutions, extending the \emph{Kempf--Lascoux--Weyman complexes} \cite{Weyman}. Much of this homological information is encoded in the $\mathbf{L}$-irreducible decomposition
of their coordinate rings, our focus here.
Our results simultaneously generalize the Cauchy identity, the Littlewood--Richardson rule, and the Knutson--Miller Hilbert series formula for matrix Schubert varieties \cite{KM:adv, Knutson.Miller}.
  
\subsection{Filtered RSK}\label{subsec:introcomb}

In the language of symmetric polynomials, the \emph{Cauchy identity} states
\begin{equation}\label{eqn:Cauchy}
\prod_{\mbox{\tiny\ensuremath{\begin{array}{c} 1\leq i\leq m \\ 1\leq j\leq n\end{array}}}} \frac{1}{1-x_i y_j} = \sum_{\lambda} s_{\lambda}(x_1,\ldots,x_m)
s_{\lambda}(y_1,\ldots,y_n);
\end{equation}
where $\lambda$ is an integer partition and $s_{\lambda}(t_1,\ldots, t_k)$ is the \emph{Schur polynomial}, i.e., the generating series for \emph{semistandard Young tableaux} (SSYT) of shape $\lambda$ with entries
in $[k]:~=~\{1,2,\ldots,k\}$:
\begin{equation}
\label{eqn:Schurdef}
s_{\lambda}(t_1,\ldots,t_k):=\sum_{T} \prod_{i=1}^k t_i^{\#i \text{ in $T$}}.
\end{equation}
To prove \eqref{eqn:Cauchy} combinatorially, one defines a map {\sf RSK} that sends a matrix $M\in{\sf Mat}_{m, n}(\Z_{\geq0})$ to a pair of SSYT. This map uses the \emph{row word} and \emph{column word} of $M$.

\begin{definition}\label{def:matrixwords}
    The \emph{row word} of a matrix $M = [m_{ij}]\in {\sf Mat}_{m,n}({\mathbb Z}_{\geq 0})$, denoted ${\sf row}(M)$, records $m_{ij}$ copies of the row index $i$ for each entry $m_{ij}$ of $M$. The entries are read along columns top-to-bottom, left to  right. The \emph{column word} ${\sf col}(M)$ is 
    formed by recording $m_{ij}$ copies of the column index $j$ for each entry $m_{ij}$ of $M$. The entries are read across the rows in English reading order.
\end{definition}
\begin{example}
    If $M = \begin{bmatrix}0 & 1 \\ 2 & 3\end{bmatrix}$ then ${\sf row}(M) = 221222$ and ${\sf col}(M) = 211222$.  
\end{example}

In Section~\ref{sec:tableau} we recall the \emph{reading word} of a semistandard tableau $T$, denoted ${\sf word}(T)$ (Definition~\ref{def:readingword}), the \emph{Knuth equivalence} relation $\sim_K$ on words (Definition~\ref{def:knuthequiv}), and the fact that the Knuth equivalence class of any word contains ${\sf word}(T)$ for exactly one semistandard Young tableau $T$ (Theorem~\ref{thm:jdt}). With these notions, it is convenient to phrase the traditional RSK correspondence in this manner:

\begin{definition}[RSK correspondence]\label{alg:rsk}
    Let $M\in {\sf Mat}_{m,n}({\mathbb Z}_{\geq 0})$, and $P$ and $Q$ be the unique SSYT
    with ${\sf row}(M)\sim_K{\sf word}(P)$ and ${\sf col}(M)\sim_K{\sf word}(Q)$. Define 
    ${\sf RSK}(M) = (P|Q)$.
\end{definition}

Although non-obvious from Definition~\ref{alg:rsk}, ${\sf RSK}$ gives a bijection between matrices $M\in {\sf Mat}_{m,n}({\mathbb Z}_{\geq 0})$ and pairs $(P|Q)$ of semistandard tableaux of the same shape~$\lambda$. 
Grouping monomials corresponding to $M$ according to the shape of the tableaux in ${\sf RSK}(M)$ proves the classical Cauchy identity (\ref{eqn:Cauchy}). This shape-symmetry of ${\sf RSK}$ is not assumed, nor even true,
in what follows.

We will generalize the proof of (\ref{eqn:Cauchy}) by refining ${\sf RSK}$. This refinement uses the \emph{filtration} of a word $w$ in $[m]$ for a sequence of nonnegative integers ${\textbf I} = \{0 = i_0<\dots<i_r = m\}$. 

\begin{definition}\label{def:filtration}
    The \emph{${\mathbf I}$-filtration} of a word $w$ is the tuple of words ${\sf filter}_{{\mathbf I}}(w) = (w^{(1)},\dots, w^{(r)})$, where $w^{(k)}$ is the subword of $w$ consisting of all letters in the interval $[i_{k-1}+1, i_k]$.
\end{definition}

\begin{maindef}[Filtered RSK]\label{alg:filteredrsk}
    Let $M\in {\sf Mat}_{m,n}({\mathbb Z}_{\geq 0})$ and fix two integer sequences 
    \begin{equation}\label{eqn:twointseq}
    {\mathbf I} = \{0 = i_0<\dots<i_r = m\} \text{ \  and \ } {\mathbf J} = \{0 = j_0<\dots<j_s = n\}.
    \end{equation}
    Define ${\sf filterRSK}_{{\mathbf I}|{\mathbf J}}(M)= (\underline{P}|\underline{Q}):= (P^{(1)},\dots,P^{(r)}|Q^{(1)},\dots,Q^{(s)})$,
    where $P^{(a)}$ and $Q^{(b)}$ are the unique SSYT 
    with $P^{(a)}\sim_K {\sf filter}_{{\mathbf I}}({\sf row}(M))^{(a)}$ and $Q^{(b)}\sim_K {\sf filter}_{{\mathbf J}}({\sf col}(M))^{(b)}$, respectively.
\end{maindef}

Using \emph{row insertion} one can algorithmically compute the tableaux appearing in the definitions
of {\sf RSK} and {\sf filterRSK}; see Section~\ref{sec:tableau} and specifically Proposition~\ref{prop:filterrskalg}.

\begin{example}
Let ${\mathbf I} = \{0,1,3\} = {\mathbf J}$ and let  $M = \begin{bmatrix}
        0 & 2 & 0 \\
        2 & 1 & 0 \\
        2 & 0 & 0
    \end{bmatrix}$. Then ${\sf row}(M)= 2233112$ and ${\sf col}(M) = 2211211$, so \[{\sf filter}_{\mathbf{I}}({\sf row}(M)) = (11,22332) \text{\ \  and \ \ } {\sf filter}_{\mathbf{J}}({\sf col}(M)) = (1111,222).\] Now, consider the tuple of tableaux
    \[(P^{(1)},P^{(2)}|Q^{(1)},Q^{(2)})=\left(\ \tableau{1 & 1}, \tableau{2 & 2 & 2 & 3\\ 3}\ \bigg\vert \ 
        \tableau{1 & 1 & 1 & 1}, \tableau{2 & 2 & 2}\ \right).\]
    The reading words of these tableaux are $(11,32223|1111,222)$. Since $32223\sim_K 22332$, ${\sf filterRSK(M)} = (P^{(1)},P^{(2)}|Q^{(1)},Q^{(2)})$. 
\end{example}

\subsection{The main result}\label{subsec:intromain}
Let $GL_k=GL_k({\mathbb C})$ be the general linear group of invertible $k\times k$ matrices. Then ${\bf GL}:=GL_m\times GL_n$ acts on ${\sf Mat}_{m,n}$ by row and column
operations, i.e., if $(p,q)\in {\bf GL}$ and $M\in {\sf Mat}_{m,n}$,
\begin{equation}
\label{eqn:howmult}
(p,q)\cdot M= p^{-1}M(q^{-1})^T.
\end{equation}
The coordinate ring ${\mathbb C}[{\sf Mat}_{m,n}]$ is thus endowed with a ${\bf GL}$-module structure: if $(p,q)\in {\bf GL}$, $M\in {\sf Mat}_{m,n}$, and $f\in {\mathbb C}[{\sf Mat}_{m,n}]$ then
\begin{equation}
\label{eqn:coordringaction}
((p, q)\cdot f)(M) := f(p Mq^T).
\end{equation}
If $f$ is a degree-$d$ polynomial then so is $(p, q)\cdot f$ for all $(p, q)\in\mathbf{GL}$, so in fact the grade-$d$ component ${\mathbb C}[{\sf Mat}_{m,n}]_d$ of ${\mathbb C}[{\sf Mat}_{m,n}]$ is a finite-dimensional representation of ${\bf GL}$. Any finite-dimensional representation of $GL_k$ is a direct sum of irreducible \emph{Weyl modules} $V_\lambda$, where $\lambda$ is a partition with at most $k$ rows, i.e., $\ell(\lambda)\leq k$. The irreducible representations of ${\bf GL}$ are $V_{\lambda}\boxtimes V_{\mu}$, and
there exist $c_{\lambda|\mu}\in {\mathbb Z}_{\geq 0}$ indexed by partition-pairs $(\lambda|\mu)$ 
such that
\begin{equation}\label{eqn:repdecomp}
    {\mathbb C}[{\sf Mat}_{m,n}]_d \cong_{\mathbf{GL}} \bigoplus_{\lambda|\mu  : \  \ell(\lambda)\leq m, 
    \ell(\mu)\leq n}(V_{\lambda}\boxtimes V_{\mu})^{\oplus c_{\lambda|\mu}}.
\end{equation}

The ${\bf T}$-character of ${\mathbb C}[{\sf Mat}_{m,n}]$ is the lefthand side of \eqref{eqn:Cauchy}. Since the character of $V_{\lambda}\boxtimes V_{\mu}$ is $s_{\lambda}(x_1,\ldots,x_m)s_{\mu}(y_1,\ldots,y_n)$, it follows
that there is an identity of polynomials
\[\prod_{\mbox{\tiny\ensuremath{\begin{array}{c} 1\leq i\leq m \\ 1\leq j\leq n\end{array}}}} \frac{1}{1-x_i y_j} = 
\sum_{\lambda|\mu} c_{\lambda|\mu}s_{\lambda}(x_1,\ldots,x_m)
s_{\mu}(y_1,\ldots,y_n).\]
Treating the lefthand side as the generating series for nonnegative matrices, and using the definition (\ref{eqn:Schurdef}) in terms of semistandard Young tableaux, one deduces from ${\sf RSK}$ that $c_{\lambda|\mu}=0$ unless
$\lambda=\mu$, in which case $c_{\lambda|\lambda}=1$, in agreement with (\ref{eqn:Cauchy}).

We now generalize (\ref{eqn:Cauchy}) and the proof just outlined to a type of (possibly reducible) variety ${\mathfrak X}\subseteq {\sf Mat}_{m,n}$. We use reductive groups that ``interpolate'' between ${\bf T}$ and ${\bf GL}$:
\begin{definition}\label{def:levi}
    Let $\mathbf{L}_{{\mathbf I}|{\mathbf J}} = L_{\mathbf I}\times L_{\mathbf J}\leq \mathbf{GL}$, where
    $L_{\mathbf I} = GL_{i_1-i_0}\times GL_{i_2-i_1}\times...\times GL_{i_r-i_{r-1}}$.
\end{definition}

By subgroup restriction, each ${\bf L}_{{\mathbf I}|{\mathbf J}}$ acts on ${\sf Mat}_{m,n}$ via (\ref{eqn:howmult}). 
A variety ${\mathfrak X}\subset {\sf Mat}_{m,n}$ is \emph{$\mathbf{L}_{{\mathbf I}|{\mathbf J}}$-stable} if this
action fixes ${\mathfrak X}$.

Our technical condition on ${\mathfrak X}$ concerns a compatibility of 
Gr\"obner basis theory with Kashiwara's crystal basis theory. Identify
${\mathbb C}[{\sf Mat}_{m,n}]\cong {\mathbb C}[z_{ij}]_{1\leq i \leq m,\  1\leq j\leq n}:=S$.
Let $I=I({\mathfrak X})$ be the ideal of $X$, hence ${\mathbb C}[{\mathfrak X}]\cong S/I(\mathfrak{X})$. Associated to any term  order $<$ on the monomials of $S$ is a set of standard monomials ${\sf Std}_{<}(S/I(\mathfrak{X}))$ (taken with coefficient~$1$) 
of~$S$. These monomials form a vector space basis of $S/I(\mathfrak{X})$ over ${\mathbb C}$; see Theorem~\ref{thm:stdbasis}. We identify each ${\sf m}\in {\sf Std}_{<}(S/I(\mathfrak{X}))$ with its exponent vector, represented as 
a matrix in ${\sf Mat}_{m,n}({\mathbb Z}_{\geq 0})$. Thus under {\sf RSK}, ${\sf m}$ corresponds to a pair of SSYT $(P|Q)$ of the same shape~$\lambda$. Kashiwara's raising and lowering operators $e_i, f_i$ on SSYT define a connected crystal graph structure on the SSYT of shape $\lambda$; see Theorem~\ref{thm:crystalgraph}.
Danilov--Koshevoi \cite{bicrystal2} and van Leeuwen \cite{bicrystal1}  
explain how Kashiwara's operators ``pull-back" to four \emph{bicrystal operators} $e_i^{\sf row}, e_j^{\sf col}, f_i^{\sf row}, f_j^{\sf col}$ on 
${\sf Mat}_{m,n}(\Z_{\geq 0})$; see Definition~\ref{def:feb11bicrystal}.

\begin{definition}
${\mathfrak X}\subseteq {\sf Mat}_{m,n}$ is ${\bf L}_{{\mathbf I}|{\mathbf J}}$-\emph{bicrystal closed} if there exists a term order $<$ such that $e_i^{\sf row}({\sf m}),f_i^{\sf row}({\sf m}) \in  {\sf Std}_{<}(S/I(\mathfrak{X}))\cup \{\varnothing\}$ for ${\sf m} \in {\sf Std}_{<}(S/I(\mathfrak{X})), i\not\in \mathbf{I}$,
and $e_j^{\sf col}({\sf m}),f_j^{\sf col}({\sf m})\in {\sf Std}_{<}(S/I(\mathfrak{X}))\cup\{\varnothing\}$ for ${\sf m} \in {\sf Std}_{<}(S/I(\mathfrak{X})), j\not\in \mathbf{J}$.
\end{definition}
    
\begin{definition} $\mathfrak{X}\subseteq{\sf Mat}_{m, n}$ is ${\mathbf L}_{{\mathbf I}|{\mathbf J}}$-\emph{bicrystalline} if
it is $\mathbf{L}_{{\mathbf I}|{\mathbf J}}$-stable and bicrystal closed.
\end{definition}

If ${\mathfrak X}\subseteq {\sf Mat}_{m, n}$ is $\mathbf{L}_{{\mathbf I}|{\mathbf J}}$-stable, $\C[{\mathfrak X}]$ is a $\mathbf{L}_{{\mathbf I}|{\mathbf J}}$-module by (\ref{eqn:coordringaction}). Levi groups being reductive, their representations are completely reducible, that is, their representations are a direct sum of irreducibles. The irreducible representations of $\mathbf{L}_{{\mathbf I}|{\mathbf J}}$ are indexed by tuples of partitions $(\underline{\lambda}|\underline{\mu})$. Their characters are products of \emph{split-Schur polynomials} $s_{\underline{\lambda}}(\mathbf{x})s_{\underline{\mu}}(\mathbf{y})$, where
\[s_{\underline{\lambda}}(\mathbf{x}):=s_{\lambda^{(1)}}(x_1,\dots, x_{i_1})s_{\lambda^{(2)}}(x_{i_1+1},\dots, x_{i_2})\cdots s_{\lambda^{(r)}}(x_{i_{r-1}+1},\dots, x_m).\] 

There is a formula for the ${\bf T}$-character of ${\mathbb C}[{\mathfrak X}]$, denoted $\chi_{{\mathbb C}[{\mathfrak X}]},$ using the basis of standard monomials. If 
${\sf m}=\prod_{1\leq i\leq m, 1\leq j\leq n} z_{ij}^{a_{ij}}\in {\sf Std}_{<}(S/I(\mathfrak{X}))$ 
let
\begin{equation}
\label{eqn:whatiswt}
{\sf wt}({\sf m})=\prod_{1\leq i\leq m, 1\leq j\leq n} (x_i y_j)^{a_{ij}}.
\end{equation}
The righthand side of (\ref{eqn:char=hilb}) is the ${\mathbb Z}^m\times {\mathbb Z}^n$-multigraded Hilbert series of ${\mathbb C}[{\mathfrak X}]$.  Equality follows from the general fact identifying Hilbert series with torus characters; see Section~\ref{sec:reptheory}:
\begin{equation}\label{eqn:char=hilb}
\chi_{{\mathbb C}[{\mathfrak X}]} = \sum_{{\sf m}\in {\sf Std}_{<}(S/I(\mathfrak{X}))} {\sf wt}({\sf m}).
\end{equation}

\begin{definition}\label{def:highestweight}
    The \textit{highest-weight tableau} of \emph{shape} $\lambda$ with entries from $[a, b]$ is the tableau $T_{\lambda, [a, b]}$ of shape $\lambda$ taking the constant value $a-1+i$ on each row $i$.
    The \textit{highest-weight tableau-tuple} $(T_{\underline\lambda}|T_{\underline\mu})$ of \emph{shape} $(\underline\lambda|\underline\mu)$ has components $T_{\underline\lambda}^{(k)} := T_{\lambda^{(k)}, [i_{k-1}+1, i_k]}$ and $T_{\underline\mu}^{(k)} := T_{\mu^{(k)}, [j_{k-1}+1, j_k]}$.
\end{definition}

\begin{MainThm}[Generalized Cauchy identity] \label{thm:main}
  If $\mathfrak{X}\subseteq{\sf Mat}_{m, n}$ is ${\bf L}_{{\mathbf I}|{\mathbf J}}$-bicrystalline, as witnessed by the term order $<$, let $c^{\mathfrak{X}}_{\underline\lambda|\underline\mu}\!=\!\#\{{\sf m}\in {\sf Std}_{<}(S/I(\mathfrak{X})) :  {\sf filterRSK}_{{\mathbf I}|{\mathbf J}}({\sf m})=
  (T_{\underline\lambda}|T_{\underline\mu})\}$. Then  
  \begin{equation}\label{eqn:themainequality}
  \chi_{{\mathbb C}[{\mathfrak X}]}
  = \sum_{\underline{\lambda}, \underline{\mu}} c^{\mathfrak X}_{\underline{\lambda}|\underline{\mu}}s_{\underline{\lambda}}(\mathbf{x})s_{\underline{\mu}}(\mathbf{y}).
  \end{equation}
\end{MainThm}

\begin{example}\label{exa:isgeneralizedLR}
For any ${\bf I}|{\bf J}$, ${\mathfrak X}={\sf Mat}_{m,n}$ is ${\bf L}_{{\mathbf I}|{\mathbf J}}$-stable, and, trivially, 
${\bf L}_{{\mathbf I}|{\mathbf J}}$-bicrystalline. Letting $c_{\underline\lambda|\underline\mu}:=c_{\underline\lambda|\underline\mu}^{{\sf Mat}_{m,n}}$, we have
\begin{equation}\label{eqn:filteredCauchy}
    \prod_{\mbox{\tiny\ensuremath{\begin{array}{c} 1\leq i\leq m \\ 1\leq j\leq n\end{array}}}} \frac{1}{1-x_i y_j} = \sum_{\underline{\lambda},\underline{\mu}} c_{\underline{\lambda}|\underline{\mu}}s_{\underline{\lambda}}(\mathbf{x})
    s_{\underline{\mu}}(\mathbf{y}).
\end{equation}
\end{example}

\begin{remark}[\eqref{eqn:filteredCauchy} and Littlewood--Richardson coefficients]
Let ${\bf I}=\{0<t<m\}$ and ${\bf J}=\{0,n\}$. In this case, \eqref{eqn:filteredCauchy} becomes
\[
\prod_{\mbox{\tiny\ensuremath{\begin{array}{c} 1\leq i\leq m \\ 1\leq j\leq n\end{array}}}} \frac{1}{1-x_i y_j} = \sum_{\alpha,\beta} 
c_{\alpha,\beta}^{\lambda}
s_{\alpha}(x_1,\ldots,x_t)
s_{\beta}(x_{t+1},x_{t+2},\ldots,
x_m)
s_{\lambda}(y_1,\ldots,y_n),
\]
where $c_{\alpha,\beta}^{\lambda}$ is the \emph{Littlewood--Richardson coefficient} (in its coproduct role in the
Hopf algebra of symmetric functions); see Example~\ref{LR}.
So, Theorem~\ref{thm:main} is a generalized Littlewood--Richardson rule. We speculate that many properties
of Littlewood--Richardson coefficients hold, in ``good'' cases, for  
$c_{\underline\lambda|\underline\mu}^{\mathfrak X}$. These include \emph{saturation}, \emph{semigroup}, and \emph{SNP} properties, by analogy with work of
Knutson--Tao \cite{Knutson.Tao}, Zelevinsky \cite{Zelevinsky}, and of Monical, Tokcan with the third author \cite{MTY}, respectively.
\end{remark}

\subsection{Application to determinantal varieties}
Let $B_k\subseteq GL_k$ denote the Borel group of lower triangular matrices, and let $\mathbf{B} = B_m\times B_n\leq \mathbf{GL}$. The $\mathbf{B}$-stable varieties $\mathfrak{X}\subseteq{\sf Mat}_{m, n}$ are known; they are finite unions of the $\mathbf{B}$-orbit closures of partial permutation matrices. These $\mathbf{B}$-orbit closures are called \emph{matrix Schubert varieties} and have been extensively studied since their introduction by Fulton in \cite{Fulton:duke}. Knutson--Miller (\cite{Knutson.Miller}, \cite{KM:adv}) and Knutson \cite{Knutson} (see also \cite{KMY}) describe their standard monomials  with respect to appropriate term orders.

We characterize the groups $\mathbf{L}_{{\mathbf I}|{\mathbf J}}$ acting on a given $\mathbf{B}$-stable variety $\mathfrak{X}\subseteq{\sf Mat}_{m, n}$ and show they are bicrystalline, so ${\sf filterRSK}$ computes their $\mathbf{L}_{{\mathbf I}|{\mathbf J}}$-characters by Theorem~\ref{thm:main}. The theorem statement below, our second main result, uses notation for partial permutations and their descents from Section~\ref{subsec:MSVs}.

\begin{MainThm}\label{thm:crystalclosure}
  Let $\mathfrak{X}$ be a union of the matrix Schubert varieties defined by partial permutations $w_{(1)},w_{(2)},\dots, w_{(k)}$. Then $\mathfrak{X}$ is $\mathbf{L}_{{\mathbf I}|{\mathbf J}}$-stable if the descent positions satisfy $\desc_{{\sf row}}(w_{(i)})\subseteq {\bf I}$ and $\desc_{{\sf col}}(w_{(i)})\subseteq {\bf J}$ for $1\leq i\leq k$. Moreover $\mathfrak{X}$ is ${\bf L}_{{\mathbf I}|{\mathbf J}}$-bicrystalline.
\end{MainThm}

Theorem~\ref{thm:crystalclosure} generalizes the study of $\mathbf{GL}$-stable varieties $\mathfrak{X}\subset{\sf Mat}_{m, n}$, which are exactly the \emph{classical determinantal varieties} $\mathfrak{X}_k$ of rank $\leq k$ matrices. The $\mathbf{T}$-characters of these varieties have been long studied (see, e.g., \cite{Abhyankar, dCP, DRS, Galigo, Ghorpade, Herzog.Trung}). See Example~\ref{exa:classical}.

\begin{example}
Let ${\mathfrak X}$ be the space of $4\times 4$ matrices $M$ such that its northwest $1\times 1$ matrix has rank
$0$ and its northwest $4\times 4$
submatrix has rank $\leq 3$. In the notation of Section~\ref{subsec:MSVs}, this is the matrix Schubert variety
$\mathfrak{X}_w$ where  $w$ is the partial permutation $213\infty$. Now, $\mathfrak{X}_w$ is stable under the action of the Levi group $\mathbf{L}_{{\mathbf I}|{\mathbf J}}$ with ${\bf I} = \{0, 1, 4\} = {\bf J}$:  applying row operations on the first row (respectively, column), or on the second through fourth rows (respectively, columns), of a matrix $M\in {\mathfrak X}$ returns matrices in 
${\mathfrak X}$. Here \[I({\mathfrak X})=\left\langle z_{11}, \left|\begin{matrix} 
z_{11} & z_{12} & z_{13} & z_{14}\\
z_{21} & z_{22} & z_{23} & z_{24}\\
z_{31} & z_{32} & z_{33} & z_{34}\\
z_{41} & z_{42} & z_{43} & z_{44}\end{matrix}\right|
\right\rangle;\] this is an instance of a theorem of Fulton \cite{Fulton:duke}, recapitulated as
Theorem~\ref{theorem:Fultonsresult}. 
Let $<$ be the pure lexicographic order obtained by setting $z_{ab}>z_{cd}$ if $a<c$, or $a=c$ and $b>d$. 
Under this term order, the generators of $I$ form a Gr\"obner basis; this is a case of the Knutson--Miller Gr\"obner basis theorem from \cite{Knutson.Miller} which is restated here as Theorem~\ref{thm:KMgrob}.

Consequently,  both \[A = \begin{bmatrix}
    0 & 0 & 1 & 0\\
    0 & 2 & 0 & 0\\
    1 & 0 & 0 & 0\\
    0 & 0 & 0 & 0
\end{bmatrix} \text{\  \ and \ \ } B = \begin{bmatrix}
    0 & 1 & 0 & 0\\
    1 & 0 & 1 & 0\\
    0 & 1 & 0 & 0\\
    0 & 0 & 0 & 0
\end{bmatrix}\] correspond to standard monomials of $\C[{\mathfrak X}_w]$. 
Since ${\sf row}(A)={\sf col}(A)=3221$ and ${\sf row}(B)={\sf col}(B)=2132$, applying ${\sf filterRSK}_{{\mathbf I}|{\mathbf J}}$ to either matrix yields
\[T_{\underline\lambda|\underline\mu} = \left(\ \tableau{1}, \tableau{2 & 2\\ 3} \bigg\vert \ 
        \tableau{1}, \tableau{2 & 2\\ 3} \right).\] 
        $A$ and $B$ are the only two such matrices, so $c_{\underline{\lambda}|\underline{\mu}}^{{\mathfrak X}_w} = 2$. Additionally, $\chi_{{\mathbb C}[{\mathfrak X}_{213\infty}]}$ starts as:
\ytableausetup{boxsize=0.25em,aligntableaux=center}
\begin{multline*} 
s_{(\emptyset,\emptyset)}({\bf x})s_{(\emptyset,\emptyset)}({\bf y}) + s_{(\emptyset,\ydiagram{1})}({\bf x})s_{(\emptyset,\ydiagram{1})}({\bf y}) + s_{(\ydiagram{1},\emptyset)}({\bf x})s_{(\emptyset,\ydiagram{1})}({\bf y}) + s_{(\emptyset,\ydiagram{1})}({\bf x})s_{(\ydiagram{1},\emptyset)}({\bf y}) + s_{(\emptyset,\ydiagram{1,1})}({\bf x})s_{(\emptyset,\ydiagram{1,1})}({\bf y}) +\\
s_{(\ydiagram{1},\ydiagram{1})}({\bf x})s_{(\emptyset,\ydiagram{1,1})}({\bf y}) + s_{(\emptyset,\ydiagram{2})}({\bf x})s_{(\emptyset,\ydiagram{2})}({\bf y}) + s_{(\ydiagram{1},\ydiagram{1})}({\bf x})s_{(\emptyset,\ydiagram{2})}({\bf y}) + s_{(\ydiagram{2},\emptyset)}({\bf x})s_{(\emptyset,\ydiagram{2})}({\bf y}) + s_{(\emptyset,\ydiagram{1,1})}({\bf x})s_{(\ydiagram{1},\ydiagram{1})}({\bf y}) +\\ s_{(\emptyset,\ydiagram{2})}({\bf x})s_{(\ydiagram{1},\ydiagram{1})}({\bf y}) + s_{(\ydiagram{1},\ydiagram{1})}({\bf x})s_{(\ydiagram{1},\ydiagram{1})}({\bf y}) + s_{(\emptyset,\ydiagram{2})}({\bf x})s_{(\ydiagram{2},\emptyset)}({\bf y}) + s_{(\emptyset,\ydiagram{1,1,1})}({\bf x})s_{(\emptyset,\ydiagram{1,1,1})}({\bf y}) + s_{(\ydiagram{1},\ydiagram{1,1})}({\bf x})s_{(\emptyset,\ydiagram{1,1,1})}({\bf y}) +\\
s_{(\emptyset,\ydiagram{2,1})}({\bf x})s_{(\emptyset,\ydiagram{2,1})}({\bf y}) + s_{(\ydiagram{1},\ydiagram{1,1})}({\bf x})s_{(\emptyset,\ydiagram{2,1})}({\bf y}) + s_{(\ydiagram{1},\ydiagram{2})}({\bf x})s_{(\emptyset,\ydiagram{2,1})}({\bf y}) + s_{(\ydiagram{2},\ydiagram{1})}({\bf x})s_{(\emptyset,\ydiagram{2,1})}({\bf y}) + s_{(\emptyset,\ydiagram{1,1,1})}({\bf x})s_{(\ydiagram{1},\ydiagram{1,1})}({\bf y}) +\\
s_{(\emptyset,\ydiagram{2,1})}({\bf x})s_{(\ydiagram{1},\ydiagram{1,1})}({\bf y}) + s_{(\ydiagram{1},\ydiagram{1,1})}({\bf x})s_{(\ydiagram{1},\ydiagram{1,1})}({\bf y}) + s_{(\ydiagram{1},\ydiagram{2})}({\bf x})s_{(\ydiagram{1},\ydiagram{1,1})}({\bf y}) + s_{(\emptyset,\ydiagram{3})}({\bf x})s_{(\emptyset,\ydiagram{3})}({\bf y}) + s_{(\ydiagram{1},\ydiagram{2})}({\bf x})s_{(\emptyset,\ydiagram{3})}({\bf y}) +\\
s_{(\ydiagram{2},\ydiagram{1})}({\bf x})s_{(\emptyset,\ydiagram{3})}({\bf y}) + s_{(\ydiagram{3},\emptyset)}({\bf x})s_{(\emptyset,\ydiagram{3})}({\bf y}) + s_{(\emptyset,\ydiagram{2,1})}({\bf x})s_{(\ydiagram{1},\ydiagram{2})}({\bf y}) + s_{(\ydiagram{1},\ydiagram{1,1})}({\bf x})s_{(\ydiagram{1},\ydiagram{2})}({\bf y}) +\\
s_{(\emptyset,\ydiagram{3})}({\bf x})s_{(\ydiagram{1},\ydiagram{2})}({\bf y})+ s_{(\ydiagram{1},\ydiagram{2})}({\bf x})s_{(\ydiagram{1},\ydiagram{2})}({\bf y}) + s_{(\ydiagram{2},\ydiagram{1})}({\bf x})s_{(\ydiagram{1},\ydiagram{2})}({\bf y}) + s_{(\emptyset,\ydiagram{2,1})}({\bf x})s_{(\ydiagram{2},\ydiagram{1})}({\bf y}) +\\
s_{(\emptyset,\ydiagram{3})}({\bf x})s_{(\ydiagram{2},\ydiagram{1})}({\bf y}) + s_{(\ydiagram{1},\ydiagram{2})}({\bf x})s_{(\ydiagram{2},\ydiagram{1})}({\bf y}) + s_{(\emptyset,\ydiagram{3})}({\bf x})s_{(\ydiagram{3},\emptyset)}({\bf y}) + s_{(\emptyset,\ydiagram{2,1,1})}({\bf x})s_{(\emptyset,\ydiagram{2,1,1})}({\bf y}) +\\ \displaybreak[3]
s_{(\ydiagram{1},\ydiagram{1,1,1})}({\bf x})s_{(\emptyset,\ydiagram{2,1,1})}({\bf y}) +s_{(\ydiagram{1},\ydiagram{2,1})}({\bf x})s_{(\emptyset,\ydiagram{2,1,1})}({\bf y}) + s_{(\ydiagram{2},\ydiagram{1,1})}({\bf x})s_{(\emptyset,\ydiagram{2,1,1})}({\bf y}) + s_{(\emptyset,\ydiagram{2,1,1})}({\bf x})s_{(\ydiagram{1},\ydiagram{1,1,1})}({\bf y}) + s_{(\ydiagram{1},\ydiagram{2,1})}({\bf x})s_{(\ydiagram{1},\ydiagram{1,1,1})}({\bf y}) +\\ 
s_{(\emptyset,\ydiagram{2,2})}({\bf x})s_{(\emptyset,\ydiagram{2,2})}({\bf y}) +s_{(\ydiagram{1},\ydiagram{2,1})}({\bf x})s_{(\emptyset,\ydiagram{2,2})}({\bf y}) + s_{(\ydiagram{2},\ydiagram{2})}({\bf x})s_{(\emptyset,\ydiagram{2,2})}({\bf y}) +s_{(\emptyset,\ydiagram{3,1})}({\bf x})s_{(\emptyset,\ydiagram{3,1})}({\bf y}) +\\
s_{(\ydiagram{1},\ydiagram{2,1})}({\bf x})s_{(\emptyset,\ydiagram{3,1})}({\bf y}) + s_{(\ydiagram{2},\ydiagram{1,1})}({\bf x})s_{(\emptyset,\ydiagram{3,1})}({\bf y}) + s_{(\ydiagram{1},\ydiagram{3})}({\bf x})s_{(\emptyset,\ydiagram{3,1})}({\bf y}) +s_{(\ydiagram{2},\ydiagram{2})}({\bf x})s_{(\emptyset,\ydiagram{3,1})}({\bf y}) +\\
s_{(\ydiagram{3},\ydiagram{1})}({\bf x})s_{(\emptyset,\ydiagram{3,1})}({\bf y}) + s_{(\emptyset,\ydiagram{2,1,1})}({\bf x})s_{(\ydiagram{1},\ydiagram{2,1})}({\bf y}) + s_{(\ydiagram{1},\ydiagram{1,1,1})}({\bf x})s_{(\ydiagram{1},\ydiagram{2,1})}({\bf y}) + s_{(\emptyset,\ydiagram{2,2})}({\bf x})s_{(\ydiagram{1},\ydiagram{2,1})}({\bf y})+\\ s_{(\emptyset,\ydiagram{3,1})}({\bf x})s_{(\ydiagram{1},\ydiagram{2,1})}({\bf y})
+ 2s_{(\ydiagram{1},\ydiagram{2,1})}({\bf x})s_{(\ydiagram{1},\ydiagram{2,1})}({\bf y})+\cdots.
\end{multline*}
\ytableausetup{boxsize=normal,aligntableaux=top}
\end{example}
\subsection{Organization}\label{subsec:organization}
Section~\ref{sec:reptheory} presents algebraic preliminaries. We begin with general notions  about
(reductive) linear algebraic groups, their representations and characters, and the irreducible decomposition problem in the representation ring. Then we discuss Gr\"obner basis notions, specifically \emph{standard monomials} and computation of $\mathbf{T}$-characters. Finally, we specialize to the setting of this paper, explaining, e.g., why the characters of irreducible representations of ${\bf L}_{{\mathbf I}|{\mathbf J}}$ are the split-Schur polynomials $s_{\underline\lambda}({\bf x})s_{\underline\mu}(\mathbf{y})$. 

Section~\ref{sec:tableau} covers the tableau notions we need: semistandard Young tableaux and their reading words, Knuth equivalence, row insertion, and ${\sf filterRSK}$ in terms of insertion. 

Section~\ref{subsec:graphs} discusses Kashiwara's \emph{crystal graphs}. Section~\ref{subsec:crystals} reviews \emph{crystal graphs} on words and tableaux, stating and proving some preparatory results.
Section~\ref{subsec:matrixcrystals} concerns the \emph{bicrystal} structure on matrices found by Danilov--Koshevoi
\cite{bicrystal2} and van Leeuwen \cite{bicrystal1}. 
We phrase results in terms of local isomorphisms of pre-crystal graphs.

Section~\ref{subsec:filterings} introduces filterings, which combined with the build-up from
Sections~\ref{sec:tableau} and~\ref{subsec:graphs}, culminates in
Theorem~\ref{cor:filterRSKgraphiso}, the ``bicrystal categorification'' 
of Theorem~\ref{thm:main}. It generalizes results of \cite{bicrystal2, bicrystal1}. 
With this theorem, the Main Theorem~\ref{thm:main} follows, in view of the ideas laid out in Section~\ref{sec:reptheory}.

Section~\ref{sec:MSVs} concerns the main case of $\mathbf{B}$-stable varieties. We define matrix Schubert varieties $\mathfrak{X}_w$ and state basic results drawn from Fulton's \cite{Fulton:duke}. We then recall the description of the standard monomials of $\C[\mathfrak{X}_w]$ from Knutson--Miller's \cite{Knutson.Miller} and the extension to arbitrary $\mathbf{B}$-stable varieties from Knutson's \cite{Knutson}. This allows us to prove Theorem~\ref{thm:crystalclosure}, showing that the $\mathbf{L}_{{\mathbf I}|{\mathbf J}}$-characters of $\mathbf{B}$-stable varieties are computed by ${\sf filterRSK}$.

Section~\ref{sec:polytopal} offers final remarks. Proposition~\ref{thm:filterRSKpolyrule} shows that $c_{\underline\lambda|\underline\mu}$ counts lattice points in a polytope. 
It allows one to reprove the classical Cauchy identity and formulate a polytopal Littlewood--Richardson rule.
Proposition~\ref{thm:feb10poly} and Remark~\ref{remark:unionofpoly} give an extension for any
$c^\mathfrak{X}_{\underline\lambda|\underline\mu}$. We then rederive the  Cauchy-type identity for classical determinantal varieties.

Appendix~\ref{sec:appendix} elaborates on the connections between our main results for matrix Schubert varieties, prior work on their Hilbert series, and open problems regarding their minimal free resolutions.

\section{Background on representation theory and commutative algebra}
\label{sec:reptheory}

\subsection{Reductive groups, maximal tori, and characters}\label{subsec:repbackground}
We assume throughout that all groups $G$ have the structure of an affine variety over the base field $\C$ of complex numbers. These are the \emph{(complex) linear algebraic groups}, so called because they are isomorphic to closed subgroups of $GL_n$ \cite[Corollary 4.10]{Milne}. Our primary reference is \cite{Milne}.

Let $V$ be a $\C$-algebra. By a \emph{(rational) representation} of $G$ we mean a homomorphism \[\rho:G\to GL(V)\]
that is also a morphism of varieties, where $GL(V)$ is the (linear algebraic) group of invertible linear transformations of $V$. $V$ is a $\C[G]$-module via the action $g\cdot v = \rho(g)v$. 

A representation $V$ of $G$ is \emph{irreducible} if it contains no nontrivial $G$-invariant subspaces.  Irreducible representations of $G$ are finite-dimensional \cite[Corollary 4.8]{Milne}. We call $G$ over $\C$ \emph{reductive} if any representation of $G$ is a direct sum of irreducible representations.

\begin{example}
Our main examples of linear algebraic groups are the matrix groups $\mathbf{GL}$, $\mathbf{L}_{{\mathbf I}|{\mathbf J}}$, and $\mathbf{T}$ from the introduction. Additionally,
any finite group $G$ may be viewed as a linear algebraic group by first using Cayley's theorem to embed $G$ in a symmetric group ${\mathfrak S}_n$ and then embedding ${\mathfrak S}_n$ in $GL_n$ as permutation matrices. All of these groups are reductive.
\end{example}

When $G$ is a reductive group, the \emph{representation ring} ${\sf Rep}(G)$ consists of formal ${\mathbb Z}$-linear combinations of isomorphism classes of finite-dimensional representations, with direct sums and tensor products of representations as the ring operations. As a ${\mathbb Z}$-module, it has a basis given by classes of  irreducible representations of $G$.  Given a representation $V$ of a reductive group $G$, one asks \emph{how to express the class of $V$ in ${\sf Rep}(G)$ in terms of this basis?} Let us call this the
``decomposition problem''.

In the case where $G = T$ is an \emph{(algebraic) torus} (i.e., $T\cong (\C^\star)^k$ for some $k$), the irreducible representations are particularly easy to describe.

\begin{theorem}[{\cite[Theorem 12.12]{Milne}}]\label{fact:torusrep}
    The irreducible representations of $T\cong (\C^\star)^k$ are one-dimensional, indexed by integer $k$-tuples $\mathbf{a} = (a_1,\dots, a_k)\in{\mathbb Z}^k$. The action of $\mathbf{t} = (t_1,\dots, t_k)\in T$ on the irreducible representation $V_{\mathbf{a}}$ is given by $\mathbf{t}\cdot v = t_1^{a_1}\dots t_k^{a_k}v$. 
\end{theorem}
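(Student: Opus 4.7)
The plan is three-fold: classify the algebraic group homomorphisms $T\to \C^\star$ (the \emph{characters} of $T$); establish a weight-space decomposition for any finite-dimensional rational representation of $T$; and then extract the irreducibles.

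First I would identify the characters. The coordinate ring is $\C[T]\cong \C[t_1^{\pm 1},\ldots,t_k^{\pm 1}]$, a Laurent polynomial ring. A character $\chi\colon T\to \C^\star$ is a morphism of algebraic groups, so it corresponds to a unit $f\in\C[T]$ satisfying the Hopf-algebra identity $\Delta f = f\otimes f$ together with $\chi(\mathbf{1})=1$. The units of a Laurent polynomial ring over a field are nonzero scalar multiples of Laurent monomials, and the normalization condition forces the scalar to equal $1$. Hence every character has the form $t^{\mathbf{a}}:=t_1^{a_1}\cdots t_k^{a_k}$ for some $\mathbf{a}\in\Z^k$, and distinct $\mathbf{a}$ give distinct characters, so the character group is precisely $\Z^k$.

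Next, I would decompose an arbitrary finite-dimensional rational representation $\rho\colon T\to GL(V)$ into weight spaces. Let $\Delta_V\colon V\to V\otimes\C[T]$ be the comodule map dual to $\rho$. Expanding each $v\in V$ in the Laurent monomial basis writes $\Delta_V(v)=\sum_{\mathbf{a}\in\Z^k} v_{\mathbf{a}}\otimes t^{\mathbf{a}}$ as a \emph{finite} sum (since $V$ is finite-dimensional). Setting $V_{\mathbf{a}}:=\{v\in V:\rho(\mathbf{t})v=t^{\mathbf{a}}v\text{ for all }\mathbf{t}\in T\}$, the coassociativity and counit axioms of the comodule imply $v=\sum_{\mathbf{a}} v_{\mathbf{a}}$, $v_{\mathbf{a}}\in V_{\mathbf{a}}$, and that the maps $v\mapsto v_{\mathbf{a}}$ are projections. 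This yields $V=\bigoplus_{\mathbf{a}\in\Z^k}V_{\mathbf{a}}$.

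Finally, each $V_{\mathbf{a}}$ is $T$-stable, so if $V$ is irreducible then exactly one weight space is nonzero, say $V=V_{\mathbf{a}_0}$; and on $V_{\mathbf{a}_0}$ the torus acts by scalars, so every line is $T$-stable and irreducibility forces $\dim V=1$. This produces the claimed bijection between $\Z^k$ and irreducibles. The main technical obstacle is the second step: translating between the comodule description of rationality and the classical notion of a weight-space decomposition. Morally one is simultaneously diagonalizing the commuting family $\{\rho(\mathbf{t})\}_{\mathbf{t}\in T}$, but the subtlety is tracking the $\mathbf{t}$-dependence of the eigenvalues so that the decomposition is indexed by $\Z^k$-valued characters rather than merely by scalar eigenvalues for each fixed $\mathbf{t}$; this is exactly what rationality of $\rho$ guarantees via the Hopf-algebra structure of $\C[T]$.
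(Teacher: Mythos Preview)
The paper does not supply its own proof of this statement; it is quoted as a known result from Milne's textbook \cite[Theorem 12.12]{Milne} and used as a black box. So there is no in-paper argument to compare against.

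Your proposal is a correct and standard proof. The three steps---identifying the character lattice of $T$ with $\Z^k$ via the units of $\C[t_1^{\pm1},\dots,t_k^{\pm1}]$, using the comodule map to obtain the weight-space decomposition $V=\bigoplus_{\mathbf a}V_{\mathbf a}$, and then reading off that irreducibles are one-dimensional---are exactly the ingredients of the textbook argument (and essentially what Milne does). Your remark that the comodule viewpoint is what packages the ``simultaneous diagonalization'' so that eigenvalues vary as characters rather than as unrelated scalars is well put; that is indeed the content of rationality here. One small point worth tightening: you should note that the projections $v\mapsto v_{\mathbf a}$ are $T$-equivariant (this follows from coassociativity), which is what guarantees each $V_{\mathbf a}$ is a subrepresentation and not merely a subspace. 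Otherwise the argument is complete.
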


\begin{definition}\label{def:genchar}
    Let $T\cong (\C^\star)^k$ be a torus. Let 
    $V = \bigoplus_{\mathbf{a}}c_{\mathbf{a}}V_{\mathbf{a}}$ be a $T$-representation. The \emph{character} of $V$ is 
    \[\chi_V = \sum_{\mathbf{a}}c_{\mathbf{a}}\mathbf{t}^\mathbf{a}\in 
    \Z[[\mathbf{t}^{\pm 1} ]] = \Z[[t_1^{\pm 1},\dots, t_k^{\pm 1}]].\] 
\end{definition}

Assume for the moment that $V$ is a finite-dimensional $T$-representation via the action $\rho:T\to GL(V)$, so $\chi_V\in \Z[\mathbf t^{\pm 1}]$ is a Laurent polynomial.
The $T$-action on $V$ defines a \emph{multigrading} on $V$:
an element $v\in V$ is \emph{$T$-homogeneous of multidegree $\mathbf{a}$} if $\textrm{span}(v)$ is isomorphic to $V_{\mathbf{a}}$ as a $T$-representation.
The \emph{weight} of $v$ is then the monomial ${\sf wt}_T(v) = \mathbf{t}^\mathbf{a}\in\Z[\mathbf{t}^{\pm 1}]$. In later sections we will work with the equivalent \emph{combinatorial weight} of $v$, which is the $k$-tuple ${\sf cwt}_T(v) = \mathbf{a}\in\Z^k$.
By Theorem~\ref{fact:torusrep} and the fact that $T$ is reductive, $V$ must have a \emph{$T$-homogeneous basis} $\mathfrak{B}$ (meaning each $v\in\mathfrak{B}$ is $T$-homogeneous).
Then the elements of $\mathfrak{B}$ form a full basis of eigenvectors for every matrix $\rho(\mathbf{t})$ ($\mathbf{t}\in T$) and the eigenvalues of $\rho(\mathbf{t})$ are the weights $\mathbf{t}^\mathbf{a}$.
The character $\chi_V$ is thus concretely realized as the function mapping $\mathbf{t}$ to the sum of the eigenvalues of $\rho(\mathbf{t})$.
That is, we have the formula
\begin{equation}
\label{eqn:chardefsequiv}
\chi_V = \textrm{Trace}(\rho(\mathbf{t}))\in\Z[\mathbf{t}^{\pm1}].
\end{equation}

Theorem~\ref{fact:torusrep} helps to understand the decomposition problem for other connected reductive groups $G$ since any such group contains a nontrivial algebraic torus (\cite[Theorem 16.60]{Milne}). A \emph{split reductive group} is a pair $(G, T)$ of a connected reductive group $G$ and a maximal torus $T$ in $G$. The maximal tori in $G$ are all conjugate to one another, so the particular choice of $T$ matters little (\cite[Theorem 17.10, 21.43]{Milne}).

When $(G, T)$ is split reductive, any $G$-representation is also a $T$-representation, and taking characters then defines a ring homomorphism 
\begin{equation}\label{eqn:Gchar}
{\sf ch}:{\sf Rep}(G)\to\Z[\mathbf{t}^{\pm 1}].
\end{equation}
If $G = T$, ${\sf ch}$ is an isomorphism: ${\sf Rep}(T)\!\cong\! \Z[\mathbf{t}^{\pm 1}]$. In general, the image of ${\sf ch}$ is determined by the \emph{Weyl group} $W$ of $G$, which is $N_G(T)/T$ where $N_G(T)$ is the normalizer of $T$ in $G$.

\begin{theorem}[{\cite[Theorem 22.38]{Milne}}]\label{fact:chariso}
    If $(G, T)$ is a split reductive group, then the character map ${\sf ch}$ of (\ref{eqn:Gchar}) is an isomorphism onto the subring $\Z[\mathbf{t}^{\pm 1}]^W$ of $W$-invariant $T$-characters. In particular, the characters of irreducible $G$-representations $V_\lambda$ form a $\Z$-module basis for $\Z[\mathbf{t}^{\pm 1}]^W\cong {\sf Rep}(G)$.
\end{theorem}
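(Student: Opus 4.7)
The plan is to establish three successive claims: (i) the map ${\sf ch}$ is an injective ring homomorphism, (ii) its image lies in the subring $\Z[\mathbf{t}^{\pm 1}]^W$ of $W$-invariants, and (iii) the characters of the irreducible $G$-representations $V_\lambda$ map bijectively onto an explicit $\Z$-basis of $\Z[\mathbf{t}^{\pm 1}]^W$, which forces the image to be all of $\Z[\mathbf{t}^{\pm 1}]^W$ and finishes both assertions at once.

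For (i), I would use that $G$ is reductive, so ${\sf Rep}(G)$ is the free $\Z$-module on classes of the irreducible $G$-representations; additivity and multiplicativity of ${\sf ch}$ follow from restricting along $T\hookrightarrow G$ and applying Theorem~\ref{fact:torusrep} to $V|_T \oplus W|_T$ and $V|_T \otimes W|_T$. Injectivity then reduces to showing that distinct irreducibles have distinct characters, which follows from the classification of irreducibles by highest weights together with the fact that the highest weight occurs with multiplicity one in its own representation.

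For (ii), fix $n \in N_G(T)$ projecting to $w \in W$ and a weight vector $v \in V_\mathbf{a}$. Since $nTn^{-1} = T$, a one-line calculation gives $(ntn^{-1})\cdot(n\cdot v) = \mathbf{t}^\mathbf{a}\,(n\cdot v)$ for all $t \in T$, so $n\cdot v$ lies in $V_{w\cdot\mathbf{a}}$. Hence $n$ induces a linear isomorphism $V_\mathbf{a} \xrightarrow{\sim} V_{w\mathbf{a}}$, and $\dim V_{w\mathbf{a}} = \dim V_\mathbf{a}$ makes $\chi_V$ $W$-invariant. For (iii), I would invoke the \emph{orbit sums} $m_\mathbf{a} := \sum_{\mathbf{b} \in W\cdot\mathbf{a}} \mathbf{t}^\mathbf{b}$, indexed by dominant weights $\mathbf{a}$, which are an evident $\Z$-basis of $\Z[\mathbf{t}^{\pm 1}]^W$. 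By highest weight theory there is exactly one irreducible $V_\lambda$ per dominant weight $\lambda$, and
\[ \chi_{V_\lambda} \;=\; m_\lambda \;+\; \sum_{\mu < \lambda} c_{\lambda\mu}\, m_\mu, \]
where $<$ is the dominance order on dominant weights and $c_{\lambda\mu}\in \Z_{\geq 0}$. This unitriangular change of basis shows the $\chi_{V_\lambda}$ form a $\Z$-basis of $\Z[\mathbf{t}^{\pm 1}]^W$, simultaneously establishing surjectivity in (iii) and giving the ``in particular'' clause.

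The main obstacle is (iii): it silently packages the full highest-weight classification of irreducibles of split reductive groups together with a Weyl-character-type leading-term statement. For the concrete groups that matter in this paper, namely the Levi groups $\mathbf{L}_{{\mathbf I}|{\mathbf J}}$, one can bypass the general theory by building $V_{\underline\lambda|\underline\mu}$ directly as an external product of Schur-Weyl modules for the $GL_k$ factors and checking unitriangularity against split-Schur polynomials; but the proof of the general statement really does require the structure theory of split reductive groups in its full strength.
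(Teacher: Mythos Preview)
Your outline is correct and is essentially the standard argument; it is, in particular, the route taken in \cite{Milne}. Note, however, that the paper does not supply its own proof of this statement: it is quoted as \cite[Theorem 22.38]{Milne} and used as a black box. So there is no proof in the paper to compare against, and your sketch simply reconstructs the textbook argument behind the citation. Your closing remark is apt: for the applications in this paper one only needs the case of products of general linear groups, where the unitriangularity of Schur polynomials against monomial symmetric polynomials gives (iii) by elementary means, bypassing the general highest-weight machinery.
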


\begin{definition}\label{def:whatischaracter}
    Let $(G, T)$ be a split reductive group with Weyl group $W$. The \emph{character} of a $G$-representation $V$ is defined to be its character $\chi_V\in\Z[\mathbf{t}^{\pm 1}]$ as a $T$-representation.
\end{definition}

In principle, Theorem~\ref{fact:chariso} solves the decomposition problem for $G$, albeit in a 
manner that does not combinatorially exhibit the positivity of the coefficients. Given a 
representation $V$ of $G$, first compute its character $\chi_V \in\Z[\mathbf{t}^{\pm 1}]$. 
By Theorem~\ref{fact:chariso}, $\chi_V$ in fact lies in $\Z[\mathbf{t}^{\pm 1}]^W\cong {\sf Rep}(G)$ and there is a unique expression $\chi_V = \sum_{\lambda} c_{\lambda}\chi_{V_\lambda}$ (where the sum is over irreducible representations of $G$). Hence, 
\[[V]=\sum_{\lambda} c_{\lambda}[V_{\lambda}]\in {\sf Rep}(G),\] 
or equivalently, $V\cong_G\bigoplus_{\lambda}V_{\lambda}^{\oplus c_\lambda}$. Determination
of $\chi_V \in\Z[\mathbf{t}^{\pm 1}]$ may be difficult in general. However, standard combinatorial commutative algebra will provide the method in the context of this paper, as summarized in Proposition~\ref{thm:bigsummary}.

We need the irreducible representations for a product of split reductive groups in terms of the irreducible representations of the factors.

\begin{definition}\label{def:Feb15exterior}
    Let $V$ and $V'$ be representations of linear algebraic groups $G$ and $G'$. The \emph{(exterior) tensor product representation} $V\boxtimes V'$ of $G\times G'$ is the tensor product of $V$ and $V'$ as vector spaces with the action
    $(g, g')\cdot(v\otimes v') = (g\cdot v)\otimes (g'\cdot v')$.
\end{definition}

The following type of statement is standard. See \cite[Lemma~68]{Steinberg} for a more general statement than the second sentence
(it assumes an algebraically closed field, which is our case). The compact Lie group case is textbook,
and sufficient for our application, since for general linear groups one reduces to compact case of $U_n$ by Weyl's unitarian trick.

\begin{proposition}\label{fact:prodrep}
    If $(G, T)$ and $(G', T')$ are split reductive groups, then $(G\times G', T\times T')$ is also split reductive. The irreducible representations of $G\times G'$ are exactly tensor products $V_\lambda\boxtimes V_\mu$ of irreducible representations of the factors.
\end{proposition}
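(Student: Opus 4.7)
The plan is to prove the two assertions in order: first that $(G\times G', T\times T')$ is split reductive, then the classification of its irreducibles. Both parts are standard and amount to bookkeeping once the right structural facts are quoted, but I would explicitly perform the Weyl unitarian-trick reduction to the compact setting for the second part since that is where the cleanest proof lives (and this is the strategy flagged in the statement).

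For the structural claim, I would argue as follows. A direct product of connected affine varieties is connected, so $G\times G'$ is connected. For reductivity, any finite-dimensional representation $V$ of $G\times G'$ restricts via $g\mapsto (g,e)$ to a representation of $G$, and any $G\times G'$-invariant complement of a subrepresentation can be built by first decomposing under $G$ and then refining under $\{e\}\times G'$ using reductivity of each factor; alternatively this is Milne's fact that a product of reductive groups is reductive. The subgroup $T\times T'$ is a torus since a product of tori is a torus, and it is maximal because the centralizer of $T\times T'$ in $G\times G'$ equals $Z_G(T)\times Z_{G'}(T')=T\times T'$ by maximality of $T$ in $G$ and of $T'$ in $G'$.

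For the classification, I would first show that each $V_\lambda\boxtimes V_\mu$ is irreducible. Reduce to the compact case via Weyl's unitarian trick, so that we may use inner products and character orthogonality. If $U\subseteq V_\lambda\boxtimes V_\mu$ is a nonzero $G\times G'$-invariant subspace, pick $u\in U$ and write $u=\sum_i v_i\otimes w_i$ with the $w_i$ linearly independent in $V_\mu$. Applying operators of the form $(g,e)$ and taking linear combinations, invariance together with irreducibility of $V_\lambda$ shows that $U$ contains $v\otimes w_j$ for every $v\in V_\lambda$ and some fixed $j$; then applying $(e,g')$ and using irreducibility of $V_\mu$ fills out all of $V_\lambda\otimes V_\mu$, so $U=V_\lambda\boxtimes V_\mu$.

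For the converse, let $W$ be any irreducible representation of $G\times G'$. Restricting to $G\times\{e\}$, reductivity of $G$ yields a decomposition $W\cong\bigoplus_{\lambda} V_\lambda\otimes U_\lambda$ where $U_\lambda=\mathrm{Hom}_G(V_\lambda,W)$ is the multiplicity space. Because $\{e\}\times G'$ commutes with $G\times\{e\}$, each $U_\lambda$ carries a $G'$-action, and the decomposition above is an isomorphism of $G\times G'$-modules. Irreducibility of $W$ forces exactly one $\lambda$ to occur, so $W\cong V_\lambda\otimes U_\lambda$ as $G\times G'$-modules; a further application of irreducibility forces $U_\lambda$ to be irreducible as a $G'$-representation, hence $U_\lambda\cong V_\mu$ for some $\mu$ and $W\cong V_\lambda\boxtimes V_\mu$. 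The only non-routine step is verifying that the $G\times G'$-structure on $W$ really does coincide with the external tensor product structure on $V_\lambda\otimes U_\lambda$; this is immediate from the commutation of the two factor actions, but it is the one place that warrants care, and it is the main (modest) obstacle in the argument.
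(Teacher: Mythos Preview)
Your argument is correct and in fact more detailed than what the paper offers: the paper does not prove this proposition at all, but simply cites \cite[Lemma~68]{Steinberg} for the second sentence and remarks that for general linear groups one may reduce to the compact case $U_n$ via Weyl's unitarian trick. Your sketch follows exactly this suggested strategy and then carries it out, so there is no divergence in approach---only in level of detail.

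One small presentational point: in your irreducibility argument you write $u=\sum_i v_i\otimes w_i$ with the $w_i$ linearly independent and then claim that acting by $G\times\{e\}$ produces $v\otimes w_j$ for all $v$. This step is cleanest if you instead take the $v_i$ linearly independent and invoke the density theorem (the image of $\mathbb{C}[G]$ in $\mathrm{End}(V_\lambda)$ is everything, since $\mathbb{C}$ is algebraically closed), so that you can apply an endomorphism killing all $v_i$ but one. As written the step is slightly underspecified, though the intended mechanism is clear. Also, you announce the unitarian-trick reduction to the compact setting but then give a purely algebraic argument that does not actually use inner products or character orthogonality; you could drop the reduction, or replace the direct argument by the one-line character computation $\langle\chi_{V_\lambda\boxtimes V_\mu},\chi_{V_\lambda\boxtimes V_\mu}\rangle=\langle\chi_{V_\lambda},\chi_{V_\lambda}\rangle\cdot\langle\chi_{V_\mu},\chi_{V_\mu}\rangle=1$ that the compact reduction is meant to enable.
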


In our application, $V=\bigoplus_{i\geq 0} V_i$ is a graded, infinite-dimensional vector space where each graded component $V_i$ is a finite-dimensional representation of $G$. Hence, our analysis occurs in the completion 
$\widehat{{\sf Rep}(G)}$ of ${\sf Rep}(G)$. This allows for infinite linear combinations of isomorphism classes of representations. This causes no difficulty as computing this class in $\widehat{{\sf Rep}(G)}$ may be interpreted as solving a $\Z_{\geq0}$-indexed set of finite-dimensional decomposition problems, one for each $V_i$.
(In the case $G=T$, $\widehat{{\sf Rep}(G)}\cong \Z[[t_1^{\pm 1},\dots, t_k^{\pm 1}]]$.)

\subsection{Representations from affine varieties}\label{subsec:stdmono}
Our $G$-modules of interest come as coordinate rings of algebraic varieties. 
We now review the basics.

\begin{definition}
    Let $G$ be a linear algebraic group. A \emph{$G$-variety} $\mathfrak{X}$ is a variety equipped with a rational action of $G$.
\end{definition}

\begin{example}
The space ${\sf Mat}_{m, n}$ is a $\mathbf{GL}$-variety under the usual multiplication action. By restriction ${\sf Mat}_{m, n}$ is also an $\mathbf{L}_{{\mathbf I}|{\mathbf J}}$-variety and a $\mathbf{T}$-variety.
Any linear algebraic group $G$, being an affine variety, is also a $G$-variety under the multiplication or conjugation actions. 
\end{example}

While a $G$-variety $\mathfrak{X}$ is usually not a representation of $G$ because it has no vector space structure, the \emph{coordinate ring} $\C[\mathfrak{X}]$ is a $G$-representation via the action
\begin{equation}
\label{eqn:Feb13abc}
g\cdot f(\mathbf{x}) := f(g\inv\cdot\mathbf{x}), \text{ \ for  $g\in G,\ \mathbf{x}\in\mathfrak{X},\ f\in\C[\mathfrak{X}]$.}
\end{equation}

If $G$ is reductive and $\mathfrak{X}$ is a $G$-variety, then $\C[\mathfrak{X}]$ decomposes into a direct sum of irreducible $G$-representations.
When $(G, T)$ is a split reductive group, the $T$-action on $\C[\mathfrak{X}]$ defines a multigrading, and the $G$-character (i.e., the $T$-character) is exactly the multigraded Hilbert series of $\mathfrak{X}$ in the sense of \cite{Miller.Sturmfels}. Section~\ref{subsec:repbackground} suggests how to compute the decomposition of $\C[\mathfrak{X}]$ into $G$-irreducibles. To proceed, we introduce coordinates on $\mathfrak{X}$ in order to give an explicit $T$-homogenous basis on $\C[\mathfrak{X}]$ and compute the $T$-character $\chi_{\C[{\mathfrak{X}}]}$.

Let $\mathfrak{X}$ be an affine variety, realized in coordinates as a closed subset of $\C^N$ for some $N$. Then the coordinate ring $\C[\mathfrak{X}]$ is realized as the quotient ring $S/I(\mathfrak{X})$, where $S := \C[z_1,\dots, z_N]$ and $I(\mathfrak{X})$ is the defining ideal of $\mathfrak{X}$. As a vector space, $S$ has an infinite basis given by all monomials in the variables $z_1,\dots, z_N$. The images of these monomials under the quotient map $q:S\to S/I(\mathfrak{X})$ then generate $\C[\mathfrak{X}]$ as a vector space, but they are generally not a basis: the relations in the quotient impose linear dependencies between some of the monomials. \emph{Gr\"obner bases} allow one to cut down this generating set to a vector space basis for $S/I(\mathfrak{X})$; we recall the basic definitions with \cite{CLO} as our reference.

\begin{definition}
    Let $I\subseteq S = \C[z_1,\dots, z_N]$ be an ideal, and let $<$ be a \emph{term order}. The \emph{initial ideal} of $I$ with respect to $<$ is the monomial ideal 
    $\init_<(I) := \langle{\sf LT}_<(f)|f\in I\rangle$, 
    where ${\sf LT}_<(f)$ denotes the lead term of $f$ under the term order $<$.
\end{definition}

\begin{definition}\label{def:stdmonomialdef}
    Let $I\subseteq S$ be an ideal and $<$ a term order as above. The set of \emph{standard monomials} for $S/I$ with respect to $<$ is the set ${\sf Std}_<(S/I)$ of all monomials not in $\init_<(I)$, taken with coefficient $1$. 
\end{definition}

\begin{theorem}[{\cite[Section 2.2]{CLO}}]\label{thm:stdbasis}
    Fix a term order $<$. ${\sf Std}_<(S/I)$ is a vector space basis for $S/I$.
\end{theorem}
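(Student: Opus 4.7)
The plan is to prove the two defining properties of a basis in turn: the standard monomials span $S/I$, and they are linearly independent.

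For spanning, I would invoke the division algorithm with respect to a Gröbner basis. The idea is that although the standard monomials are defined intrinsically (as monomials not lying in $\init_<(I)$), one can exhibit every class in $S/I$ as a $\C$-linear combination of them by a constructive normal-form procedure. Concretely, fix a Gröbner basis $G = \{g_1, \ldots, g_s\}$ of $I$ (its existence follows from Dickson's lemma applied to $\init_<(I)$, which has finitely many monomial generators). Given any $f \in S$, run the multivariate division algorithm of $f$ by $G$ to obtain $f = \sum q_i g_i + r$, where no term of $r$ is divisible by any ${\sf LT}_<(g_i)$. Because $G$ is a Gröbner basis, $\init_<(I) = \langle {\sf LT}_<(g_1), \ldots, {\sf LT}_<(g_s) \rangle$, so no term of $r$ lies in $\init_<(I)$; i.e., every term of $r$ is a standard monomial. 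Since $\sum q_i g_i \in I$, the class $\overline{f} = \overline{r}$ in $S/I$ is a $\C$-linear combination of standard monomials. This shows ${\sf Std}_<(S/I)$ spans $S/I$.

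For linear independence, I would argue by contradiction using the leading-term structure. Suppose there is a nontrivial relation $\sum_{i} c_i {\sf m}_i \in I$ with distinct standard monomials ${\sf m}_i$ and scalars $c_i \in \C$, not all zero. Let ${\sf m}_j$ be the $<$-largest monomial among those with $c_j \neq 0$. Then ${\sf LT}_<\!\bigl(\sum_i c_i {\sf m}_i\bigr) = c_j {\sf m}_j$, and since $\sum_i c_i {\sf m}_i \in I$, its leading term lies in $\init_<(I)$. But this forces ${\sf m}_j \in \init_<(I)$, contradicting the hypothesis that ${\sf m}_j$ is a standard monomial. Hence the relation must be trivial, establishing linear independence.

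Combining the two parts, ${\sf Std}_<(S/I)$ is a $\C$-vector space basis of $S/I$, as claimed. The only nontrivial ingredient is the existence of a Gröbner basis (so that every element of $\init_<(I)$ actually has its leading term captured by the $g_i$), and this is where the term-order hypothesis is used essentially, since it ensures the division algorithm terminates. I do not foresee any real obstacle: this is the foundational statement that standard monomials give normal forms modulo $I$, and both halves of the argument are standard manipulations with term orders.
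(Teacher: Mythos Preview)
Your proof is correct and is the standard argument. Note, however, that the paper does not supply its own proof of this theorem: it is stated as a known result with a citation to \cite[Section 2.2]{CLO}, and the argument you give is essentially the one found in that reference.
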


\begin{definition}
    A \emph{Gr\"obner basis} for $I\subseteq S$ is a finite set $\{f_i\}_{i=1}^k\subseteq I$ such that $\init_<(I) = \langle{\sf LT}(f_i)|1\leq i\leq k\rangle$. 
\end{definition}

\begin{theorem}[Buchberger's Algorithm, {\cite[Section 1.3]{CLO}}]\label{thm:buchberger}
    Any generating set of an ideal $I\subseteq S$ can be extended to a Gr\"obner basis via a finite algorithm.
\end{theorem}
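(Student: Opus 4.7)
The plan is to describe Buchberger's algorithm explicitly and verify both its correctness and its termination. Given a finite generating set $F = \{f_1, \dots, f_k\}$ of $I$, I would first recall the \emph{S-polynomial} of a pair $f, g \in F$, defined by
\[
S(f, g) = \frac{\gamma}{{\sf LT}_<(f)}\, f \;-\; \frac{\gamma}{{\sf LT}_<(g)}\, g,
\]
where $\gamma = \mathrm{lcm}({\sf LT}_<(f), {\sf LT}_<(g))$. This construction is engineered so that the lead terms of $f$ and $g$ cancel, leaving a polynomial whose lead term is strictly smaller than $\gamma$. The algorithm then repeatedly computes the multivariate-division remainder $\overline{S(f_i, f_j)}^{F}$ of each S-polynomial modulo the current set $F$, appends any nonzero such remainder to $F$, and halts when every S-pair remainder is zero.

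Correctness is the content of \emph{Buchberger's criterion}: the set $F$ is a Gr\"obner basis of $\langle F \rangle$ with respect to $<$ if and only if $\overline{S(f_i, f_j)}^F = 0$ for all $i < j$. I would prove the nontrivial direction by taking $h \in \langle F \rangle$, writing $h = \sum a_i f_i$, and examining the largest monomial occurring among the products $a_i f_i$. If no cancellation occurs at that monomial, then ${\sf LT}_<(h)$ is a multiple of some ${\sf LT}_<(f_i)$ and we are done. Otherwise, a syzygy-style argument rewrites the cancelling contributions in terms of S-pairs, each of which reduces to zero modulo $F$ by hypothesis; this strictly decreases the largest monomial appearing in the representation of $h$, and an induction with respect to $<$ finishes the argument, showing ${\sf LT}_<(h) \in \langle {\sf LT}_<(f_i) \rangle$.

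Termination follows from the Noetherian property of $S = \C[z_1, \dots, z_N]$. Each time a nonzero remainder $r$ is adjoined to $F$, the very definition of the division remainder guarantees ${\sf LT}_<(r) \notin \langle {\sf LT}_<(f) : f \in F \rangle$, so the monomial ideal of leading terms strictly grows. Any strictly ascending chain of ideals in $S$ stabilizes (equivalently, by Dickson's lemma on monomial ideals in finitely many variables), so only finitely many insertions can occur before the S-pair criterion is satisfied. The main obstacle is Buchberger's criterion itself, specifically the implication ``vanishing of all S-pair remainders $\Rightarrow$ Gr\"obner basis,'' which hinges on a careful cancellation/syzygy analysis of the monomial representations; once that lemma is in hand, the remainder of the argument is essentially mechanical.
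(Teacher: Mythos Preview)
Your sketch is a correct outline of the standard proof of Buchberger's algorithm: S-polynomials, Buchberger's criterion via a syzygy/cancellation argument, and termination via the ascending chain condition on monomial ideals. However, the paper does not actually prove this theorem; it is stated purely as a citation to \cite[Section 1.3]{CLO} and used as a black box. So there is nothing to compare against---you have supplied an argument where the paper gives none.
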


Thus, a vector space basis for $S/I$ is obtained by taking generators for $I$, extending them to a Gr\"obner basis with respect to some term order $<$, and describing ${\sf Std}_<(S/I)$ as those monomials not divisible by the lead term of any element of the Gr\"obner basis.

\begin{definition}\label{def:nattorus}
    The \emph{natural torus} on $\C^N$ is $T = (\C^\star)^N$, with the action $(\C^\star)^N\times\C^N\to\C^N$ given by scaling each component. The \emph{fine multigrading} on 
    $S = \C[z_1,\dots, z_N]$ is the multigrading induced by the natural torus action.
\end{definition}

The monomials of $\C[z_1,\dots, z_N]$ are all homogeneous with respect to the natural torus action of $(\C^\star)^N$ on $\C^N$. The next definition gives sufficient conditions for the standard monomials of a $T$-variety $\mathfrak{X}$ to be $T$-homogeneous.

\begin{definition}
    Let $T\cong (\C^\star)^k$ be a torus and let $\mathfrak{X}$ be a $T$-variety. An embedding $\iota:\mathfrak{X}\to\C^N$ is \emph{$T$-compatible} if there is an algebraic group homomorphism $\iota':T\to(\C^\star)^N$ such that the $T$-action on $\mathfrak{X}$ is given by the natural torus action on $\C^N$, restricted to the image of $\iota'$. 
\end{definition}

In summary, for split reductive groups $(G, T)$ we have the following method for decomposing representations arising from affine $G$-varieties:

\begin{proposition}\label{thm:bigsummary}
    Let $(G, T)$ be a split reductive group. Let $\mathfrak{X}$ be a $G$-variety with a $T$-compatible embedding such that $\C[\mathfrak{X}]\cong S/I(\mathfrak{X})$. Express the sum of the $T$-weights of the elements of ${\sf Std}_<(S/I(\mathfrak{X}))$ as a linear combination $\sum_{\lambda} c_{\lambda}\chi_{V_{\lambda}}$ of irreducible $G$-characters. Then 
    \[\C[\mathfrak{X}]\cong \bigoplus_{\lambda} V_{\lambda}^{\oplus c_{\lambda}}.\]
\end{proposition}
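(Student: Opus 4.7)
The plan is to compute the $T$-character of $\C[\mathfrak{X}]$ using the standard monomial basis and then invoke the injectivity of the character map (Theorem~\ref{fact:chariso}) to read off the $G$-irreducible multiplicities. Since $G$ is reductive, $\C[\mathfrak{X}]$ decomposes as $\bigoplus_\lambda V_\lambda^{\oplus d_\lambda}$ for some multiplicities $d_\lambda\in\Z_{\geq 0}$, and the task is to identify $d_\lambda$ with the prescribed $c_\lambda$.

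First, I would verify that the standard monomials form a $T$-homogeneous basis of $\C[\mathfrak{X}]$. Theorem~\ref{thm:stdbasis} already supplies ${\sf Std}_<(S/I(\mathfrak{X}))$ as a vector space basis. The $T$-compatible embedding $\iota:\mathfrak{X}\hookrightarrow\C^N$ factors the $T$-action on $\mathfrak{X}$ through the natural torus $(\C^\star)^N$ of Definition~\ref{def:nattorus}, so by~\eqref{eqn:Feb13abc} each coordinate $z_i\in S$ is a $T$-eigenvector; every monomial in the $z_i$, and hence its class in $S/I(\mathfrak{X})$, is therefore $T$-homogeneous of an explicitly computable weight. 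Summing weights over the basis yields
\[
\chi_{\C[\mathfrak{X}]} \;=\; \sum_{{\sf m}\in{\sf Std}_<(S/I(\mathfrak{X}))} {\sf wt}_T({\sf m}) \;=\; \sum_\lambda c_\lambda\,\chi_{V_\lambda},
\]
where the second equality is the hypothesis of the proposition.

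Next, I would apply the character map ${\sf ch}$ of~\eqref{eqn:Gchar} to the abstract decomposition $[\C[\mathfrak{X}]]=\sum_\lambda d_\lambda [V_\lambda]$, obtaining $\chi_{\C[\mathfrak{X}]}=\sum_\lambda d_\lambda\,\chi_{V_\lambda}$. Theorem~\ref{fact:chariso} asserts that the irreducible characters $\{\chi_{V_\lambda}\}$ are linearly independent in $\Z[\mathbf{t}^{\pm 1}]^W$, so comparing this with the previous display forces $d_\lambda=c_\lambda$ for every $\lambda$, which is the claim.

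The only real subtlety is that $\C[\mathfrak{X}]$ is typically infinite-dimensional, whereas Theorem~\ref{fact:chariso} is stated for the ring ${\sf Rep}(G)$ of finite-dimensional representations. As flagged at the end of Subsection~\ref{subsec:repbackground}, this is dispatched by working grade-by-grade: the natural $\Z_{\geq 0}$-grading of $S$ by total degree descends to a grading of $\C[\mathfrak{X}]$ by finite-dimensional $G$-subrepresentations (in all cases of interest the embedding places $\mathfrak{X}$ inside an affine space on which $G$ acts linearly, so the grading is $G$-stable), and the identification $d_\lambda=c_\lambda$ is carried out on each graded piece; equivalently, one computes in the completion $\widehat{{\sf Rep}(G)}$. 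Beyond this bookkeeping I do not anticipate a genuine obstacle.
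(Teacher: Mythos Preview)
Your proof is correct and follows essentially the same route as the paper: use Theorem~\ref{thm:stdbasis} and $T$-compatibility to obtain a $T$-homogeneous basis, compute the $T$-character as the sum of weights, and then invoke Theorem~\ref{fact:chariso} to match multiplicities. Your treatment is in fact a bit more explicit than the paper's, particularly in naming the abstract multiplicities $d_\lambda$ and in spelling out the grade-by-grade reduction to the finite-dimensional setting.
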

\begin{proof}
Theorem~\ref{thm:stdbasis} states that ${\sf Std}_<(S/I(\mathfrak{X}))$ forms a basis for $S/I(\mathfrak{X})\cong\C[\mathfrak{X}]$. The $T$-compatible hypothesis says that each of these basis elements is $T$-homogeneous. Thus we obtain the $T$-weight space decomposition of $\C[{\mathfrak{X}}]\cong S/I(\mathfrak{X})$ as guaranteed by Theorem~\ref{fact:torusrep}. The expression indicated in the second sentence of the statement is then the
$T$-character of $\C[{\mathfrak{X}}]\cong S/I(\mathfrak{X})$ in the sense of Definition~\ref{def:whatischaracter}.
Now apply Theorem~\ref{fact:chariso} (and the discussion immediately after Definition~\ref{def:whatischaracter}).
\end{proof}

\subsection{The general linear group and its Levi groups}\label{subsec:gln}

We now specialize  to the setting of this paper. All varieties considered are $T$-stable subvarieties of ${\sf Mat}_{m, n}\cong \C^{mn}$, with $T$ a subtorus of the natural torus $(\C^\star)^{mn}$. The groups $\mathbf{L}_{\mathbf{I}|\mathbf{J}}$ used are products of general linear groups, so we begin by describing their representation theory more explicitly.

The (complex) general linear group $GL_k$ is reductive, and the group $T_k\subseteq GL_k$ of invertible diagonal matrices is a maximal torus. The associated Weyl group is the symmetric group $\mathfrak{S}_k$, so by Theorem~\ref{fact:chariso} the space of polynomial $GL_k$-characters is isomorphic to the ring of symmetric polynomials $\Lambda[t_1,\dots, t_k] := \Z[t_1,\dots, t_k]^{\mathfrak{S}_k}$, with a $\Z$-module basis given by the characters of irreducible $GL_k$-representations.

\begin{theorem}[{\cite[Theorem 8.2.2]{Fulton}}]\label{fact:GLrep}
    The irreducible polynomial representations of $GL_k$ are the \emph{Weyl modules} $V_{\lambda}$ indexed by partitions $\lambda$ with at most $k$ rows. The character of $V_\lambda$ is the \emph{Schur polynomial} $s_\lambda(t_1,\dots, t_k)$ previously defined in (\ref{eqn:Schurdef}).
\end{theorem}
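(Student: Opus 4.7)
Since $GL_k$ is reductive, every polynomial representation is a direct sum of irreducibles, so it suffices to classify the irreducibles and compute their characters. My approach is Schur-Weyl duality on tensor powers of the defining representation $V = \mathbb{C}^k$, which simultaneously exhibits the Weyl modules, proves their irreducibility and completeness, and (via Frobenius' formula) identifies their characters with the Schur polynomials.

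The first step is to establish the bimodule decomposition
\[V^{\otimes d} \;\cong\; \bigoplus_{\lambda \vdash d,\ \ell(\lambda) \leq k} V_\lambda \boxtimes S^\lambda\]
as a $GL_k \times \mathfrak{S}_d$-module, where the $S^\lambda$ are the Specht modules and the $V_\lambda$ are pairwise inequivalent irreducible $GL_k$-modules. This reduces to showing that the images of $\mathbb{C}[GL_k]$ and $\mathbb{C}[\mathfrak{S}_d]$ in $\mathrm{End}(V^{\otimes d})$ are each other's centralizers; the double centralizer theorem then packages the decomposition. The constraint $\ell(\lambda) \leq k$ is automatic, since any Young symmetrizer for a partition with more than $k$ parts antisymmetrizes more than $k$ tensor slots and hence annihilates $V^{\otimes d}$. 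To conclude that these $V_\lambda$ exhaust the irreducible polynomial representations, I would observe that the matrix coefficients of any polynomial representation lie in $\mathrm{Sym}^{\bullet}(\mathrm{End}(V)^\vee)$, so the representation embeds into a finite sum of $V^{\otimes d}$'s and thus decomposes as a sum of $V_\lambda$'s from the list.

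The second step computes the characters. For a diagonal $g = \mathrm{diag}(t_1, \ldots, t_k)$ and $\sigma \in \mathfrak{S}_d$ of cycle type $\mu$, a direct calculation on tensor slots yields $\mathrm{tr}\bigl((g, \sigma)\mid V^{\otimes d}\bigr) = p_\mu(t_1, \ldots, t_k)$, the power-sum symmetric polynomial. Combined with the decomposition above this gives
\[p_\mu(t_1,\ldots, t_k) \;=\; \sum_{\lambda \vdash d,\ \ell(\lambda)\leq k} \chi^\lambda(\mu)\,\chi_{V_\lambda}(g),\]
where $\chi^\lambda$ is the irreducible $\mathfrak{S}_d$-character indexed by $\lambda$. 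Inverting this linear system via orthogonality of $\mathfrak{S}_d$-characters and comparing with Frobenius' identity $s_\lambda = \sum_\mu z_\mu^{-1} \chi^\lambda(\mu)\, p_\mu$ identifies $\chi_{V_\lambda}(t_1,\ldots,t_k)$ with $s_\lambda(t_1, \ldots, t_k)$ as defined in \eqref{eqn:Schurdef}.

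The main obstacle is Schur-Weyl duality itself, specifically the non-obvious direction that every $GL_k$-equivariant endomorphism of $V^{\otimes d}$ lies in the image of $\mathbb{C}[\mathfrak{S}_d]$. The standard argument polarizes from rank-one tensors $g^{\otimes d}$ to show that the linear span of the $GL_k$-action is all of $\mathrm{Sym}^d(\mathrm{End}(V)) \subseteq \mathrm{End}(V)^{\otimes d} \cong \mathrm{End}(V^{\otimes d})$, and then identifies this with the $\mathfrak{S}_d$-invariant subalgebra. A self-contained alternative bypasses $\mathfrak{S}_d$ entirely: construct $V_\lambda$ as the image of a Young symmetrizer on $V^{\otimes |\lambda|}$, verify via highest-weight theory that it is irreducible with highest weight $\lambda$, then apply the Weyl character formula $\chi_{V_\lambda} = \det(t_i^{\lambda_j + k - j})/\det(t_i^{k-j})$, which is the bialternant definition of $s_\lambda$. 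The first route is cleaner conceptually and recovers Frobenius's formula as a bonus; the second avoids the double centralizer theorem at the cost of developing highest-weight theory for $GL_k$.
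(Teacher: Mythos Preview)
The paper does not prove this theorem: it is stated as a background citation to \cite[Theorem~8.2.2]{Fulton} and used without argument. Your proposal is a mathematically sound sketch of the standard Schur--Weyl duality proof, which is indeed the approach Fulton takes in the cited reference, so in that sense you are aligned with the source the paper defers to rather than with anything the paper itself does.

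One small remark on your sketch: the claim that any polynomial representation embeds in a finite sum of $V^{\otimes d}$'s deserves a bit more care. What you want is that a polynomial representation of degree $d$ (meaning its matrix coefficients are homogeneous of degree $d$ in the entries of $g$) embeds in a sum of copies of $V^{\otimes d}$; this follows because the space of such matrix coefficients is exactly the span of matrix coefficients of $V^{\otimes d}$. A general polynomial representation then splits by degree. This is routine but is the step where ``exhaustiveness'' actually lives, and it is worth stating precisely since the rest of your argument only produces the $V_\lambda$ without \emph{a priori} ruling out other irreducibles.
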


First, suppose that $V$ is a finite-dimensional $GL_k$-representation, realized through a homomorphism $\rho:GL_k\to GL(V)$. Since $GL_k$ is reductive, it follows that $V = \bigoplus_\lambda V_\lambda^{c_\lambda}$ for some nonnegative integers $c_\lambda$. We concretely compute the coefficients $c_{\lambda}$ by expressing the character $\chi_V(\mathbf{t}) = \textrm{Trace}(\rho(\mathbf{t}))$ as $\sum_\lambda c_\lambda s_\lambda(\mathbf{t})$ in the basis of Schur polynomials.

\begin{example}
\label{exa:2to3}
Let $V={\sf Sym}^2({\mathbb C}^2)$ be the ${\mathbb C}$-vector subspace of $\C[x,y]$ 
with basis $\{x^2, 2xy, y^2\}$. $GL_2({\mathbb C})$ acts on $V$ by
$x\mapsto (ax+cy), \ \ y\mapsto (bx+dy)$.
This induces a change of basis
\[x^2\mapsto (ax+cy)^2, \ 
2xy\mapsto 2(ax+cy)(bx+dy), \ 
y^2\mapsto (bx+dy)^2.\]
After identifying  $V\cong {\mathbb C}^3$, we can express the ${\mathbb C}[GL_2({\mathbb C})]$-module $V$
as a linear representation, by sending a generic $2\times 2$ matrix to the change of basis matrix:
\begin{align*}
\rho: & \ GL_2({\mathbb C})\to
GL_3({\mathbb C})\\
& 
\left[\begin{matrix}
a & b \\
c & d
\end{matrix}
\right] \mapsto
\left[
\begin{matrix}
a^2 & 2ab & b^2 \\
ac & bc+ad & bd \\
c^2 & 2cd & d^2
\end{matrix}
\right].
\end{align*}
Hence,
$\rho\left(
\left[\begin{smallmatrix}
t_1 & 0 \\
0 & t_2
\end{smallmatrix}
\right]
\right)=
\left[
\begin{smallmatrix}
t_1^2 & 0 & 0 \\
0 & t_1 t_2 & 0 \\
0 & 0 & t_2^2
\end{smallmatrix}
\right]
$
and, by \eqref{eqn:chardefsequiv},  $\chi_V = t_1^2+t_1t_2+t_2^2=s_{\stableau{\phantom{} & \phantom{}}}(t_1,t_2)$. Thus $V$ is isomorphic to the Weyl module $V_{\stableau{\phantom{} & \phantom{}}}$.
\end{example}

By Proposition~\ref{fact:prodrep}, the representation theory of $GL_k$ encapsulated in Theorem~\ref{fact:GLrep} extends immediately to characterize the irreducible representations of products $GL_k\times GL_l$. They are tensor products $V_\lambda\boxtimes V_\mu$ of Weyl modules, with characters given by the corresponding products $s_\lambda(t_1,\dots, t_k)s_\mu(t'_1\dots, t'_l)$ of Schur polynomials. The following example realizes this construction explicitly using the Kronecker product of matrices.

\begin{example}\label{exa:splittensor}
Consider the $GL_2\times GL_2$ action on ${\sf Sym}^2({\mathbb C}^2)\boxtimes
{\sf Sym}^2({\mathbb C}^2)
$. A basis consists of:
\[x^2\otimes x^2, x^2\otimes (2xy),x^2\otimes y^2, (2xy)\otimes x^2, (2xy)\otimes (2xy), (2xy)\otimes y^2, y^2\otimes x^2, y^2\otimes (2xy), y^2\otimes y^2.\]
Suppose a generic element of $GL_2\times GL_2$ is 
$(g,h)=\left(
\left[\begin{smallmatrix}
a & b \\
c & d
\end{smallmatrix}
\right],
\left[\begin{smallmatrix}
q & r \\
s & t
\end{smallmatrix}
\right]
\right)
$. Then, the action induces a change of basis, e.g.,
$x^2\otimes y^2\mapsto
(ax+cy)^2\otimes (rx+ty)^2$.
The change of basis matrix is 
\[\left[
\begin{smallmatrix}
a^2 & 2ab & b^2 \\
ac & bc+ad & bd \\
c^2 & 2cd & d^2
\end{smallmatrix}
\right]\otimes
\left[\begin{smallmatrix}
q^2 & 2qr & r^2 \\
qs & rs+qt & rt \\
s^2 & 2st & t^2
\end{smallmatrix}\right]:= 
\left[\begin{matrix}
a^2\left[\begin{smallmatrix}
q^2 & 2qr & r^2 \\
qs & rs+qt & rt \\
s^2 & 2st & t^2
\end{smallmatrix}\right]
&
 2ab
 \left[\begin{smallmatrix}
q^2 & 2qr & r^2 \\
qs & rs+qt & rt \\
s^2 & 2st & t^2
\end{smallmatrix}\right]

&

b^2
 \left[\begin{smallmatrix}
q^2 & 2qr & r^2 \\
qs & rs+qt & rt \\
s^2 & 2st & t^2
\end{smallmatrix}\right]\\

ac
 \left[\begin{smallmatrix}
q^2 & 2qr & r^2 \\
qs & rs+qt & rt \\
s^2 & 2st & t^2
\end{smallmatrix}\right]

&

(bc+ad) 
 \left[\begin{smallmatrix}
q^2 & 2qr & r^2 \\
qs & rs+qt & rt \\
s^2 & 2st & t^2
\end{smallmatrix}\right]

&

bd
 \left[\begin{smallmatrix}
q^2 & 2qr & r^2 \\
qs & rs+qt & rt \\
s^2 & 2st & t^2
\end{smallmatrix}\right]\\

c^2
 \left[\begin{smallmatrix}
q^2 & 2qr & r^2 \\
qs & rs+qt & rt \\
s^2 & 2st & t^2
\end{smallmatrix}\right]

&

2cd
 \left[\begin{smallmatrix}
q^2 & 2qr & r^2 \\
qs & rs+qt & rt \\
s^2 & 2st & t^2
\end{smallmatrix}\right]

&

d^2
 \left[\begin{smallmatrix}
q^2 & 2qr & r^2 \\
qs & rs+qt & rt \\
s^2 & 2st & t^2
\end{smallmatrix}\right]
\end{matrix}\right].\]
That is, the homomorphism describing this representation is
\[\theta
\left(
\left[\begin{matrix}
a & b \\
c & d
\end{matrix}
\right],
\left[\begin{matrix}
q & r \\
s & t
\end{matrix}
\right]
\right)=
\left[
\begin{matrix}
a^2 & 2ab & b^2 \\
ac & bc+ad & bd \\
c^2 & 2cd & d^2
\end{matrix}
\right]\otimes
\left[\begin{matrix}
q^2 & 2qr & r^2 \\
qs & rs+qt & rt \\
s^2 & 2st & t^2
\end{matrix}\right]
\]
The character of this representation is 
${\rm Trace}\left(\theta
\left(
\left[\begin{smallmatrix}
t_1 & 0 \\
0 & t_2
\end{smallmatrix}
\right],
\left[\begin{smallmatrix}
t'_1 & 0 \\
0 & t'_2
\end{smallmatrix}
\right]
\right)\right)
$, and the reader
can check this indeed
equals $\chi_{\rho}(t_1,t_2)\chi_{\rho}(t'_1,t'_2)$.
\end{example}

The $GL_k$ representations we consider are constructed from the following basic example. Consider $\C^k$ as an
affine space. Viewing an element $v\in \C^k$ as a column vector, we realize $\C^k$ as a $GL_k$-variety via the right action $v\cdot g = g\inv v$. If $\mathfrak{X}\subseteq \C^k$ is a subvariety stable under this action, the formula (\ref{eqn:Feb13abc}) then gives a left $GL_k$ action on the coordinate ring $\C[\mathfrak{X}] \cong S/I(\mathfrak{X})$, where $S = \C[z_1,\dots, z_k]$. The torus $T_k\subseteq GL_k$ of invertible diagonal matrices is precisely the natural torus acting on $\C^k$ from Definition~\ref{def:nattorus}, so the $T_k$-weight of each monomial ${\sf m}$ is obtained by making the substitutions $z_i\mapsto t_i$ for all $1\leq i\leq k$. By Proposition~\ref{thm:bigsummary}, we may compute the decomposition of $\C[\mathfrak{X}]$ into irreducible $GL_k$-representations by identifying the constants $c^{\mathfrak{X}}_\lambda$ such that 
\begin{equation}
\label{eqn:Mar7abc}
\sum_{{\sf m}\in{\sf Std}_<(S/I(\mathfrak{X}))}{\sf wt}_{T_k}({\sf m}) = \sum_\lambda c^{\mathfrak{X}}_\lambda s_\lambda(\mathbf{t}).
\end{equation}
Since $V={\mathbb C}[{\mathfrak X}]$ is generally infinite-dimensional, the expression
(\ref{eqn:Mar7abc}) technically lives in the completion $\widehat{\Lambda[t_1,\ldots,t_k]}\cong {\widehat {\sf Rep(GL_k)}}$ of the ring of symmetric
polynomials, which allows infinite linear combinations
of Schur polynomials (see, e.g., \cite[pg.~291]{ECII}). 
This causes no concern since $V=\bigoplus_{i\geq 0}V_i$ is a standard graded ring where 
each $V_i$ is a finite-dimensional $GL_k$-module. All copies of the Weyl module $V_\lambda$ in $V$ must lie in the graded component $V_{|\lambda|}$, so all coefficients in (\ref{eqn:Mar7abc}) are finite.

Letting $\mathbf{I} = \{0 = i_0 <\dots <i_r = k\}$, the subgroup $L_{\mathbf{I}} = GL_{i_1-i_0}\times\dots\times GL_{i_r-i_{r-1}}$ of $k\times k$ invertible block-diagonal matrices also acts on $\C^k$ by restriction.
If $\mathfrak{X}$ is stable under this restricted action, then $\C[\mathfrak{X}]$ decomposes into a sum of \emph{split-Weyl modules} $V_{\underline\lambda} := V_{\lambda^{(1)}}\boxtimes\dots\boxtimes V_{\lambda^{(r)}}$, 
where each $\lambda^{(a)}$ is a partition with at most $i_a-i_{a-1}$ rows. The character of $V_{\underline\lambda}$ is the split-Schur polynomial $s_{\underline \lambda}(\mathbf{t})$ from the introduction (cf.~\cite[Definition~4.3]{HY1}).
The group $T_k$ is also a maximal torus for $L_{\mathbf{I}}$, so by Proposition~\ref{thm:bigsummary} again we decompose $\C[\mathfrak{X}]$ into irreducible $L_{\mathbf{I}}$-representations by
identifying constants $c^{\mathfrak{X}}_{\underline\lambda}$ such that 
\begin{equation}
\label{eqn:Mar7cde}
\sum_{{\sf m}\in{\sf Std}_<(S/I(\mathfrak{X}))}{\sf wt}_{T_k}({\sf m}) = \sum_{\underline\lambda} c^{\mathfrak{X}}_{\underline\lambda} s_{\underline\lambda}(\mathbf{t}).
\end{equation}
As with (\ref{eqn:Mar7abc}), (\ref{eqn:Mar7cde}) lives in 
$\widehat{{\sf Rep}(L_{\bf I})}\cong \widehat{\Lambda[t_1,\ldots,t_{i_1}]}\boxtimes \cdots\boxtimes \widehat{\Lambda[t_{i_{r-1}+1},\ldots,t_{i_r}]}$.

Now, the $GL_m$-action on $\C^m$ and $GL_n$-action on $\C^n$ combine to give the $\mathbf{GL} := GL_m\times GL_n$-action $(v\otimes w)\cdot (p, q) = (v\cdot p)\otimes (w\cdot q)$ on $\C^m\boxtimes \C^n$.
Identifying $\C^m\boxtimes\C^n$ with the matrix space ${\sf Mat}_{m, n}$, this is the action (\ref{eqn:howmult}) seen in the introduction.
This action restricts to a $\mathbf{L}_{\mathbf{I}|\mathbf{J}} := L_{\mathbf{I}}\times L_{\mathbf{J}}$-action for any indexing sets $\mathbf{I} = \{0=i_0<\dots<i_r = m\}$ and $\mathbf{J} = \{0<j_0<\dots<j_s = n\}$.
If $\mathfrak{X}\subseteq{\sf Mat}_{m, n}$ is a subvariety stable under this $\mathbf{L}_{\mathbf{I}|\mathbf{J}}$ action, then applying (\ref{eqn:Feb13abc}) gives us the left $\mathbf{L}_{\mathbf{I}|\mathbf{J}}$-action (\ref{eqn:coordringaction}) on the coordinate ring $\C[\mathfrak{X}] \cong S/I(\mathfrak{X})$, where $S = \C[z_{11},\dots, z_{mn}]$.
The maximal torus $\mathbf{T} = T_m\times T_n$ in $\mathbf{L}_{\mathbf{I}|\mathbf{J}}$ is a subtorus of the natural torus acting on ${\sf Mat}_{m, n}\cong \C^{mn}$: the $\mathbf{T}$-weight of a monomial ${\sf m}\in S$ is obtained by making the substitutions $z_{ij}\mapsto x_iy_j$ for each variable.
Applying Proposition~\ref{thm:bigsummary} one more time, we see that the decomposition of $\C[\mathfrak{X}]$ into irreducible $\mathbf{L}_{\mathbf{I}|\mathbf{J}}$-representations is computed by identifying constants $c^{\mathfrak{X}}_{\underline\lambda|\underline\mu}$ such that
\[\sum_{{\sf m}\in{\sf Std}_<(S/I(\mathfrak{X}))}{\sf wt}_{\mathbf{T}}({\sf m}) = \sum_{\underline\lambda|\underline\mu} c^{\mathfrak{X}}_{\underline\lambda|\underline\mu} s_{\underline\lambda}(\mathbf{x})s_{\underline\mu}(\mathbf{y})\in {\widehat{{\sf Rep}(\mathbf{L}_{\mathbf{I}|\mathbf{J}})}},\]
where ${\sf wt}_{\mathbf{T}}$ is the same thing as ${\sf wt}$ from (\ref{eqn:whatiswt}) and
\[{\widehat{{\sf Rep}(\mathbf{L}_{\mathbf{I}|\mathbf{J}})}}\!\cong\!
\left(\widehat{\Lambda[x_1,\Compactldots,x_{i_1}]}\mbox{\small\ensuremath{\boxtimes}} \Compactcdots\mbox{\small\ensuremath{\boxtimes}} \widehat{\Lambda[x_{i_{r-1}+1},\Compactldots,x_{i_r}]}\right)
\mbox{\small\ensuremath{\boxtimes}}
\left(\widehat{\Lambda[y_1,\Compactldots,y_{i_1}]}\mbox{\small\ensuremath{\boxtimes}} \Compactcdots\mbox{\small\ensuremath{\boxtimes}} \widehat{\Lambda[y_{i_{s-1}+1},\Compactldots,y_{i_s}]}\right).\]

\section{Tableaux}\label{sec:tableau}
We review some standard notions from tableau combinatorics, with \cite{Fulton} as our primary reference. Let $\lambda$ be an integer partition, which we identify with its Young diagram.

\begin{definition}
A \emph{semistandard Young tableau} $T$ of \emph{shape} $\lambda = {\sf shape}(T)$ is a filling of the cells of $\lambda$ with positive integer entries such that the entries increase weakly along rows from left to right, and the entries increase strictly along columns from top to bottom.
\end{definition}

\begin{example}
The partition $\lambda=(4,2,2)$ has Young diagram $\tableau{\ & \ & \ & \ \\
\ & \ \\ \ & \ }$. One semistandard tableau of shape $\lambda$ is $T = \tableau{1 & 1 & 2 & 3 \\
3 & 3 \\ 4 & 5 }$.
\end{example}

Let ${\sf SSYT}(\lambda,[a,b])$ be the set of semistandard Young tableaux of shape $\lambda$ where the entries come from the interval $[a,b]$. Let 
\[{\sf SSYT}(\lambda):=
{\sf SSYT}(\lambda,[1,\infty))\text{ \ \ and  \ \ }
{\sf SSYT}:=\bigcup_{\lambda} {\sf SSYT}(\lambda).\]

\begin{definition}\label{def:readingword}
    The (column) \textit{reading word} of a tableau $T$, ${\sf word}(T)$, reads the entries of each column bottom-to-top, starting from the leftmost column and proceeding rightward.
\end{definition}

\begin{example}\label{exa:Nov16abc}
If $T=\tableau{1 & 2 & 2 & 3 \\ 2 & 4 \\ 
3 & 5}$ then ${\sf word}(T) = 32154223$.
\end{example}

If we know that $w = {\sf word}(T)$ for some tableau $T$, we can reconstruct $T$ from $w$: the strictly decreasing sequences in $w$ are exactly the columns of $T$, read from bottom to top. However, many words cannot be realized as ${\sf word}(T)$ for any $T$. We now recall the notion of \emph{Knuth equivalence}, which relates any word to the reading word of a unique tableau. 

\begin{definition}\label{def:knuthequiv}
    Let $x, y, z\in\mathbb{N}$ be letters. The \textit{elementary Knuth moves} are 
    \begin{equation}\label{eqn:leftKnuthmove}
    yzx\leftrightarrow yxz\ \ (x<y\leq z)
    \end{equation}
    and 
    \begin{equation}\label{eqn:rightKnuthmove}
    zxy\leftrightarrow xzy\ \ (x\leq y < z).
    \end{equation}
    Two words $w$ and $w'$ are \textit{Knuth equivalent}, denoted $w\sim_K w'$, if they are connected by a sequence of elementary Knuth moves.
\end{definition}

\begin{theorem}\label{thm:jdt}
    For every word $w$, there is a unique $T\in {\sf SSYT}$ such that $w\sim_K {\sf word}(T)$.
\end{theorem}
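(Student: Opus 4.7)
The plan is to prove existence and uniqueness simultaneously through the \emph{row-insertion} operator $P\colon \text{words}\to {\sf SSYT}$. Given a word $w=w_1w_2\cdots w_\ell$, define $P(w)$ by successively row-inserting the letters $w_1,w_2,\dots,w_\ell$ into the empty tableau, where row insertion of a letter $x$ into a tableau places $x$ at the end of row $1$ if every entry in row $1$ is $\leq x$, and otherwise bumps the leftmost entry of row $1$ strictly greater than $x$, which is then row-inserted into row $2$, and so on. Standard checking shows that $P(w)$ is a well-defined SSYT. The theorem will then follow from three intermediate claims which I outline below.

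\textbf{Claim A: $P({\sf word}(T))=T$ for every $T\in {\sf SSYT}$.} The idea is to process the columns of $T$ from left to right: column by column, row insertion of the column read bottom-to-top reproduces that column and leaves all previous columns unchanged, because each new letter is strictly smaller than all entries currently sitting above it in its target row. I would verify this by a direct induction on the number of columns of $T$.

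\textbf{Claim B: If $w\sim_K w'$ then $P(w)=P(w')$.} It suffices to treat a single elementary Knuth move. By splitting $w=u\cdot r\cdot v$ where $r$ is the three-letter pattern being altered, and noting that $P(w)$ can be built by first computing $U=P(u)$ and then inserting the remaining letters into $U$, the claim reduces to showing that inserting the two patterns $yzx$ and $yxz$ (with $x<y\leq z$), or $zxy$ and $xzy$ (with $x\leq y<z$), into an arbitrary SSYT $U$ yields the same result. This is a finite case analysis tracking which rows are affected by the bumps; I expect this \emph{to be the main obstacle}, because one must chase bumping chains across potentially many rows and verify that the two orderings produce matching outputs. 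The cleanest execution is probably to reduce to a two-row tableau by observing that only the bumping sequences starting in rows $1$ and $2$ of $U$ interact, and then to enumerate the sub-cases according to where $x$, $y$, $z$ land relative to the existing row entries.

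\textbf{Claim C: $w\sim_K {\sf word}(P(w))$ for every word $w$.} I would induct on $\ell=|w|$. If $w=w'\cdot x$, then by induction $w'\sim_K {\sf word}(P(w'))$, so it remains to show that for any SSYT $U$ and any letter $x$, one has ${\sf word}(U)\cdot x\sim_K {\sf word}(U\xleftarrow{\text{ins}} x)$. This is proved by following the bumping chain: each bumping step, where an entry of row $i$ is bumped into row $i+1$, corresponds to a short sequence of elementary Knuth moves on the portion of the reading word that records rows $i$ and $i+1$ together with the inserted letter, in precisely the pattern \eqref{eqn:leftKnuthmove}--\eqref{eqn:rightKnuthmove}. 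Iterating down the bumping chain gives the desired Knuth equivalence.

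\textbf{Conclusion.} For existence, take $T=P(w)$; Claim~C gives $w\sim_K {\sf word}(T)$. For uniqueness, suppose $w\sim_K {\sf word}(T_1)$ and $w\sim_K {\sf word}(T_2)$, so ${\sf word}(T_1)\sim_K {\sf word}(T_2)$. Applying $P$ and invoking Claims~B and~A yields $T_1=P({\sf word}(T_1))=P({\sf word}(T_2))=T_2$.
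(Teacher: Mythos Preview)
The paper does not actually prove Theorem~\ref{thm:jdt}; it is stated as standard background, with \cite{Fulton} given as the primary reference for the tableau material in Section~\ref{sec:tableau}. Your Claim~C is precisely what the paper records (also without proof) as Theorem~\ref{thm:insertionnice}, and your $P$ is the paper's ${\sf tab}$. So there is no ``paper's own proof'' to compare against; your outline is essentially the textbook argument the paper is implicitly invoking.

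On the substance: Claims~A and~C are fine as sketched, and the concluding deduction is correct. The one place where your plan is slightly off is the reduction in Claim~B. The assertion that one can ``reduce to a two-row tableau by observing that only the bumping sequences starting in rows~$1$ and~$2$ of $U$ interact'' is not how the standard argument goes, and as written it is not obviously true: all three insertions begin in row~$1$, and their bumping chains may thread through arbitrarily many rows of $U$. What actually controls the situation is the Row Bumping Lemma (if $x\le x'$ are inserted in that order, the second bumping path lies strictly right of and weakly above the first; if $x>x'$, weakly left and strictly below). With that lemma in hand, the verification that $(U\leftarrow y\leftarrow z\leftarrow x)=(U\leftarrow y\leftarrow x\leftarrow z)$ for $x<y\le z$, and $(U\leftarrow z\leftarrow x\leftarrow y)=(U\leftarrow x\leftarrow z\leftarrow y)$ for $x\le y<z$, becomes a short case check on where the three new boxes land. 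I would replace your two-row reduction with an appeal to the Row Bumping Lemma; the rest of your argument then goes through.
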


There are various algorithms for constructing the tableau $T$ such that $w\sim_K{\sf word}(T)$. We employ the following \emph{row insertion algorithm}.

\begin{definition}
    The \emph{row insertion} of $x$ into $T\in {\sf SSYT}$ is a tableau denoted $T\leftarrow x$. If no element of the first row of $T$ is strictly greater than $x$, then $x$ is appended to the end of that row. Otherwise, let $y$ be the first element in the first row of $T$ such that $x < y$. Replace $y$ with $x$ and insert $y$ into the second row of $T$ using the same procedure. This process eventually terminates, producing $T\leftarrow x$.
\end{definition}

\begin{example}
    Let $T = \tableau{1 & 2 & 2 & 3\\ 2 & 5 \\ 4}$ and let $x = 1$. Inserting $1$ into the first row of $T$ bumps out a $2$, yielding $T_1 = \tableau{1 & 1 & 2 & 3\\ 2 & 5 \\ 4}$. Reinserting the displaced $2$ into the second row bumps out the $5$ to give $T_2 = \tableau{1 & 1 & 2 & 3\\ 2 & 2 \\ 4}$. Reinserting this $5$ in the third row gives $(T\leftarrow x) =  \tableau{1 & 1 & 2 & 3\\ 2 & 2 \\ 4 & 5}$.
\end{example}

\begin{definition}\label{def:insertiontab}
    The \emph{insertion tableau} of a word $w = w_1w_2\dots w_N$ is the tableau 
    \[{\sf tab}(w) := (((\emptyset\leftarrow w_1)\leftarrow w_2)\leftarrow\dots\leftarrow w_N).\]
\end{definition}

\begin{theorem}\label{thm:insertionnice}
    Any word $w$ satisfies $w\sim_K{\sf word}({\sf tab}(w))$.
\end{theorem}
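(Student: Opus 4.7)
The plan is to prove the theorem by induction on the length $N$ of $w$, reducing it to a single key lemma about row insertion.

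For the inductive step, write $w=w'\cdot w_N$. By the induction hypothesis $w'\sim_K{\sf word}({\sf tab}(w'))$, and since Knuth equivalence is preserved under concatenation on the right, we obtain $w\sim_K{\sf word}({\sf tab}(w'))\cdot w_N$. By Definition~\ref{def:insertiontab}, ${\sf tab}(w)={\sf tab}(w')\leftarrow w_N$, so the theorem reduces to the following \emph{Row Insertion Lemma}: for any $T\in{\sf SSYT}$ and positive integer $x$, one has ${\sf word}(T)\cdot x\sim_K{\sf word}(T\leftarrow x)$.

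To prove this lemma I would unravel $T\leftarrow x$ as a bumping chain $y_0=x,y_1,\ldots,y_k$, where for $1\le i<k$ the letter $y_{i-1}$ row-inserts into the $i$th row $R_i$ of $T$ and displaces $y_i$ from some column $c_i$, while $y_k$ is appended at the end of some row without further bumping. The strategy is to show inductively on $i$ that after the first $i$ bump steps the column reading word of the partially updated tableau (with $y_i$ held pending for insertion into $R_{i+1}$) is Knuth equivalent to ${\sf word}(T)\cdot x$. Each bump step is local in the reading word, and the rearrangements required are realized by the elementary moves (\ref{eqn:leftKnuthmove})--(\ref{eqn:rightKnuthmove}); the inequality hypotheses on these moves follow from semistandardness (strict decrease down columns, weak increase along rows) together with the fact that $y_i$ is the leftmost entry of $R_i$ strictly greater than the incoming letter $y_{i-1}$.

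The principal obstacle is bookkeeping: adjacent letters of ${\sf word}(T)$ can come from non-adjacent rows of $T$, so the ``local'' effect of a single bump is not manifestly a short sequence of Knuth moves at the level of ${\sf word}(T)$. My workaround would be to first prove (by a short induction on the number of columns of $T$) the standard fact that ${\sf word}(T)$ is Knuth equivalent to the \emph{row reading word} ${\sf rw}(T)$ obtained by reading rows bottom-to-top, left-to-right. In ${\sf rw}$ each bump step corresponds to a visibly local alteration — three consecutive letters inside the substring for $R_i$ are rearranged according to one of (\ref{eqn:leftKnuthmove})--(\ref{eqn:rightKnuthmove}), and the displaced letter $y_i$ is then shuffled into the substring for $R_{i+1}$ by further Knuth moves. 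Chaining these steps along the bumping chain proves the Row Insertion Lemma for ${\sf rw}$, and the ${\sf word}\leftrightarrow{\sf rw}$ equivalence transfers it back to ${\sf word}(T)$, completing the proof.
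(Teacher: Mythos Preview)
The paper does not actually prove this theorem; it is stated without proof as a standard result from tableau combinatorics, with Fulton's \emph{Young Tableaux} cited as the primary reference for Section~\ref{sec:tableau}. Your proposed argument --- induction on word length reducing to a Row Insertion Lemma, established via the row reading word --- is precisely the standard textbook proof (Fulton, Section~2.1). So there is nothing in the paper to compare against; your outline is correct and matches the approach in the cited reference.

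One small point of phrasing: a single bump within a row $R_i$ is not literally one rearrangement of ``three consecutive letters\ldots according to one of (\ref{eqn:leftKnuthmove})--(\ref{eqn:rightKnuthmove})''. It is a chain of such elementary moves: first the incoming letter $y_{i-1}$ slides leftward past each entry of $R_i$ strictly larger than it (each slide is one application of (\ref{eqn:leftKnuthmove})), and then the bumped letter $y_i$ must be carried to the left end of the $R_i$-block so that it sits adjacent to the $R_{i+1}$-block (each step here is one application of (\ref{eqn:rightKnuthmove})). Your follow-up clause ``shuffled\ldots by further Knuth moves'' suggests you are aware of this, but when you write it up make the count explicit: each bump costs $O(|R_i|)$ elementary moves, and the required inequalities at each step follow from the row being weakly increasing together with $y_i$ being the \emph{leftmost} entry strictly exceeding $y_{i-1}$.
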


\begin{corollary}\label{cor:Knuthifftab}
$w\sim_K w'$ if and only if ${\sf tab}(w)={\sf tab}(w')$
\end{corollary}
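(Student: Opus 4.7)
The plan is to derive the corollary as a direct consequence of Theorems~\ref{thm:jdt} and~\ref{thm:insertionnice}, treating Knuth equivalence as an equivalence relation (which is clear from Definition~\ref{def:knuthequiv} since each elementary Knuth move is symmetric, and chains of moves give transitivity).

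For the forward direction, I would suppose $w \sim_K w'$. By Theorem~\ref{thm:insertionnice} applied to each word, $w \sim_K {\sf word}({\sf tab}(w))$ and $w' \sim_K {\sf word}({\sf tab}(w'))$. Chaining these with the hypothesis $w \sim_K w'$ gives ${\sf word}({\sf tab}(w)) \sim_K {\sf word}({\sf tab}(w'))$. Now Theorem~\ref{thm:jdt} asserts that in each Knuth equivalence class there is a \emph{unique} semistandard tableau whose reading word lies in that class; applying this uniqueness to the common class of ${\sf word}({\sf tab}(w))$ and ${\sf word}({\sf tab}(w'))$ forces ${\sf tab}(w) = {\sf tab}(w')$.

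For the reverse direction, if ${\sf tab}(w) = {\sf tab}(w')$ then obviously ${\sf word}({\sf tab}(w)) = {\sf word}({\sf tab}(w'))$, so by Theorem~\ref{thm:insertionnice} we have $w \sim_K {\sf word}({\sf tab}(w)) = {\sf word}({\sf tab}(w')) \sim_K w'$, and transitivity of $\sim_K$ completes the proof.

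There is no real obstacle here: the corollary is essentially a repackaging of the existence/uniqueness statement of Theorem~\ref{thm:jdt} combined with the fact (Theorem~\ref{thm:insertionnice}) that ${\sf tab}$ realizes the unique tableau representative. The only thing worth noting explicitly in the write-up is that Knuth equivalence is indeed an equivalence relation, which is immediate from the symmetry of the moves in \eqref{eqn:leftKnuthmove}--\eqref{eqn:rightKnuthmove}.
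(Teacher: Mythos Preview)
Your proof is correct and takes essentially the same approach as the paper, which simply states that the corollary is immediate from combining Theorems~\ref{thm:jdt} and~\ref{thm:insertionnice}. You have merely unpacked both directions explicitly, which is fine.
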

\begin{proof}
This is immediate from combining Theorems~\ref{thm:jdt} and~\ref{thm:insertionnice}.
\end{proof}
\begin{example}
    Theorem~\ref{thm:insertionnice} makes sense when the elementary Knuth transformations are interpreted via row insertion. Let $w = yzx$ and $w' = yxz$ with $x<y\leq z$, so $w\sim_K w'$ via one use of (\ref{eqn:leftKnuthmove}). Direct computation shows that ${\sf tab}(w) = \tableau{x & z\\ y} = {\sf tab}(w')$. Similarly, if $v = zxy$ and $v' = xzy$ with $x\leq y<z$, then ${\sf tab}(v) = \tableau{x & y\\ z} = {\sf tab}(v')$.
\end{example}

Definition~\ref{def:matrixwords} identifies a $M\in {\sf Mat}_{m,n}({\mathbb Z}_{\geq 0})$ with the pair $({\sf row}(M)|{\sf col}(M))$. Combining this identification with the insertion algorithm yields:

\begin{definition}
    The \emph{RSK map} sends  $M\in {\sf Mat}_{m,n}({\mathbb Z}_{\geq 0})$ to a pair of semistandard Young tableaux as follows:
    ${\sf RSK}(M) = ({\sf tab}({\sf row}(M))|{\sf tab}({\sf col}(M)))$.
\end{definition}

\begin{theorem}[RSK Correspondence]\label{thm:RSK}
    The map ${\sf RSK}$ defines a bijection between ${\sf Mat}_{m,n}({\mathbb Z}_{\geq 0})$  and tableau-pairs $(P|Q)$ of the same shape $\lambda$.
\end{theorem}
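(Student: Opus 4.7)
I would deduce Theorem~\ref{thm:RSK} from the classical two-line-array formulation of RSK. Given $M\in {\sf Mat}_{m,n}(\Z_{\geq 0})$, encode it as the biword $w_M$ whose columns are the pairs $\binom{i}{j}$ each appearing $m_{ij}$ times, sorted lexicographically (top entry first, then bottom). Unpacking Definition~\ref{def:matrixwords} shows that the bottom row of $w_M$, read left to right, is exactly ${\sf col}(M)$.

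The first step is to invoke the classical two-line-array RSK theorem (e.g., \cite[Ch.~4]{Fulton}): row-inserting the bottom row of $w_M$ into the empty tableau while recording the top-row entries in the positions of newly created boxes defines a bijection between biwords and pairs $(P_{\mathrm{std}}|Q_{\mathrm{std}})$ of SSYT of common shape. By construction and Definition~\ref{def:insertiontab}, $P_{\mathrm{std}} = {\sf tab}({\sf col}(M))$, which is the paper's $Q$, so one half of the identification is automatic.

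The subtler step is to identify the paper's $P = {\sf tab}({\sf row}(M))$ with the recording tableau $Q_{\mathrm{std}}$. For this I would use the symmetry of classical RSK: if it sends $M$ to $(P_{\mathrm{std}}|Q_{\mathrm{std}})$, then it sends $M^T$ to $(Q_{\mathrm{std}}|P_{\mathrm{std}})$. A direct check from Definition~\ref{def:matrixwords} gives ${\sf col}(M^T) = {\sf row}(M)$, since reading column indices of $M^T$ across its rows in English order corresponds to reading row indices of $M$ down columns left to right. Applying the previous paragraph to $M^T$ therefore yields $Q_{\mathrm{std}} = {\sf tab}({\sf col}(M^T)) = {\sf tab}({\sf row}(M)) = P$, as required. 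Same-shape and bijectivity onto SSYT-pairs of common shape are then inherited directly from the classical bijection.

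The chief obstacle is the symmetry of RSK, which I would cite rather than reprove; a self-contained derivation is available through Fomin's growth-diagram local rules, but is orthogonal to the paper's development. Everything else amounts to matching conventions between the paper's row/column word definitions and the classical biword, which Definition~\ref{def:matrixwords} makes straightforward.
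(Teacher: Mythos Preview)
Your argument is correct: encoding $M$ as the lexicographically sorted biword, identifying its bottom row with ${\sf col}(M)$, invoking classical RSK to get the insertion tableau as the paper's $Q$, and then using the symmetry theorem together with the observation ${\sf col}(M^T)={\sf row}(M)$ to identify the recording tableau with the paper's $P$, all work as you describe. The convention-matching checks are accurate.

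However, this is precisely the route the paper explicitly declines to take. Remark~\ref{remark:Feb15symm} names your argument as the standard one and announces that the paper will instead derive Theorem~\ref{thm:RSK} from its bicrystal machinery, which it does in Example~\ref{ex:RSK}. There, with ${\bf I}=\{0,m\}$ and ${\bf J}=\{0,n\}$, the highest-weight inequalities of Proposition~\ref{thm:filterRSKpolyrule} force any highest-weight matrix to be northwest-triangular with constant antidiagonals; this yields $c_{\lambda|\mu}=\delta_{\lambda,\mu}$, and then Theorem~\ref{cor:filterRSKgraphiso} (or Corollary~\ref{cor:highestweightmatrix}) identifies $\mathcal{M}_{m,n}\cong\bigoplus_\lambda \mathcal{B}_{\lambda,m}\Box\mathcal{B}_{\lambda,n}$ as pre-crystal graphs, which is the bijection. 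Your approach is shorter and self-contained modulo the symmetry theorem, but it imports that theorem as a black box and is orthogonal to the paper's development. The paper's approach is longer but is internally generated: it exhibits Theorem~\ref{thm:RSK} as the base case of the filtered framework, so that the same mechanism (local isomorphisms of bicrystals plus polytopal highest-weight analysis) handles RSK, the Littlewood--Richardson rule, and the general $c^{\mathfrak X}_{\underline\lambda|\underline\mu}$ uniformly.
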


\begin{remark}\label{remark:Feb15symm}
    In standard references such as \cite{Fulton}, the map ${\sf RSK}$ is given by setting $P = {\sf tab}({\sf col}(M))$ and defining $Q$ to be a \emph{recording tableau} that keeps track of information needed to reverse the insertion algorithm. The difficult part of the proof is showing that with these conventions we in fact have 
    $Q = {\sf tab}({\sf row}(M))$; this result is sometimes called the ``symmetry theorem" \cite[Section 4.1]{Fulton}. Instead, we prefer to see Theorem~\ref{thm:RSK} as a consequence of more general results about crystal graphs; see Example~\ref{ex:RSK}.
\end{remark}

\begin{definition}
    Let $w$ be a word with ${\bf I}$-filtration ${\sf filter}_{\mathbf I}(w) = (w^{(1)},\dots w^{(r)})$. Then the \emph{${\bf I}$-filtered insertion tableau-tuple} of $w$ is
    \[{\sf tab}_{\mathbf{I}}(w) = ({\sf tab}(w^{(1)}),{\sf tab}(w^{(2)}),\dots, {\sf tab}(w^{(r)})).\]
\end{definition}

In view of Definition~\ref{def:insertiontab}, we obtain an algorithmic form of {\sf filterRSK}:

\begin{proposition}\label{prop:filterrskalg}
Given ${\bf I}$ and ${\bf J}$ as in \eqref{eqn:twointseq} and $M\in {\sf Mat}_{m,n}({\mathbb Z}_{\geq 0})$
\[{\sf filterRSK}_{{\bf I}|{\bf J}}(M)=({\sf tab}_{\bf I}({\sf row}(M)), {\sf tab}_{\bf J}({\sf col}(M))).\]
\end{proposition}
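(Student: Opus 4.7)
The statement is essentially a direct unwinding of two definitions against the background tableau theory already set up. The plan is to verify the claimed equality component-by-component, showing that the $a$-th component on the left coincides with the $a$-th component on the right, and then doing the same on the $Q$-side.

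By Main Definition~\ref{alg:filteredrsk}, the $a$-th component of the left-hand side $P^{(a)}$ is specified as the unique SSYT with ${\sf word}(P^{(a)}) \sim_K {\sf filter}_{\mathbf{I}}({\sf row}(M))^{(a)}$, existence and uniqueness of which is guaranteed by Theorem~\ref{thm:jdt}. On the right-hand side, the $a$-th component of ${\sf tab}_{\mathbf{I}}({\sf row}(M))$ is by definition the insertion tableau ${\sf tab}\bigl({\sf filter}_{\mathbf{I}}({\sf row}(M))^{(a)}\bigr)$. By Theorem~\ref{thm:insertionnice}, this insertion tableau has reading word Knuth-equivalent to ${\sf filter}_{\mathbf{I}}({\sf row}(M))^{(a)}$, and since ${\sf tab}(w)$ is an SSYT by construction, it satisfies exactly the characterizing property of $P^{(a)}$. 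Hence by the uniqueness part of Theorem~\ref{thm:jdt} (or equivalently by Corollary~\ref{cor:Knuthifftab}), the two agree.

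The same argument applied verbatim to ${\sf col}(M)$ and the sequence $\mathbf{J}$ shows that $Q^{(b)} = {\sf tab}\bigl({\sf filter}_{\mathbf{J}}({\sf col}(M))^{(b)}\bigr)$ for each $b$, completing the proof. There is no real obstacle here: the proposition is purely a translation between the nonconstructive definition of ${\sf filterRSK}$ via Knuth equivalence and its algorithmic form via iterated row insertion on each filtered piece. The only content is the recognition that the insertion tableau of a word is precisely the canonical representative picked out by Theorem~\ref{thm:jdt}, which is exactly the combined force of Theorems~\ref{thm:jdt} and~\ref{thm:insertionnice}.
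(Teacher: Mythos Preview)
Your proof is correct and takes essentially the same approach as the paper: the paper's proof simply reads ``Immediate from Main Definition~\ref{alg:filteredrsk} and Theorem~\ref{thm:insertionnice},'' and you have spelled out in detail exactly why it is immediate from those two results (together with the uniqueness in Theorem~\ref{thm:jdt}).
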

\begin{proof}
Immediate from Main Definition~\ref{alg:filteredrsk} and Theorem~\ref{thm:insertionnice}. 
\end{proof}

\section{Crystals and bicrystals}\label{subsec:graphs}
To prove Theorem~\ref{thm:main}, we need a tool from Kashiwara's theory of \emph{crystal bases}, that is, special types of graphs called \emph{crystal graphs}. We only need specific examples of crystal graphs and do not present the generalities. We refer to \cite{Shimozono, Kwon, BS}. We begin by laying out just enough general 
notions for our needs.

\begin{definition}
    A \emph{pre-crystal graph} $\mathcal{G}$ is a simple, directed graph with countably many labelled vertices and edges. The label of each vertex $v$ has an associated \emph{weight}, an ordered tuple of nonnegative integers denoted ${\sf cwt}_{\mathcal G}(v)\in {\mathbb Z}_{\geq 0}^r$, for some fixed $r\in {\mathbb Z}_{\geq 0}$. 
\end{definition}

\begin{definition}\label{def:crystalmap}
    A \emph{pre-crystal graph homomorphism} 
    $f:\mathcal{G}\to\mathcal{H}$
    is a weight-preserving set map between the vertex sets (so ${\sf cwt}_{\mathcal G}(v) = {\sf cwt}_{\mathcal H}(f(v))$ for all vertices $v\in \mathcal{G}$) that also preserves adjacency and edge labels. In other words, if $v\xrightarrow{i}v'$ in $\mathcal{G}$ then we must have $f(v)\xrightarrow{i}f(v')$ in $\mathcal{H}$. Furthermore, $f$ is a \emph{pre-crystal graph isomorphism} if it is invertible and $f\inv:\mathcal{H}\to\mathcal{G}$ is also a pre-crystal graph homomorphism.
\end{definition}

\begin{definition}
    The \emph{direct sum} $\mathcal{G}\oplus\mathcal{H}$ of two pre-crystal graphs is their disjoint union.
\end{definition}

\begin{definition}\label{def:cartprodcrystal}
    The \emph{Cartesian product} $\mathcal{G} \ \Box \ \mathcal{H}$ of two pre-crystal graphs has vertex set $\{(g, h) \ | \ g\in \mathcal{G}, h\in \mathcal{H}\}$, with ${\sf cwt}_{\mathcal{G}\Box \mathcal{H}}((g, h)) = ({\sf cwt}_{\mathcal G}(g), {\sf cwt}_{\mathcal H}(h))$. The edge-labels of $\mathcal{G} \ \Box \ \mathcal{H}$ come from the disjoint union of the edge-labelling sets of $\mathcal{G}$ and $\mathcal{H}$. There is an edge $(g, h)\xrightarrow{i^\mathcal{G}}(g', h')$ if $g\xrightarrow{i}g'$ and $h = h'$, and there is an edge $(g, h)\xrightarrow{j^\mathcal{H}}(g', h')$ if $g = g'$ and $h\xrightarrow{j}h'$.
\end{definition}

\begin{definition}
A pre-crystal graph ${\mathcal G}$ is \emph{connected} if there exists an undirected path between any two
vertices of ${\mathcal G}$ (i.e., its underlying graph is connected).
\end{definition}

Most pre-crystal graph homomorphisms we discuss will be of the following type:

\begin{definition}
    A pre-crystal graph homomorphism $f:\mathcal{G}\to\mathcal{H}$ is a \emph{local isomorphism} if its restriction $f|_{\mathcal C}$ to any connected component ${\mathcal C}$ of $\mathcal{G}$ is an isomorphism onto a connected component $\mathcal{C}'$ of $\mathcal{H}$. 
\end{definition}

\begin{lemma}\label{lemma:localisoprops}
\gap
    \begin{itemize}
        \item[(I)] If $f:\mathcal{G}\to\mathcal{H}$ is a local isomorphism then $\mathcal{G}\cong \bigoplus_{\mathcal{C}}\mathcal{C}^{\oplus m_\mathcal{C}}$, where the sum is over connected components of $\mathcal{H}$ and $m_\mathcal{C}$ is the number of connected components of $\mathcal{G}$ mapped onto $\mathcal{C}$ by~$f$.
        \item[(II)] A composition of local isomorphisms is a local isomorphism.
        \item[(III)] If $f:\mathcal{G}\to\mathcal{H}$ and $f':\mathcal{G'}\to\mathcal{H'}$ are local isomorphisms, then the product map $(f\Box f'):\mathcal{G}\Box\mathcal{H}\to\mathcal{G'}\Box\mathcal{H'}$ defined on vertices by $(v, v')\mapsto (f(v), f'(v'))$ is a local isomorphism.
    \end{itemize}
\end{lemma}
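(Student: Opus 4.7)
The plan is to dispatch the three parts in order, leveraging only the bare definitions (connected components, local isomorphism, Cartesian product). Each part is essentially formal once the correct decomposition of the graphs into connected components is identified, so the proof will be short.

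For part (I), I would start by writing $\mathcal{G}$ as the disjoint union of its connected components, $\mathcal{G} = \bigoplus_{\mathcal{D}} \mathcal{D}$, and similarly $\mathcal{H} = \bigoplus_{\mathcal{C}} \mathcal{C}$. Since $f$ is a local isomorphism, each $\mathcal{D}$ is mapped isomorphically via $f|_\mathcal{D}$ onto some connected component $\mathcal{C}$ of $\mathcal{H}$. Grouping the $\mathcal{D}$'s by their image component and letting $m_\mathcal{C}$ be the number of $\mathcal{D}$'s mapped to $\mathcal{C}$ yields the claimed decomposition. The only thing to note is that ``$\mathcal{D}\cong\mathcal{C}$ as pre-crystal graphs'' is really the content of $f|_\mathcal{D}$ being a pre-crystal graph isomorphism, which is given.

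For part (II), suppose $f:\mathcal{G}\to\mathcal{H}$ and $g:\mathcal{H}\to\mathcal{K}$ are local isomorphisms. Pick a connected component $\mathcal{D}$ of $\mathcal{G}$; then $f|_\mathcal{D}$ is a pre-crystal graph isomorphism onto a connected component $\mathcal{C} = f(\mathcal{D})$ of $\mathcal{H}$, and $g|_\mathcal{C}$ is a pre-crystal graph isomorphism onto a connected component of $\mathcal{K}$. Their composition $(g\circ f)|_\mathcal{D} = g|_\mathcal{C}\circ f|_\mathcal{D}$ is a composition of isomorphisms, hence an isomorphism onto a connected component of $\mathcal{K}$, as required.

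For part (III), the key preliminary observation is that the connected components of a Cartesian product $\mathcal{G}\Box\mathcal{G}'$ are exactly the products $\mathcal{D}\Box\mathcal{D}'$ where $\mathcal{D}$ and $\mathcal{D}'$ run over connected components of $\mathcal{G}$ and $\mathcal{G}'$ respectively. This follows because every edge in $\mathcal{G}\Box\mathcal{G}'$ has the form either $(v,v')\xrightarrow{i^\mathcal{G}}(w,v')$ or $(v,v')\xrightarrow{j^{\mathcal{G}'}}(v,w')$ by Definition~\ref{def:cartprodcrystal}, so undirected paths in $\mathcal{G}\Box\mathcal{G}'$ project to undirected paths in each factor, and conversely any two points of $\mathcal{D}\Box\mathcal{D}'$ can be joined by concatenating paths in the two factors. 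Once this is established, restrict $(f\Box f')$ to a component $\mathcal{D}\Box\mathcal{D}'$: its image is $f(\mathcal{D})\Box f'(\mathcal{D}')$, a connected component of $\mathcal{H}\Box\mathcal{H}'$; it is a bijection because $f|_\mathcal{D}$ and $f'|_{\mathcal{D}'}$ are; and it preserves weights and labelled edges coordinate-wise. The inverse is $(f|_\mathcal{D})^{-1}\Box(f'|_{\mathcal{D}'})^{-1}$, which is also a pre-crystal graph homomorphism by the same coordinate-wise verification, so the restriction is a pre-crystal graph isomorphism.

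None of the three parts presents a real obstacle; the only step requiring a moment's thought is identifying the connected components of a Cartesian product in part (III), and I would state this explicitly as a sub-claim before invoking it.
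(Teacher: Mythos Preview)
Your proposal is correct and follows the same approach as the paper, which simply declares (I) and (II) immediate and notes for (III) that connected components of $\mathcal{G}\Box\mathcal{G}'$ have the form $\mathcal{D}\Box\mathcal{D}'$ for components $\mathcal{D}\subseteq\mathcal{G}$, $\mathcal{D}'\subseteq\mathcal{G}'$. You spell out the details the paper omits, including a clean justification of that component fact; nothing more is needed.
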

\begin{proof}
(I) and (II) are immediate. (III) follows since connected components of
$\mathcal{G}\Box\mathcal{H}$ are of the form ${\mathcal G}'\Box {\mathcal H}'$ where ${\mathcal G}'$
and ${\mathcal H}'$ are connected components of ${\mathcal G}$ and ${\mathcal H}$, respectively.
\end{proof}

\subsection{Crystals of words and tableaux}\label{subsec:crystals}
Next, we define crystal graphs with vertices labelled by words. Let $w = w_1w_2\dots w_N$ be a word on the alphabet $[n]$ and fix $i\in [n-1]$.

\begin{definition}
    The \textit{ith bracket operator} ${\sf bracket}_i$ associates a word on the alphabet $\{(, )\}$ to $w$ by recording a ``$)$" for each $i$ and a ``$($" for each $i+1$ (maintaining the order of the letters).
\end{definition}
    
\begin{definition}
    Let $w_e = i+1$ and $w_f = i$ be the letters of $w$ associated to the leftmost unmatched ``$($" and rightmost unmatched ``$)$" of ${\sf bracket}_i(w)$ respectively. The \textit{crystal raising operator} $e_i$ sends $w$ to the word obtained by changing $w_e$ to $i$. Analogously, the \textit{crystal lowering operator} $f_i$ sends $w$ to the word obtained by changing $w_f$ to $i+1$. If no such letters $w_e$ or $w_f$ exist, the operators output the special symbol $\varnothing$.
\end{definition}

\begin{definition}
    The \emph{word crystal graph} $\mathcal{W}_{[a, b]}$ has vertices labelled by words $w$ on the alphabet $[a, b]$. The weight ${\sf cwt}_{\mathcal{W}_{[a, b]}}(w)$ of a vertex
    $w$ is $(y_a,y_{a+1},\ldots,y_{b})$ where $y_j$ is the number of $j$'s appearing in $w$. There is an edge $w\stackrel{i}{\longrightarrow}v$ if and only if $v = f_i(w)$. In the case where $[a, b] = [1, n]$ we use the abbreviated notation $\mathcal{W}_n:= \mathcal{W}_{[1, n]}$.
\end{definition}

\begin{example}\label{ex:wordcrystal}
Figure~\ref{fig:W3} shows
    the connected components of $\mathcal{W}_3$ containing $211$ and $121$. Each vertex is labelled by a word $w$, with ${\sf bracket}_1(w)$ written above it and ${\sf bracket}_2(w)$ below. We highlight  the rightmost unmatched ``)" in ${\sf bracket}_1(w)$ using red (if it exists) and the rightmost unmatched ``)" in ${\sf bracket}_2(w)$ in blue. If $f_1(w) = v$, we draw a red edge directed from $w$ to $v$, and when $f_2(w) = v$ we draw a blue edge directed from $w$ to $v$. For clarity, we label the edges of the crystal graphs with ``$f_i$'' rather than ``$i$''.
    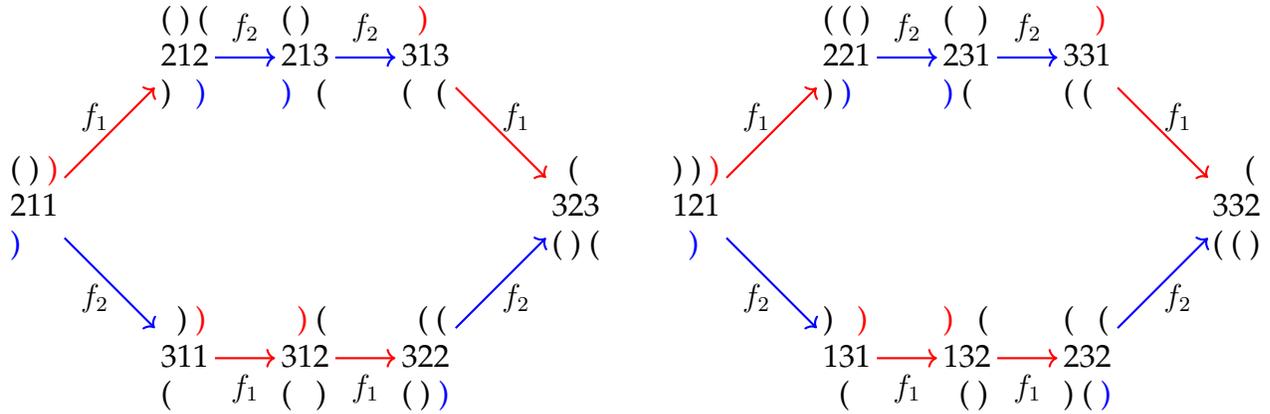
\begin{figure}
    \begin{center}
    \begin{tikzpicture}[scale = 0.39, align = left]
        \draw[red, thick][->] (1, 6) -- (4, 9);
        \draw[red, thick][->] (14, 9) -- (17, 6);
        \draw[blue, thick][->] (6, 10) -- (8, 10);
        \draw[blue, thick][->] (10, 10) -- (12, 10);
        \draw[blue, thick][->] (1, 4) -- (4, 1);
        \draw[blue, thick][->] (14, 1) -- (17, 4);
        \draw[red, thick][->] (6, 0) -- (8, 0);
        \draw[red, thick][->] (10, 0) -- (12, 0);
        \draw (0, 5) node{( ) \textcolor{red}{)} \\ 211 \\ \textcolor{blue}{)}  };
        \draw (5, 10) node{( ) ( \\ 212 \\ ) \ \ \textcolor{blue}{)}};
        \draw (9, 10) node{( ) \\ 213 \\ \textcolor{blue}{)} \ \ (};
        \draw (13, 10) node{ \ \ \textcolor{red}{)} \\ 313 \\ ( \ \ (};
        \draw (5, 0) node{\ \ ) \textcolor{red}{)} \\ 311 \\ ( };
        \draw (9, 0) node{\ \ \textcolor{red}{)} ( \\ 312 \\ ( \ \ ) };
        \draw (13, 0) node{\ \ ( ( \\ 322 \\ ( ) \textcolor{blue}{)}};
        \draw (18, 5) node{\ \ ( \ \\ 323 \\ ( ) (};
        \draw (2,8) node{$f_1$};
        \draw (2,2) node{$f_2$};
        \draw (7,11) node{$f_2$};
        \draw (11,11) node{$f_2$};
        \draw (16,8) node{$f_1$};
        \draw (7,-1) node{$f_1$};
        \draw (11,-1) node{$f_1$};
        \draw (16,2) node{$f_2$};

        \draw[red, thick][->] (23, 6) -- (26, 9);
        \draw[red, thick][->] (36, 9) -- (39, 6);
        \draw[blue, thick][->] (28, 10) -- (30, 10);
        \draw[blue, thick][->] (32, 10) -- (34, 10);
        \draw[blue, thick][->] (23, 4) -- (26, 1);
        \draw[blue, thick][->] (36, 1) -- (39, 4);
        \draw[red, thick][->] (28, 0) -- (30, 0);
        \draw[red, thick][->] (32, 0) -- (34, 0);
        \draw (22, 5) node{) ) \textcolor{red}{)} \\ 121 \\  \ \ \textcolor{blue}{)}  };
        \draw (27, 10) node{( ( ) \\ 221 \\ )  \textcolor{blue}{)}};
        \draw (31, 10) node{( \ \ ) \\ 231 \\ \textcolor{blue}{)} (};
        \draw (35, 10) node{ \ \ \ \ \textcolor{red}{)} \\ 331 \\ ( (};
        \draw (27, 0) node{) \ \ \textcolor{red}{)} \\ 131 \\ \ \ ( };
        \draw (31, 0) node{\textcolor{red}{)} \ \ ( \\ 132 \\ \ \ (  ) };
        \draw (35, 0) node{( \ \ ( \\ 232 \\ ) ( \textcolor{blue}{)}};
        \draw (40, 5) node{\ \ \ \ ( \ \\ 332 \\ ( ( )};
        \draw (24,8) node{$f_1$};
        \draw (24,2) node{$f_2$};
        \draw (29,11) node{$f_2$};
        \draw (33,11) node{$f_2$};
        \draw (38,8) node{$f_1$};
        \draw (29,-1) node{$f_1$};
        \draw (33,-1) node{$f_1$};
        \draw (38,2) node{$f_2$};
    \end{tikzpicture}
    \end{center}
    \caption{ \label{fig:W3} The connected components of $\mathcal{W}_3$ containing $211$ and $121$}
    \end{figure}
\end{example}

This lemma is well-known; we include a proof for convenience.
\begin{lemma}\label{lemma:crystalinverse}
    The crystal operators $e_i$ and $f_i$ are inverses whenever their outputs are not $\varnothing$.
\end{lemma}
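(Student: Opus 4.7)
The plan is to prove both directions $e_i(f_i(w)) = w$ and $f_i(e_i(w)) = w$ by tracking exactly how the bracket matching of ${\sf bracket}_i$ changes under a single letter-flip. The main tool is the standard structural fact that, after fully pairing each ``)'' with the nearest available ``('' to its left, the \emph{unmatched} sub-sequence has the form $)^{a}($ $^{b}$ for some $a,b \geq 0$. Equivalently, a valid non-crossing partial matching of the brackets is maximal if and only if its unmatched brackets read as some ``)''s followed by some ``(''s; this is because an unmatched ``)'' cannot lie to the right of an unmatched ``('' (they would otherwise pair).

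I would first fix notation: let $p_1 < \cdots < p_a$ be the positions of the unmatched ``)''s in ${\sf bracket}_i(w)$ and $q_1 < \cdots < q_b$ the positions of the unmatched ``(''s, so the structural fact forces $p_a < q_1$. Assume $f_i(w) \neq \varnothing$, so $a \geq 1$; by definition, $w' := f_i(w)$ is obtained by changing the letter $i$ at position $p_a$ into $i+1$, i.e., flipping the bracket at position $p_a$ from ``)'' to ``(''. The key step is to identify the bracket matching of $w'$. I would claim that the matched pairs of $w$ remain matched pairs of $w'$, while the new unmatched positions are exactly $p_1,\dots,p_{a-1},\,p_a,q_1,\dots,q_b$, giving the pattern $)^{a-1}($ $^{b+1}$. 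The matching inherited from $w$ is certainly still a valid non-crossing partial matching of $w'$ (the same positions with unchanged letters), and the candidate unmatched sub-sequence has the required $)^{*}($ $^{*}$ form, so by the structural fact the inherited matching is already maximal for $w'$, hence is the bracket matching of $w'$.

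From this identification, the leftmost unmatched ``('' in ${\sf bracket}_i(w')$ occupies position $p_a$, so $e_i$ changes the letter $i+1$ at position $p_a$ back to $i$, giving $e_i(w') = w$ as required. The identity $f_i(e_i(w)) = w$ follows by a symmetric argument: let $q_1$ be the leftmost unmatched ``('' of ${\sf bracket}_i(w)$, flip it to ``)'', and argue that the resulting unmatched pattern becomes $)^{a+1}($ $^{b-1}$ with the new rightmost unmatched ``)'' at position $q_1$. The only subtlety I expect is a clean articulation of the ``maximal matching iff unmatched is $)^{*}($ $^{*}$'' fact, since changing one letter could \emph{a priori} reshuffle the global pairing; I will sidestep this by exhibiting the explicit valid matching above and appealing to uniqueness of the maximum non-crossing matching.
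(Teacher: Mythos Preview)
Your proposal is correct and follows essentially the same approach as the paper: both arguments rest on the structural fact that the unmatched subsequence of ${\sf bracket}_i(w)$ has the form $)^a(^b$, from which it is immediate that flipping the rightmost unmatched ``$)$'' to ``$($'' makes it the new leftmost unmatched ``$($''. Your version is simply more explicit about why the existing matched pairs survive the single flip and why the inherited matching is already maximal, a point the paper's two-sentence proof leaves implicit.
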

\begin{proof}
    Removing all matched parentheses from ${\sf bracket}_i(w)$ produces a string of the form $)\dots )(\dots ($. This shows that the rightmost unmatched ``$)$" in ${\sf bracket}_i(w)$ is in the same position as the leftmost unmatched ``$($" in ${\sf bracket}_i(f_i(w))$ whenever $f_i(w)\neq \varnothing$. Similarly, the leftmost unmatched ``$($" in ${\sf bracket}_i(w)$ is in the same position as the rightmost unmatched ``$)$" in ${\sf bracket}_i(e_i(w))$ whenever $e_i(w)\neq\varnothing$.
\end{proof}

\begin{definition}\label{def:tabcrystalinduced}
    The \emph{tableau crystal graph} $\mathcal{B}_{\lambda, [a, b]}$ is the induced subgraph of $\mathcal{W}_n$ on the vertices $\{{\sf word}(T) : T\in{\sf SSYT}(\lambda, [a, b])\}$. We often use the abbreviation $\mathcal{B}_{\lambda, n} := \mathcal{B}_{\lambda, [1, n]}$.
\end{definition}

We will typically refer to vertices of $\mathcal{B}_{\lambda, [a, b]}$ by tableaux rather than their
reading words.

\begin{definition}
    A word $w\in\mathcal{W}_{[a, b]}$ has \textit{highest weight} if $e_i(w) = \varnothing$ for all $i$. Equivalently, $w$ has highest weight if the vertex it labels in $\mathcal{W}_{[a, b]}$ is a source.
\end{definition}

\begin{theorem}[{\cite[Proposition~2.44]{Shimozono}}]\label{thm:crystalgraph}
    The graph $\mathcal{B}_{\lambda, [a,b]}$ is the connected component of $T_{\lambda,[a,b]}$ (Definition~\ref{def:highestweight}) in $\mathcal{W}_{[a, b]}$. This vertex is the unique source in $\mathcal{B}_{\lambda, [a, b]}$.
\end{theorem}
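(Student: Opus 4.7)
The plan is to deduce the statement from three claims: (1) $\mathcal{B}_{\lambda,[a,b]}$ is closed under the crystal operators (so that $e_i, f_i$ restrict to well-defined operators on reading words of SSYT of shape $\lambda$ with entries in $[a,b]$, possibly outputting $\varnothing$); (2) $T_{\lambda,[a,b]}$ is a source of $\mathcal{W}_{[a,b]}$; and (3) $T_{\lambda,[a,b]}$ is the only source of $\mathcal{B}_{\lambda,[a,b]}$. Granted these, iterating $e_i$'s from any vertex of $\mathcal{B}_{\lambda,[a,b]}$ strictly increases the weight tuple $(\#a, \#(a+1), \ldots, \#b)$ in lexicographic order while preserving its sum $|\lambda|$, so the process terminates at a source, which by (3) must be $T_{\lambda,[a,b]}$. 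Hence every vertex of $\mathcal{B}_{\lambda,[a,b]}$ is in the connected component of $T_{\lambda,[a,b]}$ in $\mathcal{W}_{[a,b]}$, and (1) ensures that component does not extend beyond $\mathcal{B}_{\lambda,[a,b]}$, proving both assertions.

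For (1), the standard approach is to describe $e_i, f_i$ directly on tableaux via the \emph{signature rule}: restrict $T$ to cells containing $i$ or $i+1$; column-pair each $i+1$ whose cell immediately above contains exactly $i$; then bracket the remaining unpaired entries in reading order, marking unpaired $i+1$'s as $($ and unpaired $i$'s as $)$. Changing the leftmost unpaired $i+1$ to $i$ (for $e_i$, when one exists) produces a filling that one verifies remains semistandard -- the column pairing is precisely the obstruction to creating a column violation -- with reading word $e_i({\sf word}(T))$; $f_i$ is handled symmetrically. For (2), the reading word of $T_{\lambda,[a,b]}$ is the concatenation, over columns $j = 1, 2, \ldots$, of the strictly decreasing blocks $a - 1 + \lambda'_j,\, a - 2 + \lambda'_j,\, \ldots,\, a$. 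For any $i \in [a, b-1]$: a column with $\lambda'_j \geq i - a + 2$ contributes a matched $()$-pair to ${\sf bracket}_i$ (since the letter $i+1$ is immediately followed by $i$ in reading order), while a column with $\lambda'_j = i - a + 1$ contributes only a $)$. Since columns are read left to right with nonincreasing height, every $($ in ${\sf bracket}_i({\sf word}(T_{\lambda,[a,b]}))$ is matched, so $e_i({\sf word}(T_{\lambda,[a,b]})) = \varnothing$ for every applicable $i$.

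The main obstacle is (3), the uniqueness of the highest weight. I would proceed by induction on row index $k$, showing any source $T \in {\sf SSYT}(\lambda, [a,b])$ has row $k$ equal to the constant row $(a-1+k)^{\lambda_k}$. For $k=1$, suppose row $1$ contains some entry strictly exceeding $a$, and let $c$ be the leftmost such entry, in cell $(1, q)$, and set $i = c - 1$. Using the signature rule from (1), the cell $(1, q)$ is not column-paired (nothing sits above row $1$), so its entry $c$ contributes an unpaired $($ to the signature. A short case analysis -- split on $c = a+1$ (when the $a$'s of row 1 left of $(1,q)$ provide only $)$'s before the $($, and every $(a+1)$ below row 1 is column-paired against an $a$ directly above) versus $c > a+1$ (when the row-1 $a$'s are irrelevant to the $(c-1, c)$ signature and every $(c-1)$ in $T$ must lie in a column whose cell directly below carries a $c$, forcing it to be column-paired) -- shows that this $($ has no matching $)$ later in the bracketing, contradicting highest weight. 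The inductive step applies the identical argument to the subtableau of $T$ on entries $\geq a - 1 + k$, once rows $1, \ldots, k-1$ are known to be constant.
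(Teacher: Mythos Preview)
The paper does not prove this theorem; it is quoted from \cite{Shimozono}. So there is no ``paper's proof'' to compare against, only your direct argument to assess. Your reduction to claims (1)--(3) is sound, and your treatments of (1), (2), and the $c=a+1$ sub-case of (3) are correct.

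There is, however, a genuine error in the $c>a+1$ sub-case of (3). You assert that ``every $(c-1)$ in $T$ must lie in a column whose cell directly below carries a $c$, forcing it to be column-paired.'' This is false. Take $a=1$, $\lambda=(2,1)$, and
\[
T=\tableau{1 & 3\\ 2},
\]
so that $c=3$ and $c-1=2$ sits at $(2,1)$ with no cell below it at all. (One can also take $\lambda=(2,2,1)$ and $T$ with columns $1,2,5$ and $3,4$: the $2$ at $(2,1)$ has a $5$ below it, not a $3$.) In these examples the $2$ is \emph{not} column-paired, yet the conclusion you want---that $e_{c-1}(T)\neq\varnothing$---still holds, because the $)$ coming from that $2$ lies \emph{before} the $($ coming from the $c$ at $(1,q)$ in the reading word and therefore cannot match it.

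The fix is to replace your false claim with the correct positional observation: since row $1$ is weakly increasing with $(1,q)=c$, every entry $(1,j)$ for $j\geq q$ is $\geq c$, and column-strictness then forces every entry in columns $\geq q$ to be $\geq c$. Hence all $c-1$'s lie in columns $<q$, so every $)$ in ${\sf bracket}_{c-1}({\sf word}(T))$ occurs strictly before the $($ contributed by the $c$ at $(1,q)$. That $($ is therefore unmatched, giving $e_{c-1}(T)\neq\varnothing$ and the desired contradiction. The same positional argument (with $a+k-1$ in place of $a$) carries the inductive step as well.
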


\begin{example}\label{ex:tableaucrystal}
    Figure~\ref{fig:B21-3} displays $\mathcal{B}_{\lambda, 3}$ for $\lambda = (2, 1)$, which is the connected component of $\mathcal{W}_3$ containing $211$ from Example~\ref{ex:wordcrystal} by Theorem~\ref{thm:crystalgraph}.

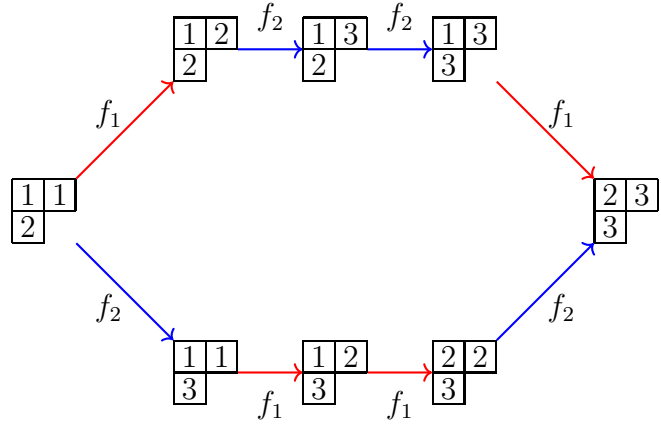
\begin{figure}
    \begin{center}
    \begin{tikzpicture}[scale = 0.43]
        \draw[red, thick][->] (1, 6) -- (4, 9);
        \draw[red, thick][->] (14, 9) -- (17, 6);
        \draw[blue, thick][->] (6, 10) -- (8, 10);
        \draw[blue, thick][->] (10, 10) -- (12, 10);
        \draw[blue, thick][->] (1, 4) -- (4, 1);
        \draw[blue, thick][->] (14, 1) -- (17, 4);
        \draw[red, thick][->] (6, 0) -- (8, 0);
        \draw[red, thick][->] (10, 0) -- (12, 0);
        \draw (0, 5) node{\tableau{1 & 1\\ 2}};
        \draw (5, 10) node{\tableau{1 & 2\\ 2}};
        \draw (9, 10) node{\tableau{1 & 3\\ 2}};
        \draw (13, 10) node{\tableau{1 & 3\\ 3}};
        \draw (5, 0) node{\tableau{1 & 1\\ 3}};
        \draw (9, 0) node{\tableau{1 & 2\\ 3}};
        \draw (13, 0) node{\tableau{2 & 2\\ 3}};
        \draw (18, 5) node{\tableau{2 & 3\\ 3}};
        \draw (2,8) node{$f_1$};
        \draw (2,2) node{$f_2$};
        \draw (7,11) node{$f_2$};
        \draw (11,11) node{$f_2$};
        \draw (16,8) node{$f_1$};
        \draw (7,-1) node{$f_1$};
        \draw (11,-1) node{$f_1$};
        \draw (16,2) node{$f_2$};
    \end{tikzpicture}
    \end{center}
      \caption{\label{fig:B21-3} $B_{\lambda,3}$ for $\lambda=(2,1)$}
    \end{figure}
\end{example}

Globally, ${\mathcal W}_{[a,b]}$ consists of copies of $\mathcal{B}_{\lambda, [a, b]}$ for various $\lambda$. The precise statement is Proposition~\ref{thm:crystalmain}, which we prove using two lemmas.

\begin{lemma}[``coplactic" property of $f_i$ and $e_i$, {\cite[Theorem 8.4]{BS}}]\label{lemma:coplactic}
    Fix two words $w$ and $w'$ on $[a, b]$ such that $w\sim_K w'$ and fix $i$. Then $f_i(w)\sim_K f_i(w')$ and $e_i(w)\sim_K e_i(w')$ (by convention, $\varnothing\sim_K\varnothing$). If $f_i(w),f_i(w')\neq \varnothing$ (resp. $e_i(w),e_i(w')\neq \varnothing$), then the converse holds. 
\end{lemma}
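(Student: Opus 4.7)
The plan is to reduce the forward direction to a case analysis on elementary Knuth moves. Since $\sim_K$ is generated by (\ref{eqn:leftKnuthmove}) and (\ref{eqn:rightKnuthmove}), it suffices to prove that whenever $w \leftrightarrow w'$ by a single Knuth move, one has both $f_i(w) \sim_K f_i(w')$ and $e_i(w) \sim_K e_i(w')$. The two statements are handled by analogous case analyses, so I would focus on $f_i$; the corresponding argument for $e_i$ runs along the same lines, and could alternatively be obtained from $f_i$ by combining the forward direction with Lemma~\ref{lemma:crystalinverse}.

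The key observation is that ${\sf bracket}_i$ depends only on the subword of $w$ in the alphabet $\{i, i+1\}$: letters outside this set contribute no parentheses. Thus, if none of the three letters $x, y, z$ involved in the Knuth move belong to $\{i, i+1\}$, then ${\sf bracket}_i(w) = {\sf bracket}_i(w')$ and $f_i$ acts on the same position in both words, so $f_i(w)$ and $f_i(w')$ still differ by the same elementary Knuth move (or are both $\varnothing$). Otherwise, I would enumerate which of $x, y, z$ equal $i$ or $i+1$, using the inequalities $x<y\leq z$ in (\ref{eqn:leftKnuthmove}) and $x\leq y<z$ in (\ref{eqn:rightKnuthmove}) to rule out most configurations. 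In each remaining case, one tracks how the local parentheses contributed by $\{x, y, z\}$ interact with the matching of brackets coming from the rest of the word, and verifies that the rightmost unmatched `)' in ${\sf bracket}_i(w)$ either lies outside the triple (so that $f_i(w)$ and $f_i(w')$ again differ by the same Knuth move) or lies inside it (in which case a direct check on the short triple confirms that $f_i(w)$ and $f_i(w')$ differ by a Knuth move, possibly of the other type).

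For the converse, suppose $f_i(w), f_i(w') \neq \varnothing$ and $f_i(w) \sim_K f_i(w')$. By Lemma~\ref{lemma:crystalinverse}, $w = e_i(f_i(w))$ and $w' = e_i(f_i(w'))$, so applying the already-proven forward direction for $e_i$ to the equivalence $f_i(w) \sim_K f_i(w')$ gives $w \sim_K w'$. The argument with the roles of $e_i$ and $f_i$ swapped is identical. The main obstacle in executing this plan is the forward-direction case analysis: while each individual case is mechanical, the bookkeeping around how the three locally-swapped letters interact with brackets contributed by the surrounding word requires care, especially when one of the three letters in the Knuth move is itself the letter that $f_i$ would change.
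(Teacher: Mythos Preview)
The paper does not prove this lemma; it simply cites \cite[Theorem~8.4]{BS} and moves on. Your proposal supplies a direct argument where the paper defers to the literature, so there is nothing to compare at the level of proof strategy.

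Your outline is the standard route to this result and is sound. The reduction to a single elementary Knuth move is legitimate because $\sim_K$ is generated by those moves; the only thing to be careful about when chaining is that the case analysis must also show that $f_i(w)=\varnothing$ forces $f_i(w')=\varnothing$ for adjacent words (equivalently, that the number of unmatched ``)'' in ${\sf bracket}_i$ is invariant under each elementary move), so that no intermediate step in a longer chain collapses to $\varnothing$ while the endpoints do not. You gesture at this with ``or are both $\varnothing$'', but it is worth stating explicitly as part of what each case must verify. The converse argument via Lemma~\ref{lemma:crystalinverse} is clean and correct.

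One small caution: your parenthetical claim that the forward direction for $e_i$ ``could alternatively be obtained from $f_i$ by combining the forward direction with Lemma~\ref{lemma:crystalinverse}'' is not obviously non-circular. Lemma~\ref{lemma:crystalinverse} lets you recover $w$ from $f_i(w)$, but to deduce $e_i(w)\sim_K e_i(w')$ from the $f_i$ statement you would need to already know something about $e_i(w)$ and $e_i(w')$ being Knuth equivalent, or invoke a word/alphabet reversal symmetry that itself needs justification. It is safest to simply rerun the same case analysis with the roles of ``('' and ``)'' swapped, as you say first.
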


The following is also well-known. One reference is \cite[Section~4]{Kwon}:
\begin{lemma}\label{lemma:crystalknuth}
    If $w\sim_K w'$, then $w$ and $w'$ do not lie in the same connected component of $\mathcal{W}_{[a, b]}$, unless $w=w'$.
\end{lemma}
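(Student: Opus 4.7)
The plan is to recast the statement as injectivity of the insertion map
\[\phi : \mathcal{W}_{[a,b]} \to \bigsqcup_{\lambda} \mathcal{B}_{\lambda, [a,b]}, \qquad \phi(w) := {\sf tab}(w).\]
By Corollary~\ref{cor:Knuthifftab}, $\phi(w) = \phi(w')$ if and only if $w \sim_K w'$, and $\phi$ restricts to the identity on $\bigsqcup_\lambda \mathcal{B}_{\lambda, [a, b]}$. So Lemma~\ref{lemma:crystalknuth} is equivalent to showing $\phi|_C$ is injective for each connected component $C$ of $\mathcal{W}_{[a,b]}$. The first substantive step is to check that $\phi$ is a pre-crystal graph homomorphism. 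Weight preservation is immediate. For edge preservation, I would combine $w \sim_K {\sf word}(\phi(w))$ with Lemma~\ref{lemma:coplactic} to get $f_i(w) \sim_K f_i({\sf word}(\phi(w)))$ whenever $w' = f_i(w) \neq \varnothing$. By Theorem~\ref{thm:crystalgraph}, ${\sf word}(\phi(w))$ lies in a connected component $\mathcal{B}_{\lambda, [a, b]}$ closed under $f_i$, so $f_i({\sf word}(\phi(w)))$ is the reading word of a unique tableau $f_i(\phi(w))$; then $\phi(w') = f_i(\phi(w))$, which is an $i$-edge in the target.

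From edge preservation, $\phi(C)$ lies in some single component $\mathcal{B}_{\mu, [a, b]}$. To prove injectivity of $\phi|_C$, I induct on the undirected distance $d(w, w')$ in $C$ between Knuth-equivalent $w, w' \in C$ with $w \neq w'$. Since Knuth equivalence forces equal weights and each $f_i, e_i$ step changes a coordinate of the weight by $\pm 1$, balance dictates that along any path from $w$ to $w'$ one has $\#f_i = \#e_i$ for each $i$, so $d$ is even. The base case $d = 2$ is then immediate: the only balanced two-step sequence has the form $f_i e_i$ or $e_i f_i$ for some $i$, which by Lemma~\ref{lemma:crystalinverse} collapses to the identity, contradicting $w \neq w'$.

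The main obstacle is the inductive step $d \geq 4$. Here I plan to invoke the structural theory of type-$A$ crystals, specifically that both $C$ and $\mathcal{B}_{\mu, [a, b]}$ satisfy Stembridge's local axioms (see, e.g., \cite[Section~4]{Kwon}), under which every closed walk in a connected type-$A$ crystal reduces via $f_i e_i = e_i f_i = {\rm id}$ cancellations and the braid-type commutations between operators of distinct indices. Since $\phi$ intertwines the crystal operators on $C$ and $\mathcal{B}_{\mu, [a, b]}$, such a local simplification of the image walk $\phi(v_0), \ldots, \phi(v_d)$ pulls back to a strictly shorter path in $C$ from $w$ to $w'$, contradicting minimality of $d$. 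The delicate point, deferred to the cited crystal literature, is the verification that the bracket-based word crystal locally satisfies the same axioms as the tableau crystal; once granted, the induction closes and $\phi|_C$ is a pre-crystal graph isomorphism onto $\mathcal{B}_{\mu, [a, b]}$, in particular injective.
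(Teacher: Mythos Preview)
The paper does not give a proof of this lemma; it simply cites \cite[Section~4]{Kwon} as a reference and moves on. So there is no in-paper argument to compare against. Your setup---recasting the statement as injectivity of ${\sf tab}$ on each component, and verifying that ${\sf tab}$ is a pre-crystal homomorphism via Lemma~\ref{lemma:coplactic} and Theorem~\ref{thm:crystalgraph}---is correct, and in fact this homomorphism check is exactly the first half of the paper's proof of Proposition~\ref{thm:crystalmain} (which \emph{uses} Lemma~\ref{lemma:crystalknuth} rather than proving it). Your parity observation and the $d=2$ base case are also fine.

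The gap is in the inductive step. The assertion that ``every closed walk in a connected type-$A$ crystal reduces via $f_ie_i$ cancellations and braid-type commutations'' is not a standard formulation of Stembridge's results, and you do not indicate how to derive it. Commutation moves replace a length-two subpath by another length-two subpath; they do not shorten anything. To shorten you would need a backtrack $\phi(v_{k-1})=\phi(v_{k+1})$, and a minimal closed walk in $\mathcal{B}_{\mu,[a,b]}$ need not contain one, nor is it clear it can be commuted into one. What you are really asserting is a simple-connectedness statement for the crystal (with $2$-cells attached along the local relations); this is true but requires its own argument and is not what the Stembridge axioms are packaged to say. If you are going to invoke Stembridge's theory anyway, use the theorem directly: a connected crystal satisfying the axioms is isomorphic to a unique $\mathcal{B}_\lambda$ via a unique isomorphism, so $\phi|_C$ must be that isomorphism and is in particular injective. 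Alternatively, the classical route---and likely what the cited reference does---uses the recording tableau: connected components of $\mathcal{W}_{[a,b]}$ are exactly the fibers of $w\mapsto Q(w)$, while Knuth classes are the fibers of $w\mapsto P(w)={\sf tab}(w)$; since $w\mapsto(P(w),Q(w))$ is a bijection, the intersection of a Knuth class with a component is a single word.
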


\begin{proposition}\label{thm:crystalmain}
    The map
    \[{\sf tab}: {\mathcal W}_{[a, b]}\to \bigoplus_{\lambda} 
    {\mathcal B}_{\lambda, [a, b]},\]
    defined on vertices by $w\mapsto {\sf tab}(w)$, is a local isomorphism. 
\end{proposition}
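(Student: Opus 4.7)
The plan is to verify the two requirements for ${\sf tab}$ to be a local isomorphism in turn: first that it is a pre-crystal graph homomorphism, and second that its restriction to each connected component $\mathcal{C} \subseteq \mathcal{W}_{[a,b]}$ is an isomorphism onto a connected component of the target $\bigoplus_\lambda \mathcal{B}_{\lambda,[a,b]}$.

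Checking the homomorphism axioms, weight preservation is immediate since Knuth equivalence preserves content and $w \sim_K {\sf word}({\sf tab}(w))$ by Theorem~\ref{thm:insertionnice}. For edge preservation, suppose $w \xrightarrow{i} w'$ in $\mathcal{W}_{[a,b]}$, so $w' = f_i(w) \neq \varnothing$. Applying the coplactic property (Lemma~\ref{lemma:coplactic}) to $w \sim_K {\sf word}({\sf tab}(w))$ and $w' \sim_K {\sf word}({\sf tab}(w'))$ yields
\[ f_i({\sf word}({\sf tab}(w))) \sim_K f_i(w) = w' \sim_K {\sf word}({\sf tab}(w')); \]
in particular $f_i({\sf word}({\sf tab}(w))) \neq \varnothing$, since no nonempty word is Knuth-equivalent to $\varnothing$. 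By Theorem~\ref{thm:crystalgraph}, $\mathcal{B}_{\lambda,[a,b]}$ with $\lambda = {\sf shape}({\sf tab}(w))$ is already a connected component of $\mathcal{W}_{[a,b]}$, so the nonempty word $f_i({\sf word}({\sf tab}(w)))$ must itself be the reading word of a tableau. Uniqueness of the tableau in a Knuth class (Theorem~\ref{thm:jdt}) then forces $f_i({\sf word}({\sf tab}(w))) = {\sf word}({\sf tab}(w'))$, producing the desired edge ${\sf tab}(w) \xrightarrow{i} {\sf tab}(w')$ in $\mathcal{B}_{\lambda,[a,b]}$.

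For the second requirement, fix a connected component $\mathcal{C}$ of $\mathcal{W}_{[a,b]}$; since ${\sf tab}$ is a homomorphism and $\mathcal{C}$ is connected, ${\sf tab}(\mathcal{C})$ lies inside a single $\mathcal{B}_{\lambda,[a,b]}$. Injectivity of ${\sf tab}|_\mathcal{C}$ is the quick step: if ${\sf tab}(w) = {\sf tab}(w')$ for $w, w' \in \mathcal{C}$, then $w \sim_K w'$ by Corollary~\ref{cor:Knuthifftab}, and Lemma~\ref{lemma:crystalknuth} forces $w = w'$. For surjectivity, I would inductively walk along an undirected path of crystal edges in $\mathcal{B}_{\lambda,[a,b]}$ (possible by connectedness, Theorem~\ref{thm:crystalgraph}) from some known image $T_0 = {\sf tab}(w_0) \in {\sf tab}(\mathcal{C})$ to any chosen target $T$: given $w_j \in \mathcal{C}$ with ${\sf tab}(w_j) = T_j$, and the next path step $T_j \xrightarrow{i} T_{j+1}$ (respectively $T_{j+1} \xrightarrow{i} T_j$), set $w_{j+1} := f_i(w_j)$ (resp.\ $e_i(w_j)$). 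Coplactic reasoning, together with Lemma~\ref{lemma:crystalinverse}, ensures $w_{j+1} \neq \varnothing$ and ${\sf tab}(w_{j+1}) = T_{j+1}$, and $w_{j+1} \in \mathcal{C}$ because it is adjacent to $w_j$. That ${\sf tab}|_\mathcal{C}^{-1}$ is itself a pre-crystal graph homomorphism is a free consequence of the surjectivity construction, as each edge in $\mathcal{B}_{\lambda,[a,b]}$ was explicitly lifted to an edge of $\mathcal{C}$.

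The main obstacle I anticipate is the edge-preservation step, which requires simultaneously juggling three ingredients: the coplactic property, uniqueness of the tableau in a Knuth class, and the characterization of $\mathcal{B}_{\lambda,[a,b]}$ as a full connected component of $\mathcal{W}_{[a,b]}$. Once edge preservation is in hand, the local isomorphism property reduces cleanly to a uniqueness argument for injectivity and a connectedness argument for surjectivity, both of which follow quickly from the tools assembled in Section~\ref{subsec:crystals}.
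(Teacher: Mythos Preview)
Your proof is correct and follows essentially the same approach as the paper's: both establish the homomorphism property via the coplactic lemma combined with Theorem~\ref{thm:crystalgraph}, then handle injectivity via Corollary~\ref{cor:Knuthifftab} plus Lemma~\ref{lemma:crystalknuth}, and surjectivity by lifting edges in $\mathcal{B}_{\lambda,[a,b]}$ back to $\mathcal{C}$ through the coplactic property. Your surjectivity argument is slightly more explicit in walking along an undirected path and invoking Lemma~\ref{lemma:crystalinverse} for the $e_i$ direction, whereas the paper compresses this into a single-edge step, but the substance is the same.
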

\begin{proof}
\noindent
(\emph{${\sf tab}$ is a pre-crystal graph homomorphism}):
By definition, ${\sf tab}$ is weight-preserving. It remains to check adjacency is preserved under ${\sf tab}$.
Suppose $f_i(w) = w^\downarrow$, so $w\stackrel{i}{\longrightarrow} w^\downarrow$ is an edge in $W_{[a,b]}$.  
By Theorem~\ref{thm:insertionnice}, $w\sim_K {\sf word}({\sf tab}(w))$ and
$w^\downarrow\sim_K {\sf word}({\sf tab}(w^\downarrow))$. 
Hence by
Lemma~\ref{lemma:coplactic} and Theorem~\ref{thm:crystalgraph} combined,
\begin{equation}
\label{eqn:Mar8uuu}
f_i({\sf word}({\sf tab}(w)))\sim_K w^{\downarrow} \sim_K {\sf word}({\sf tab}(w^\downarrow)).
\end{equation}
By Definition~\ref{def:tabcrystalinduced}, (\ref{eqn:Mar8uuu}) means 
$f_i({\sf tab}(w)) = {\sf tab}(w^\downarrow)$, as desired.

\noindent
\emph{(${\sf tab}$ is a local isomorphism)}:
Let ${\mathcal G}$ be a connected component of $\mathcal{W}_{[a, b]}$.
We claim the restricted map ${\sf tab}|_{\mathcal G}$ is an isomorphism onto some $\mathcal{B}_{\lambda, [a,b]}$.
To show injectivity, suppose $w,w'\in {\mathcal G}$ satisfy ${\sf tab}(w)={\sf tab}(w')$.
Then $w\sim_K w'$ by Corollary~\ref{cor:Knuthifftab}, so $w = w'$ by Lemma~\ref{lemma:crystalknuth}.
To show surjectivity, suppose that for some $w\in\mathcal{G}$ and $T\in\mathcal{B}_{\lambda, [a,b]}$ there is an edge ${\sf tab}(w)\stackrel{i}{\longrightarrow} T$ in $\mathcal{B}_{\lambda, [a,b]}$.
Then ${\sf word}(T) = f_i({\sf word}({\sf tab}(w)))$, and since $w\sim_K{\sf word}({\sf tab}(w))$ by Theorem~\ref{thm:insertionnice}, Lemma~\ref{lemma:coplactic} immediately shows that $T = {\sf tab}(f_i(w))$.
Thus ${\sf tab}|_{\mathcal G}$ is surjective, and this argument also shows that the inverse map ${\sf tab}|_{\mathcal G}\inv$ on the vertices is a pre-crystal graph homomorphism.
Thus ${\sf tab}|_{\mathcal G}$ is a pre-crystal graph isomorphism onto
${\mathcal B}_{\lambda,[a,b]}$, so ${\sf tab}$ is a local isomorphism.
\end{proof}

\begin{corollary}\label{cor:uniquesourceW}
    Any connected component $\mathcal{G}$ of $\mathcal{W}_{[a, b]}$ has a unique source vertex $v$ (i.e., a vertex of highest-weight). Moreover, ${\sf tab}(v) = T_{\lambda, [a, b]}$ for some partition $\lambda$, and then $\mathcal{G}\cong \mathcal{B}_{\lambda, [a, b]}$.
\end{corollary}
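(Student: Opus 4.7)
The plan is to deduce this corollary almost immediately from Proposition~\ref{thm:crystalmain} together with Theorem~\ref{thm:crystalgraph}, using the general property of local isomorphisms recorded in Lemma~\ref{lemma:localisoprops}(I). Since ${\sf tab}:\mathcal{W}_{[a,b]}\to \bigoplus_\lambda \mathcal{B}_{\lambda,[a,b]}$ is a local isomorphism, its restriction to any connected component $\mathcal{G}$ of $\mathcal{W}_{[a,b]}$ is a pre-crystal graph isomorphism onto a connected component of the codomain. Each $\mathcal{B}_{\lambda,[a,b]}$ is itself connected by Theorem~\ref{thm:crystalgraph}, and these exhaust the connected components of $\bigoplus_\lambda \mathcal{B}_{\lambda,[a,b]}$, so ${\sf tab}|_\mathcal{G}$ is an isomorphism onto $\mathcal{B}_{\lambda,[a,b]}$ for a unique partition $\lambda$.

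Next, I would transfer the source-vertex statement across this isomorphism. By Theorem~\ref{thm:crystalgraph}, $\mathcal{B}_{\lambda,[a,b]}$ has a unique source, namely the highest-weight tableau $T_{\lambda,[a,b]}$. Because ${\sf tab}|_\mathcal{G}$ is a pre-crystal graph isomorphism, it preserves in-degree of every vertex (in particular it sends sources to sources, bijectively). Hence there is a unique vertex $v\in \mathcal{G}$ with $e_i(v)=\varnothing$ for all $i$, and ${\sf tab}(v)=T_{\lambda,[a,b]}$. This simultaneously yields all three assertions of the corollary: uniqueness of the highest-weight vertex, its image under ${\sf tab}$, and the isomorphism $\mathcal{G}\cong \mathcal{B}_{\lambda,[a,b]}$.

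I do not anticipate a serious obstacle here: all of the substantive content (Knuth-invariance of ${\sf tab}$, coplacticity of $e_i,f_i$, connectedness and uniqueness of the source in $\mathcal{B}_{\lambda,[a,b]}$) has already been proved in Proposition~\ref{thm:crystalmain} and Theorem~\ref{thm:crystalgraph}. The only mild point worth stating explicitly in the write-up is that a pre-crystal graph isomorphism, being invertible and edge/label-preserving in both directions (Definition~\ref{def:crystalmap}), carries the unique source of one component bijectively to the unique source of the other; this is what pins down $v$ and its image $T_{\lambda,[a,b]}$.
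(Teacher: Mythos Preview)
Your proposal is correct and follows essentially the same approach as the paper: invoke Proposition~\ref{thm:crystalmain} to get that ${\sf tab}|_{\mathcal{G}}$ is an isomorphism onto some $\mathcal{B}_{\lambda,[a,b]}$, then apply Theorem~\ref{thm:crystalgraph} to transfer the unique-source statement back across. The paper's proof is simply terser, and does not bother to cite Lemma~\ref{lemma:localisoprops}(I) (the restriction-to-components property is already the definition of local isomorphism).
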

\begin{proof}
By Proposition~\ref{thm:crystalmain}, ${\sf tab}$ restricts to a pre-crystal graph isomorphism ${\mathcal G}\cong {\mathcal B}_{\lambda,[a,b]}$ for some $\lambda$.
Now apply Theorem~\ref{thm:crystalgraph}.
\end{proof}

One can see that Figure~\ref{fig:W3} agrees with Proposition~\ref{thm:crystalmain}, Lemma~\ref{lemma:coplactic}, and Corollary~\ref{cor:uniquesourceW}.

\begin{definition}\label{def:Knuthcrystalgraph}
    The \emph{Knuth crystal graph} $\mathcal{K}_{[a, b]}$ has vertices labelled by Knuth equivalence classes of words on $[a, b]$, with the weight of a class defined via representatives 
    (${\sf cwt}_{{\mathcal K}_{[a,b]}}([w]_K) := {\sf cwt}_{{\mathcal W}_{[a,b]}}(w)$). There is an edge $C\stackrel{i}{\longrightarrow}C'$ if for some choice of representatives $C = [w]_K$ and $C' = [w']_K$, we have $w' = f_i(w)$. In other words, $\mathcal{K}_{[a, b]}$ is the quotient graph $\mathcal{W}_{[a, b]}/\sim_K$.
\end{definition}

Since $w\sim_K w'$ implies ${\sf cwt}_{{\mathcal W}_{[a,b]}}(w)={\sf cwt}_{{\mathcal W}_{[a,b]}}(w')$, 
${\sf cwt}_{{\mathcal K}_{[a,b]}}([w]_K)$ is well-defined.

\begin{proposition}\label{prop:pancake}
The map \[[-]_K:{\mathcal W}_{[a,b]}\to {\mathcal K}_{[a,b]}\] that sends $w\mapsto [w]_K$ 
is a local isomorphism. Thus, every connected component ${\overline {\mathcal G}}$ of ${\mathcal K}_{[a,b]}$
is isomorphic to ${\mathcal B}_{\lambda,[a,b]}$ for some $\lambda$.
\end{proposition}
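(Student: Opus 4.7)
The plan is to directly verify the two conditions required of a local isomorphism for $[-]_K$, then combine with Corollary~\ref{cor:uniquesourceW} to obtain the second assertion.

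First, I would observe that $[-]_K$ is a pre-crystal graph homomorphism: weights match by the definition of ${\sf cwt}_{{\mathcal K}_{[a,b]}}$ in terms of representatives, and an edge $w\xrightarrow{i}f_i(w)$ in ${\mathcal W}_{[a,b]}$ immediately yields the edge $[w]_K\xrightarrow{i}[f_i(w)]_K$ in ${\mathcal K}_{[a,b]}$ by Definition~\ref{def:Knuthcrystalgraph}.

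Next I would fix a connected component ${\mathcal G}$ of ${\mathcal W}_{[a,b]}$, set $\overline{{\mathcal G}}:=[-]_K({\mathcal G})$, and show that the restriction $[-]_K|_{{\mathcal G}}$ is a pre-crystal graph isomorphism onto $\overline{{\mathcal G}}$, with $\overline{{\mathcal G}}$ itself a connected component of ${\mathcal K}_{[a,b]}$. Injectivity of $[-]_K|_{{\mathcal G}}$ on vertices is exactly Lemma~\ref{lemma:crystalknuth}. For the remaining facts, given any edge $[w]_K\xrightarrow{i}[w']_K$ in ${\mathcal K}_{[a,b]}$ with $w\in{\mathcal G}$, Definition~\ref{def:Knuthcrystalgraph} provides representatives $v\sim_K w$ and $f_i(v)\sim_K w'$; then by Lemma~\ref{lemma:coplactic}, $f_i(w)\sim_K f_i(v)\sim_K w'$, so $f_i(w)$ is defined and $[f_i(w)]_K=[w']_K$. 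Since $f_i(w)\in{\mathcal G}$, this shows $[w']_K\in\overline{{\mathcal G}}$; an analogous argument with $e_i$ handles reversed edges, so $\overline{{\mathcal G}}$ is a full connected component. Moreover, by injectivity the inverse of $[-]_K|_{{\mathcal G}}$ on vertices must send $[w']_K$ to $f_i(w)$, so the edge $[w]_K\xrightarrow{i}[w']_K$ pulls back to the edge $w\xrightarrow{i}f_i(w)$ in ${\mathcal G}$. Hence $[-]_K|_{{\mathcal G}}$ is a pre-crystal graph isomorphism onto a connected component.

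For the second assertion, every connected component $\overline{{\mathcal G}}$ of ${\mathcal K}_{[a,b]}$ is the image under $[-]_K$ of some connected component ${\mathcal G}$ of ${\mathcal W}_{[a,b]}$, because $[-]_K$ is surjective on vertices. The local isomorphism just established yields $\overline{{\mathcal G}}\cong{\mathcal G}$, and Corollary~\ref{cor:uniquesourceW} identifies ${\mathcal G}\cong{\mathcal B}_{\lambda,[a,b]}$ for some partition $\lambda$, completing the argument. The only delicate step is the use of the coplactic Lemma~\ref{lemma:coplactic} to move between representatives of a Knuth class when lifting an edge from ${\mathcal K}_{[a,b]}$ to ${\mathcal W}_{[a,b]}$; everything else is formal.
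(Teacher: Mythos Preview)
Your proof is correct and follows essentially the same approach as the paper's: both verify that $[-]_K$ is a pre-crystal graph homomorphism, use Lemma~\ref{lemma:crystalknuth} for injectivity on a component, invoke the coplactic Lemma~\ref{lemma:coplactic} to lift edges from ${\mathcal K}_{[a,b]}$ back to ${\mathcal W}_{[a,b]}$, and then apply Corollary~\ref{cor:uniquesourceW} for the second assertion. If anything, you are slightly more explicit than the paper in handling the reversed-edge case via $e_i$ when showing that $\overline{{\mathcal G}}$ is a full connected component.
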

\begin{proof}
From the definition of $\mathcal{K}_{[a, b]}$, $[-]_K$ is a pre-crystal graph homomorphism.
Fix a connected component ${\mathcal G}$ of ${\mathcal W}_{[a,b]}$.
By Corollary~\ref{cor:uniquesourceW}, ${\mathcal G}$ has a unique
source vertex $v$ and ${\mathcal G}\cong {\mathcal B}_{\lambda,[a,b]}$ where $\lambda$ is the shape of
${\sf tab}(v)$.
Lemma~\ref{lemma:crystalknuth} shows $[-]_K|_{\mathcal G}$ is injective.
Now, suppose $[u]_K\stackrel{i}{\longrightarrow}[w]_K$ is an edge in ${\mathcal K}_{[a,b]}$ for some $u\in\mathcal{G}$.
By definition, there exists $u'\in [u]_K, w'\in [w]_K$ such that $u'\stackrel{i}{\longrightarrow} w'$.
By Lemma~\ref{lemma:coplactic}, $f_i(u)\sim_K f_i(u')=w'$.
Hence $f_i(u)\in {\mathcal G}$ and $[f_i(u)]_K=[w']_K$.
This proves that $[-]_K|_{\mathcal G}$ is surjective onto the connected component ${\overline {\mathcal G}}$ of ${\mathcal K}_{[a,b]}$ containing $[v]_K$, and also shows that the inverse map $([-]_K|_{\mathcal G})\inv$ is a pre-crystal graph homomorphism. Thus $[-]_K$ is a local isomorphism as desired. 

For the second sentence, given $[w]_K\in {\overline {\mathcal G}}$, let ${\mathcal G}_w$ be the connected component of ${\mathcal W}_{[a,b]}$ containing $w$.
Then ${\mathcal G}_w\cong {\mathcal B}_{\lambda,[a,b]}$ for some $\lambda$ by Corollary~\ref{cor:uniquesourceW}, and the first sentence of the lemma says ${\mathcal G}_w\cong {\overline {\mathcal G}}$.
\end{proof}

\begin{corollary}\label{cor:tablocaliso}
    The local isomorphism ${\sf tab}:\mathcal{W}_{[a, b]}\to \bigoplus_\lambda \mathcal{B}_{\lambda, [a, b]}$ descends to a pre-crystal graph isomorphism 
    \[{\overline {\sf tab}}:\mathcal{K}_{[a, b]}\to \bigoplus_\lambda \mathcal{B}_{\lambda, [a, b]}.\]
\end{corollary}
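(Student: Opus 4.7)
The plan is to define $\overline{\sf tab}$ on vertices by the rule $\overline{\sf tab}([w]_K) := {\sf tab}(w)$, show that it is well-defined, and then verify that it is a pre-crystal graph isomorphism. Well-definedness is immediate from Corollary~\ref{cor:Knuthifftab}: if $w\sim_K w'$ then ${\sf tab}(w)={\sf tab}(w')$, so the value does not depend on the choice of representative. By construction this yields a factorization ${\sf tab}=\overline{\sf tab}\circ [-]_K$ through the quotient map $[-]_K$.

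Next I will verify $\overline{\sf tab}$ is a pre-crystal graph homomorphism. Weight preservation is immediate from Definition~\ref{def:Knuthcrystalgraph} combined with weight preservation of ${\sf tab}$ (Proposition~\ref{thm:crystalmain}). For edge preservation, given an edge $[u]_K \xrightarrow{i} [w]_K$ in $\mathcal{K}_{[a,b]}$, I lift it via Definition~\ref{def:Knuthcrystalgraph} to representatives with $w'=f_i(u')$, apply Proposition~\ref{thm:crystalmain} to obtain ${\sf tab}(u')\xrightarrow{i}{\sf tab}(w')$ in $\bigoplus_\lambda \mathcal{B}_{\lambda,[a,b]}$, and then use well-definedness to replace ${\sf tab}(u'),{\sf tab}(w')$ by ${\sf tab}(u),{\sf tab}(w)$.

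For bijectivity on vertices I will invoke Theorem~\ref{thm:jdt}: each Knuth equivalence class on $[a,b]$ contains precisely one reading word ${\sf word}(T)$ for some $T\in {\sf SSYT}(\lambda,[a,b])$, and by Theorem~\ref{thm:insertionnice} that tableau is exactly ${\sf tab}(w)$ for any representative $w$. This identifies the vertex set of $\mathcal{K}_{[a,b]}$ with $\bigsqcup_\lambda {\sf SSYT}(\lambda,[a,b])$, which is the vertex set of $\bigoplus_\lambda\mathcal{B}_{\lambda,[a,b]}$.

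Finally, I will verify that $\overline{\sf tab}^{-1}$ preserves adjacency: an edge $T\xrightarrow{i}T'$ in some $\mathcal{B}_{\lambda,[a,b]}$ is, by Definition~\ref{def:tabcrystalinduced}, an edge ${\sf word}(T)\xrightarrow{i}{\sf word}(T')$ in $\mathcal{W}_{[a,b]}$, which descends under $[-]_K$ to the edge $[{\sf word}(T)]_K\xrightarrow{i}[{\sf word}(T')]_K$ in $\mathcal{K}_{[a,b]}$, as required. I do not anticipate a genuine obstacle here: every ingredient has been prepared by Proposition~\ref{thm:crystalmain}, Proposition~\ref{prop:pancake}, and Theorem~\ref{thm:jdt}, and the task is essentially a bookkeeping exercise confirming that the universal property of the quotient $[-]_K$ upgrades the local isomorphism ${\sf tab}$ to a genuine isomorphism once the redundancy created by Lemma~\ref{lemma:crystalknuth} is collapsed.
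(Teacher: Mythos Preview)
Your proof is correct, but it proceeds along a somewhat different path than the paper's. The paper argues more abstractly within the local-isomorphism framework it has been building: since $[-]_K:\mathcal{W}_{[a,b]}\to\mathcal{K}_{[a,b]}$ (Proposition~\ref{prop:pancake}) and ${\sf tab}:\mathcal{W}_{[a,b]}\to\bigoplus_\lambda\mathcal{B}_{\lambda,[a,b]}$ (Proposition~\ref{thm:crystalmain}) are both local isomorphisms and ${\sf tab}=\overline{\sf tab}\circ[-]_K$, restricting to a connected component shows $\overline{\sf tab}$ is a local isomorphism; surjectivity is clear, and Corollary~\ref{cor:Knuthifftab} forces distinct components of $\mathcal{K}_{[a,b]}$ to land in distinct $\mathcal{B}_{\lambda,[a,b]}$, upgrading the local isomorphism to a global one. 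You instead verify the definition of pre-crystal graph isomorphism directly: well-definedness, homomorphism, bijectivity on vertices via Theorem~\ref{thm:jdt}, and that the inverse preserves edges. Your route is more elementary and self-contained, essentially bypassing Proposition~\ref{prop:pancake}; the paper's route is terser and better exploits the component-by-component machinery, which pays off later when similar factorization arguments recur (e.g., Corollary~\ref{cor:filterknuthequiv}).
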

\begin{proof}
Proposition~\ref{prop:pancake} and Proposition~\ref{thm:crystalmain} show that
${\overline {\sf tab}}$ is a local isomorphism. It is clearly surjective. Finally, Corollary~\ref{cor:Knuthifftab} shows that different connected components of $\mathcal{K}_{[a, b]}$
map to distinct $\mathcal{B}_{\lambda, [a, b]}$, so ${\overline {\sf tab}}$ is a pre-crystal graph isomorphism.
\end{proof}

\subsection{Bicrystals of matrices}\label{subsec:matrixcrystals}

A matrix $M\in {\sf Mat}_{m,n}({\mathbb Z}_{\geq 0})$
is uniquely determined by ${\sf row}(M)$ and ${\sf col}(M)$ (Definition~\ref{def:matrixwords}). In isolation, we understand these two
``halves'' of the information encoding $M$ by Proposition~\ref{thm:crystalmain}. The connected components of $\mathcal{W}_m$ and $\mathcal{W}_n$ containing  ${\sf row}(M)$ and ${\sf col}(M)$, respectively,
are isomorphic to certain tableaux crystal graphs $\mathcal{B}_{\lambda, m}$ and $\mathcal{B}_{\mu, n}$ (in fact, $\lambda=\mu$ by Remark~\ref{remark:Feb15symm}). This subsection is about the ``bicrystal'' structure, due to
\cite{bicrystal2, bicrystal1},
on ${\sf Mat}_{m, n}(\Z_{\geq 0})$ that intermingles the two ``halves''. 

\begin{definition}\label{def:feb11bicrystal}
    Let $M = [m_{i, j}]\in {\sf Mat}_{m,n}({\mathbb Z}_{\geq 0})$, and let $(i+1, b)$ and $(i, a)$ be the coordinates of the entries of $M$ yielding the letters of ${\sf row}(M)$ altered by $e_i$ and $f_i$ respectively. The \textit{row crystal raising operator} $e_i^{\sf row}$ sends $M$ to the matrix $e_i^{\sf row}(M)$ obtained by subtracting one from $m_{i+1, b}$ and adding one to $m_{i, b}$. Similarly, the \textit{row crystal lowering operator} $f_i^{\sf row}$ sends $M$ to the matrix $f_i^{\sf row}(M)$ obtained by subtracting one from $m_{i, a}$ and adding one to $m_{i+1, a}$. When $e_i({\sf row}(M)) = \varnothing$ or $f_i({\sf row}(M)) = \varnothing$, we define the corresponding row bicrystal operators to output the special symbol $\varnothing$ instead of a matrix.
    The \textit{column bicrystal operators} $e_j^{\sf col}$ and $f_j^{\sf col}$ are defined analogously using ${\sf col}(M)$ (or using transposes: $e_j^{\sf col}(M) = (e_j^{\sf row}(M^t))^t$ and $f_j^{\sf col}(M) = (f_j^{\sf row}(M^t)^t)$).
\end{definition}

\begin{definition}
    The \emph{matrix bicrystal graph} $\mathcal{M}_{m, n}$ is a pre-crystal graph with vertices $M\in{\sf Mat}_{m, n}(\Z_{\geq 0})$ with weights 
    \[{\sf cwt}_{{\mathcal M}_{m,n}}(M):=({\sf cwt}_{{\mathcal W}_m}({\sf row}(M)), 
    {\sf cwt}_{{\mathcal W}_n}({\sf col}(M))
    )\in {\mathbb Z}^{m+n}.\] 
    There is an $i^{\sf row}$-labelled edge from $M$ to $M'$ if and only if $M' = f_i^{\sf row}(M)$. There is a $j^{\sf col}$-labelled edge from $M$ to $M'$ if and only if $M' = f_j^{\sf col}(M)$.
\end{definition}

\begin{definition}
    Let $\mathcal{M}^{\sf row}_{m, n}$ and $\mathcal{M}^{\sf col}_{m, n}$ to be the pre-crystal subgraphs of $\mathcal{M}_{m, n}$ using only $i^{\sf row}$-labelled or $j^{\sf col}$-labelled edges respectively. 
    Set 
    \[{\sf cwt}_{\mathcal{M}^{\sf row}_{m, n}}(M):={\sf cwt}_{{\mathcal W}_{m}}({\sf row}(M)), \ {\sf cwt}_{\mathcal{M}^{\sf col}_{m, n}}(M):={\sf cwt}_{{\mathcal W}_{n}}({\sf col}(M)).\]
\end{definition}

\begin{example}\label{ex:matrixcrystal}
    Figure~\ref{fig:matrixcry} depicts the connected component $\mathcal{H}$ of $\left[ \begin{smallmatrix} 1 & 1 & 0\\ 1 & 0 & 0\\ 0 & 0 & 0 \end{smallmatrix}\right]$ in $\mathcal{M}^{\sf row}_{3, 3}$. A red arrow from $M$ to $M'$ indicates that $M' = f_1^{\sf row}(M)$ and a blue arrow indicates that $M' = f_2^{\sf row}(M)$. Notice ${\mathcal H}\cong \mathcal{B}_{\stableau{\ & \ \\ \ }, 3}$ is a pre-crystal graph isomorphism, under the map $M\mapsto {\sf tab}({\sf row}(M))$. This is an instance of Lemma~\ref{lemma:matrixcrystalreasonable} (below) combined with Proposition~\ref{thm:crystalmain}.
\begin{figure}
    \begin{center}
    \begin{tikzpicture}[scale = 0.45]
        \draw[red, thick][->] (2, 7) -- (4, 8.5);
        \draw[red, thick][->] (20, 8.5) -- (22, 7);
        \draw[blue, thick][->] (8, 10) -- (10, 10);
        \draw[blue, thick][->] (14, 10) -- (16, 10);
        \draw[blue, thick][->] (2, 3) -- (4, 1.5);
        \draw[blue, thick][->] (20, 1.5) -- (22, 3);
        \draw[red, thick][->] (8, 0) -- (10, 0);
        \draw[red, thick][->] (14, 0) -- (16, 0);
        \draw (0, 5) node{$\begin{bmatrix} 1 & 1 & 0 \\ 1 & 0 & 0 \\ 0 & 0 & 0 \end{bmatrix}$};
        \draw (6, 10) node{$\begin{bmatrix} 0 & 1 & 0 \\ 2 & 0 & 0 \\ 0 & 0 & 0 \end{bmatrix}$};
        \draw (12, 10) node{$\begin{bmatrix} 0 & 1 & 0 \\ 1 & 0 & 0 \\ 1 & 0 & 0 \end{bmatrix}$};
        \draw (18, 10) node{$\begin{bmatrix} 0 & 1 & 0 \\ 0 & 0 & 0 \\ 2 & 0 & 0 \end{bmatrix}$};
        \draw (6, 0) node{$\begin{bmatrix} 1 & 1 & 0 \\ 0 & 0 & 0 \\ 1 & 0 & 0 \end{bmatrix}$};
        \draw (12, 0) node{$\begin{bmatrix} 1 & 0 & 0 \\ 0 & 1 & 0 \\ 1 & 0 & 0 \end{bmatrix}$};
        \draw (18, 0) node{$\begin{bmatrix} 0 & 0 & 0 \\ 1 & 1 & 0 \\ 1 & 0 & 0 \end{bmatrix}$};
        \draw (24, 5) node{$\begin{bmatrix} 0 & 0 & 0 \\ 0 & 1 & 0 \\ 2 & 0 & 0 \end{bmatrix}$};
        \draw (2.5,8.5) node{$f_1^{\sf row}$};
        \draw (2.5,1.5) node{$f_2^{\sf row}$};
        \draw (9,11) node{$f_2^{\sf row}$};
        \draw (15,11) node{$f_2^{\sf row}$};
        \draw (21.5,8.5) node{$f_1^{\sf row}$};
        \draw (9,-1) node{$f_1^{\sf row}$};
        \draw (15,-1) node{$f_1^{\sf row}$};
        \draw (21.5,1.5) node{$f_2^{\sf row}$};
    \end{tikzpicture}
    \end{center}
    \caption{\label{fig:matrixcry} A connected component $\mathcal{H}$ of ${\mathcal M}^{\sf row}_{3,3}$}
  \end{figure}
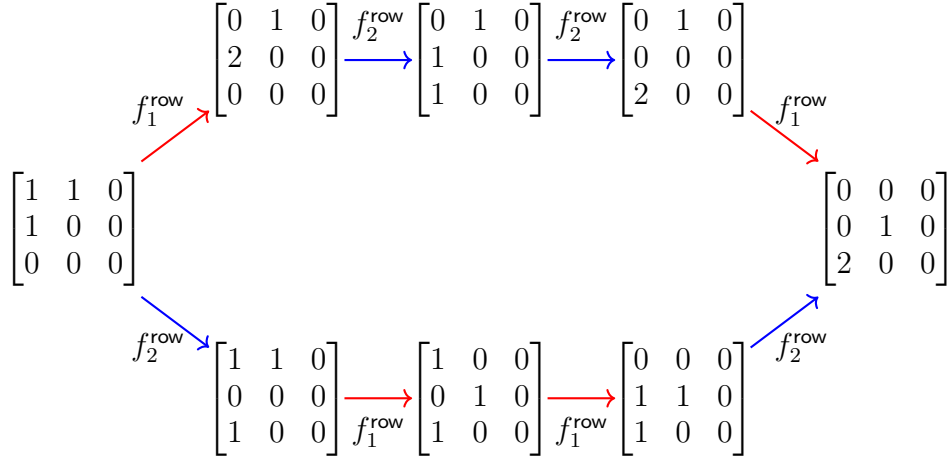
\end{example}

\begin{lemma}\label{lemma:matrixcrystalreasonable}
  The maps
    \[{\sf row}:\mathcal{M}^{\sf row}_{m, n}\to \mathcal{W}_m\quad \textrm{and}\quad {\sf col}:\mathcal{M}^{\sf col}_{m, n}\to \mathcal{W}_n,\]
    defined on vertices by $M\mapsto {\sf row}(M)$ and $M\mapsto {\sf col}(M)$ respectively are local isomorphisms.
\end{lemma}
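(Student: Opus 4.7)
The plan is to verify the lemma directly from the definitions of the bicrystal operators and the row/column words, with the crucial observation that within a single connected component of $\mathcal{M}^{\sf row}_{m, n}$ the column sums of $M$ are invariant, and dually for $\mathcal{M}^{\sf col}_{m, n}$. The two cases are entirely symmetric, so I will only describe the argument for ${\sf row}$.

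First I would check that ${\sf row}:\mathcal{M}^{\sf row}_{m, n}\to \mathcal{W}_m$ is a pre-crystal graph homomorphism. Weight preservation is built into the definition: ${\sf cwt}_{\mathcal{M}^{\sf row}_{m, n}}(M)={\sf cwt}_{\mathcal{W}_m}({\sf row}(M))$. Adjacency preservation is essentially tautological from Definition~\ref{def:feb11bicrystal}. Indeed, $f_i^{\sf row}$ is defined by locating the position $(i,a)$ in $M$ whose contribution to ${\sf row}(M)$ is the specific letter altered by $f_i$, then decrementing $m_{i, a}$ and incrementing $m_{i+1, a}$. Within each column word $1^{m_{1, a}}2^{m_{2, a}}\cdots m^{m_{m, a}}$ this change replaces a single $i$ at the correct position by an $i+1$, so ${\sf row}(f_i^{\sf row}(M))=f_i({\sf row}(M))$. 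The $\varnothing$ conventions agree, so adjacency is preserved and ${\sf row}$ is a pre-crystal graph homomorphism.

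Next I would establish the key invariant: if $M'=f_i^{\sf row}(M)$ (or $e_i^{\sf row}(M)$) is a matrix, then $M$ and $M'$ have the same column sum vector $(c_1,\ldots,c_n)$, because $f_i^{\sf row}$ moves weight between two entries of the \emph{same} column $a$. Hence column sums are constant on each connected component $\mathcal{C}$ of $\mathcal{M}^{\sf row}_{m, n}$. This immediately gives injectivity of ${\sf row}|_{\mathcal{C}}$: knowing $(c_1,\ldots, c_n)$, the word ${\sf row}(M)$ parses uniquely into consecutive blocks of lengths $c_1,\ldots,c_n$, and each such block is a weakly increasing string $1^{m_{1, a}}\cdots m^{m_{m, a}}$ that recovers column $a$ of $M$.

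For surjectivity onto a connected component, pick $M\in\mathcal{C}$, let $w={\sf row}(M)$, and let $\mathcal{G}$ be the component of $\mathcal{W}_m$ containing $w$. Any $w'\in \mathcal{G}$ is reached from $w$ by a finite sequence of $e_i,f_i$; because $e_i^{\sf row},f_i^{\sf row}$ output $\varnothing$ exactly when the corresponding crystal operator does, the identical sequence of bicrystal operators can be applied starting at $M$, stays inside $\mathcal{M}^{\sf row}_{m, n}$, and terminates at a matrix $M'\in\mathcal{C}$ with ${\sf row}(M')=w'$. Thus ${\sf row}|_{\mathcal{C}}$ is a bijection onto $\mathcal{G}$. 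The same argument, reading edges in reverse, shows that the inverse set map is also adjacency-preserving, hence a pre-crystal graph homomorphism. Therefore ${\sf row}|_{\mathcal{C}}$ is a pre-crystal graph isomorphism onto $\mathcal{G}$, proving that ${\sf row}$ is a local isomorphism. Replacing rows by columns (equivalently, using $M\mapsto M^t$ as in the parenthetical remark in Definition~\ref{def:feb11bicrystal}) gives the statement for ${\sf col}$.

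The only step requiring care is the bookkeeping in step one verifying that $f_i^{\sf row}$ really was engineered so that ${\sf row}$ intertwines $f_i^{\sf row}$ with $f_i$; everything else (injectivity via column-sum invariance, surjectivity by lifting paths) is then immediate. I anticipate no genuine obstacle.
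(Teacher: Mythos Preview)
Your proposal is correct and follows essentially the same approach as the paper: both verify the homomorphism property from the defining identity ${\sf row}(f_i^{\sf row}(M))=f_i({\sf row}(M))$, establish injectivity via the invariance of column sums along a connected component (the paper phrases this as reconstructibility of $M$ from ${\sf row}(M)$ together with ${\sf cwt}_{\mathcal{W}_n}({\sf col}(M))$), and obtain surjectivity with inverse-homomorphism by lifting edges from $\mathcal{W}_m$ back to $\mathcal{M}^{\sf row}_{m,n}$. Your version spells out a bit more detail (parsing ${\sf row}(M)$ into column blocks, lifting full paths rather than single edges), but the argument is the same.
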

\begin{proof}
We prove that ${\sf row}$ is a local isomorphism; the argument for ${\sf col}$ is entirely analogous.

\noindent
\emph{(${\sf row}$ is a pre-crystal graph homomorphism)}: By definition, ${\sf row}$ is weight-preserving.
Now, the row bicrystal operators are defined such that ${\sf row}(f_i^{\sf row}(M))=f_i({\sf row}(M))$. This means that ${\sf row}$ preserves adjacency.

\noindent
\emph{(${\sf row}$ is a local isomorphism)}: Fix a connected component ${\mathcal G}$ of $\mathcal{M}^{\sf row}_{m, n}$.
Fix $M_0\in {\mathcal G}$. We must show ${\sf row}|_{\mathcal G}$ is an isomorphism onto the connected component $\mathcal{H}$ of $\mathcal{W}_m$ that contains ${\sf row}(M_0)$. To see injectivity, let $M\in \mathcal{G}$ be arbitrary. Since $M_0$ and $M$ are connected by row bicrystal
operations, it follows from the definitions that ${\sf cwt}_{{\mathcal W}_n}({\sf col}(M)) = {\sf cwt}_{{\mathcal W}_n}({\sf col}(M_0))$. Now, any matrix $U\in{\sf Mat}_{m,n}(\Z_{\geq0})$ is uniquely reconstructible from 
${\sf row}(U)$ and ${\sf cwt}_{{\mathcal W}_n}({\sf col}(U))$. Thus $M$ is uniquely identified in $\mathcal{G}$ by its row word, so the restriction ${\sf row}|_{\mathcal{G}}$ is injective.
To show surjectivity, suppose there is an edge ${\sf row}(M)\stackrel{i}{\longrightarrow} w$ in $\mathcal{H}$. Then $f_i({\sf row}(M)) = w$, so by the definition of the row bicrystal operators, $w = {\sf row}(f_i^{\sf row}(M))$. Thus ${\sf row}|_{\mathcal{G}}$ is surjective onto $\mathcal{H}$, and this argument also shows that the inverse map ${\sf row}|_{\mathcal{G}}\inv$ is a pre-crystal graph homomorphism. Hence, ${\sf row}|_{\mathcal{G}}$ is an isomorphism onto ${\mathcal H}$ 
and ${\sf row}$ is a local isomorphism.
\end{proof}

Given Lemma~\ref{lemma:matrixcrystalreasonable}, one might expect a local isomorphism $\mathcal{M}_{m, n}\to \mathcal{W}_m\Box\mathcal{W}_n$ defined by $M\mapsto ({\sf row}(M)|{\sf col}(M))$.
However, this map is not a pre-crystal graph homomorphism (in general, $f_i^{\sf row}(M)\not\mapsto (f_i({\sf row}(M))|{\sf col}(M))$).
On the other hand, Proposition~\ref{thm:biwordcrystalequiv} shows that we \emph{do} obtain a local isomorphism when we replace $\mathcal{W}_m\Box\mathcal{W}_n$ with the quotient graph $\mathcal{K}_m\Box\mathcal{K}_n$.
To establish this result, we need the next two (known) lemmas. They can be proved via careful but elementary analysis of the matrix bicrystal operators.

\begin{lemma}[{\cite[Proposition 5.8]{bicrystal1}}]\label{lemma:crystalscommute1}
    Let $M \in{\sf Mat}_{m, n}(\Z_{\geq 0})$. Then
    \[{\sf col}(e_i^{\sf row}(M))\sim_K{\sf col}(M)\sim_K{\sf col}(f_i^{\sf row}(M)),\]
    assuming $e_i^{\sf row}(M)\neq \varnothing$ and $f_i^{\sf row}(M)\neq \varnothing$ for the two equivalences respectively. Similarly,
    \[{\sf row}(e_j^{\sf col}(M))\sim_K{\sf row}(M)\sim_K{\sf row}(f_j^{\sf col}(M)),\]
    assuming $e_j^{\sf col}(M)\neq \varnothing$ and $f_j^{\sf col}(M)\neq \varnothing$ for the two equivalences respectively.
\end{lemma}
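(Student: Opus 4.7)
The plan starts with two easy reductions. By Lemma~\ref{lemma:crystalinverse}, whenever $e_i^{\sf row}(M) \neq \varnothing$ we have $f_i^{\sf row}(e_i^{\sf row}(M)) = M$, so the $e_i^{\sf row}$-claim follows from the $f_i^{\sf row}$-claim applied to $e_i^{\sf row}(M)$. Likewise, the statements about $f_j^{\sf col}$ and $e_j^{\sf col}$ follow by applying the row-operator statements to $M^t$, using $f_j^{\sf col}(M) = (f_j^{\sf row}(M^t))^t$ and ${\sf row}(X^t) = {\sf col}(X)$. So the heart of the matter is to prove ${\sf col}(M) \sim_K {\sf col}(f_i^{\sf row}(M))$ whenever the right-hand side is defined.

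Next I would reduce to a two-row matrix. The operator $f_i^{\sf row}$ only modifies the single entry $m_{i,a}$ (decrementing it) and $m_{i+1,a}$ (incrementing it) for some column $a$. Hence ${\sf col}(M)$ and ${\sf col}(f_i^{\sf row}(M))$ agree on the contributions from rows $1,\dots, i-1$ and from rows $i+2,\dots, m$. Since each elementary Knuth move of Definition~\ref{def:knuthequiv} involves only three consecutive letters, Knuth equivalence is closed under common left- and right-concatenation. Moreover, the bracket pairing defining $f_i^{\sf row}$ only ``sees'' letters $i$ and $i+1$, so the column $a$ chosen by $f_i^{\sf row}$ on $M$ agrees with the column chosen by $f_1^{\sf row}$ on the two-row sub-matrix $M''$ formed by rows $i$ and $i+1$, and the changes in ${\sf col}(M)$ land exactly within the ${\sf col}(M'')$-portion. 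So I may assume $M$ has two rows and $i = 1$.

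Write $p_c := m_{1,c}$ and $q_c := m_{2,c}$, so $V := {\sf col}(M) = A \cdot B$ with $A = 1^{p_1} 2^{p_2}\cdots n^{p_n}$ and $B = 1^{q_1} 2^{q_2}\cdots n^{q_n}$. The operator $f_1^{\sf row}$ maps $(p_a, q_a) \mapsto (p_a-1, q_a+1)$, producing $V^* = A^*\cdot B^*$ which differs from $V$ only by relocating one copy of the letter $a$ from the $A$-block across the intermediate segment $A_{>a}\cdot B_{<a}$ into the $B$-block. By Corollary~\ref{cor:Knuthifftab}, the desired equivalence $V \sim_K V^*$ is equivalent to the tableau equality ${\sf tab}(V) = {\sf tab}(V^*)$.

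I would prove this equality by induction on the number of columns $n$, with the base case $n=1$ trivial since then $V = V^* = 1^{p_1+q_1}$. For the inductive step, the crux is an explicit match between the bracket pairing on ${\sf row}(M)$ and the insertion tableau of $V$: namely, the bracket-stack size after processing the first $c$ columns of ${\sf row}(M)$ equals the difference $\lambda_1^{(c)} - \lambda_2^{(c)}$ of the row lengths in the insertion tableau of the column-letters $\leq c$ of $V$. Granting this identity, if $a < n$ then the columns to the right of $a$ contribute only matched brackets and thus extend ${\sf tab}(V)$ and ${\sf tab}(V^*)$ identically (so the inductive hypothesis applied to the first $a$ columns finishes the case), while if $a = n$ a direct insertion comparison shows that moving one $a$ from $A$ to $B^*$ leaves the final tableau unchanged. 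The technical core, and main obstacle, is proving the stack-shape identity itself; this is a two-row instance of the RSK symmetry alluded to in Remark~\ref{remark:Feb15symm} and would be established by a parallel induction tracking both the bracket pairing and the two-row insertion state column by column.
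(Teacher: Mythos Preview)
Your initial reductions—deducing the $e_i^{\sf row}$ statement from the $f_i^{\sf row}$ one via Lemma~\ref{lemma:crystalinverse}, passing to row operators by transposition, and localizing to the two-row submatrix on rows $i,i+1$ using that Knuth equivalence is stable under common left/right concatenation—are correct and match the paper's setup.

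From there the approaches diverge. The paper's argument is direct: from the hypothesis that column $a$ carries the rightmost unmatched ``$)$'' in ${\sf bracket}_i({\sf row}(M))$ it extracts two families of linear inequalities among the entries $m_{i,k}$, $m_{i+1,k}$, and then uses a short auxiliary claim (repeated elementary moves $lxy\leftrightarrow xly$ and $yzl\leftrightarrow ylz$ slide a single letter through a weakly increasing block to its ``partner'') to exhibit an explicit chain of Knuth moves carrying the row-$i$/row-$(i{+}1)$ segment of ${\sf col}(M)$ to that of ${\sf col}(f_i^{\sf row}(M))$. No induction and no appeal to insertion tableaux is needed.

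Your route instead aims for ${\sf tab}(V)={\sf tab}(V^*)$ by induction on $n$, resting on a ``stack-shape identity'' equating a bracket count on ${\sf row}(M)$ with the row-length difference of a partial insertion tableau of ${\sf col}(M)$. As you yourself flag, this identity is essentially the two-row case of RSK symmetry (Remark~\ref{remark:Feb15symm}), and you do not prove it—so the proposal is a plan rather than a proof. Even granting the identity, the inductive step for $a<n$ has a gap: knowing ${\sf tab}(V)|_{\leq a}={\sf tab}(V^*)|_{\leq a}$ does not by itself force ${\sf tab}(V)={\sf tab}(V^*)$, since the entries $>a$ occupy a skew shape whose position is not yet pinned down; and the claim that ``columns to the right of $a$ contribute only matched brackets'' is imprecise (only their ``$)$'' symbols are guaranteed matched, not their ``$($'' symbols). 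The paper's explicit-moves argument sidesteps all of this.
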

\begin{lemma}[{\cite[Lemma 1.4.7]{bicrystal1}}]\label{lemma:crystalscommute2}
    The row and column bicrystal operators on $M\in{\sf Mat}_{m, n}(\Z_{\geq 0})$ commute. That is, for all $i$ and $j$,
    \begin{align*}
        f_i^{\sf row}(f_j^{\sf col}(M)) = f_j^{\sf col}(f_i^{\sf row}(M)), &\quad e_i^{\sf row}(e_j^{\sf col}(M)) = e_j^{\sf col}(e_i^{\sf row}(M))\\
        e_i^{\sf row}(f_j^{\sf col}(M)) = f_j^{\sf col}(e_i^{\sf row}(M)), &\quad f_i^{\sf row}(e_j^{\sf col}(M)) = e_j^{\sf col}(f_i^{\sf row}(M)).
    \end{align*}
\end{lemma}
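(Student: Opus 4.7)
The plan is to establish the first identity $f_i^{\sf row} \circ f_j^{\sf col} = f_j^{\sf col} \circ f_i^{\sf row}$ on matrices where both sides are defined, and then deduce the remaining three identities formally. Note that $e_i^{\sf row}$ and $f_i^{\sf row}$ are mutual inverses where defined, as this is inherited from Lemma~\ref{lemma:crystalinverse} applied to the row word, and similarly for the column operators. Given the first identity, applying $e_i^{\sf row}$ to both sides (and using that $e_i^{\sf row}$ undoes $f_i^{\sf row}$) converts it to $f_j^{\sf col} \circ e_i^{\sf row} = e_i^{\sf row} \circ f_j^{\sf col}$ after a suitable change of variables; iterating this observation with the column operators yields all four identities.

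For the main identity, the first step is a reduction to a local situation. Write $a = a(M)$ for the row selected by $f_j^{\sf col}$ (so $f_j^{\sf col}$ decrements $m_{a,j}$ and increments $m_{a,j+1}$) and $b = b(M)$ for the column selected by $f_i^{\sf row}$. The bracket word determining $f_i^{\sf row}$ depends only on the entries of rows $i$ and $i+1$, and $f_j^{\sf col}$ modifies only row $a$. Hence if $a \notin \{i, i+1\}$, then $f_j^{\sf col}$ leaves the row-$i$ bracket word of $M$ untouched, so $f_i^{\sf row}$ still selects column $b$ afterwards, and the two operations act on disjoint entries and clearly commute. The symmetric argument handles $b \notin \{j, j+1\}$. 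It remains to treat the case $a \in \{i, i+1\}$ and $b \in \{j, j+1\}$, where both operations touch the $2\times 2$ block $\{i, i+1\} \times \{j, j+1\}$.

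The second step is the case analysis inside this block. The row-$i$ bracket word is the concatenation of column blocks of the form $)^{m_{i,k}}(^{m_{i+1,k}}$ for $k=1, \ldots, n$. Applying $f_j^{\sf col}$ with $a = i$ removes one $)$ from the $k=j$ block and inserts one $)$ into the $k = j+1$ block; with $a = i+1$, it analogously moves one $($. One then verifies directly that the rightmost unmatched $)$ in this modified bracket word lies in the same column $b$ as before, unless $b \in \{j, j+1\}$, in which case its location and the location of the unmatched $)$ in the column-$j$ bracket word of $f_i^{\sf row}(M)$ are rigidly linked. In each of the four subcases, both compositions alter precisely the same entries of the $2\times 2$ block in the same way, which is checked by a short enumeration using the inequalities forced by the unmatched-parenthesis conditions.

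The main obstacle is this final subcase analysis: the bracketing is a nonlocal matching across all columns, so one must verify that moving a single parenthesis across the $j$--$(j+1)$ boundary does not disturb which $)$ is rightmost-unmatched outside of the predictable $2\times 2$ interaction. The cleanest way to do this is to show that the matching structure restricted to columns outside $\{j, j+1\}$ is preserved by $f_j^{\sf col}$, and then to check each of the four $(a,b)$ configurations directly; a parallel argument handles the effect of $f_i^{\sf row}$ on the column-$j$ bracket word. Once the four local cases check out, commutativity on the full matrix follows since all other entries are fixed by both operations.
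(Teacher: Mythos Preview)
The paper does not prove this lemma itself; it cites \cite[Lemma~1.4.7]{bicrystal1}, and the proof sketch present in the \LaTeX\ source is wrapped in \texttt{\string\iffalse\ldots\string\fi} and not compiled. Your approach---a direct case analysis on which rows and columns the two operators touch, proving one identity and deducing the rest via the $e/f$ inverse relation---is the same as that suppressed sketch and as van Leeuwen's original argument.

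Your reduction step has a gap. In the case $a\notin\{i,i+1\}$ you correctly observe that $f_j^{\sf col}$ leaves rows $i,i+1$ alone, so $f_i^{\sf row}$ still selects column $b$ on $f_j^{\sf col}(M)$; you then conclude that ``the two operations act on disjoint entries and clearly commute.'' Disjointness of the modified entries is not sufficient: commutativity also requires that $f_j^{\sf col}$ still select row $a$ when applied to $f_i^{\sf row}(M)$. If $b\in\{j,j+1\}$, then $f_i^{\sf row}$ has altered an entry in column $j$ or $j+1$ and hence the column-$j$ bracket word, so this is not automatic. Symmetrically, your case $b\notin\{j,j+1\}$ does not by itself control the behavior of $f_i^{\sf row}$ after $f_j^{\sf col}$ when $a\in\{i,i+1\}$. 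The only case that is genuinely trivial is $a\notin\{i,i+1\}$ \emph{and} $b\notin\{j,j+1\}$; the mixed cases already require the parenthesis-tracking you defer to your second step. The paper's suppressed proof handles this by treating $a=i+1$ and $a=i$ directly (and symmetrically $b=j+1$, $b=j$), using the inequalities $m_{i+1,j}>m_{i,j+1}$ or $m_{i+1,j}\le m_{i,j+1}$ forced by the unmatched-bracket condition to show the other operator's selection is unchanged, and then isolating the single exceptional configuration $(a,b)=(i,j)$ where both compositions subtract one from $m_{i,j}$ and add one to $m_{i+1,j+1}$. Your argument needs the same reorganization: the bracket analysis in your second step must be applied before, not after, reducing to $a\in\{i,i+1\}$ and $b\in\{j,j+1\}$.
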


Proposition~\ref{thm:biwordcrystalequiv} below is essentially classical RSK, associating $M\in{\sf Mat}_{m, n}(\Z_{\geq 0})$ to pairs of Knuth equivalence classes (which can then be labelled uniquely with tableaux). However, the local isomorphism reformulation will be convenient in the sequel.

\begin{proposition}\label{thm:biwordcrystalequiv}
    The map
    \[{\sf biword}: \mathcal{M}_{m, n}\to \mathcal{K}_m\Box\mathcal{K}_n,\]
    defined by $M\mapsto (\ [{\sf row}(M)]_K\ |\ [{\sf col}(M)]_K\ )$, is a local isomorphism.
\end{proposition}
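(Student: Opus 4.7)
The plan is to verify that ${\sf biword}$ is a pre-crystal graph homomorphism and then to show each connected component of $\mathcal{M}_{m,n}$ decomposes as a product matching the Cartesian product structure on the target side. Weight preservation is immediate from the definitions. For adjacency, given an $i^{\sf row}$-edge $M \to f_i^{\sf row}(M)$, Definition~\ref{def:feb11bicrystal} gives ${\sf row}(f_i^{\sf row}(M)) = f_i({\sf row}(M))$, producing an $i^{\mathcal{K}_m}$-edge on the first coordinate, while Lemma~\ref{lemma:crystalscommute1} gives ${\sf col}(f_i^{\sf row}(M)) \sim_K {\sf col}(M)$, leaving the second coordinate fixed; the $j^{\sf col}$ case is symmetric.

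Next I would fix a connected component $\mathcal{G} \subseteq \mathcal{M}_{m,n}$ containing some $M_0$ and let $\overline{\mathcal{G}_1} \subseteq \mathcal{K}_m$ and $\overline{\mathcal{G}_2} \subseteq \mathcal{K}_n$ be the connected components containing the two coordinates of ${\sf biword}(M_0)$, so that ${\sf biword}(\mathcal{G}) \subseteq \overline{\mathcal{G}_1} \Box \overline{\mathcal{G}_2}$. Write $R_0$ for the row-connected-component of $M_0$ in $\mathcal{M}^{\sf row}_{m,n}$; composing Lemma~\ref{lemma:matrixcrystalreasonable} with Proposition~\ref{prop:pancake} via Lemma~\ref{lemma:localisoprops}(II) identifies $R_0 \cong \overline{\mathcal{G}_1}$ through $N \mapsto [{\sf row}(N)]_K$. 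For each $N \in R_0$, iterating Lemma~\ref{lemma:crystalscommute1} along the sequence of row operators from $M_0$ to $N$ shows $[{\sf col}(N)]_K = [{\sf col}(M_0)]_K$, and the analogous composition for columns identifies the column-connected-component $C(N) \subseteq \mathcal{M}^{\sf col}_{m,n}$ with $\overline{\mathcal{G}_2}$ via $M \mapsto [{\sf col}(M)]_K$. Lemma~\ref{lemma:crystalscommute2} lets me reorder any bicrystal path from $M_0$ so that all row operators precede all column operators, yielding the decomposition $\mathcal{G} = \bigsqcup_{N \in R_0} C(N)$ (disjointness holds because column operators preserve row Knuth classes, so each $C(N)$ lies above a distinct $N$). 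From this, the inverse of ${\sf biword}|_{\mathcal{G}}$ is built by a two-step lookup: given $(x_1, x_2)$, select the unique $N \in R_0$ with $[{\sf row}(N)]_K = x_1$ and then the unique $M \in C(N)$ with $[{\sf col}(M)]_K = x_2$.

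Finally I would check that this inverse is itself a pre-crystal graph homomorphism. Given an $i^{\mathcal{K}_m}$-edge out of $(x_1, x_2)$, the coplactic convention in Lemma~\ref{lemma:coplactic} ensures $f_i^{\sf row}$ is defined on the preimage matrix, and the biword of the result matches the target endpoint, so by injectivity this matrix equals the intended preimage; the $j^{\sf col}$-edge case is symmetric. The hard part will be establishing the product decomposition $\mathcal{G} = \bigsqcup_{N \in R_0} C(N)$: it requires Lemma~\ref{lemma:crystalscommute2} to push row operators past column operators so that the column-components exhaust $\mathcal{G}$, combined with Lemma~\ref{lemma:crystalscommute1} to guarantee that every column-component lands in the same $\overline{\mathcal{G}_2}$ rather than in distinct $\mathcal{K}_n$-components. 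Once this product structure is in hand, bijectivity and the inverse homomorphism property reduce to a routine diagram chase through the local isomorphisms already established.
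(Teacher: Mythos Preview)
Your proposal is correct and uses essentially the same ingredients as the paper's proof: Lemma~\ref{lemma:crystalscommute1} to show the ``other'' coordinate is Knuth-fixed, Lemma~\ref{lemma:crystalscommute2} to reorder paths so row operators precede column operators, and the local isomorphisms of Lemma~\ref{lemma:matrixcrystalreasonable} and Proposition~\ref{prop:pancake} to control each half. The only difference is organizational: the paper argues injectivity and surjectivity of ${\sf biword}|_{\mathcal G}$ as separate steps (injectivity by reordering two paths and comparing their row-intermediate points, surjectivity by an edge-by-edge lift), whereas you first package the same reordering argument into an explicit set decomposition $\mathcal{G} = \bigsqcup_{N\in R_0} C(N)$ and read off bijectivity from that. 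Your disjointness step implicitly needs Lemma~\ref{lemma:crystalknuth} (distinct vertices in $R_0$ have distinct row Knuth classes), which you should cite explicitly, but otherwise the two arguments are interchangeable.
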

\begin{proof}
\indent
\emph{(${\sf biword}$ is a pre-crystal graph homomorphism)}:
    By definition, ${\sf biword}$ preserves weights. It remains to check that it preserves adjacency. 
    Suppose $M\stackrel{i^{\sf row}}\longrightarrow M'$ is an edge in $\mathcal{M}_{m, n}$. By Lemma~\ref{lemma:crystalscommute1}, $[{\sf col}(M)]_K=[{\sf col}(M')]_K$. By Lemma~\ref{lemma:matrixcrystalreasonable},
    ${\sf row}(M)\stackrel{i}{\longrightarrow}{\sf row}(M')$ in ${\mathcal W}_m$. Therefore, by Definition~\ref{def:Knuthcrystalgraph}, $[{\sf row}(M)]_K\stackrel{i}{\longrightarrow}[{\sf row}(M')]_K$ in ${\mathcal K}_m$. Thus, ${\sf biword}(M)\stackrel{i^{\sf row}}{\longrightarrow} {\sf biword}(M')$ is
    an edge in $\mathcal{K}_m\Box\mathcal{K}_n$. The argument for an edge $M\stackrel{j^{\sf col}}\longrightarrow M'$ is similar.

\noindent
\emph{(${\sf biword}$ is a local isomorphism)}: Let $\mathcal{G}$ be a connected component of $\mathcal{M}_{m, n}$.
Fix $M_0\in {\mathcal G}$.

We show ${\sf biword}|_{\mathcal G}$ is injective. Suppose $M_1, M_2\in {\mathcal G}$ and ${\sf biword}(M_1) = {\sf biword}(M_2)$.
Fix a path ${\mathcal P}_1$ from $M_0$ to $M_1$, and a path ${\mathcal P}_2$ from $M_0$ to $M_2$.
By Lemma~\ref{lemma:crystalscommute2}, we may assume that ${\mathcal P}_1$ starts with a subpath
from $M_0$ to some $M_1'$ lying entirely in $\mathcal{M}^{\sf row}_{m, n}$ followed by a subpath from $M_1'$ to $M_1$
lying entirely in $\mathcal{M}^{\sf col}_{m, n}$.
Similarly, one defines $M_2'$.
Hence $M'_1$ and $M'_2$ both lie in the connected component of $M_0$ in $\mathcal{M}^{\sf row}_{m, n}$.
Moreover, by Lemma~\ref{lemma:crystalscommute1}, ${\sf row}(M_1')\sim_K {\sf row}(M_1)$ and ${\sf row}(M_2')\sim_K {\sf row}(M_2)$. Since ${\sf row}(M_1)\sim_K {\sf row}(M_2)$ by assumption, we see that ${\sf row}(M_1')\sim_K {\sf row}(M_2')$ by transitivity. Thus, $M'_1 = M'_2$ by Lemma~\ref{lemma:matrixcrystalreasonable} and Lemma~\ref{lemma:crystalknuth}.
Now, $M_1$ and $M_2$ both lie in the connected component of $M'_1 = M'_2$ in ${\mathcal M}^{\sf col}_{m, n}$. Since 
${\sf col}(M_1)\sim_K {\sf col}(M_2)$ by assumption, Lemma~\ref{lemma:matrixcrystalreasonable} and Lemma~\ref{lemma:crystalknuth} imply that $M_1 = M_2$. Thus ${\sf biword}|_{\mathcal G}$ is injective.    

To show surjectivity, suppose ${\sf biword}(M)\stackrel{i^{{\mathcal K}_m}}{\longrightarrow} ([u]_K|[{\sf col}(M)]_K)$ is an edge in $\mathcal{K}_m\Box\mathcal{K}_n$ for some $M\in\mathcal{G}$ (the case ${\sf biword}(M)\stackrel{j^{{\mathcal K}_n}}{\longrightarrow} ([{\sf row}(M)]_K|[v]_K)$ is similar). Thus, there exists an edge
$[{\sf row}(M)]_K\stackrel{i}{\longrightarrow}[u]_K$ in ${\mathcal K}_m$. By Lemma~\ref{lemma:matrixcrystalreasonable}, the connected component of ${\mathcal M}_{m,n}^{\sf row}$
containing $M$ is isomorphic to the connected component of ${\mathcal W}_m$ containing ${\sf row}(M)$ which
in turn is isomorphic, by Proposition~\ref{prop:pancake}, to the connected component of ${\mathcal K}_m$
containing $[{\sf row}(M)]_K$. From this, one sees there is an edge $M\stackrel{i^{\sf row}}{\longrightarrow} M'$ in ${\mathcal M}_{m,n}$ such that ${\sf biword}(M')=([u]_K|[{\sf col}(M)]_K)$. Thus
${\sf biword}|_{\mathcal G}$ is surjective onto the connected component ${\mathcal H}$ 
of $\mathcal{K}_m\Box\mathcal{K}_n$ that contains ${\sf biword}(M_0)$. It follows that
${\sf biword}|_{\mathcal G}:{\mathcal G}\to {\mathcal H}$ is a bijection of vertices, and this argument also shows that ${\sf biword}|_{\mathcal G}^{-1}$ preserves adjacency, completing the proof. 
\end{proof}

Proposition~\ref{thm:biwordcrystalequiv} encapsulates our claim from the introduction that the bicrystal operators on matrices are a ``pull-back" of the crystal operators on words. We illustrate the point with the commutative cube in Figure~\ref{fig:cube}.


\begin{figure}
    \[\adjustbox{scale=1,center}{%
    \begin{tikzcd}
	M &&&& {f_i^{\sf row}(M)} \\
	& {f_j^{\sf col}(M)} &&&& {f_i^{\sf row}(f_j^{\sf col}(M))} \\
	{([u]_K|[v]_K)} &&&& {([f_i(u)]_K|[v]_K)} \\
	& {([u]_K|[f_j(v)]_K)} &&&& {([f_i(u)]_K|[f_j(v)]_K)}
	\arrow["{f_j^{\sf col}}"{description}, color={rgb,255:red,11;green,168;blue,16}, from=1-1, to=2-2]
	\arrow["{f_i^{\sf row}}"{description}, color={rgb,255:red,150;green,55;blue,251}, from=1-1, to=1-5]
	\arrow["{f_i^{\sf row}}"{description}, color={rgb,255:red,150;green,55;blue,251}, from=2-2, to=2-6]
	\arrow["{f_j^{\sf col}}"{description}, color={rgb,255:red,11;green,168;blue,16}, from=1-5, to=2-6]
	\arrow["{{\sf biword}}"{description}, from=1-1, to=3-1]
	\arrow["{{\sf biword}}"{description, pos=0.3}, from=2-2, to=4-2]
	\arrow["{f_j^{\mathcal{K}_n}}"{description}, color={rgb,255:red,11;green,168;blue,16}, from=3-5, to=4-6]
	\arrow["{{\sf biword}}"{description, pos=0.3}, from=2-6, to=4-6]
	\arrow["{{\sf biword}}"{description, pos=0.3}, from=1-5, to=3-5]
	\arrow["{f_j^{\mathcal{K}_n}}"{description}, color={rgb,255:red,11;green,168;blue,16}, from=3-1, to=4-2]
	\arrow["{f_i^{\mathcal{K}_m}}"{description}, color={rgb,255:red,150;green,55;blue,251}, from=4-2, to=4-6]
	\arrow["{f_i^{\mathcal{K}_m}}"{description}, color={rgb,255:red,150;green,55;blue,251}, from=3-1, to=3-5]
    \end{tikzcd}}\]
    \caption{\label{fig:cube} An illustration of Proposition~\ref{thm:biwordcrystalequiv}, where $u = {\sf row}(M)$ and $v = {\sf col}(M)$.}
\end{figure}
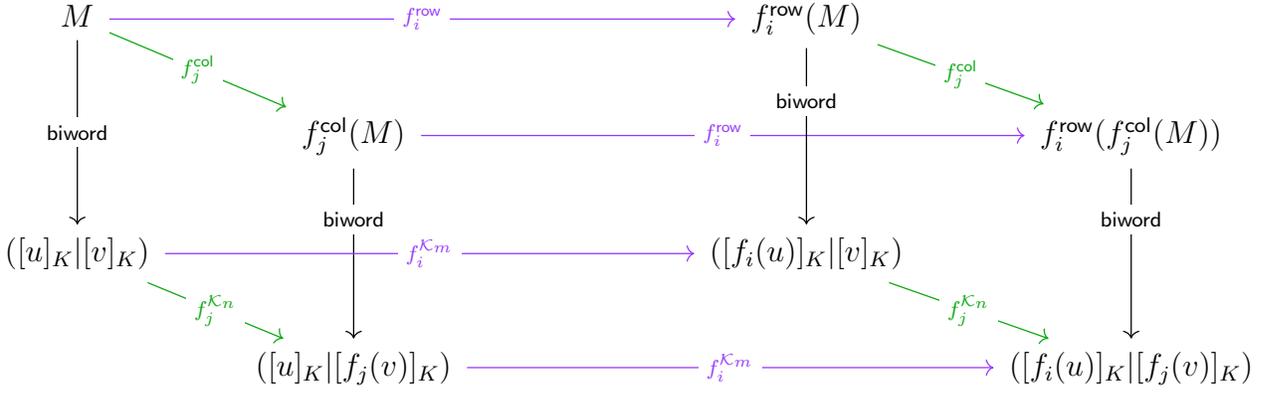

\section{Filterings and the proof of Theorem~\ref{thm:main}}\label{subsec:filterings}
This section shows how the results of Section~\ref{subsec:crystals} generalize using the ${\sf filterRSK}_{{\mathbf I}|{\mathbf J}}$ map, leading to our proof of Theorem~\ref{thm:main}. As in (\ref{eqn:twointseq}), fix sequences of nonnegative integers
\[
{\bf I} = \{0 = i_0<\dots<i_r = m\} \text{ \  and \ } {\bf J} = \{0 = j_0<\dots<j_s = n\}.
\]

\begin{definition}
    The \emph{${\bf I}$-filtered word crystal graph} $\mathcal{W}_m^{\bf I}$ is the subgraph of $\mathcal{W}_m$ obtained by deleting all edges labelled by elements $i\in {\bf I}$.
\end{definition}

\begin{proposition}\label{thm:filteredwordcrystal}
    Let ${\bf I}$ be as in (\ref{eqn:twointseq}). The map
   ${\sf filter}_{\mathbf{I}}: \mathcal{W}^{\mathbf{I}}_m \to \mathcal{W}_{[1, i_1]}\Box \mathcal{W}_{[i_1+1, i_2]}\Box\cdots\Box\mathcal{W}_{[i_{r-1}+1, m]}$,
   defined on vertices by $w\mapsto {\sf filter}_{\mathbf{I}}(w)$, is a local isomorphism.
\end{proposition}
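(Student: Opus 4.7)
The plan is to verify the two properties from Section~\ref{subsec:graphs}: that ${\sf filter}_{\mathbf I}$ is a pre-crystal graph homomorphism, and that its restriction to each connected component of $\mathcal{W}_m^{\mathbf I}$ is an isomorphism onto a connected component of the product.

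First I would check that ${\sf filter}_{\mathbf I}$ is a pre-crystal graph homomorphism. Weight-preservation is immediate since filtering merely partitions the letters of $w$ into blocks according to which interval $[i_{k-1}+1,i_k]$ they fall in, without altering any multiplicities. For adjacency, the key observation is that an edge of $\mathcal{W}_m^{\mathbf I}$ is labelled by some $i\notin\mathbf I$, which forces both $i$ and $i+1$ to lie in a common interval $[i_{k-1}+1,i_k]$. Since the bracket sequence ${\sf bracket}_i(w)$ only records letters equal to $i$ or $i+1$, it coincides with ${\sf bracket}_i(w^{(k)})$; therefore $f_i$ alters exactly the same physical position of $w$ and of $w^{(k)}$, giving $f_i(w)^{(k)}=f_i(w^{(k)})$ and $f_i(w)^{(l)}=w^{(l)}$ for $l\neq k$. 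Under the evident identification of edge labels $i\notin\mathbf I$ with their image in the disjoint union of edge-label sets of the factors (per Definition~\ref{def:cartprodcrystal}), this matches the Cartesian-product edge structure.

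Next I would fix a connected component $\mathcal{G}$ of $\mathcal{W}_m^{\mathbf I}$ with a chosen vertex $w_0$, and show ${\sf filter}_{\mathbf I}|_{\mathcal G}$ is a bijection onto the connected component of ${\sf filter}_{\mathbf I}(w_0)$ in the target. For injectivity, define the \emph{interval pattern} of a word $w=w_1\cdots w_N\in\mathcal{W}_m^{\mathbf I}$ to be the function $p\mapsto k(p)$ assigning to each position $p$ the index $k(p)$ with $w_p\in[i_{k(p)-1}+1,i_{k(p)}]$. Every operator $f_i,e_i$ with $i\notin\mathbf I$ changes a letter from $i$ to $i+1$ or back within a single interval, so the interval pattern is invariant on $\mathcal{G}$. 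Any $w\in\mathcal{G}$ is then uniquely reconstructed from its interval pattern (shared with $w_0$) and its filter by reading off the positions in order and filling in letters from the appropriate subwords, giving injectivity.

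Finally, for surjectivity, I would argue that any edge in the target starting at ${\sf filter}_{\mathbf I}(w)$ (with $w\in\mathcal{G}$) has the form ${\sf filter}_{\mathbf I}(w)\to(w^{(1)},\ldots,f_i(w^{(k)}),\ldots,w^{(r)})$ for some $k$ and some $i\in[i_{k-1}+1,i_k-1]$, hence $i\notin\mathbf I$. Because $f_i(w^{(k)})\neq\varnothing$, the bracket argument from Step~1 gives $f_i(w)\neq\varnothing$ in $\mathcal{W}_m^{\mathbf I}$, and ${\sf filter}_{\mathbf I}(f_i(w))$ equals the target of the edge. Iterating, every vertex of the target's connected component is hit. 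The same computation shows that the set-inverse on vertices sends edges to edges, so ${\sf filter}_{\mathbf I}|_{\mathcal G}$ is a pre-crystal graph isomorphism, hence ${\sf filter}_{\mathbf I}$ is a local isomorphism. The only nontrivial point is the identification ${\sf bracket}_i(w)={\sf bracket}_i(w^{(k)})$ for $i\notin\mathbf I$, which is where the hypothesis that $\mathbf I$-edges are deleted does the essential work; everything else is bookkeeping.
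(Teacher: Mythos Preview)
Your proof is correct and follows essentially the same approach as the paper. Your ``interval pattern'' invariant is exactly the paper's observation that the set of positions occupied by letters from each interval $[i_{t-1}+1,i_t]$ is unchanged by $f_i,e_i$ with $i\notin\mathbf{I}$, and both proofs hinge on the identity ${\sf bracket}_i(w)={\sf bracket}_i(w^{(k)})$ for $i$ in the $k$th interval.
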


Let us emphasize that ${\sf filter}_{\bf I}$ is not an isomorphism. For example, if $m=2$
and ${\bf I}=\{0,1,2\}$ then ${\sf filter}_{\bf I}(12)={\sf filter}_{\bf I}(21)=(1,2)$.

\begin{proof}
\noindent\emph{(${\sf filter}_{\mathbf{I}}$ is a pre-crystal graph homomorphism)}: By Definition~\ref{def:cartprodcrystal},
\begin{align*}
{\sf cwt}_{\mathcal{W}_{[1, i_1]}\Box\cdots\Box\mathcal{W}_{[i_{r-1}+1, m]} }({\sf filter}_{\mathbf{I}}(w)) &=
 ({\sf cwt}_{ \mathcal{W}_{[1, i_1]}}(w^{(1)}),\dots, {\sf cwt}_{ \mathcal{W}_{[i_{r-1}+1, m]}}(w^{(r)})) = {\sf cwt}_{ \mathcal{W}^{\mathbf{I}}_m}(w).
\end{align*}
Hence ${\sf filter}_\mathbf{I}$ is weight-preserving.

Now we show that  ${\sf filter}_\mathbf{I}$ preserves adjacency. Suppose $w\stackrel{i}{\longrightarrow}w'$ is an
edge in  $\mathcal{W}^{\mathbf{I}}_m$. By definition, ${\sf filter}_{{\mathbf I}}(w')$ and ${\sf filter}_{{\mathbf I}}(w)$ agree in each component except the $k$-th, where $i_{k-1}+1\leq i\leq i_k$.
In that component, $w'^{(k)} = f_i(w^{(k)})$. That is, $w^{(k)}\stackrel{i}{\longrightarrow} w'^{(k)}$ is an
edge in ${\mathcal W}_{[i_{k-1}+1,i_k]}$. Thus, by Definition~\ref{def:cartprodcrystal}, 
${\sf filter}_{\bf I}(w)\stackrel{i}{\longrightarrow}{\sf filter}_{\bf I}(w')$ is an edge 
in the Cartesian product pre-crystal graph $\mathcal{W}_{[1, i_1]}\Box \mathcal{W}_{[i_1+1, i_2]}\Box\cdots\Box\mathcal{W}_{[i_{r-1}+1, m]}$, as required.

\noindent\emph{(${\sf filter}_\mathbf{I}$ is a local isomorphism)}:
Fix $\mathcal{G}$ a connected component of $\mathcal{W}_m^{\mathbf{I}}$ and $w_0\in {\mathcal G}$.
We need to show that ${\sf filter}_{\mathbf{I}}$ restricts to an isomorphism $\mathcal{G}\to\mathcal{G}^{(1)}\Box\cdots\Box\mathcal{G}^{(k)}$, where $\mathcal{G}^{(k)}$ is the connected component of $\mathcal{W}_{[i_{k-1}+1, i_k]}$ containing $w_0^{(k)}$. 

First we show that ${\sf filter}_{\bf I}|_{\mathcal G}$ is injective. Suppose 
${\sf filter}_{\bf I}|_{\mathcal G}(u)={\sf filter}_{\bf I}|_{\mathcal G}(v)=(\pi^{(1)},\ldots,\pi^{(r)})$ 
for some $u,v\in~{\mathcal G}$. Both $u$ and $v$ are interweavings of the words $\pi^{(1)},\ldots,\pi^{(r)}$, i.e., the letters appearing in $\pi^{(t)}$ are in the same order in both $u$ and $v$, but in possibly different positions. However, any sequence of crystal operations $f_i,e_i$ ($i\not\in {\bf I}$) applied to $u$ does not affect the set of positions of $u$ that are occupied by  $[i_{t-1}+1,i_t]$. Hence $u$ and $v$ cannot be in the same connected component
unless $u=v$.

For surjectivity, suppose ${\sf filter}_{\bf I}(w)\stackrel{i}{\longrightarrow}{\alpha}$ is an edge in $\mathcal{W}_{[1, i_1]}\Box \mathcal{W}_{[i_1+1, i_2]}\Box\cdots\Box\mathcal{W}_{[i_{r-1}+1, m]}$ for some $w\in\mathcal{G}$, where $i\notin\mathbf{I}$. Suppose
$k$ is the unique index such that $i\in [i_{k-1}+1, i_k]$. Since ${\sf bracket}_i(w) = {\sf bracket}_i(w^{(k)})$,
clearly ${\sf filter}_{\bf I}(f_i(w))=\alpha$. 

Thus, the inverse map $({\sf filter}_{\bf I}|_{\mathcal{G}})^{-1}$ exists and preserves adjacency as required. 
\end{proof}

Taking the quotient of $\mathcal{W}^{\mathbf{I}}_m$ by $\sim_K$ produces the \emph{$\mathbf{I}$-filtered Knuth crystal graph} $\mathcal{K}^{\mathbf{I}}_m$. We now show that ${\sf filter}_{\mathbf{I}}$ descends to a local isomorphism on these quotients.

\begin{corollary}\label{cor:filterknuthequiv}
    Let ${\bf I}$ be as in (\ref{eqn:twointseq}). The map
   \[{\overline {\sf filter}}_{\mathbf{I}}: \mathcal{K}^{\mathbf{I}}_m \to \mathcal{K}_{[1, i_1]}\Box \cdots\Box\mathcal{K}_{[i_{r-1}+1, m]},\]
   defined on vertices using representatives $([w]_K\mapsto [{\sf filter}_{\mathbf{I}}(w)]_K)$, is a local isomorphism.
\end{corollary}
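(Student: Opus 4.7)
The plan is to deduce this from Propositions~\ref{thm:filteredwordcrystal} and~\ref{prop:pancake} together with Lemma~\ref{lemma:localisoprops}, by chasing the commutative square whose vertices are ${\mathcal W}_m^{\mathbf I}$, ${\mathcal W}_{[1,i_1]}\Box\cdots\Box {\mathcal W}_{[i_{r-1}+1,m]}$, ${\mathcal K}_m^{\mathbf I}$, and ${\mathcal K}_{[1,i_1]}\Box\cdots\Box{\mathcal K}_{[i_{r-1}+1,m]}$. The horizontal maps are ${\sf filter}_{\mathbf I}$ and $\overline{\sf filter}_{\mathbf I}$; the vertical ones are the Knuth quotient maps $q:{\mathcal W}_m^{\mathbf I}\to {\mathcal K}_m^{\mathbf I}$ and $q_1\Box\cdots\Box q_r$ applied componentwise. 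The first step is to verify that $\overline{\sf filter}_{\mathbf I}$ is well-defined, which reduces to showing that for each elementary Knuth move $w\leftrightarrow w'$ on letters $x, y, z$, the tuples ${\sf filter}_{\mathbf I}(w)$ and ${\sf filter}_{\mathbf I}(w')$ are componentwise Knuth-equivalent. A block $[i_{t-1}+1, i_t]$ can perturb the $t$-th filtered subword only if both $x$ and $z$ lie in that block, in which case the strict inequality $x<y\leq z$ (or $x\leq y<z$) forces $y$ to lie there as well; hence the $t$-th subword itself undergoes the same elementary Knuth move, while all other blocks see an identical subword in $w$ and $w'$.

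Next I would show that the quotient $q$ is itself a local isomorphism, by essentially repeating the proof of Proposition~\ref{prop:pancake}. Any connected component of ${\mathcal W}_m^{\mathbf I}$ is contained in one connected component of ${\mathcal W}_m$, so Lemma~\ref{lemma:crystalknuth} forces $q$ to be injective on each component; the coplactic Lemma~\ref{lemma:coplactic} provides surjectivity onto the corresponding component of ${\mathcal K}_m^{\mathbf I}$ together with adjacency preservation of the inverse. Separately, Lemma~\ref{lemma:localisoprops}(III) applied componentwise to Proposition~\ref{prop:pancake}, and then Lemma~\ref{lemma:localisoprops}(II) composing the result with Proposition~\ref{thm:filteredwordcrystal}, make the top-right path $(q_1\Box\cdots\Box q_r)\circ{\sf filter}_{\mathbf I}$ a local isomorphism.

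To finish, commutativity of the square reads $\overline{\sf filter}_{\mathbf I}\circ q = (q_1\Box\cdots\Box q_r)\circ{\sf filter}_{\mathbf I}$. Given any connected component ${\mathcal G}\subseteq {\mathcal K}_m^{\mathbf I}$, I pick a connected component ${\mathcal G}'\subseteq {\mathcal W}_m^{\mathbf I}$ with $q|_{{\mathcal G}'}$ an isomorphism onto ${\mathcal G}$ (supplied by $q$ being a local isomorphism), and write $\overline{\sf filter}_{\mathbf I}|_{\mathcal G} = \bigl((q_1\Box\cdots\Box q_r)\circ{\sf filter}_{\mathbf I}\bigr)|_{{\mathcal G}'}\circ (q|_{{\mathcal G}'})^{-1}$, a composition of isomorphisms landing inside a single connected component of the product ${\mathcal K}_{[1,i_1]}\Box\cdots\Box{\mathcal K}_{[i_{r-1}+1,m]}$. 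This establishes $\overline{\sf filter}_{\mathbf I}$ as a local isomorphism. The main subtle point is the well-definedness verification in step~1; once that is in hand, the rest is a formal diagram chase using Lemma~\ref{lemma:localisoprops}.
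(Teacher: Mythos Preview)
Your proof is correct and follows essentially the same approach as the paper: you verify well-definedness via the same elementary Knuth-move analysis (if the swapped letters $x,z$ lie in a common block then so does $y$), and then deduce the local isomorphism property from the commuting square with ${\sf filter}_{\mathbf I}$, $q$, and $q_1\Box\cdots\Box q_r$, invoking Propositions~\ref{thm:filteredwordcrystal} and~\ref{prop:pancake} together with Lemma~\ref{lemma:localisoprops}. The only cosmetic difference is that you spell out explicitly why the quotient $q:\mathcal{W}_m^{\mathbf I}\to\mathcal{K}_m^{\mathbf I}$ remains a local isomorphism after deleting the $\mathbf I$-labelled edges, whereas the paper simply cites Proposition~\ref{prop:pancake} and leaves that restriction step implicit.
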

\begin{proof}
\noindent
\emph{(${\overline {\sf filter}}_{\mathbf{I}}$ is well-defined)}:  We need 
    that $w\sim_K w'$ implies ${\sf filter}_{\mathbf{I}}(w)\sim_K {\sf filter}_{\mathbf{I}}(w')$ (where Knuth equivalence of filtered words is defined component-wise). The only elementary Knuth moves \eqref{eqn:leftKnuthmove}, \eqref{eqn:rightKnuthmove} on $w$ that alter ${\sf filter}_{\mathbf{I}}(w)$ are those swapping two elements $x, z\in [i_{k-1}+1, i_k]$ for some $1\leq k\leq r$. Such a move is of the form $xzy\leftrightarrow zxy$ or $yxz \leftrightarrow yzx$ for $y\in [i_{k-1}+1, i_k]$, and is therefore also an elementary Knuth move on $w^{(k)}$, as desired.

\noindent
\emph{(${\overline {\sf filter}}_{\mathbf{I}}$ is a local isomorphism)}: Fix a connected component
${\overline{\mathcal G}}$ of ${\mathcal K}_m^{\bf I}$ and a vertex $\sigma_0$ in $\overline{\mathcal G}$. Pick a representative $w_0$ for $\sigma_0$. $\overline{\mathcal{G}}$ is isomorphic to the connected component $\mathcal{G}$ of $\mathcal{W}^{\mathbf{I}}_m$ containing $w_0$, by Proposition~\ref{prop:pancake}. The proof follows from
the commuting diagram:
\[\begin{tikzcd}
	{\mathcal{G}} && {\mathcal{G}_1\Box\mathcal{G}_2\Box\cdots\Box\mathcal{G}_r} \\
	\\
	{\overline{\mathcal{G}}} && {\overline{\mathcal{G}}_1\Box\overline{\mathcal{G}}_2\Box\cdots\Box\overline{\mathcal{G}}_r}
	\arrow["{[-]_K}"', tail reversed, from=1-1, to=3-1]
	\arrow["{{\sf filter}_{\mathbf{I}}}", tail reversed, from=1-1, to=1-3]
	\arrow["{\overline{{\sf filter}}_{\mathbf{I}}}"', dashed, from=3-1, to=3-3]
	\arrow["\cong", from=1-1, to=3-1]
	\arrow["{[-]_K\Box[-]_K\Box\cdots\Box[-]_K}", tail reversed, from=1-3, to=3-3]
	\arrow["\cong"', from=1-3, to=3-3]
	\arrow["\cong"', from=1-1, to=1-3]
\end{tikzcd}\]
By Proposition~\ref{thm:filteredwordcrystal}, ${\sf filter}_{\mathbf{I}}|_{\mathcal{G}}$ is an isomorphism from $\mathcal{G}$ to the product graph $\mathcal{G}_1\Box\mathcal{G}_2\Box\cdots\Box\mathcal{G}_r$, where $\mathcal{G}_k$ is the connected component of $w_0^{(k)}$ in $\mathcal{W}_{[i_{k-1}+1, i_k]}$. By Proposition~\ref{prop:pancake}, taking Knuth equivalence classes of filtered words then gives a third isomorphism from $\mathcal{G}_1\Box\mathcal{G}_2\Box\cdots\Box\mathcal{G}_r$ to the quotient graph $\overline{\mathcal{G}}_1\Box\overline{\mathcal{G}}_2\Box\cdots\Box\overline{\mathcal{G}}_r$ in $\mathcal{K}_{[1, i_1]}\Box\cdots\Box\mathcal{K}_{[i_{r-1}+1, m]}$. 
\end{proof}

\begin{definition}
    The \emph{$({\bf I}|{\bf J})$-filtered matrix bicrystal graph} $\mathcal{M}_{m, n}^{{\bf I}|{\bf J}}$ is the subgraph of $\mathcal{M}_{m, n}$ obtained by deleting all edges $i^{\sf row}$ with $i\in \mathbf{I}$ and $j^{\sf col}$ with $j\in \mathbf{J}$.
\end{definition}

\begin{definition}
    An \emph{(${\bf I}$-filtered) partition-tuple} is a sequence of partitions $\underline\lambda = (\lambda^{(1)},\dots, \lambda^{(r)})$, where each $\lambda^{(k)}$ has at most $i_k-i_{k-1}$ rows. An \emph{(${\bf I}$-filtered) tableau-tuple} of \emph{shape} $\underline\lambda$ is an ordered sequence of tableaux $\underline T = (T^{(1)},\ldots, T^{(r)})$ with each 
    $T^{(k)}\in {\sf SSYT}(\lambda^{(k)}, [i_{k-1}+1, i_k])$.
\end{definition}

\begin{definition}
    Let $\underline\lambda = (\lambda^{(1)},\dots, \lambda^{(r)})$ be an $\mathbf{I}$-filtered partition-tuple. The \emph{${\bf I}$-filtered tableau crystal graph} $\mathcal{B}^{\bf I}_{\underline\lambda, m}$ is the Cartesian product $\mathcal{B}_{\lambda^{(1)}, [1, i_1]}\Box\mathcal{B}_{\lambda^{(2)}, [i_1+1, i_2]}\Box\cdots\Box\mathcal{B}_{\lambda^{(r)}, [i_{r-1}+1, m]}$.
\end{definition}

We can now show that the map ${\sf filterRSK}_{{\mathbf I}|{\mathbf J}}$ of Definition~\ref{alg:filteredrsk} is a local isomorphism.

\begin{proposition}\label{thm:filtercrystalequiv}
    Let ${\bf I}$ and ${\bf J}$ be as in (\ref{eqn:twointseq}). Then the map
    \[{\sf filterRSK}_{\mathbf{I}|\mathbf{J}}:\mathcal{M}^{\mathbf{I}|\mathbf{J}}_{m, n}\to \bigoplus_{\underline\lambda, \underline\mu}(\mathcal{B}_{\underline\lambda, m}\Box\mathcal{B}_{\underline\mu, n}),\]
    given on vertices by mapping $M\mapsto{\sf filterRSK}_{\mathbf{I}|\mathbf{J}}(M)$, is a local isomorphism.
\end{proposition}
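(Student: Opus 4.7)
The plan is to exhibit ${\sf filterRSK}_{\mathbf{I}|\mathbf{J}}$ as a composition of local isomorphisms already proven in the paper, and then appeal to parts (II) and (III) of Lemma~\ref{lemma:localisoprops} to conclude. Specifically, I would show that ${\sf filterRSK}_{\mathbf{I}|\mathbf{J}}$ factors as
\[\mathcal{M}^{\mathbf{I}|\mathbf{J}}_{m, n} \xrightarrow{\ {\sf biword}\ } \mathcal{K}^{\mathbf I}_m\Box\mathcal{K}^{\mathbf J}_n \xrightarrow{\ \overline{\sf filter}_{\mathbf I}\Box\,\overline{\sf filter}_{\mathbf J}\ } \left(\Box_{k}\mathcal{K}_{[i_{k-1}+1, i_k]}\right)\Box\left(\Box_\ell\mathcal{K}_{[j_{\ell-1}+1, j_\ell]}\right) \xrightarrow{\ \Box_k\overline{\sf tab}\,\Box\,\Box_\ell\overline{\sf tab}\ } \bigoplus_{\underline\lambda,\underline\mu}(\mathcal{B}_{\underline\lambda, m}\Box\mathcal{B}_{\underline\mu, n}).\]

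The first step is to check that ${\sf biword}$ of Proposition~\ref{thm:biwordcrystalequiv} restricts to a local isomorphism on the filtered subgraphs. The filtered graphs $\mathcal{M}^{\mathbf{I}|\mathbf{J}}_{m,n}$ and $\mathcal{K}^{\mathbf{I}}_m\Box\mathcal{K}^{\mathbf{J}}_n$ are obtained from $\mathcal{M}_{m,n}$ and $\mathcal{K}_m\Box\mathcal{K}_n$ by deleting precisely the same labeled edges (namely, $i^{\sf row}$ for $i\in\mathbf{I}$ and $j^{\sf col}$ for $j\in\mathbf{J}$). Since ${\sf biword}$ is a local isomorphism preserving edge labels, the restriction to corresponding edge-deleted subgraphs remains a local isomorphism on each connected component.

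The second map is a local isomorphism by Corollary~\ref{cor:filterknuthequiv} applied to each factor, combined with Lemma~\ref{lemma:localisoprops}(III). Similarly, the third map is a local isomorphism by Corollary~\ref{cor:tablocaliso} applied factor-wise together with Lemma~\ref{lemma:localisoprops}(III); here the direct sums over partitions distribute across the Cartesian products to yield the target $\bigoplus_{\underline\lambda,\underline\mu}(\mathcal{B}_{\underline\lambda, m}\Box\mathcal{B}_{\underline\mu, n})$. Composing these three maps gives a local isomorphism by Lemma~\ref{lemma:localisoprops}(II).

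Finally, I would verify that this composition agrees with ${\sf filterRSK}_{\mathbf{I}|\mathbf{J}}$ on vertices: starting from $M$, we get $([{\sf row}(M)]_K\,|\,[{\sf col}(M)]_K)$, then $([{\sf filter}_{\mathbf{I}}({\sf row}(M))]_K\,|\,[{\sf filter}_{\mathbf{J}}({\sf col}(M))]_K)$ componentwise, then $({\sf tab}_{\mathbf{I}}({\sf row}(M))\,|\,{\sf tab}_{\mathbf{J}}({\sf col}(M)))$, which matches ${\sf filterRSK}_{\mathbf{I}|\mathbf{J}}(M)$. The main subtlety is the first step: one must be careful that deleting edges labeled by $\mathbf{I}$ in the source exactly corresponds under ${\sf biword}$ to deleting edges labeled by $\mathbf{I}$ in $\mathcal{K}_m$ (and similarly for $\mathbf{J}$), so that no connected component is inadvertently split differently on the two sides. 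This compatibility, however, is built into the definitions of the filtered graphs and the edge-label preservation of ${\sf biword}$, so it poses no genuine obstacle.
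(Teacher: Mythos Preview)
Your proposal is correct and follows essentially the same approach as the paper's own proof: both factor ${\sf filterRSK}_{\mathbf{I}|\mathbf{J}}$ as ${\sf biword}$ (restricted to the filtered graphs), followed by $\overline{\sf filter}_{\mathbf{I}}\Box\overline{\sf filter}_{\mathbf{J}}$, followed by $\overline{\sf tab}$ applied factorwise, and then invoke Lemma~\ref{lemma:localisoprops}(II),(III). The paper likewise flags the first step as the only subtlety and disposes of it exactly as you do, by noting that the filtered graphs on both sides are obtained by deleting the same set of edge labels.
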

\begin{proof}
    By Proposition~\ref{thm:biwordcrystalequiv}$, {\sf biword}: \mathcal{M}_{m, n}\to \mathcal{K}_m\Box\mathcal{K}_n$ is a local isomorphism. It follows immediately (from the proof) that ${\sf biword}: \mathcal{M}^{\mathbf{I}|\mathbf{J}}_{m, n}\to \mathcal{K}^{\mathbf{I}}_m\Box\mathcal{K}^{\mathbf{J}}_n$ (defined the same on vertices) is also a local isomorphism, since on both the domain and codomain we are simply deleting all edges with labels in $\mathbf{I}$ or $\mathbf{J}$.
    
    We may then understand ${\sf filterRSK}_{\mathbf{I}|\mathbf{J}}$ as a composition of the maps ${\sf biword}: \mathcal{M}^{\mathbf{I}|\mathbf{J}}_{m, n}\to \mathcal{K}^{\mathbf{I}}_m\Box\mathcal{K}^{\mathbf{J}}_n$, ${\overline {\sf filter}}_{\mathbf{I}}: \mathcal{K}^{\mathbf{I}}_m \to \mathcal{K}_{[1, i_1]}\Box \mathcal{K}_{[i_1+1, i_2]}\Box\cdots\Box\mathcal{K}_{[i_{r-1}+1, m]}$, and ${\overline {\sf tab}}:\mathcal{K}_{[a, b]}\to \bigoplus_\lambda \mathcal{B}_{\lambda, [a, b]}$. These three maps are local isomorphisms by the previous paragraph, Corollary~\ref{cor:filterknuthequiv}, and Corollary~\ref{cor:tablocaliso} respectively. Now apply Lemma~\ref{lemma:localisoprops}, parts (II) and (III). 
\end{proof}

\begin{corollary}\label{cor:highestweightmatrix}
    Any connected component $\mathcal{G}$ of $\mathcal{M}^{\mathbf{I}|\mathbf{J}}_{m, n}$ has a unique source vertex $M$ (i.e., a vertex of highest-weight). Moreover, ${\sf filterRSK}_{\mathbf{I}|\mathbf{J}}(M) = (T_{\underline\lambda}|T_{\underline\mu})$ for some partition-tuples $\underline\lambda$ and $\underline\mu$, and then $\mathcal{G}\cong \mathcal{B}_{\underline\lambda, m}\Box\mathcal{B}_{\underline\mu, n}$.
\end{corollary}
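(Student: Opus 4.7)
The plan is to derive this corollary directly from Proposition~\ref{thm:filtercrystalequiv}. Since ${\sf filterRSK}_{\mathbf{I}|\mathbf{J}}$ is a local isomorphism, its restriction to any connected component $\mathcal{G}$ of $\mathcal{M}^{\mathbf{I}|\mathbf{J}}_{m, n}$ is a pre-crystal graph isomorphism onto a connected component $\mathcal{C}$ of the codomain. A connected component of a direct sum lies entirely in a single summand, so $\mathcal{C}$ is a connected component of $\mathcal{B}_{\underline\lambda, m}\Box\mathcal{B}_{\underline\mu, n}$ for a specific choice of $\underline\lambda$ and $\underline\mu$. All three claims will follow once we identify $\mathcal{C}$ as the entirety of this Cartesian product and locate its unique source.

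First I would verify that $\mathcal{B}_{\underline\lambda, m}\Box\mathcal{B}_{\underline\mu, n}$ is connected with unique source $(T_{\underline\lambda}|T_{\underline\mu})$. By Theorem~\ref{thm:crystalgraph}, each individual factor $\mathcal{B}_{\lambda^{(k)}, [i_{k-1}+1, i_k]}$ and $\mathcal{B}_{\mu^{(k)}, [j_{k-1}+1, j_k]}$ is connected, with unique source given by the corresponding highest-weight tableau. Because edge-label sets are taken as a disjoint union in the Cartesian product (Definition~\ref{def:cartprodcrystal}), an edge of $\mathcal{B}_{\underline\lambda, m}\Box\mathcal{B}_{\underline\mu, n}$ modifies exactly one tensor factor, so connectedness of the product reduces to connectedness of each factor. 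Similarly, a vertex $(\underline T|\underline U)$ has no incoming edge iff each component tableau is a source in its own factor, which by Theorem~\ref{thm:crystalgraph} forces $\underline T = T_{\underline\lambda}$ and $\underline U = T_{\underline\mu}$. Hence $\mathcal{B}_{\underline\lambda, m}\Box\mathcal{B}_{\underline\mu, n}$ is itself connected with the claimed unique source.

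Combining these observations, $\mathcal{C} = \mathcal{B}_{\underline\lambda, m}\Box\mathcal{B}_{\underline\mu, n}$. Since pre-crystal graph isomorphisms preserve adjacency and edge direction (Definition~\ref{def:crystalmap}), they carry sources to sources and reflect uniqueness. Therefore $\mathcal{G}$ has a unique source $M := ({\sf filterRSK}_{\mathbf{I}|\mathbf{J}}|_{\mathcal{G}})^{-1}(T_{\underline\lambda}|T_{\underline\mu})$, it maps to $(T_{\underline\lambda}|T_{\underline\mu})$ under ${\sf filterRSK}_{\mathbf{I}|\mathbf{J}}$, and $\mathcal{G}\cong \mathcal{B}_{\underline\lambda, m}\Box\mathcal{B}_{\underline\mu, n}$, as required. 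No substantive obstacle arises; the only point demanding care is the verification that Cartesian products of pre-crystal graphs with disjoint label sets inherit both connectedness and the source-by-coordinates property from their factors.
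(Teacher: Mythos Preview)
Your proof is correct and follows essentially the same approach as the paper: invoke Proposition~\ref{thm:filtercrystalequiv} to obtain a pre-crystal graph isomorphism from $\mathcal{G}$ onto a connected component of some $\mathcal{B}_{\underline\lambda, m}\Box\mathcal{B}_{\underline\mu, n}$, then use Theorem~\ref{thm:crystalgraph} together with the fact that sources in a Cartesian product are products of sources in the factors. The paper's version is terser, but you have simply made explicit the connectedness and source-by-coordinates verifications that the paper leaves to the reader.
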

\begin{proof}
    By Proposition~\ref{thm:filtercrystalequiv}, ${\sf filterRSK}$ restricts to an isomorphism $\mathcal{G}\cong\mathcal{B}_{\underline\lambda, m}\Box\mathcal{B}_{\underline\mu, n}$ for some $\underline\lambda$ and $\underline\mu$. Now apply Theorem~\ref{thm:crystalgraph} along with the fact that any source vertex in a Cartesian product of graphs is given by a product of source vertices in each factor.
\end{proof}

\begin{theorem}[Bicrystal categorification of Theorem~\ref{thm:main}]\label{cor:filterRSKgraphiso}
If ${\mathfrak X}\subseteq {\sf Mat}_{m,n}$ is ${\bf L}_{\bf I|\bf J}$-bicrystal closed, then 
the monomials in ${\sf Std}_<(S/I(\mathfrak{X}))$ (interpreted as matrices) are vertices of an induced pre-crystal subgraph ${\mathcal S}_{\mathfrak X}$ of 
${\mathcal M}_{m,n}^{\bf I|\bf J}$. Now ${\sf filterRSK}_{{\bf I}|{\bf J}}$ gives a pre-crystal graph
isomorphism
\begin{equation}\label{eqn:Feb28fff}
{\mathcal S}_{\mathfrak X}\cong
\bigoplus_{\underline\lambda|\underline\mu}(\mathcal{B}_{\underline\lambda, m}\Box\mathcal{B}_{\underline\mu, n})^{\oplus c_{\underline\lambda|\underline\mu}^{\mathfrak X}},
\end{equation}
where the sum is over all partition-tuples $(\underline\lambda|\underline\mu)$. The multiplicities
$c_{\underline\lambda|\underline\mu}^{\mathfrak X}$ counts highest-weight matrices $M\in {\sf Std}_<(S/I(\mathfrak{X}))$
of shape $(\underline\lambda|\underline\mu)$.
\end{theorem}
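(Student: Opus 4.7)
The plan is to derive Theorem~\ref{cor:filterRSKgraphiso} by applying the structural results about local pre-crystal graph isomorphisms already assembled in Sections~\ref{subsec:graphs} and~\ref{subsec:filterings}, together with the bicrystal-closed hypothesis on ${\sf Std}_<(S/I(\mathfrak{X}))$.

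First I would unpack what bicrystal closure means graph-theoretically. The definition says precisely that the set of standard monomials (viewed as matrices) is closed under every bicrystal operator $e_i^{\sf row}, f_i^{\sf row}$ with $i\notin \mathbf{I}$ and $e_j^{\sf col}, f_j^{\sf col}$ with $j\notin \mathbf{J}$, since any ``output'' either lies in ${\sf Std}_<(S/I(\mathfrak{X}))$ or equals $\varnothing$. These are exactly the edges of $\mathcal{M}_{m,n}^{\mathbf{I}|\mathbf{J}}$, so $\mathcal{S}_{\mathfrak X}$ is an induced subgraph closed under all raising and lowering operators. Thus $\mathcal{S}_{\mathfrak X}$ is a union of connected components of $\mathcal{M}_{m,n}^{\mathbf{I}|\mathbf{J}}$.

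Next, I would invoke Proposition~\ref{thm:filtercrystalequiv}, which states that
\[
{\sf filterRSK}_{\mathbf{I}|\mathbf{J}}:\mathcal{M}_{m,n}^{\mathbf{I}|\mathbf{J}}\to \bigoplus_{\underline\lambda,\underline\mu}\bigl(\mathcal{B}_{\underline\lambda,m}\Box\mathcal{B}_{\underline\mu,n}\bigr)
\]
is a local isomorphism. Restricting a local isomorphism to any union of connected components of its domain still yields a local isomorphism, so ${\sf filterRSK}_{\mathbf{I}|\mathbf{J}}|_{\mathcal{S}_{\mathfrak X}}$ is a local isomorphism onto its image. Each $\mathcal{B}_{\lambda,[a,b]}$ is connected by Theorem~\ref{thm:crystalgraph}, and a Cartesian product of connected pre-crystal graphs is connected, so the connected components of the codomain are exactly the individual factors $\mathcal{B}_{\underline\lambda,m}\Box\mathcal{B}_{\underline\mu,n}$. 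Lemma~\ref{lemma:localisoprops}(I) then immediately gives a decomposition
\[
\mathcal{S}_{\mathfrak X}\ \cong\ \bigoplus_{\underline\lambda|\underline\mu}\bigl(\mathcal{B}_{\underline\lambda,m}\Box\mathcal{B}_{\underline\mu,n}\bigr)^{\oplus m_{\underline\lambda|\underline\mu}},
\]
where $m_{\underline\lambda|\underline\mu}$ counts connected components of $\mathcal{S}_{\mathfrak X}$ that map isomorphically onto $\mathcal{B}_{\underline\lambda,m}\Box\mathcal{B}_{\underline\mu,n}$.

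Finally, to identify $m_{\underline\lambda|\underline\mu}$ with $c^{\mathfrak X}_{\underline\lambda|\underline\mu}$, I would use Corollary~\ref{cor:highestweightmatrix}: each connected component of $\mathcal{M}_{m,n}^{\mathbf{I}|\mathbf{J}}$ (hence of $\mathcal{S}_{\mathfrak X}$) has a unique source vertex $M$, and ${\sf filterRSK}_{\mathbf{I}|\mathbf{J}}(M)$ is a pair of highest-weight tableau-tuples $(T_{\underline\lambda}|T_{\underline\mu})$ determining the component up to isomorphism. Consequently $m_{\underline\lambda|\underline\mu}$ equals the number of highest-weight vertices of $\mathcal{S}_{\mathfrak X}$ sent by ${\sf filterRSK}_{\mathbf{I}|\mathbf{J}}$ to $(T_{\underline\lambda}|T_{\underline\mu})$, which is precisely $c^{\mathfrak X}_{\underline\lambda|\underline\mu}$ as defined in Theorem~\ref{thm:main}. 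The only subtlety worth double-checking is that ``highest-weight matrix'' in the statement of the theorem means $e_i^{\sf row}(M)=e_j^{\sf col}(M)=\varnothing$ for all $i\notin\mathbf I$, $j\notin\mathbf J$ (i.e., a source in $\mathcal{M}_{m,n}^{\mathbf{I}|\mathbf{J}}$), which agrees with the condition that ${\sf filterRSK}_{\mathbf{I}|\mathbf{J}}(M)=(T_{\underline\lambda}|T_{\underline\mu})$ since the right-hand side is the unique source of its connected component. Once this identification is made, \eqref{eqn:Feb28fff} follows. The only genuine content is the closure step; everything afterward is a bookkeeping application of the machinery already proved.
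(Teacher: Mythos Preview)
Your proposal is correct and follows essentially the same approach as the paper: both argue that bicrystal closure makes $\mathcal{S}_{\mathfrak X}$ a union of connected components of $\mathcal{M}_{m,n}^{\mathbf{I}|\mathbf{J}}$, then invoke Proposition~\ref{thm:filtercrystalequiv}, Lemma~\ref{lemma:localisoprops}(I), and Corollary~\ref{cor:highestweightmatrix} in the same way. Your write-up is more explicit than the paper's terse proof (e.g., spelling out why the codomain components are the individual $\mathcal{B}_{\underline\lambda,m}\Box\mathcal{B}_{\underline\mu,n}$ and verifying the ``highest-weight matrix'' convention), but there is no substantive difference in strategy.
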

\begin{proof}
The first sentence is by the definition of ${\mathfrak X}$ being ${\bf L}_{\bf I|\bf J}$-bicrystal closed.
    Any pre-crystal graph is the disjoint union of its connected components; the hypothesis of being
    ${\bf L}_{\bf I|\bf J}$-bicrystal closed says that a connected component ${\mathcal G}$ of ${\mathcal S}_{\mathfrak X}$
    is a connected component  of ${\mathcal M}_{m,n}^{\bf I|\bf J}$.
Therefore, we are done by Proposition~\ref{thm:filtercrystalequiv}, Lemma~\ref{lemma:localisoprops}(I),
and Corollary~\ref{cor:highestweightmatrix}.
    \end{proof}

\noindent
\emph{Proof of Theorem \ref{thm:main}:} Section~\ref{sec:reptheory} and specifically
Proposition~\ref{thm:bigsummary} implies (\ref{eqn:char=hilb}), since each standard monomial ${\sf m}$ spans
a weight space with weight ${\sf wt}({\sf m})$ with respect to the algebraic torus action
of $\mathbf{T}=({\mathbb C}^\star)^m\times ({\mathbb C}^\star)^n$ (see Section~\ref{subsec:gln}). 
Now, the result follows from
Theorem~\ref{cor:filterRSKgraphiso} 
combined with the hypothesis that ${\mathfrak X}$ is ${\bf L}_{{\bf I}|{\bf J}}$-bicrystalline.
Specifically, in that theorem, the pre-crystal graph isomorphism is, by definition, weight-preserving. These
weights agree with ${\sf wt}_{\mathbf{T}}({\sf m})$ and 
the weight of semistandard tableau in its contribution to a Schur polynomial. Thus the 
equality in \eqref{eqn:themainequality} follows from the isomorphism.
\qed

\section{Main application: Matrix Schubert varieties} \label{sec:MSVs}

\subsection{Matrix Schubert varieties}\label{subsec:MSVs}

Let $B_k\subset GL_k$ denote the Borel group of lower triangular matrices. Then $\mathbf{B} = B_m\times B_n$ acts on ${\sf Mat}_{m,n}$ on the right by 
$M\cdot (b_1,b_2)=b_1^{-1}M (b_2^{-1})^{T}$. We recount the construction of all $\mathbf{B}$-stable varieties in ${\sf Mat}_{m, n}$ due to Fulton \cite{Fulton:duke}.

\begin{definition}
    A \emph{partial permutation matrix} $M_w\in{\sf Mat}_{m, n}$ is a matrix with at most one $1$ in each row and column and $0$s everywhere else. The indexing \emph{partial permutation} is a function $w:[m]\to[n]\cup\{\infty\}$, where $w(i) = j\in[n]$ if $M_w$ has a $1$ in position $(i, j)$ and $w(i) = \infty$ if $M_w$ has no $1$ in row $i$. The \emph{rank function} for $M_w$ is denoted $r_w:[m]\times[n]\to \Z_{\geq 0}$; it maps $(i, j)$ to the rank of the northwest $i\times j$ submatrix of $M_w$.
\end{definition}

\begin{theorem}[{\cite[Lemma 3.1]{Fulton:duke}}]\label{thm:Mar6abc}
    Every $\mathbf{B}$-orbit in ${\sf Mat}_{m, n}$ contains a unique $M_w$.
\end{theorem}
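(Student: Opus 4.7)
The plan is to prove existence and uniqueness of the orbit representative $M_w$ separately, using the rank function $r_M(i,j)$ of the northwest $i\times j$ submatrix of $M$ as a complete $\mathbf{B}$-invariant. For uniqueness, I would first check that $r_M$ is constant on $\mathbf{B}$-orbits. Since $b_1^{-1}$ is lower triangular, its northwest $i\times i$ block is invertible, so left multiplication by $b_1^{-1}$ preserves the rank of every northwest $i\times j$ submatrix; an analogous argument applies to right multiplication by the upper triangular $(b_2^{-1})^T$. Then $r_{M_w}$ recovers $w$ via the corner difference
\[
r_w(i,j) - r_w(i-1,j) - r_w(i,j-1) + r_w(i-1,j-1),
\]
which equals $1$ if $w(i) = j$ and $0$ otherwise (since $r_w(i,j) = \#\{k \leq i : w(k) \leq j\}$); in particular $w(i) = \infty$ exactly when $r_w(i,n) = r_w(i-1,n)$. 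Distinct partial permutations therefore lie in distinct $\mathbf{B}$-orbits.

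For existence, I would read off what the $\mathbf{B}$-action allows: left multiplication by a lower triangular matrix rescales rows and adds multiples of earlier rows to later rows, while right multiplication by an upper triangular matrix rescales columns and adds multiples of earlier columns to later columns. I would run a Gaussian-elimination sweep through columns $j = 1, 2, \ldots, n$: at column $j$, pick the topmost row $i$ with $M_{ij} \neq 0$ (skip the column if none exists); rescale $M_{ij}$ to $1$, use row $i$ to clear the entries below it in column $j$, and use column $j$ to clear the entries to the right of it in row $i$. Both clearing moves are permitted by the shape of the action.

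The main obstacle is verifying that the column-clearing step does not disturb previously placed pivots or spoil the inductive hypothesis for later columns. After row-clearing inside column $j$, that column has a single $1$ at $(i,j)$ and zeros elsewhere: the zeros above at any earlier pivot row $i_k$ (with $k<j$) follow by induction because the row-clearing at column $k$ zeroed out row $i_k$ to the right of column $k$, and this includes column $j$. Consequently adding a multiple of column $j$ to any later column $j' > j$ only alters the entry in row $i$, leaving every earlier pivot position $(i_k, k)$ and its neighbouring zeros intact. The output has at most one nonzero entry (a $1$) per row and column, so it equals $M_w$ for some partial permutation $w$, completing the proof.
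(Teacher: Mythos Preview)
The paper does not give its own proof of this statement; it is quoted as \cite[Lemma~3.1]{Fulton:duke} and used as a black box. Your argument is correct and is essentially the standard one that appears in Fulton's paper: $\mathbf{B}$-invariance of the northwest rank function gives uniqueness, and a restricted Gaussian elimination (downward row operations, rightward column operations) gives existence. There is nothing to compare against in the present paper, but your proposal would serve perfectly well as a self-contained proof here.
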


\begin{defthm}[{\cite[Proposition 3.3]{Fulton:duke}}]\label{thm:MSV}
   The \emph{matrix Schubert variety} ${\mathfrak X}_w\subseteq{\sf Mat}_{m, n}$ is the Zariski closure of the orbit $M_w\cdot\mathbf{B}$. Each ${\mathfrak X}_w$ is irreducible.
\end{defthm} 

The following immediate consequence of Definition-Theorem~\ref{thm:MSV} is known:

\begin{corollary}\label{thm:borelunion}
    Any $\mathbf{B}$-stable variety $\mathfrak{X}\subseteq{\sf Mat}_{m, n}$ is a finite union of matrix Schubert varieties.
\end{corollary}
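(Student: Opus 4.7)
The plan is to combine Theorem~\ref{thm:Mar6abc} with the (very finite) enumeration of partial permutations. Since $\mathfrak{X}$ is $\mathbf{B}$-stable, it is a disjoint union of complete $\mathbf{B}$-orbits in ${\sf Mat}_{m,n}$, and Theorem~\ref{thm:Mar6abc} labels each such orbit by the unique partial permutation matrix $M_w$ it contains. Setting $S = \{w : M_w \in \mathfrak{X}\}$, we thus obtain
\[
\mathfrak{X} = \bigcup_{w \in S} \mathbf{B}\cdot M_w.
\]

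Next I would use that $\mathfrak{X}$ is Zariski closed: for each $w \in S$, the inclusion $\mathbf{B}\cdot M_w \subseteq \mathfrak{X}$ passes to Zariski closures to give $\mathfrak{X}_w = \overline{\mathbf{B}\cdot M_w} \subseteq \overline{\mathfrak{X}} = \mathfrak{X}$. Combined with the trivial inclusion $\mathbf{B}\cdot M_w \subseteq \mathfrak{X}_w$, this upgrades the orbit decomposition to the closed decomposition
\[
\mathfrak{X} = \bigcup_{w \in S} \mathfrak{X}_w,
\]
expressing $\mathfrak{X}$ as a union of matrix Schubert varieties.

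Finiteness of this union is immediate: the number of partial permutation matrices in ${\sf Mat}_{m,n}$ is bounded by $\sum_{k=0}^{\min(m,n)} \binom{m}{k}\binom{n}{k}k!$, so $|S|$ is automatically finite. (If one preferred, finiteness could alternatively be deduced from ${\sf Mat}_{m,n}$ being a Noetherian topological space by passing to the irreducible components of $\mathfrak{X}$ and noting that each is $\mathbf{B}$-stable, hence contains a dense orbit $\mathbf{B}\cdot M_w$ and therefore equals $\mathfrak{X}_w$; but this is overkill here.) There is no substantive obstacle: the argument is bookkeeping on top of Fulton's orbit classification, and the only step worth even a moment's care is the passage from the orbit decomposition to the closed decomposition, which rests solely on $\mathfrak{X}$ being Zariski closed.
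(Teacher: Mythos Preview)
Your argument is correct and is essentially the same as the paper's: both use Theorem~\ref{thm:Mar6abc} to decompose $\mathfrak{X}$ into $\mathbf{B}$-orbits indexed by partial permutations, pass to closures using Definition-Theorem~\ref{thm:MSV}, and note that partial permutations are finite in number. The paper's proof is a two-sentence sketch of exactly this; you have simply written out the closure step carefully.
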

\begin{proof}
There are only finitely many partial permutation matrices in ${\sf Mat}_{m, n}$. Now apply Theorem~\ref{thm:Mar6abc}
and Definition-Theorem~\ref{thm:MSV}.
\end{proof}

In light of Corollary~\ref{thm:borelunion}, we restrict our attention to matrix Schubert varieties $\mathfrak{X}_w\subseteq{\sf Mat}_{m, n}$. Equations for these varieties were given in \cite{Fulton:duke}. We recall the standard permutation combinatorics needed to describe them.

The \emph{graph} of a partial permutation matrix $M_w$ is an $m\times n$ grid with a $\bullet$ symbol in the entries where $M_w$ has a $1$ and blank spaces elsewhere. The \emph{Rothe diagram} of $M_w$, denoted $D(w)$, consists of all boxes in $[m]\times[n]$ not weakly below or right of a $\bullet$ in the graph of $M_w$. 
The \emph{essential set} $E(w)$ of $M_w$ is comprised of the maximally southeast boxes of each connected component of $D(w)$, i.e.,
$E(w)=\{(i,j)\in D(w): (i,j+1), (i+1,j)\not\in D(w)\}$.
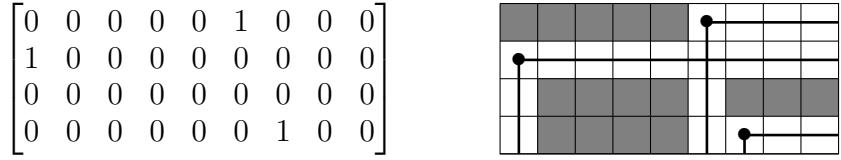
\begin{figure}
\centering
\begin{tikzpicture}[scale = 0.5]
    \fill [gray, opacity  = 0.25] (0,4) rectangle (5,3);
    \fill [gray, opacity  = 0.25] (1,2) rectangle (5,0);
    \fill [gray, opacity  = 0.25] (6,2) rectangle (9,1);
    \draw (0,4)--(9,4)--(9,0)--(0,0)--(0,4);
    \draw (1,0) -- (1,4);
    \draw (2,0) -- (2,4);
    \draw (3,0) -- (3,4);
    \draw (4,0) -- (4,4);
    \draw (5,0) -- (5,4);
    \draw (6,0) -- (6,4);
    \draw (7,0) -- (7,4);
    \draw (8,0) -- (8,4);
    \draw (0,0) -- (9,0);
    \draw (0,1) -- (9,1);
    \draw (0,2) -- (9,2);
    \draw (0,3) -- (9,3);
 
    \node at (-8,2) {$\left[\begin{matrix}
0 & 0 & 0 & 0 & 0 & 1 & 0 & 0 & 0\\
1 & 0 & 0 & 0 & 0 & 0 & 0 & 0 & 0\\
0 & 0 & 0 & 0 & 0 & 0 & 0 & 0 & 0\\
0 & 0 & 0 & 0 & 0 & 0 & 1 & 0 & 0\\
\end{matrix}\right]$};

    \node at (5.5,3.5) {$\bullet$};
    \node at (0.5,2.5) {$\bullet$};
    \node at (6.5,0.5) {$\bullet$};
    \draw[line width = 0.35mm] (5.5,0) -- (5.5,3.5) -- (9,3.5);
    \draw[line width = 0.35mm] (0.5,0) -- (0.5,2.5) -- (9,2.5);
    \draw[line width = 0.35mm] (6.5,0) -- (6.5,0.5) -- (9,0.5);
    \end{tikzpicture}
    \caption{The partial permutation matrix $M_w$ for $w = {61\infty 7}$ and its Rothe diagram; the boxes of $D(w)$ are shaded.
    \label{fig:61infinity7}}
\end{figure}
\begin{definition}
    The \emph{Coxeter length} of $w$ is the size of its Rothe diagram, $\ell(w):=\#\{D(w)\}$.
    The \emph{row descent} positions of $M_w$, denoted $\desc_{\sf row}(w)$, consists of row indices $i$ such that the rightmost box in row $i$ of $D(w)$ is strictly right of the rightmost box in row $i+1$. Similarly, the \emph{column descent} positions of $M_w$ is the set $\desc_{\sf col}(w)$ of column indices $j$ such that the bottom-most box in row $j$ of $D(w)$ is strictly below the bottom-most box in row $j+1$.
\end{definition}

\begin{remark}\label{remark:wbar}
    By \cite[Equation 3.4]{Fulton:duke}, any $M_w\in{\sf Mat}_{m, n}$ is indexed by a unique permutation $\overline{w}\in {\mathfrak S}_{m+n}$ with $\ell({\overline w})=\ell(w)$. Indeed, $M_w$ is the northwest $m\times n$ submatrix of the \emph{permutation matrix} in ${\sf Mat}_{m+n, m+n}$ with $1$s in positions $(i, {\overline w}(i))$ and $0$s elsewhere. The definitions above for partial permutation matrices agree with the corresponding definitions for the indexing permutation $\overline{w}$. In particular, $\desc_{\sf row}(w) = \desc(\overline{w})$ and $\desc_{\sf col}(w) = \desc(\overline{w}\inv)$.
\end{remark}

\begin{example}
Figure~\ref{fig:61infinity7} depicts $M_w$ and $D(w)$ for $w=61\infty 7$. Here, $\ell(w)=16$, 
$\desc_{\sf row}(w)=\{1,3\}$ whereas $\desc_{\sf col}(w)=\{5\}$. Referring to Remark~\ref{remark:wbar},
${\overline w}=61(10)7234589(11)(12)(13)$.
\end{example}

We now state a concrete description of matrix Schubert varieties.
Make the identification $\C[{\sf Mat}_{m, n}]=\C[z_{ij}:1\leq i\leq m,1\leq j\leq n]$ where $z_{ij}$ is the $(i,j)$-coordinate function.

\begin{theorem}[{\cite[Proposition~3.3]{Fulton:duke}}]\label{theorem:Fultonsresult}
    ${\mathfrak X}_w\subseteq {\sf Mat}_{m, n}$ is the set of $m\times n$ matrices $M$ such that the rank of the northwest $i\times j$ submatrix of $M$ has rank $\leq r_w(i, j)$.

    Let $Z=(z_{ij})_{1\leq i\leq m, 1\leq j\leq n}$ be the generic $m\times n$ matrix and set $Z_{ij}$ to be the northwest $i\times j$ submatrix of $Z$.
    Then the defining ideal of $\mathfrak{X}_w\subseteq{\sf Mat}_{m, n}$ is the \emph{Schubert determinantal ideal}
    \begin{equation}\label{eqn:thegenerators}
        I_w := I(\mathfrak{X}_w) = \langle \text{rank $r_w(i, j)+1$ minors of $Z_{ij}$, $(i,j)\in E(w)$}\rangle.
    \end{equation}
    This ideal is prime. Under this embedding ${\mathfrak X}_w\subseteq {\sf Mat}_{m, n}$ has codimension 
    $\ell(w)$.
\end{theorem}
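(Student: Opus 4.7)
First I would prove the set-theoretic description $\mathfrak{X}_w = \mathfrak{Y}_w$, where $\mathfrak{Y}_w := \{M : \mathrm{rank}(M_{[i],[j]}) \leq r_w(i,j) \text{ for all } (i,j)\}$. Because $b_1 \in B_m$ and $b_2 \in B_n$ are lower triangular with invertible diagonal, their top-left $i \times i$ and $j \times j$ blocks are themselves invertible, so the northwest $i \times j$ submatrix of $b_1^{-1} M (b_2^{-1})^T$ equals $(b_1^{-1})_{[i],[i]} \cdot M_{[i],[j]} \cdot ((b_2^{-1})_{[j],[j]})^T$. This shows $(i,j) \mapsto \mathrm{rank}(M_{[i],[j]})$ is $\mathbf{B}$-invariant, so $\mathfrak{Y}_w$ is a closed $\mathbf{B}$-stable variety containing $M_w$, giving $\mathfrak{X}_w \subseteq \mathfrak{Y}_w$. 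For the reverse, Corollary~\ref{thm:borelunion} decomposes $\mathfrak{Y}_w$ into matrix Schubert varieties $\mathfrak{X}_{w'}$ indexed by partial permutations $w'$ with $r_{w'} \leq r_w$ pointwise; I would produce each such $M_{w'}$ as a limit of explicit one-parameter families inside $\mathfrak{X}_w$, reducing to the standard degenerations associated with simple Bruhat covers.

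Next I would show that the essential-set minors cut out the correct vanishing locus. Let $I_w'$ be the ideal generated by rank-$(r_w(i,j)+1)$ minors at every $(i,j)$, so $V(I_w') = \mathfrak{Y}_w$ by the first step. The containment $I_w \subseteq I_w'$ gives $V(I_w) \supseteq V(I_w')$; for the reverse, I would use the characterization of $E(w)$ as the maximally-southeast boxes in the connected components of $D(w)$, combined with the principle that the rank of an $i \times j$ submatrix is bounded by the rank of any larger submatrix plus the number of added rows and columns. A short combinatorial analysis on $D(w)$ then shows that every rank condition generating $I_w'$ is implied by the essential rank conditions generating $I_w$, yielding $V(I_w) = V(I_w') = \mathfrak{X}_w$.

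The main obstacle is upgrading this to primeness of $I_w$, not merely correctness of its zero set. Irreducibility of $\mathfrak{X}_w$---hence primeness of the full vanishing ideal $I(\mathfrak{X}_w)$---is automatic from $\mathfrak{X}_w = \overline{\mathbf{B} \cdot M_w}$ and the connectedness of $\mathbf{B}$, so the substantive remaining content is radicality of $I_w$. Here I would invoke the Knutson--Miller antidiagonal Gr\"obner basis theorem (Theorem~\ref{thm:KMgrob}), which identifies $\init_<(I_w)$ with the Stanley--Reisner ideal of a shellable subword complex; such squarefree monomial ideals are radical, and the standard fact that a radical initial ideal implies the original ideal is radical transfers this back to $I_w$. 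Combined with irreducibility, this yields primeness, and the codimension $\ell(w)$ then follows either from a facet count in the subword complex or from a direct parameter count for $\mathrm{Stab}_{\mathbf{B}}(M_w)$ showing $\dim(\mathbf{B} \cdot M_w) = mn - \ell(w)$.
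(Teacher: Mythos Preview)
The paper does not prove this theorem; it is quoted as \cite[Proposition~3.3]{Fulton:duke} with no accompanying argument. So there is no ``paper's own proof'' to compare against, and your proposal is really an independent proof of a result the paper takes as input.

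That said, your strategy is \emph{not} Fulton's. Fulton's 1992 argument for primeness proceeds by a direct analysis of the coordinate ring (locally inverting a suitable minor and exhibiting the localization as a polynomial ring), with no Gr\"obner bases in sight. You instead go through the Knutson--Miller theorem (Theorem~\ref{thm:KMgrob}), which appeared more than a decade later. This is a legitimate route---indeed Knutson--Miller explicitly recover primeness of $I_w$ as a corollary of their Gr\"obner result---but you should be aware of the logical order. The standard proof of Theorem~\ref{thm:KMgrob} proceeds by a Hilbert-series sandwich
\[
J_w \subseteq \init_\prec(I_w) \subseteq \init_\prec(I(\mathfrak{X}_w)),
\]
where $J_w$ is the antidiagonal ideal; equality is forced once one knows $S/J_w$ and $S/I(\mathfrak{X}_w)$ have the same Hilbert series. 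This requires only the set-theoretic containment $I_w \subseteq I(\mathfrak{X}_w)$ and the dimension $\dim\mathfrak{X}_w = mn - \ell(w)$, both of which you supply in your first step. So there is no circularity, but you should say so explicitly rather than treating Theorem~\ref{thm:KMgrob} as a black box whose proof might already assume what you are trying to show.

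One genuine soft spot: your reverse inclusion $\mathfrak{Y}_w \subseteq \mathfrak{X}_w$ is sketched as ``produce $M_{w'}$ as a limit of one-parameter families \ldots\ reducing to simple Bruhat covers.'' This hides the real content, namely that the pointwise inequality $r_{w'} \leq r_w$ is equivalent to $w' \geq w$ in Bruhat order, and that Bruhat covers are realized by explicit degenerations inside $\mathfrak{X}_w$. Neither fact is hard, but both need to be stated and checked; the first is a combinatorial characterization of Bruhat order via rank matrices, and the second is the standard $t \to 0$ limit along a one-parameter subgroup of $\mathbf{B}$.
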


By definition, each $\mathfrak{X}_w$ is a $\mathbf{B}$-variety. They are therefore also $\mathbf{T}$-varieties by restriction. The $\mathbf{T}$-character of $\mathfrak{X}_w$ (in the guise of the multigraded Hilbert series) has been extensively studied in \cite{Knutson.Miller, KM:adv}. Our point of departure from \cite{Fulton:duke, Knutson.Miller, KM:adv} begins by observing that each $\mathfrak{X}_w$ is also a $\mathbf{L}_{{\mathbf I}|{\mathbf J}}$-variety for appropriate choices of $\mathbf{I}$ and $\mathbf{J}$.

\begin{proposition}\label{prop:leviacts}
    The matrix Schubert variety $\mathfrak{X}_w$ is $\mathbf{L}_{{\mathbf I}|{\mathbf J}}$-stable with respect to the right action $(g,g')\cdot A = g^{-1} A (g'^{-1})^T$ whenever $\desc_{{\sf row}}(w)\subset \mathbf{I}$ and $\desc_{{\sf col}}(w)\subset \mathbf{J}$.
\end{proposition}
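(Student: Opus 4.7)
The plan is to reduce $\mathbf{L}_{\mathbf{I}|\mathbf{J}}$-stability of $\mathfrak{X}_w$ to finitely many rank conditions via Fulton's Theorem~\ref{theorem:Fultonsresult}, then verify each using the block structure of $\mathbf{L}_{\mathbf{I}|\mathbf{J}}$. By that theorem, $A \in \mathfrak{X}_w$ iff the rank of every northwest $i \times j$ submatrix of $A$ with $(i,j) \in E(w)$ is at most $r_w(i,j)$, so it suffices to check this finite set of rank inequalities for $g^{-1} A (g'^{-1})^T$ whenever $A \in \mathfrak{X}_w$ and $(g,g') \in \mathbf{L}_{\mathbf{I}|\mathbf{J}}$.

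The combinatorial heart of the argument is the containment $E(w) \subseteq (\desc_{\sf row}(w) \cup \{m\}) \times (\desc_{\sf col}(w) \cup \{n\})$. Fix $(i,j) \in E(w)$ with $i < m$. From $(i,j) \in D(w)$ we have $\overline{w}(i) > j$ and $\overline{w}^{-1}(j) > i$, while $(i+1,j) \notin D(w)$ forces either $\overline{w}(i+1) \leq j$ or $\overline{w}^{-1}(j) = i+1$; in either subcase $\overline{w}(i+1) \leq j < \overline{w}(i)$, so $i$ is a descent of $\overline{w}$, i.e., $i \in \desc_{\sf row}(w)$ via Remark~\ref{remark:wbar}. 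A symmetric analysis of $(i,j+1) \notin D(w)$ when $j < n$ gives $j \in \desc_{\sf col}(w)$. Because $m$ and $n$ are always endpoints of the last blocks of $\mathbf{I}$ and $\mathbf{J}$, the descent hypotheses of the proposition give $i \in \mathbf{I}$ and $j \in \mathbf{J}$ for every $(i,j) \in E(w)$.

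The second ingredient is the observation that for $g = \mathrm{diag}(g_1, \ldots, g_r) \in L_{\mathbf{I}}$ and $i = i_k \in \mathbf{I}$, the top $i$ rows of $g^{-1} A$ are precisely $\mathrm{diag}(g_1^{-1}, \ldots, g_k^{-1})$ times the top $i$ rows of $A$; since each $g_l$ is invertible, this left multiplication preserves the rank of every northwest submatrix using only those $i$ rows. An analogous statement holds for right multiplication by $(g'^{-1})^T$ when the column cutoff sits at a boundary in $\mathbf{J}$.

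Combining these ingredients, for $A \in \mathfrak{X}_w$, $(g,g') \in \mathbf{L}_{\mathbf{I}|\mathbf{J}}$, and $(i,j) \in E(w)$, the northwest $i \times j$ submatrix of $g^{-1} A (g'^{-1})^T$ equals the northwest $i \times j$ submatrix of $A$ multiplied on the left and right by invertible matrices, so it has the same rank, still at most $r_w(i,j)$. Theorem~\ref{theorem:Fultonsresult} then yields $g^{-1} A (g'^{-1})^T \in \mathfrak{X}_w$. The main obstacle is the combinatorial containment in paragraph two; the rest reduces to the straightforward fact that block-diagonal multiplication acts invertibly on any northwest submatrix whose corner sits at a block boundary.
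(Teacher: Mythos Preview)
Your proof is correct and follows the same overall strategy as the paper: reduce to Fulton's rank conditions at essential boxes (Theorem~\ref{theorem:Fultonsresult}) and verify they are preserved by the block-diagonal Levi action. The main difference is that you isolate and prove the containment $E(w)\subseteq(\desc_{\sf row}(w)\cup\{m\})\times(\desc_{\sf col}(w)\cup\{n\})$ as a separate combinatorial lemma, whereas the paper uses this fact implicitly: after reducing to $\mathbf{I}=\{0\}\cup\desc_{\sf row}(w)\cup\{m\}$ and factoring $\mathbf{L}$ into single-block elements, the paper writes the northwest $e_1\times e_1$ action matrix as $\mathrm{diag}(I_{i_{k-1}},\tilde g^{-1},I_{e_1-i_k})$, which tacitly requires $e_1\geq i_k$ whenever $e_1>i_{k-1}$, i.e., exactly that $e_1\in\mathbf{I}$. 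Your explicit lemma makes this step transparent. A secondary difference is that you act by the full block-diagonal $g$ at once rather than factoring into single-block pieces; this is a minor streamlining, since once one knows $i\in\mathbf{I}$ the full block matrix already restricts invertibly to the top $i$ rows.
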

\begin{proof}
    By restriction, it is enough to show that ${\bf L} = \mathbf{L}_{{\mathbf I}|{\mathbf J}}$ acts on~${\mathfrak X}_w$ in the case that
    \[{\bf I} = \{0\}\cup \desc_{{\sf row}}(w)\cup \{m\}= \{i_0<i_1<\ldots<i_r\}\]
    and
    \[{\bf J}= \{0\}\cup \desc_{{\sf col}}(w)\cup \{n\} = \{j_0<j_1<\ldots<j_s\}.\] 
     Let $I\in {\bf L}$ be the identity and $A\in {\sf Mat}_{m,n}$.  Of course, $I\cdot A = A$; it is
     straightforward to check that for $(g,g'),(h,h')\in {\bf L}$,  
     $(g,g')\cdot ((h,h')\cdot A)=((h,h')\cdot (g,g'))\cdot A$,
     so one has a right action on ${\sf Mat}_{m,n}$. Clearly this matrix-multiplication action is rational.

    It remains to check that this action restricts to ${\mathfrak X}_w$. To see this, first
fix $1\leq k\leq r$ and let 
\begin{equation}\label{eqn:Feb26type1}
g = (I_{i_1 - i_0},I_{i_2-i_1},\ldots, \tilde{g},\ldots,I_{i_r - i_{r-1}}),
\end{equation}
be the block matrix where $I_{a}$ is the $a\times a$ identity matrix and $\tilde{g}\in GL_{i_k - i_{k-1}}$. Let $A\in {\mathfrak X}_w$ and, for any $(i,j)$, let $A_{(i,j)}$ denote the northwest $i\times j$ submatrix of $A$. By \eqref{eqn:thegenerators}, if suffices to show that, given ${\bf e} = (e_1,e_2)\in E(w)$, $\text{rank}(g\cdot A)_{\bf e}\leq r_w(e_1,e_2)$. Indeed, 
if $e_1 \leq i_{k-1}$, then $(g\cdot A)_e = A_e$ and the result follows. Thus, we may assume $e_1 > i_{k-1}$. Then \[(g\cdot A)_{\bf e} = \begin{bmatrix}I_{i_{k-1}} & 0 & 0\\ 0 & \tilde{g}^{-1} & 0\\ 0& 0 & I_{i_{e_1 - i_k}}\end{bmatrix} A_{\bf e} := \tilde{G}A_{\bf e},\] where $\tilde{G}$ is the indicated $e_1\times e_1$ block-diagonal matrix. Since $\tilde{G}$ has full rank,
    $$\text{rank}((g\cdot A)_{\bf e}) = \text{rank}\left(\tilde{G} A_{\bf e}\right)\leq \text{rank}(A_{\bf e})\leq r_w(e_1,e_2),$$ as desired. Similarly, fix $1\leq k\leq s$ and let 
\begin{equation}\label{eqn:Feb26type2}
g' = (I_{j_1 - j_0},\ldots, \tilde{g'},\ldots, I_{j_s - j_{s-1}}),
\end{equation}
where $\tilde{g'}\in GL_{j_k-j_{k-1}}$. If $e_2 \leq j_{k-1}$, then $(g'\cdot A)_{\bf e} = A_{\bf e}$ and the result follows. Assume $j_{k-1} < e_2$ and let \[\tilde{G'} = \begin{bmatrix}I_{j_{k-1}} & 0 & 0\\ 0 & (\tilde{g'}^{-1})^T & 0\\ 0& 0 & I_{j_{e_2 - j_k}}\end{bmatrix}.\] As before, \[\text{rank}((g\cdot A)_{\bf e}) = \text{rank}(A_{\bf e}\tilde{G'}) \leq\text{rank}(A_{\bf e})\leq r_w(e_1,e_2).\]
    
    Given  $(h,h')\in\mathbf{L}$, $h$ factorizes into a product of matrices of the form (\ref{eqn:Feb26type1}), and
    $h'$ factorizes into matrices of the form (\ref{eqn:Feb26type2}). Thus, 
    the claim the right action restricts holds.
\end{proof}

As explained in Section~\ref{subsec:stdmono} (see \eqref{eqn:Feb13abc}), by Proposition~\ref{prop:leviacts}, $\C[\mathfrak{X}_w]$ is a $\mathbf{L}_{{\mathbf I}|{\mathbf J}}$-representation for any ${\bf I}\supseteq \desc_{{\sf row}}(w)$ and ${\bf J}\supseteq \desc_{{\sf col}}(w)$. We now check that 
${\mathfrak X}_w$ is ${\bf L}_{{\bf I}|{\bf J}}$-crystal closed in order to apply Theorem~\ref{thm:main}
to compute the $\mathbf{L}_{{\mathbf I}|{\mathbf J}}$-character of $\C[\mathfrak{X}_w]$.

\subsection{\texorpdfstring{Standard monomials of $\mathfrak{X}_w$ and proof of Theorem~\ref{thm:crystalclosure}}{Standard monomials and bicrystallinity of matrix Schubert varieties}}\label{subsec:MSVstdmono}
Fix an \emph{antidiagonal term order} $\prec$ on $S=\C[z_{11},\dots, z_{mn}]$, i.e., one that picks the
antidiagonal term of a minor. One example is pure lexicographic order obtained by setting $z_{ab}\succ z_{cd}$ if $a<c$, or $a=c$ and $b>d$. This is Knutson--Miller's Gr\"obner basis theorem:

\begin{theorem}[{\cite[Theorem~B]{Knutson.Miller}}]\label{thm:KMgrob}
Fulton's generators (\ref{eqn:thegenerators}) for $I_w$ form a Gr\"obner basis with respect to $\prec$.
\end{theorem}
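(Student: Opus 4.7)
The plan is to prove this Gröbner-basis claim via a Hilbert-series comparison. Let $J_w\subseteq S$ denote the monomial ideal generated by the leading (antidiagonal) terms of Fulton's minors in \eqref{eqn:thegenerators}; tautologically $J_w\subseteq \init_\prec(I_w)$. Since the one-parameter Gröbner degeneration of $I_w$ to $\init_\prec(I_w)$ is flat, $S/I_w$ and $S/\init_\prec(I_w)$ share a $\mathbf{T}$-multigraded Hilbert series. The inclusion $J_w\subseteq \init_\prec(I_w)$ then gives a coefficient-wise inequality
\[\mathrm{Hilb}_{\mathbf{T}}(S/J_w) \geq \mathrm{Hilb}_{\mathbf{T}}(S/\init_\prec(I_w)) = \mathrm{Hilb}_{\mathbf{T}}(S/I_w),\]
and it suffices to exhibit the reverse inequality, for equality throughout will force $J_w = \init_\prec(I_w)$, which is precisely the Gröbner-basis conclusion.

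For the right-hand side, I would use Fulton's formula expressing $\mathrm{Hilb}_{\mathbf{T}}(S/I_w)$ as the double Schubert polynomial $\mathfrak{S}_{\overline w}(\mathbf{x};\mathbf{y})$ of the Coxeter permutation $\overline w\in \mathfrak{S}_{m+n}$ of Remark~\ref{remark:wbar}, divided by the canonical denominator $\prod_{i,j}(1-x_iy_j)$. For the left-hand side, $J_w$ is squarefree, so $S/J_w$ is the Stanley--Reisner ring of an explicit simplicial complex $\Delta_w$ whose minimal non-faces are the antidiagonals of Fulton's minors. Facet enumeration (Hochster-style, or direct inclusion--exclusion on minimal non-faces) identifies $\mathrm{Hilb}_{\mathbf{T}}(S/J_w)$ with the generating function, over the pipe dreams (RC-graphs) for $\overline w$, of the weight $\prod x_i y_j$ read off from the positions of the crossings.

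The main obstacle is then the resulting combinatorial identity: the pipe-dream weight generating function for $\overline w$ must equal $\mathfrak{S}_{\overline w}(\mathbf{x};\mathbf{y})$. This is the Fomin--Kirillov / Billey--Jockusch--Stanley formula for double Schubert polynomials; I would either cite it, or prove it by induction on $\ell(\overline w)$ using Monk's rule on the polynomial side and the chute/ladder-move recursion on the pipe-dream side. A more geometric alternative, perhaps cleaner conceptually, is Knutson's geometric vertex decomposition of $\mathfrak{X}_w$: slice along an appropriate hyperplane in ${\sf Mat}_{m,n}$, split $\mathfrak{X}_w$ into two lower-dimensional matrix Schubert varieties (a ``link'' and a ``projection''), verify that the Fulton generators split compatibly with respect to $\prec$, and induct on $\ell(w)$, with the base case of a single simple reflection being an immediate one-variable check.
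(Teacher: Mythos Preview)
The paper does not prove this statement: it is quoted verbatim as \cite[Theorem~B]{Knutson.Miller} and used as a black box to deduce Corollary~\ref{cor:groebner}. So there is no ``paper's own proof'' to compare against; the intended treatment is a citation.

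That said, your sketch is a faithful outline of the original Knutson--Miller argument. The Hilbert-series squeeze between $S/J_w$ and $S/I_w$, the identification of $\mathrm{Hilb}_{\mathbf{T}}(S/J_w)$ with a sum over pipe dreams via the Stanley--Reisner structure on the antidiagonal complex (what Knutson--Miller call the \emph{subword complex}), and the match with $\mathfrak{S}_{\overline w}(\mathbf{x};\mathbf{y})$ via the Fomin--Kirillov/Billey--Jockusch--Stanley formula is exactly their strategy. Your alternative inductive route via vertex decomposition is closer to the later treatment in \cite{KMY}. Either way, this is substantially more than the present paper does or needs; for the purposes of this paper a bare citation is the correct ``proof.''
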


If $g$ is a Fulton generator, we write $Z_g$ for the submatrix of $[z_{ij}]$ of which $g$ is the determinant. Any monomial ${\sf m}$ in the $z_{ij}$ can be naturally identified with a nonnegative integer matrix,
via its exponent vector. Identifying ${\sf m}$ with the corresponding matrix,
Theorem~\ref{thm:KMgrob} describes the standard monomials of ${\mathfrak X}_w$ in terms of matrices, as follows:

\begin{corollary}\label{cor:groebner}
    ${\sf m}\in {\sf Std}_{\prec}(S/I_w)$ if and only if for any Fulton generator $g=\det(Z_g)$ of $I_w$ from
    (\ref{eqn:thegenerators}), the product of the entries of ${\sf m}$ along the main antidiagonal of the submatrix $Z_g$ is $0$.
\end{corollary}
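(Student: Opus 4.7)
The plan is to chain together three essentially immediate observations. By Definition~\ref{def:stdmonomialdef}, ${\sf m}\in {\sf Std}_\prec(S/I_w)$ if and only if ${\sf m}\notin \init_\prec(I_w)$. Theorem~\ref{thm:KMgrob} asserts that Fulton's generators \eqref{eqn:thegenerators} form a Gr\"obner basis of $I_w$ with respect to $\prec$, so by the defining property of a Gr\"obner basis, $\init_\prec(I_w)$ is generated as a monomial ideal by the set $\{{\sf LT}_\prec(g) : g \text{ a Fulton generator of } I_w\}$. Hence ${\sf m}\in \init_\prec(I_w)$ if and only if ${\sf m}$ is divisible by ${\sf LT}_\prec(g)$ for at least one such generator $g$.

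Next I would identify ${\sf LT}_\prec(g)$ explicitly. Each Fulton generator $g$ is a minor of the generic matrix $Z$, and the defining feature of an antidiagonal term order is that the leading term of such a minor is, up to sign, the product of the variables along its main antidiagonal. Thus ${\sf LT}_\prec(g) = \prod_{(i,j)\in\mathrm{antidiag}(g)} z_{ij}$, a squarefree monomial. This is the step where one actually uses the specific choice of term order~$\prec$.

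Finally, I would translate the divisibility condition into the matrix language in which the corollary is phrased. Writing ${\sf m} = \prod_{i,j} z_{ij}^{a_{ij}}$ and identifying ${\sf m}$ with the matrix $[a_{ij}]\in {\sf Mat}_{m,n}(\Z_{\geq 0})$, divisibility of ${\sf m}$ by $\prod_{(i,j)\in\mathrm{antidiag}(g)} z_{ij}$ is equivalent to $a_{ij}\geq 1$ for every $(i,j)$ on the antidiagonal of $g$, which is in turn equivalent to the nonvanishing of the integer product $\prod_{(i,j)\in\mathrm{antidiag}(g)} a_{ij}$. Negating gives the stated characterization, after quantifying over all Fulton generators $g$. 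I expect no real obstacle here: the corollary is a direct reformulation of Theorem~\ref{thm:KMgrob} in the matrix bookkeeping used in the rest of the paper, and the ``main step'' is simply assembling the two inputs (Gr\"obner basis $\Rightarrow$ divisibility criterion, antidiagonal term order $\Rightarrow$ explicit leading term) and reading off the result.
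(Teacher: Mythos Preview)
Your proposal is correct and matches the paper's approach: the paper gives no explicit proof of this corollary, treating it as immediate from Theorem~\ref{thm:KMgrob} together with the definition of standard monomials and the fact that $\prec$ selects antidiagonal terms. Your write-up simply spells out those implicit steps.
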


The standard monomials of any $\mathbf{B}$-stable variety $\mathfrak{X}\subset{\sf Mat}_{m, n}$ are described in terms of the standard monomials of matrix Schubert varieties. This follows immediately from work of Knutson on Frobenius splittings \cite{Knutson}, although it is not explicitly stated there.\footnote{See also \cite{Bertiger} for Bertiger's construction of a Gr\"obner basis for arbitrary $\mathbf{B}$-stable varieties.} A more explicit statement appears in \cite[Proposition 1.4]{WeigandtASM}.

\begin{proposition}[cf.~\cite{Knutson}]\label{thm:stdmonounion}
    Let $\mathfrak{X}\subseteq{\sf Mat}_{m, n}$ be a $\mathbf{B}$-stable variety. Write $\mathfrak{X} = \bigcup_{i=1}^k \mathfrak{X}_{w^{(i)}}$ as a union of matrix Schubert varieties (by Corollary~\ref{thm:borelunion}). Then the set of standard monomials for $\mathfrak{X}$, with respect to $\prec$, is 
${\sf Std}_{\prec}({\mathfrak X})=\bigcup_{i=1}^k {\sf Std}_{\prec}(S/I_{w^{(i)}})$.    
\end{proposition}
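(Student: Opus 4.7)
The plan is to translate the claim into an equality of initial ideals and then invoke Knutson's Frobenius splitting of matrix Schubert varieties from \cite{Knutson}. Set $I := I(\mathfrak{X}) = \bigcap_{i=1}^k I_{w^{(i)}}$; the latter equality holds because the ideal of a union of affine subvarieties is the intersection of their ideals. A monomial ${\sf m}\in S$ is standard for $S/I$ iff ${\sf m}\notin \init_\prec(I)$, and ${\sf m}$ lies in $\bigcup_i {\sf Std}_\prec(S/I_{w^{(i)}})$ iff ${\sf m}\notin\bigcap_i \init_\prec(I_{w^{(i)}})$. Hence the proposition is equivalent to the identity
\[
\init_\prec\!\Bigl(\,\bigcap_{i=1}^k I_{w^{(i)}}\Bigr) \;=\; \bigcap_{i=1}^k \init_\prec(I_{w^{(i)}}).
\]

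The inclusion $\subseteq$ is formal: any $f\in \bigcap_i I_{w^{(i)}}$ has $\init_\prec(f)\in \init_\prec(I_{w^{(i)}})$ for every $i$. The reverse inclusion is the essential content, and this is where I would invoke \cite{Knutson}: the antidiagonal term order $\prec$ is induced by a Frobenius splitting of $S$ that compatibly splits every matrix Schubert variety $\mathfrak{X}_w$ (the initial ideals $\init_\prec(I_w)$ are squarefree monomial ideals by Theorem~\ref{thm:KMgrob}, reflecting the Stanley-Reisner / reducedness side of the splitting). A general feature of compatibly split ideals in this antidiagonal setting is that taking the initial ideal commutes with finite intersections; applied to the family $\{I_{w^{(i)}}\}_{i=1}^k$, this supplies the missing containment $\supseteq$.

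The main obstacle is locating and cleanly citing the precise statement inside the Frobenius splitting literature — the technical result itself is essentially Knutson's, and the role of Corollary~\ref{thm:borelunion} in our setting is only to guarantee that an arbitrary $\mathbf{B}$-stable $\mathfrak{X}$ decomposes into compatibly split pieces of the form $\mathfrak{X}_{w^{(i)}}$. Once the displayed identity of initial ideals is in hand, unwinding the ``${\sf m}\in {\sf Std}$ iff ${\sf m}\notin \init_\prec$'' equivalences immediately gives ${\sf Std}_\prec(S/I(\mathfrak{X})) = \bigcup_{i=1}^k {\sf Std}_\prec(S/I_{w^{(i)}})$, completing the proof.
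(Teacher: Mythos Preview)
Your proposal is correct and matches the paper's own proof essentially line for line: the paper also reduces to the identity $\init_\prec\bigl(\bigcap_i I_{w^{(i)}}\bigr) = \bigcap_i \init_\prec(I_{w^{(i)}})$ and cites \cite[Section 7.2]{Knutson} for it, then unwinds the definition of standard monomials. Your additional remarks about the formal inclusion $\subseteq$ and the Frobenius-splitting mechanism behind the reverse inclusion are accurate elaborations of what the paper leaves implicit.
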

\begin{proof}
    Let $I\!=\!\bigcap_{i=1}^k I_{w^{(i)}}$ be the ideal of ${\mathfrak X}$ in $S$. 
    By \cite[Section 7.2]{Knutson}, 
    ${\sf init}_{\prec}(I)\!=\!\bigcap_{i=1}^k {\sf init}_{\prec}(I_{w^{(i)}})$.
    The claim follows since, by definition, ${\sf Std}_{\prec}({\mathfrak X})={\sf Std}_{\prec}(S/I)$ are the monomials in ${\sf init}_{\prec}(I)^c$ taken with coefficient $1$.
\end{proof}

We next prove Theorem~\ref{thm:crystalclosure} for the case $\mathfrak{X} = \mathfrak{X}_w$ is a matrix Schubert variety.

\begin{theorem}\label{thm:MSVLtype}
    Let $\mathfrak{X}_w\subseteq{\sf Mat}_{m, n}$ where $\desc_{{\sf row}}(w)\subseteq \mathbf{I}$ and $\desc_{{\sf col}}(w)\subseteq\mathbf{J}$. Then
    $\mathfrak{X}_w$ is ${\bf L}_{\mathbf{I}|\mathbf{J}}$-bicrystalline.
\end{theorem}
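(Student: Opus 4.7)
\textit{Proof plan.} $\mathbf{L}_{\mathbf{I}|\mathbf{J}}$-stability is immediate from Proposition~\ref{prop:leviacts}. For bicrystal closure, my plan is to combine the antidiagonal characterization of standard monomials (Corollary~\ref{cor:groebner}) with the matrix form of Greene's theorem to restate standardness: $M \in {\sf Std}_\prec(S/I_w)$ if and only if for every essential box $(e_1,e_2) \in E(w)$ the RSK shape $\lambda$ of $M|_{[e_1]\times[e_2]}$ satisfies $\ell(\lambda) \leq r_w(e_1,e_2)$, since by Greene's theorem $\ell(\lambda(N))$ equals the length of the longest strictly decreasing (``antidiagonal'') chain of positive entries in $N$.

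Fix $i \notin \mathbf{I}$, so $i \notin \desc_{\sf row}(w)$. I would then do a case analysis on each essential box $(e_1,e_2)$ to show closure under $f_i^{\sf row}$ and $e_i^{\sf row}$. First, I would verify by a short Rothe diagram argument that no essential box lies in row $i$: if $(i,j) \in E(w)$, then $(i+1,j) \notin D(w)$ forces either $w(i+1)=j$ or $w(i+1) \leq j$, and in either case every box in row $i+1$ sits in a column $<j$, making $i$ a row descent. So the case $e_1 = i$ is vacuous. When $e_1 < i$ the bicrystal operator touches only rows $i, i+1$, so the submatrix is unchanged. The interesting case is $e_1 \geq i+1$.

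For $e_1 \geq i+1$, the letters $i, i+1$ of ${\sf row}(M|_{[e_1]\times[e_2]})$ are precisely those of ${\sf row}(M)$ coming from columns $\leq e_2$, so ${\sf bracket}_i({\sf row}(M|_{[e_1]\times[e_2]}))$ is the prefix of ${\sf bracket}_i({\sf row}(M))$ formed by brackets from columns $\leq e_2$. The key technical step is what I would call the \emph{Bracket Prefix Lemma}: if the full word's rightmost unmatched ``$)$'' (respectively leftmost unmatched ``$($'') lies at a position corresponding to a column $\leq e_2$, then the prefix has the same rightmost unmatched ``$)$'' (respectively leftmost unmatched ``$($''). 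For the ``$)$'' case this is immediate from the standard left-to-right stack algorithm, whose state at any position depends only on earlier letters. For the ``$($'' case, one observes that if the full leftmost unmatched ``$($'' is at position $p$, any ``$($'' at position $q<p$ is matched in the full word with some ``$)$'' at position $r$, and necessarily $r<p$ (otherwise ``$($'' at $p$ would lie strictly inside the match $(q,r)$, forcing it to be matched); hence the match $(q,r)$ lies within the prefix and is preserved.

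The Bracket Prefix Lemma implies that when $f_i^{\sf row}$ (or $e_i^{\sf row}$) acts on $M$ by altering a column $a \leq e_2$, the same operator acts identically on $M|_{[e_1]\times[e_2]}$; when $a>e_2$, the submatrix is untouched. Since bicrystal operators preserve the RSK shape by Proposition~\ref{thm:biwordcrystalequiv}, the quantity $\ell(\lambda(M|_{[e_1]\times[e_2]}))$ is preserved, so the essential box constraint is maintained. The column operators $f_j^{\sf col}, e_j^{\sf col}$ (with $j \notin \mathbf{J}$) are handled by the completely symmetric argument, invoking the dual fact that $j \notin \desc_{\sf col}(w)$ implies no essential box lies in column $j$. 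The hardest part will be giving a careful, notationally clean proof of the Bracket Prefix Lemma -- the underlying combinatorics is elementary, but sorting out the indexing between word positions, bracket positions, and matrix columns requires care.
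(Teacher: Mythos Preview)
Your argument is correct and takes a genuinely different route from the paper's. The paper argues the contrapositive directly at the level of antidiagonals: given a nonstandard ${\sf m}$ witnessed by a Fulton generator $g$ with row set $R$ and column set $C$, it runs a four-case analysis on whether $i,i+1\in R$ and explicitly constructs a new Fulton generator $g'$ (or $g''$) whose antidiagonal is still fully supported in $e_i^{\sf row}({\sf m})$ (or $f_i^{\sf row}({\sf m})$). The hypothesis $i\notin\desc_{\sf row}(w)$ enters in exactly one subcase (their Case~f2), where replacing row $i$ by row $i+1$ in $R$ must still yield a minor of some $Z_{e_1,e_2}$ with $(e_1,e_2)\in E(w)$; this is precisely your observation that no essential box sits in row $i$.

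Your approach instead reformulates standardness via Greene's theorem as the condition $\ell\bigl(\lambda(M|_{[e_1]\times[e_2]})\bigr)\le r_w(e_1,e_2)$ for each essential box, and then reduces closure to the RSK-shape invariance of bicrystal operators on the submatrix, the link being your Bracket Prefix Lemma. This is more conceptual: the case analysis collapses to three trivial cases ($e_1<i$; $e_1=i$ vacuous; $e_1\ge i+1$), and the delicate parenthesis-tracking in the paper's Cases~e4 and~f4 is absorbed into the single clean statement that restricting to a prefix preserves the distinguished unmatched bracket when that bracket lies in the prefix. The cost is that you import Greene's theorem, which the paper does not state; the paper's argument is entirely self-contained within its own setup. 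A minor remark: the shape invariance you need is more directly a consequence of Lemma~\ref{lemma:matrixcrystalreasonable} together with Proposition~\ref{thm:crystalmain} (applied in ${\sf Mat}_{e_1,e_2}$) than of Proposition~\ref{thm:biwordcrystalequiv}, though either works.
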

\begin{proof}
By Proposition~\ref{prop:leviacts}, ${\mathfrak X}_w$ is $\mathbf{L}_{{\mathbf I}|{\mathbf J}}$-stable. It remains to show that $\mathfrak{X}_w$ is $\mathbf{L}_{{\mathbf I}|{\mathbf J}}$-bicrystal closed. We focus first on the $\mathbf{I}$-filtered row operators $e_i^{\sf row}$ and $f_i^{\sf row}$. 
Since $e_i^{\sf row}$ and $f_i^{\sf row}$ are inverses whenever their outputs are not $\varnothing$ by Lemma~\ref{lemma:crystalinverse}, the 
$\mathbf{L}_{{\mathbf I}|{\mathbf J}}$-bicrystal closed claim follows from the (slightly stronger)
statement that if ${\sf m}\not\in {\sf Std}_<(S/I_w)$ (identified with a matrix in ${\sf Mat}_{m,n}({\mathbb Z}_{\geq 0})$), then $e_i^{\sf row}({\sf m})\not\in {\sf Std}_<(S/I_w)$ for all $i$ and $f_i^{\sf row}({\sf m})\not\in {\sf Std}_<(S/I_w)$ for $i\not\in\desc_{{\sf row}}(w)$. We prove this latter statement. 

Since ${\sf m}\not\in {\sf Std}_<(S/I_w)$, by Corollary~\ref{cor:groebner}, there exists a Fulton generator $g=\det(Z_g)$ for $I_w$ such that the product of all entries of ${\sf m}$ along the antidiagonal $A_g$ of $Z_g$ is nonzero. 
Let $R\subseteq [m]$ and $C\subseteq [n]$ be the row and column indices, respectively, of the minor $g$.

First, we argue that $e_i^{\sf row}({\sf m})\not\in {\sf Std}_<(S/I_w)$ by constructing a Fulton generator $g'$ for $I_w$ such that the product of all entries of $e_i^{\sf row}({\sf m})$ 
along $A_{g'}$ is positive.

\medskip
\noindent
{\sf Case e1:} ($i,i+1\not\in R$) Here, $e_i^{\sf row}$ does not affect any of the entries of $A_g$ in ${\sf m}$. Take $g' = g$.

\medskip
\noindent
{\sf Case e2:} ($i\in R, i+1\not\in R$) All entries of $A_g$ in $e_i^{\sf row}({\sf m})$ are only larger (by at most $1$)
in comparison to the same entry in ${\sf m}$, so we may again take $g' = g$.

\medskip
\noindent
{\sf Case e3:} ($i\not\in R, i+1\in R$) If $e_i^{\sf row}$ does not affect the entry of $A_g$ in row $i+1$,
take $g'=g$. Otherwise, we may take $g'$ to be the minor defined by row indices $R' = (R\setminus\{i+1\})\cup \{i\}$ and column indices $C$ (which is also a Fulton generator for $I_w$ by (\ref{eqn:thegenerators})).

\medskip\noindent
{\sf Case e4:} ($i,i+1\in R$) Let $b$ be as in Definition~\ref{def:feb11bicrystal}. If $m_{i+1, b}$ does not lie on the antidiagonal $A_g$ or $m_{i+1, b}\geq 2$, then we may take $g' = g$. Otherwise, since we assume $e_i^{\sf row}({\sf m})\neq \varnothing$, the entry ${\sf m}_{i+1,b}=1$ corresponds to an unmatched
``$($'' in ${\sf bracket}_i({\sf row}({\sf m}))$. Let $b'$ be the first column to the right of $b$ in $C$, so $(i,b')\in A_g$. None of the ``$)$'' in 
${\sf bracket}_i({\sf row}({\sf m}))$ associated to ${\sf m}_{i,b'}>0$ match with this aforementioned ``$($''. In particular, the leftmost ``$)$'' associated to ${\sf m}_{i,b'}$ matches with a ``$($'' associated
to ${\sf m}_{i+1,c'}>0$ for some $b<c'<b'$. Take $g'$ to be the minor defined by row indices $R$ and column indices $C' = (C\setminus\{b\})\cup\{c'\}$ (which is also a Fulton generator for $I_w$ by (\ref{eqn:thegenerators})).

Similarly, we show that $f^{\sf row}_i({\sf m})\not\in {\sf Std}_{<}(S/I_w)$ when $i\not\in\desc_{{\sf row}}(w)$ by constructing a Fulton generator $g''$ for $I_w$ such that the product of entries of $f^{\sf row}_i({\sf m})$ along $A_{g''}$ is positive.

\medskip
\noindent
{\sf Case f1:} ($i,i+1\not\in R$) Same argument as {\sf Case e1}, take $g'' = g$.

\medskip
\noindent
{\sf Case f2:} ($i\in R, i+1\not\in R$) If $f_{i}^{\sf row}$ does not affect the entry of $A_g$
in row $i$, let $g''=g$. Otherwise, let $g''$ be the minor that uses the rows $R''=(R\setminus \{i\})\cup \{i+1\}$ and columns $C$ (which is also a Fulton generator of $I_w$ provided $i\not\in\desc_{{\sf row}}(w)$ by (\ref{eqn:thegenerators})).

\medskip
\noindent
{\sf Case f3:} ($i\not\in R, i+1\in R$) Same argument as {\sf Case e2}, take $g'' = g$. 

\medskip
\noindent
{\sf Case f4:} ($i,i+1\in R$) We use left-right ``flipped'' version of {\sf Case e4}. Let $a$ be as in Definition~\ref{def:feb11bicrystal}. If $m_{i, a}$ does not lie on  $A_g$ or $m_{i, a}\geq 2$, then we may take $g'' = g$. Otherwise, since we assume $f_i^{\sf row}({\sf m})\neq \varnothing$, the entry ${\sf m}_{i,a}=1$ corresponds to an unmatched
``$)$'' in ${\sf bracket}_i({\sf row}({\sf m}))$. Let $a'$ be the first column to the left of $a$ in $C$, so $(i+1, a')\in A_g$. None of the ``$($'' in 
${\sf bracket}_i({\sf row}({\sf m}))$ associated to ${\sf m}_{i+1,a'}>0$ match with this aforementioned ``$)$''. In particular, the rightmost ``$($'' associated to ${\sf m}_{i+1,a'}>0$ matches with a ``$)$'' associated
to ${\sf m}_{i,c''}>0$ for some $a'<c''<a$. Take $g''$ to be the minor defined by row indices $R$ and column indices $C'' = (C\setminus \{a\})\cup\{c''\}$ (which is a Fulton generator of $I_w$ by (\ref{eqn:thegenerators})).

    The statements for $f_j^{\sf col}$ and $e_j^{\sf col}$ hold by taking transposes of all matrices involved.
\end{proof}

\begin{proof}[Proof of Theorem~\ref{thm:crystalclosure}]
    Let 
    $\mathfrak{X}\subseteq {\sf Mat}_{m,n}$ be $\mathbf{B}$-stable. By Corollary~\ref{thm:borelunion},  $\mathfrak{X}=\bigcup_{i=1}^k \mathfrak{X}_{w^{(i)}}$ is a union of matrix Schubert varieties. By Proposition~\ref{prop:leviacts}, any $\mathbf{L}_{{\mathbf I}|{\mathbf J}}$ satisfying the conditions in the theorem statement acts on $\mathfrak{X}_{w^{(i)}}$ and hence on $\mathfrak{X}$. 
    Furthermore, $\mathfrak{X}$ is $\mathbf{L}_{{\mathbf I}|{\mathbf J}}$-bicrystal closed by Proposition~\ref{thm:stdmonounion} combined with Theorem~\ref{thm:MSVLtype}.
\end{proof}

\section{Final remarks}\label{sec:polytopal}

Abusing notation, write an $({\bf I}|{\bf J})$-filtered partition-tuple $(\underline\lambda|\underline\mu)$ as a sequence of non-negative integers $(\lambda_1,\dots, \lambda_m\vert\mu_1,\dots,\mu_n)$ such that $\lambda_i\geq\lambda_{i+1}$ if $i\notin \mathbf{I}$ and $\mu_j\geq\mu_{j+1}$ if $j\notin \mathbf{J}$.

Let us return to the basic case $\mathfrak{X} = {\sf Mat}_{m, n}$.
Here $c_{\underline\lambda|\underline\mu}=c^{{\sf Mat}_{m, n}}_{\underline\lambda|\underline\mu}$, as in Example~\ref{exa:isgeneralizedLR}.
We set the convention that if $M = [m_{i, j}]\in{\sf Mat}_{m, n}(\Z_{\geq 0})$, then $m_{i,j} = 0$ if $i > m$ or $j > n$.
We can characterize highest-weight matrices of shape $(\underline\lambda|\underline\mu)$ as follows:

\begin{proposition}\label{thm:filterRSKpolyrule}
    The coefficients $c_{\underline\lambda|\underline\mu}$ count the number of $M = [m_{i, j}]\in {\sf Mat}_{m,n}(\Z_{\geq 0})$ that are lattice points of the polytope $\mathcal{P}_{\underline\lambda|\underline\mu}$ defined by the \emph{highest-weight conditions}
    \begin{align}\label{eqn:feb9abc}
        \sum_{k=j}^{n}m_{i+1, k}\leq\sum_{k=j}^{n}m_{i, k+1}\ \mathrm{for}\ \mathrm{all}\ i\notin \mathbf{I}\ \mathrm{and}\ j\in[n],\\ 
        \sum_{k=i}^{m}m_{k, j+1}\leq\sum_{k=i}^{m}m_{k+1, j}\ \mathrm{for}\ \mathrm{all}\ j\notin \mathbf{J}\ \mathrm{and}\ i\in[m], \label{eqn:feb9cde}
    \end{align}
    the \emph{shape conditions}
    \begin{align}\label{eqn:feb9xx1}
        \sum_{j=1}^n m_{i,j} &= \lambda_i\ \mathrm{for}\ \mathrm{all}\  i\in[m],\\
        \sum_{i=1}^m m_{i,j} &= \mu_j\ \mathrm{for}\  \mathrm{all}\  j\in[n], \label{eqn:feb9xx2}
    \end{align}
    and the \emph{nonnegativity constraints} $m_{ij}\geq 0$ for all $i\in [m],j\in [n].$
\end{proposition}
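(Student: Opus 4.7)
The plan is to invoke Corollary~\ref{cor:highestweightmatrix} and translate its source-vertex criterion into the stated linear inequalities. For $\mathfrak{X}={\sf Mat}_{m,n}$ every element of ${\sf Mat}_{m,n}(\Z_{\geq 0})$ is a standard monomial, so Theorem~\ref{thm:main} identifies $c_{\underline\lambda|\underline\mu}$ with the number of $M\in{\sf Mat}_{m,n}(\Z_{\geq 0})$ satisfying ${\sf filterRSK}_{\mathbf{I}|\mathbf{J}}(M)=(T_{\underline\lambda}|T_{\underline\mu})$, and Corollary~\ref{cor:highestweightmatrix} characterizes these as exactly the source vertices of weight $(\underline\lambda|\underline\mu)$ in $\mathcal{M}_{m,n}^{\mathbf{I}|\mathbf{J}}$.

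The weight of $M$ is $({\sf cwt}_{\mathcal{W}_m}({\sf row}(M)),{\sf cwt}_{\mathcal{W}_n}({\sf col}(M)))$, whose $i$-th row-component counts occurrences of the letter $i$ in ${\sf row}(M)$ (namely $\sum_{j}m_{i,j}$) and whose $j$-th column-component equals $\sum_{i}m_{i,j}$. Equating these to $\lambda_i$ and $\mu_j$ respectively produces the shape conditions (\ref{eqn:feb9xx1}) and (\ref{eqn:feb9xx2}); the nonnegativity constraints $m_{i,j}\geq 0$ are just the defining condition of ${\sf Mat}_{m,n}(\Z_{\geq 0})$.

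The substantive step is translating the source condition $e_i^{\sf row}(M)=\varnothing$ for each $i\notin\mathbf{I}$ into (\ref{eqn:feb9abc}); then (\ref{eqn:feb9cde}) follows by the same argument applied to $e_j^{\sf col}(M)$ after transposing. By the definition of the row bicrystal operator, $e_i^{\sf row}(M)=\varnothing$ if and only if $e_i({\sf row}(M))=\varnothing$, i.e., every ``$($'' (representing the letter $i{+}1$) in ${\sf bracket}_i({\sf row}(M))$ is matched by some ``$)$'' (representing $i$) to its right. A standard parenthesis-matching fact reformulates this as: every suffix of ${\sf bracket}_i({\sf row}(M))$ contains at least as many ``$)$''s as ``$($''s. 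Column $j$ of $M$ contributes to ${\sf bracket}_i({\sf row}(M))$ a block of $m_{i,j}$ ``$)$''s followed by $m_{i+1,j}$ ``$($''s, separated by letters invisible to ${\sf bracket}_i$. The tightest suffix constraint coming from column $j$ is obtained by starting the suffix at the first ``$($'' of that block: such a suffix contains $\sum_{k=j}^{n}m_{i+1,k}$ opens and $\sum_{k=j+1}^{n}m_{i,k}=\sum_{k=j}^{n}m_{i,k+1}$ closes, producing exactly (\ref{eqn:feb9abc}).

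The main (mild) obstacle is verifying that no additional inequalities are needed — equivalently, that all other suffix constraints are implied by (\ref{eqn:feb9abc}). Starting the suffix earlier inside a ``$)$''-block only adds ``$)$''s, starting it later inside an ``$($''-block only removes ``$($''s, and starting at an invisible letter reduces to an adjacent critical case; all such variants give inequalities weaker than those already listed. Combining everything, the highest-weight matrices of shape $(\underline\lambda|\underline\mu)$ are exactly the lattice points of the polytope $\mathcal{P}_{\underline\lambda|\underline\mu}$ cut out by (\ref{eqn:feb9abc})--(\ref{eqn:feb9xx2}) together with the nonnegativity constraints, completing the proof.
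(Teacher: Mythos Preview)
Your proof is correct and follows essentially the same approach as the paper's: both identify $c_{\underline\lambda|\underline\mu}$ with the set of highest-weight (source) matrices of the given weight, then translate the condition $e_i^{\sf row}(M)=\varnothing$ into the suffix-style parenthesis inequalities and the weight condition into the row/column-sum equalities. Your argument is simply more explicit than the paper's, which compresses the bracket-to-inequality translation into the single sentence ``Writing out the parenthesis-matching rules for these matrix crystal operators numerically expresses the highest-weight conditions''; your suffix analysis and the check that only the critical suffixes need be considered supply exactly the details that sentence elides.
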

\begin{proof}
    $M = [m_{i,j}]\in {\sf Mat}_{m,n}({\mathbb Z}_{\geq 0})$ has \emph{highest weight} if $e_i^{\sf row}(M) = 0 = e_j^{\sf col}(M)$ for all $i\notin {\bf I}$ and $j\notin {\bf J}$. Writing out the parenthesis-matching rules for these matrix crystal operators numerically expresses the highest-weight conditions as  (\ref{eqn:feb9abc}) and (\ref{eqn:feb9cde}).
        
    The highest weight matrices $M$ are exactly those sent to some highest-weight tableau-tuple by ${\sf filterRSK}_{{\mathbf I}|{\mathbf J}}$. The shape of a highest-weight matrix $M$ is therefore uniquely determined by its weight, i.e., the collection of its row and column sums. Thus a highest-weight matrix $M$ has shape $(\underline\lambda|\underline\mu)$ if and only if it satisfies 
    (\ref{eqn:feb9xx1}) and (\ref{eqn:feb9xx2}).
\end{proof}

\begin{example}[Contingency tables]
    Let ${\bf I} = \{0\}\cup [m]$ and ${\bf J} = \{0\}\cup [n]$, so $\mathbf{L}_{{\mathbf I}|{\mathbf J}} = \mathbf{T}$ is the $(m+n)$-torus acting on ${\sf Mat}_{m, n}$.
    Then ${\sf filterRSK}_{{\mathbf I}|{\mathbf J}}$ computes the $\mathbf{T}$-character of $\C[{\sf Mat}_{m, n}]$ by mapping the exponent matrix $M\in{\sf Mat}_{m, n}(\Z_{\geq 0})$ of each monomial to the exponent vector ${\sf cwt}_{\mathcal{M}_{m, n}}(M)$ of its torus weight.
    Here, there are no $({\bf I}|{\bf J})$-filtered crystal moves, so 
    (\ref{eqn:feb9abc}) and (\ref{eqn:feb9cde})
    are trivial.
    Thus, (\ref{eqn:feb9xx1}) and (\ref{eqn:feb9xx2}) say
        $c_{\underline\lambda\vert\underline\mu}$ counts \emph{contingency tables}, i.e., $M\in {\sf Mat}_{m,n}({\mathbb Z}_{\geq 0})$ whose $i$th row sums to $\lambda_i$ and $j$th column sums to $\mu_j$ for all $i\in[m]$ and $j\in[n]$. 
\end{example}

\begin{example}[The classical Cauchy identity]\label{ex:RSK}
    \!Let ${\bf I} = \{0, m\}$ and ${\bf J} = \{0, n\}$. Then ${\sf filterRSK}_{{\mathbf I}|{\mathbf J}}$ is the usual ${\sf RSK}$ map sending each $M = [m_{i, j}]\in {\sf Mat}_{m,n}({\mathbb Z}_{\geq 0})$ to a tableau-pair $(P|Q)$.
    Applying (\ref{eqn:feb9abc}) with $j = n$, we see that $m_{i+1, n} = m_{i, n+1}$ for all $i\in[m-1]$. But $m_{i, n+1} = 0$ for all such $i$ since these entries lie entirely outside of $M$, so in fact $m_{i+1, n} = 0$ for $i\in[m-1]$.
    Similarly, applying (\ref{eqn:feb9cde}) with $i = m$ shows that $m_{m, j+1} = 0$ for all $j\in[n-1]$. Iterating this argument shows that $m_{i+k, n+1-k} = m_{n+1-k, j+k} = 0$ for all $i\in[m-1]$, $j\in[n-1]$, and $k\in\Z_{\geq 0}$, so any matrix $M$ satisfying (\ref{eqn:feb9abc}) and (\ref{eqn:feb9cde}) must be northwest-triangular.

    Similar iterative arguments on the inequalities (or equivalently, in terms of brackets) on the non-zero entries of $M$ show that all entries on each antidiagonal must be equal. In other words, the matrices satisfying the highest-weight conditions 
    (\ref{eqn:feb9abc}) and (\ref{eqn:feb9cde})
    are of the form 
    $\left[\begin{smallmatrix}
        a & b & c & d & e\\
        b & c & d & e & 0\\
        c & d & e & 0 & 0\\
        d & e & 0 & 0 & 0\\
        e & 0 & 0 & 0 & 0
    \end{smallmatrix}\right]$.
    Now, it is easy to see that further imposing (\ref{eqn:feb9xx1}) and (\ref{eqn:feb9xx2})
    shows that $c_{\lambda\vert\mu} = 1$ if $\lambda = \mu$ and $0$ otherwise, by Proposition~\ref{thm:filterRSKpolyrule}. This recovers the classical Cauchy identity (\ref{eqn:Cauchy}) and proves Theorem~\ref{thm:RSK}.
\end{example}

\begin{example}[A polytopal Littlewood--Richardson rule]\label{LR}
    Let $\mathbf{I} = \{0, t, m\}$ for some $0<t<m$ and let $\mathbf{J} = \{0, n\}$. Write $(\underline\lambda|\underline\mu) = (\lambda^{(1)}, \lambda^{(2)}|\mu)$ for an $({\bf I}|{\bf J})$-filtered tableau-tuple. Then $c_{\underline\lambda|\underline\mu}$ is the \emph{Littlewood-Richardson coefficient}  
    $c_{\lambda^{(1)}, \lambda^{(2)}}^\mu$ (for partitions of the right size). Thus, Proposition~\ref{thm:filterRSKpolyrule} specializes to a polytopal rule for Littlewood--Richardson coefficients.  
\end{example}


Proposition~\ref{thm:filterRSKpolyrule} extends to  $c^{\mathfrak{X}}_{\underline{\lambda}|\underline{\mu}}$ where $\mathfrak{X}\subseteq{\sf Mat}_{m, n}$ is any $\mathbf{L}_{\mathbf{I}|\mathbf{J}}$-bicrystalline variety.

\begin{proposition}\label{thm:feb10poly}
    Let $\mathfrak{X}\subseteq{\sf Mat}_{m, n}$ be ${\bf L}_{{\mathbf I}|{\mathbf J}}$-bicrystalline. Let $\mathcal{P}_{\underline\lambda|\underline\mu}$ be the polytope in Proposition~\ref{thm:filterRSKpolyrule}, and let $\mathcal{P}^{\mathfrak{X}}$ be the set of lattice points corresponding to elements of ${\sf Std}_<(S/I(\mathfrak{X}))$. Then $c^\mathfrak{X}_{\underline\lambda|\underline\mu}$ counts lattice points in 
    $\mathcal{P}^{\mathfrak{X}}_{\underline\lambda|\underline\mu} = \mathcal{P}^\mathfrak{X}\cap\mathcal{P}_{\underline\lambda|\underline\mu}$.
\end{proposition}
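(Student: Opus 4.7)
The plan is to reduce this statement to a direct combination of Theorem~\ref{cor:filterRSKgraphiso} and Proposition~\ref{thm:filterRSKpolyrule}. By Theorem~\ref{cor:filterRSKgraphiso}, $c^{\mathfrak{X}}_{\underline\lambda|\underline\mu}$ equals the number of highest-weight vertices of shape $(\underline\lambda|\underline\mu)$ in the pre-crystal subgraph $\mathcal{S}_{\mathfrak{X}}\subseteq\mathcal{M}_{m,n}^{\mathbf{I}|\mathbf{J}}$ induced on ${\sf Std}_<(S/I(\mathfrak{X}))$. The task therefore reduces to showing that such highest-weight standard monomials correspond bijectively with the lattice points of $\mathcal{P}^{\mathfrak{X}}\cap\mathcal{P}_{\underline\lambda|\underline\mu}$.

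The first step is to observe that the notion of ``source vertex'' inside $\mathcal{S}_{\mathfrak{X}}$ coincides with that inside the ambient $\mathcal{M}_{m,n}^{\mathbf{I}|\mathbf{J}}$ for any ${\sf m}\in{\sf Std}_<(S/I(\mathfrak{X}))$. Indeed, if $e_i^{\sf row}({\sf m})\neq\varnothing$ (resp.\ $e_j^{\sf col}({\sf m})\neq\varnothing$) for some $i\notin\mathbf{I}$ (resp.\ $j\notin\mathbf{J}$) in $\mathcal{M}_{m,n}^{\mathbf{I}|\mathbf{J}}$, then the ${\bf L}_{{\mathbf I}|{\mathbf J}}$-bicrystal closed hypothesis forces $e_i^{\sf row}({\sf m})\in{\sf Std}_<(S/I(\mathfrak{X}))$ (resp.\ $e_j^{\sf col}({\sf m})\in{\sf Std}_<(S/I(\mathfrak{X}))$), and so ${\sf m}$ is not a source of $\mathcal{S}_{\mathfrak{X}}$ either. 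Hence the set of highest-weight matrices in $\mathcal{S}_{\mathfrak{X}}$ equals the intersection of ${\sf Std}_<(S/I(\mathfrak{X}))$ with the set of highest-weight matrices in $\mathcal{M}_{m,n}^{\mathbf{I}|\mathbf{J}}$.

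The second step is to recall, from the proof of Proposition~\ref{thm:filterRSKpolyrule}, that the highest-weight matrices of shape $(\underline\lambda|\underline\mu)$ in $\mathcal{M}_{m,n}^{\mathbf{I}|\mathbf{J}}$ are precisely the lattice points of $\mathcal{P}_{\underline\lambda|\underline\mu}$; the highest-weight inequalities \eqref{eqn:feb9abc}--\eqref{eqn:feb9cde} are a literal rewriting of the conditions $e_i^{\sf row}({\sf m})=\varnothing$ and $e_j^{\sf col}({\sf m})=\varnothing$, while the shape equalities \eqref{eqn:feb9xx1}--\eqref{eqn:feb9xx2} read off the weight. Intersecting with $\mathcal{P}^{\mathfrak{X}}$, which by definition picks out the matrices corresponding to ${\sf Std}_<(S/I(\mathfrak{X}))$, yields exactly the set counted by $c^{\mathfrak{X}}_{\underline\lambda|\underline\mu}$.

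There is no genuine obstacle here; the statement is a formal consequence of the two cited results once one notices that ``bicrystal closed'' is exactly the compatibility needed to let highest-weight conditions detected in the ambient graph detect highest-weight conditions in the induced subgraph. The only point requiring a moment of care is the above equivalence of source vertices, which relies on the fact that $e_i^{\sf row}$ and $e_j^{\sf col}$ never leave ${\sf Std}_<(S/I(\mathfrak{X}))$ except to output $\varnothing$.
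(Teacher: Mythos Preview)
Your proposal is correct and follows essentially the same approach as the paper's proof: both reduce the statement to the combination of the main theorem (the paper cites Theorem~\ref{thm:main}, you cite the equivalent Theorem~\ref{cor:filterRSKgraphiso}) with Proposition~\ref{thm:filterRSKpolyrule}. Your version is slightly more explicit in justifying why the source vertices in $\mathcal{S}_{\mathfrak X}$ coincide with those in the ambient $\mathcal{M}_{m,n}^{\mathbf{I}|\mathbf{J}}$, a point the paper leaves implicit.
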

\begin{proof} By Theorem~\ref{thm:main},
${\sf filterRSK}_{{\mathbf I}|{\mathbf J}}$ computes the $\mathbf{L}_{{\mathbf I}|{\mathbf J}}$-character of $\mathfrak{X}$. By Proposition~\ref{thm:filterRSKpolyrule}, the coefficient $c_{\underline\lambda|\underline\mu} = c^{{\sf Mat}_{m, n}}_{\underline\lambda|\underline\mu}$ counts the integer points in $\mathcal{P}_{\underline\lambda|\underline\mu}$. The desired coefficient $c^{\mathfrak{X}}_{\underline\lambda|\underline\mu}$ counts only the points in this polytope corresponding elements of ${\sf Std}_<(S/I(\mathfrak{X}))$.
\end{proof}

\begin{remark}\label{remark:unionofpoly}
    Proposition~\ref{thm:feb10poly} is not a \emph{polytopal} rule for $c^{\mathfrak{X}}_{\underline\lambda|\underline\mu}$, since the set $\mathcal{P}^\mathfrak{X}$ is not generally a polytope. However, if $\init_<(I({\mathfrak X}))$ is squarefree, one can give a serviceable description of $\mathcal{P}^\mathfrak{X}$ as a union of polytopes using a prime decomposition of $\init_{<}(I(\mathfrak{X}))$. This case includes all $\mathbf{B}$-stable varieties by the description of their initial ideals given previously.
\end{remark}

\begin{example}[Classical determinantal varieties]\label{exa:classical}
    Let ${\bf I} = \{0, m\}$ and ${\bf J} = \{0, n\}$. Let $\mathfrak{X}_k\subseteq{\sf Mat}_{m, n}$ be the variety of rank $\leq k$ matrices. It is known that a Gr\"obner basis for $I({\mathfrak X}_k)$ with respect to antidiagonal term order consists of the $k+1$-order minors of the generic $m\times n$ matrix (this is an instance of Theorem~\ref{thm:KMgrob}). The same highest-weight and contingency table conditions hold as in Example~\ref{ex:RSK}, but if $k\neq m$ or $k\neq n$ then there are also nontrivial standard monomial conditions. The standard monomials of $\mathfrak{X}_k$ correspond to matrices with at least one $0$ on each antidiagonal after the $k$th (where the first antidiagonal is the northwest corner). Combining  with the highest-weight and contingency table restrictions from before yields this known Cauchy-type identity:  \begin{equation}
\label{eqn:anotherCauchy}
\chi_{{\mathbb C}[{\mathfrak X}_k]}
 =\sum_{\lambda,\ell(\lambda)\leq k} s_{\lambda}(\mathbf{x}) s_{\lambda}(\mathbf{y}).
\end{equation}
\end{example}

\appendix
\section{Towards equivariant minimal free resolutions}\label{sec:appendix}
In this supplement to the main body of the paper, we discuss the relationship between our results, prior work on the Hilbert series of matrix Schubert varieties, and open problems on their minimal free resolutions. The perspective for these problems comes from combinatorial commutative algebra rather than representation theory; we will assume some familiarity and refer to the book of Miller--Sturmfels \cite{Miller.Sturmfels} as a standard reference. Fix a split reductive group $(G, T)$, and let $\mathfrak{X}$ be a $G$-variety with a $T$-compatible embedding such that $\complexes[\mathfrak{X}]\cong S/I(\mathfrak{X})$ (where $S = \complexes[z_1,\dots, z_N]$ for some $N$). As mentioned previously in Section~\ref{subsec:stdmono}, the $T$-character of $\complexes[\mathfrak{X}]$ is known in commutative algebra and algebraic geometry as the \emph{$T$-multigraded Hilbert series} of (the chosen embedding of) $\mathfrak{X}$. A classical result shows that the characters of coordinate rings, viewed as power series, can be written compactly as rational functions.

\begin{theorem}[Hilbert--Serre,\ {\cite[Theorem 8.20]{Miller.Sturmfels}}]\label{thm:hilbertserre}
    Let $S = \complexes[z_1,\dots, z_N]$, let $T\cong(\complexes^\star)^k$ be a compatible torus acting on $S$, and let $\mathfrak{X}$ be a variety with a $T$-compatible embedding such that $\complexes[\mathfrak{X}] = S/I(\mathfrak{X})$. Then there exists a polynomial $K_{\complexes[\mathfrak{X}]}(t_1,\dots, t_k)\in\integers[t_1,\dots, t_k]$, the \emph{$K$-polynomial} of $\complexes[\mathfrak{X}]$, such that
    \[\chi_{\complexes[\mathfrak{X}]} = \sum_{{\sf m}\in \mathrm{Std}_<(S/I(\mathfrak{X}))}{\sf wt}_T({\sf m}) = \frac{K_{\complexes[\mathfrak{X}]}(t_1,\dots, t_k)}{\prod_{i=1}^N (1-{\sf wt}_T(z_i))}.\footnote{The reference \cite[Theorem 8.20]{Miller.Sturmfels} applies in a more general context where the $K$-polynomial may be a Laurent polynomial. The $T$-compatible hypothesis in our statement guarantees that $\complexes[\mathfrak{X}]$ is a \emph{polynomial} rather than rational $T$-representation, ensuring that $K_{\complexes[\mathfrak{X}]}(t_1,\dots, t_k)$ is a bonafide polynomial.}\]
\end{theorem}

The $K$-polynomial of $\complexes[\mathfrak{X}]$ carries significance beyond determining the Hilbert series. Indeed, it is the Euler characteristic of the \emph{minimal free resolution} of $\complexes[\mathfrak{X}]$ as an $S$-module \cite[Proposition 8.23]{Miller.Sturmfels}. 
Determining the minimal free resolution of $\complexes[\mathfrak{X}]$ is usually very difficult, as such a resolution encodes many important algebraic properties of $\complexes[\mathfrak{X}]$ (e.g. the projective dimension, Castelnuovo--Mumford regularity, and $T$-multidegrees of the generators in a minimal generating set for $I(\mathfrak{X})$). One can also compute the \emph{$T$-multidegree} of $\complexes[\mathfrak{X}]$ as the lowest-total-degree homogeneous component of the \emph{twisted $K$-polynomial} $K_{\complexes[\mathfrak{X}]}(1-t_1,\dots, 1-t_k)$. In summary, we have three polynomials/power series of interest, each of which could be said to describe the Hilbert series of $\complexes[\mathfrak{X}]$:
\begin{enumerate}
    \item The $T$-character $\chi_{\complexes[\mathfrak{X}]}$ (the Hilbert series itself),
    \item The $K$-polynomial $K_{\complexes[\mathfrak{X}]}(t_1,\dots, t_k)$ (the Hilbert numerator),
    \item The twisted $K$-polynomial $K_{\complexes[\mathfrak{X}]}(1-t_1,\dots, 1-t_k)$ (a generalization of multidegree).
\end{enumerate}
Given any one of these three objects, the others can be determined algebraically via variable substitutions, cross-multiplication, and polynomial long division. However, given a \emph{combinatorial} formula for one object, these algebraic manipulations introduce complicated alternating sums and do \emph{not} yield any obvious combinatorial formula for the other two.

Now specialize to the setting of our paper, where $S = \complexes[{\sf Mat}_{m, n}]$, $G = \mathbf{L}_{\mathbf{I}|\mathbf{J}}$ for some indexing sets $\mathbf{I}$ and $\mathbf{J}$, and $T = \mathbf{T}$ is the $(m+n)$-torus acting by scaling rows and columns of our matrix (so ${\sf wt}_\mathbf{T}(z_{ij}) = x_iy_j$). Applying Proposition~\ref{thm:bigsummary} as in Section~\ref{subsec:gln} along with Theorem~\ref{thm:hilbertserre}, we know that if $\mathfrak{X}$ is a $\mathbf{L}_{\mathbf{I}|\mathbf{J}}$-stable variety with a $\mathbf{T}$-compatible embedding, then for some constants $c_{\underline\lambda|\underline\mu}^\mathfrak{X}$,
\begin{equation}\label{eqn:equivhilbertserre}
    \chi_{\complexes[\mathfrak{X}]} = \sum_{\underline\lambda|\underline\mu} c^{\mathfrak{X}}_{\underline\lambda|\underline\mu} s_{\underline\lambda}(\mathbf{x})s_{\underline\mu}(\mathbf{y}) = \frac{K_{\complexes[\mathfrak{X}]}(\mathbf{x},\mathbf{y})}{\prod_{i, j} (1-x_iy_j)}.
\end{equation}
The denominator of the rational function in (\ref{eqn:equivhilbertserre}) is stable under any permutation of the $x$'s or of the $y$'s. In particular, it is split-symmetric for the splitting determined by $\mathbf{I}$ and~$\mathbf{J}$. 
Since the other expression for the $\mathbf{T}$-character of $\complexes[\mathfrak{X}]$ in (\ref{eqn:equivhilbertserre}) is also split-symmetric, it follows that $K_{\complexes[\mathfrak{X}]}(t_1,\dots, t_k)$ has the same split-symmetry. In other words, there exist (possibly negative) integers $b_{\underline\lambda|\underline\mu}^\mathfrak{X}$ such that
\begin{equation}\label{eqn:equivK}
    K_{\complexes[\mathfrak{X}]}(\mathbf{x}, \mathbf{y}) = \sum_{\underline\lambda|\underline\mu} b^\mathfrak{X}_{\underline\lambda|\underline\mu} s_{\underline\lambda}(\mathbf{x})s_{\underline\mu}(\mathbf{y}).
\end{equation}
The variable substitutions $x_i\mapsto 1-x_i$ and $y_j\mapsto 1-y_j$ do not alter the split-symmetry of $K_{\complexes[\mathfrak{X}]}(\mathbf{x},\mathbf{y})$, so there also exist integers $d_{\underline\lambda|\underline\mu}^\mathfrak{X}$ such that
\begin{equation}\label{eqn:equivK2}
    K_{\complexes[\mathfrak{X}]}(\mathbf{1}-\mathbf{x}, \mathbf{1}-\mathbf{y}) = \sum_{\underline\lambda|\underline\mu} d^\mathfrak{X}_{\underline\lambda|\underline\mu} s_{\underline\lambda}(\mathbf{x})s_{\underline\mu}(\mathbf{y}).
\end{equation}
\begin{problem}\label{prob:3Qs}
    Given a $\mathbf{L}_{\mathbf{I}|\mathbf{J}}$-stable variety $\mathfrak{X}\subseteq{\sf Mat}_{m, n}$, give combinatorial rules for the following coefficients:
    \begin{enumerate}
        \item the $c_{\underline\lambda|\underline\mu}^\mathfrak{X}$ in the $\mathbf{L}_{\mathbf{I}|\mathbf{J}}$-equivariant expression (\ref{eqn:equivhilbertserre}) for $\chi_{\complexes[\mathfrak{X}]}$,
        \item the $b_{\underline\lambda|\underline\mu}^\mathfrak{X}$ in the $\mathbf{L}_{\mathbf{I}|\mathbf{J}}$-equivariant expression (\ref{eqn:equivK}) for $K_{\complexes[\mathfrak{X}]}(\mathbf{x},\mathbf{y})$,
        \item the $d_{\underline\lambda|\underline\mu}^\mathfrak{X}$ in the $\mathbf{L}_{\mathbf{I}|\mathbf{J}}$-equivariant expression (\ref{eqn:equivK2}) for $K_{\complexes[\mathfrak{X}]}(\mathbf{1}-\mathbf{x},\mathbf{1}-\mathbf{y})$.
    \end{enumerate}
\end{problem}
Our primary purpose in this paper was to solve Problem~\ref{prob:3Qs}(1) for matrix Schubert varieties $\mathfrak{X}_w$ and their unions. Indeed, Main Theorem~\ref{thm:main} solves Problem~\ref{prob:3Qs}(1) for all $\mathbf{L}_{\mathbf{I}|\mathbf{J}}$-bicrystalline varieties, which includes all $\mathbf{B}$-stable varieties (i.e., all unions of matrix Schubert varieties) by Main Theorem~\ref{thm:crystalclosure} and Corollary~\ref{thm:borelunion}.

Prior work on the Hilbert series of matrix Schubert varieties has focused on describing their $K$-polynomials. In the combinatorial commutative algebra literature, a fundamental tool for computing Hilbert series of coordinate rings $\complexes[\mathfrak{X}]$ is a simple formula for the $K$-polynomial using the combinatorics of the \emph{Stanley--Reisner complex} associated to some Gr\"obner degeneration of $\mathfrak{X}$ \cite[Theorem 1.13]{Miller.Sturmfels}. The Hilbert series of matrix Schubert varieties $\mathfrak{X}_w$ were originally computed using techniques along these lines; in \cite[Theorem A]{Knutson.Miller}, Knutson--Miller showed that the $K$-polynomial of $\mathfrak{X}_w$ is a \emph{double Grothendieck polynomial} using a Stanley--Reisner complex with facets indexed by the \emph{reduced pipe dreams}\footnote{Also known as \emph{rc-graphs}, see \cite{Bergeron.Billey, Fomin.Kirillov}.} for~$w$. Unfortunately, $K$-polynomial formulas from Stanley--Reisner complexes are usually highly cancellative and thus fail to solve Problem~\ref{prob:3Qs}(2)---even the monomial expansion of $K_{\complexes[\mathfrak{X}]}(\mathbf{x},\mathbf{y})$ is not combinatorially obvious from such a formula.
\begin{remark}
    There are several conventions for the definition of Grothendieck polynomials, which can lead to genuine confusion and differences. We pause to clarify our choices. We define
    \begin{align*}
        \mathfrak{G}_w(\mathbf{1}-\mathbf{xy}) &:= K_{\complexes[\mathfrak{X}_w]}(\mathbf{x}, \mathbf{y}),\\
        \mathfrak{G}_w(\mathbf{x}+\mathbf{y}-\mathbf{xy}) &:= K_{\complexes[\mathfrak{X}_w]}(\mathbf{1}-\mathbf{x},\mathbf{1}-\mathbf{y}).
    \end{align*}
    Our notation is justified by the pipe dream formulas often used to define Grothendieck polynomials combinatorially (see, e.g., \cite[Corollary 5.4]{KM:adv}).
    Summing the weights of all pipe dreams for $w$ yields $\mathfrak{G}_w(\mathbf{1}-\mathbf{xy})$ when the weight of a ``cross'' tile in position $(i, j)$ of a pipe dream is taken to be $(1-x_iy_j)$, and it yields $\mathfrak{G}_w(\mathbf{x}+\mathbf{y}-\mathbf{xy})$ when this weight is taken to be $(x_i+y_j-x_iy_j)$. 
    In their papers \cite{Knutson.Miller, KM:adv}, Knutson--Miller denote our $\mathfrak{G}_w(\mathbf{1}-\mathbf{xy})$ by $\mathfrak{G}_w(\mathbf{x},\mathbf{y})$ and our $\mathfrak{G}_w(\mathbf{x}+\mathbf{y}-\mathbf{xy})$ by $\mathfrak{G}_w(\mathbf{1}-\mathbf{x},\mathbf{1}-\mathbf{y})$, referring to the former as Grothendieck polynomials. However, in the algebraic combinatorics literature the term ``Grothendieck polynomial'' usually refers to $\mathfrak{G}_w(\mathbf{x}+\mathbf{y}-\mathbf{xy})$, the polynomial whose lowest-degree homogeneous component is the double Schubert polynomial $\mathfrak{S}(\mathbf{x},\mathbf{y})$. There are also \emph{single} Grothendieck polynomials
    \begin{align*}
        \mathfrak{G}_w(\mathbf{1}-\mathbf{x}) &:= K_{\complexes[\mathfrak{X}_w]}(\mathbf{x}, \mathbf{1}),\\
        \mathfrak{G}_w(\mathbf{x}) &:= K_{\complexes[\mathfrak{X}_w]}(\mathbf{1}-\mathbf{x},\mathbf{0}).
    \end{align*}
    Here also our notation is justified by the weights used in the pipe dream formulas for these polynomials. We write $\mathfrak{G}_w(\mathbf{y})$ to denote $\mathfrak{G}_w(\mathbf{x})$ after the substitution $x_i\mapsto y_i$. This is not the same as $K_{\complexes[\mathfrak{X}_w]}(\mathbf{0},\mathbf{1}-\mathbf{y})$; in fact, $\mathfrak{G}_w(\mathbf{y}) = K_{\complexes[\mathfrak{X}_{w\inv}]}(\mathbf{0}, \mathbf{1}-\mathbf{y})$.
    To lessen confusion, we will use our precise notation and henceforth avoid the term ``Grothendieck polynomial''.
\end{remark}

The combinatorics of $\mathfrak{G}_w(\mathbf{x}+\mathbf{y}-\mathbf{xy})$ is generally better understood than the combinatorics of $\mathfrak{G}_w(\mathbf{1}-\mathbf{xy})$. Indeed, chaining together several known formulas yields a complete solution to Problem~\ref{prob:3Qs}(3) for matrix Schubert varieties $\mathfrak{X}_w$. We sketch the derivation, referring to sources for details. First, we have a Cauchy identity due to Fomin--Kirillov \cite{FKGroth} for $\mathfrak{G}_w(\mathbf{x}+\mathbf{y}-\mathbf{xy})$ (generalizing their \cite[Theorem 8.1]{Fomin.Kirillov}), where $u\cdot v$ denotes the \emph{Demazure product} of permutations $u$ and $v$:
\begin{equation}\label{eqn:GrothCauchy}
    \mathfrak{G}_w(\mathbf{x}+\mathbf{y}-\mathbf{xy}) = \sum_{\substack{u, v\\ u\cdot v = w}}(-1)^{\ell(uvw)}\mathfrak{G}_{v}(\mathbf{x})\mathfrak{G}_{u\inv}(\mathbf{y}).
\end{equation}
In \cite{LenartGroth}, Lenart gave a multiplicity-free rule expanding $\mathfrak{G}_w(\mathbf{x})$ into the basis of single Schubert polynomials $\mathfrak{S}_v(\mathbf{x})$. See \cite[Theorem 1.5]{Weigandt} for a statement in terms of pipe dreams. Thus there is a combinatorial rule for the coefficients $a_{v, w}\in\{0, 1\}$ in the expansion
\begin{equation}\label{eqn:GrothSchub}
    \mathfrak{G}_w(\mathbf{x}) = \sum_{v}(-1)^{\ell(wv)}a_{v, w}\mathfrak{S}_v(\mathbf{x}).
\end{equation}
Finally, for any $\mathbf{I}\supset\desc_{\sf row}(w)$, Buch--Kresch--Tamvakis--Yong have given a combinatorial rule \cite[Corollary 3]{BKTY} for the nonnegative coefficients $c^w_{\underline\lambda}$ in the $\mathbf{L}_\mathbf{I}$-equivariant expansion of the Schubert polynomial $\mathfrak{S}_w(\mathbf{x})$ as a sum of products $s_{\underline\lambda}(\mathbf{x})$ of Schur polynomials:
\begin{equation}\label{eqn:splitSchub}
    \mathfrak{S}_w(\mathbf{x}) = \sum_{\underline\lambda}c_{\underline\lambda}^ws_{\underline\lambda}(\mathbf{x}).
\end{equation}
Note that since $\mathfrak{S}_w(\mathbf{x})$ is a homogeneous polynomial of degree $\ell(w)$, $c_{\underline\lambda}^w$ is nonzero only when $|\underline\lambda| = \ell(w)$. 
Combining these three combinatorial formulas (along with the facts that $(-1)^{\ell(u)+\ell(v)} = (-1)^{\ell(uv)}$ and $\ell(w) = \ell(w\inv)$ for all permutations $u$, $v$, $w$) gives the following non-cancellative solution to Problem~\ref{prob:3Qs}(3) for $\mathfrak{X}_w$.
\begin{theorem}\label{thm:equivtwistedK}
    In the case where $\mathfrak{X} = \mathfrak{X}_w$ is a matrix Schubert variety, $\mathbf{I}\supset\desc_{\sf row}(w)$, and $\mathbf{J}\supset\desc_{\sf col}(w)$, the $\mathbf{L}_{\mathbf{I}|\mathbf{J}}$-equivariant coefficients $d^\mathfrak{X}_{\underline\lambda|\underline\mu}$ in Problem~\ref{prob:3Qs}(3) are given by the formula
    \[(-1)^{\ell(w)+|\underline\lambda|+|\underline\mu|}\cdot d_{\underline\lambda|\underline\mu}^{\mathfrak{X}_w} = \sum_{\substack{u, v, A, B\\ u\cdot v = w\\ a_{A,v}=a_{B, u\inv}=1}}c_{\underline\lambda}^Ac_{\underline\mu}^B.\]
\end{theorem}
\begin{proof}
\begin{align*}
    K_{\complexes[\mathfrak{X}_w]}(\mathbf{1}-\mathbf{x},\mathbf{1}-\mathbf{y}) &=: \mathfrak{G}_w(\mathbf{x}+\mathbf{y}-\mathbf{xy})\\
    &\stackrel{\eqref{eqn:GrothCauchy}}{=} \sum_{\substack{u, v\\ u\cdot v = w}}(-1)^{\ell(uvw)}\mathfrak{G}_{v}(\mathbf{x})\mathfrak{G}_{u\inv}(\mathbf{y})\\
    &\stackrel{\eqref{eqn:GrothSchub}}{=} \sum_{\substack{A, B, u, v\\ u\cdot v = w}} (-1)^{\ell(uvw)+\ell(vA)+\ell(u\inv B)}a_{A,v}a_{B, u\inv}\mathfrak{S}_A(\mathbf{x})\mathfrak{S}_B(\mathbf{y})\\
    &\stackrel{\eqref{eqn:splitSchub}}{=} \sum_{\substack{A, B, u, v,\underline\lambda, \underline\mu\\ u\cdot v = w}}(-1)^{\ell(ABw)}a_{A,v}a_{B,u\inv}c_{\underline\lambda}^Ac_{\underline\mu}^Bs_{\underline\lambda}(\mathbf{x})s_{\underline\mu}(\mathbf{y})\\
    &= \sum_{\underline\lambda,\underline\mu}(-1)^{\ell(w)+|\underline\lambda|+|\underline\mu|}\Biggl(\sum_{u, v, A, B}c_{\underline\lambda}^Ac_{\underline\mu}^B\Biggr) s_{\underline\lambda}(\mathbf{x})s_{\underline\mu}(\mathbf{y}),
\end{align*}
where in the last line the sum is over permutations $u$, $v$, $A$, $B$ such that $u\cdot v = w$ and $a_{A,v}=a_{B, u\inv}=1$.
\end{proof}

It would be interesting to give a crystal-theoretic interpretation for the formula in Theorem~\ref{thm:equivtwistedK}. Some work in this direction has been carried out by Lenart, who gave an alternate proof of the combinatorial rule (\ref{eqn:splitSchub}) for $c_{\underline\lambda}^w$ using operators closely related (via \emph{plactification}) to the crystal operators we consider \cite[Theorem 5.1]{LenartSchub}.

In contrast to Problems \ref{prob:3Qs}(1) and (3), and in spite of its position midway between the well-studied combinatorics of RSK and pipe dreams, Problem \ref{prob:3Qs}(2) appears difficult to attack combinatorially. 
This is unfortunate, as a combinatorial solution would hint towards the form of the $\mathbf{L}_{\mathbf{I}|\mathbf{J}}$-equivariant minimal free resolution of $\complexes[\mathfrak{X}_w]$.\footnote{Two pieces of these resolutions are more amenable to combinatorial study than the others. 
The first piece is the first nontrivial step of the resolution, where the ranks of the free modules that appear count elements of a minimal generating set for $I(\mathfrak{X}_w)$. 
These minimal generating sets are known by work of Gao and the third author \cite[Theorem 1.6]{Gao.Yong}, and their description can be made equivariant. 
The second piece comes from the tail of the resolution, where the top-degree homogeneous component of $K_{\complexes[\mathfrak{X}_w]}(\mathbf{x},\mathbf{y})$ determines the unique \emph{extremal Betti number} of $\complexes[\mathfrak{X}_w]$. 
Since the top-degree components of $K_{\complexes[\mathfrak{X}_w]}(\mathbf{x},\mathbf{y})$ and $K_{\complexes[\mathfrak{X}_w]}(\mathbf{1}-\mathbf{x},\mathbf{1}-\mathbf{y})$ agree up to sign, Theorem~\ref{thm:equivtwistedK} implicitly gives an equivariant description of the extremal Betti number. 
The top-degree component of $K_{\complexes[\mathfrak{X}_w]}(\mathbf{x},\mathbf{y})$ was used to determine the Castelnuovo--Mumford regularity of $\complexes[\mathfrak{X}_w]$ in a series of papers culminating in Pechenik--Speyer--Weigandt's work \cite{PSW}, with interest continuing since then (see, e.g., the pipe dream models recently introduced by Chou--Yu \cite{Chou.Yu}).} Minimal free resolutions of $\complexes[\mathfrak{X}_w]$ play an important role in determining the structure of singularities of Schubert varieties in the complete flag variety; these singularities have been long studied. See the recent survey \cite{WY.Survey} of Woo and the third author and references therein for details.

Large group actions help to organize minimal free resolution computations. A famous instance of this principle comes from Boij--S\"oderberg theory, where Eisenbud--Fl\o ystad--Weyman \cite{EFW} used equivariant methods to construct free resolutions with pure Betti diagrams. Another instance more relevant to our discussion is Lascoux's construction \cite{Lascoux} of $\mathbf{GL}$-equivariant minimal free resolutions for the classical determinantal varieties $\mathfrak{X}_k\subseteq{\sf Mat}_{m, n}$ (see also the exposition in \cite[Chapter 6]{Weyman}). This construction solves Problem~\ref{prob:3Qs}(2) for all $\mathbf{GL}$-stable varieties in ${\sf Mat}_{m, n}$; we record the solution in our notation.
\begin{definition}
    The \emph{rank} of a partition $\lambda$, denoted ${\sf rank}(\lambda)$, is the side length of the largest square that fits inside the Young diagram for $\lambda$ (called the \emph{Durfee square} of~$\lambda$). The \emph{$k$-stabilization} of $\lambda$ is the partition $\lambda_{(k)}$ obtained by adding $k$ rows to $\lambda$ of length ${\sf rank}(\lambda)$.
\end{definition}

\begin{theorem}[{\cite{Lascoux}, \cite[Theorem 6.1.4]{ Weyman}}]\label{thm:classicK}
    The $K$-polynomial of the classical determinantal variety $\mathfrak{X}_k\subseteq{\sf Mat}_{m, n}$ of matrices with rank $\leq k$ is
    \[K_{\complexes[\mathfrak{X}_k]}(\mathbf{x},\mathbf{y}) = \sum_{\lambda\subseteq[m-k]\times[n-k]}(-1)^{|\lambda|}s_{\lambda_{(k)}}(\mathbf{x})s_{(\lambda')_{(k)}}(\mathbf{y}).\]
\end{theorem}

Theorem~\ref{thm:classicK} elegantly solves Problem~\ref{prob:3Qs}(2) when $\mathfrak{X} = \mathfrak{X}_k$ and $\mathbf{L}_{\mathbf{I}|\mathbf{J}} = \mathbf{GL}$:
\[b^{\mathfrak{X}_k}_{\lambda, \mu} = \begin{cases}
    (-1)^{|\nu|} & \text{ if $\lambda = \nu_{(k)}$ and $\mu = (\nu')_{(k)}$ for some $\nu\subseteq[m-k]\times[n-k]$,}\\
    0 & \text{ otherwise.}
\end{cases}\]
Comparing the solutions to Problem~\ref{prob:3Qs}(1) and (2) for $\mathfrak{X}_k$ given in Example~\ref{exa:classical} and Theorem~\ref{thm:classicK} respectively, we obtain the formula
\begin{equation}\label{eqn:stabcauchy}
    \frac{\sum_{\nu\subseteq[m-k]\times[n-k]}s_{\nu_{(k)}}(\mathbf{x})s_{(\nu')_{(k)}}(\mathbf{y})}{\prod_{\substack{1\leq i\leq m\\ 1\leq j\leq n}}(1-x_iy_j)} = \sum_{\ell(\lambda)\leq k}s_\lambda(\mathbf{x})s_\lambda(\mathbf{y}).
\end{equation}
When $k = \min\{m, n\}$, the numerator on the left hand side of (\ref{eqn:stabcauchy}) is $1$ and we recover the classical Cauchy identity. When $k = 0$, the right hand side of (\ref{eqn:stabcauchy}) is $1$ and clearing denominators yields the \emph{dual Cauchy identity}
\begin{equation}\label{eqn:dualcauchy}
    \sum_{\nu\subseteq[m]\times[n]}s_\nu(\mathbf{x})s_{\nu'}(\mathbf{y}) = \prod_{\substack{1\leq i\leq m\\ 1\leq j\leq n}}(1-x_iy_j).
\end{equation}
There is a \emph{dual RSK} correspondence proving (\ref{eqn:dualcauchy}) combinatorially \cite[Theorem 7.14.2]{ECII}. This bijection is related to a bicrystal structure on binary matrices noticed by van Leeuwen \cite[Proposition 3.3.4]{bicrystal1} and Danilov--Koshevoi \cite[Section C8]{bicrystal2}. It arises from the representation theory of the exterior algebra $\bigwedge({\sf Mat}_{m, n})$, which appears here as a \emph{Koszul complex} resolving the coordinate ring $\complexes[\mathfrak{X}_0]\cong\complexes$ as a $\complexes[{\sf Mat}_{m, n}]$-module \cite[Proposition 1.28]{Miller.Sturmfels}. However, when $0 < k < \min\{m, n\}$ we know of no combinatorial or crystal-based argument for (\ref{eqn:stabcauchy}). It would suffice to give a combinatorial proof for the following corollary of Theorem~\ref{thm:classicK}:
\begin{corollary}\label{cor:classicstab}
    The stabilization map $(\lambda, \mu)\mapsto (\lambda_{(1)}, \mu_{(1)})$ gives a bijection between the terms appearing in the $K$-polynomials of $\mathfrak{X}_k\subseteq{\sf Mat}_{m, n}$ and $\mathfrak{X}_{k+1}\subseteq{\sf Mat}_{m+1, n+1}$.
\end{corollary}
As a first step towards attacking Problem~\ref{prob:3Qs}(2) for matrix Schubert varieties $\mathfrak{X}_w\subseteq{\sf Mat}_{m, n}$, we conjecture a generalization of Corollary~\ref{cor:classicstab}.
\begin{definition}
    The \emph{$k$th back-stabilization} of a partial permutation $w:[m]\to[n]\cup\{\infty\}$ is the partial permutation $1^k\times w:[m+k]\to[n+k]\cup\{\infty\}$ defined by the formula
    \[(1^k\times w)(i) = \begin{cases}
        i & i\leq k,\\
        w(i-k)+k & i > k,
    \end{cases}\]
    using the convention that $\infty + k = \infty$.
\end{definition}
\begin{definition}
    The \emph{$k$-stabilization} of a partition-tuple $\underline\lambda = \left(\lambda^{(1)},\lambda^{(2)},\dots,\lambda^{(r)}\right)$, denoted $\underline\lambda_{(k)}$, is the partition-tuple obtained by replacing $\lambda^{(1)}$ with its $k$-stabilization, i.e.,
    \[\underline{\lambda}_{(k)} = \left(\lambda_{(k)}^{(1)}, \lambda^{(2)},\dots, \lambda^{(r)}\right).\]
\end{definition}
\begin{conjecture}\label{conj:schubstab}
    For any partial permutation $w:[m]\to[n]\cup\{\infty\}$, there exists a \emph{stabilization number} $N_w$ such that the stabilization map $(\underline\lambda,\underline\mu)\mapsto(\underline{\lambda}_{(1)}, \underline{\mu}_{(1)})$ gives a bijection between the terms appearing in the $K$-polynomials of $\mathfrak{X}_{1^d\times w}\subseteq{\sf Mat}_{m+d, n+d}$ and $\mathfrak{X}_{1^{d+1}\times w}\subseteq{\sf Mat}_{m+d+1, n+d+1}$ whenever $d\geq N_w$.
\end{conjecture}
\begin{remark}
    If $w:[m]\to[n]\cup\{\infty\}$ has no descents, then the matrix Schubert variety $\mathfrak{X}_w$ is in fact a classical determinantal variety $\mathfrak{X}_k$ for some $k$. In this case, the $d$th back-stabilization $1^d\times w$ of $w$ also has no descents, and $\mathfrak{X}_{1^d\times w} = \mathfrak{X}_{k+d}\subseteq{\sf Mat}_{m+d, n+d}$. Thus Corollary~\ref{cor:classicstab} proves that Conjecture~\ref{conj:schubstab} holds for partial permutations $w$ with no descents, and moreover shows that one can take $N_w = 0$ in this case. It is not true that $N_w = 0$ in general. For example, the $K$-polynomial of $\mathfrak{X}_w$ for $w = 25134$ is:
    \ytableausetup{boxsize=0.3em}
    \begin{align*}
        &s_{(\emptyset)}({\bf x})s_{(\emptyset,\emptyset)}({\bf y}) - s_{(\ydiagram{1})}({\bf x})s_{(\ydiagram{1},\emptyset)}({\bf y}) - s_{(\ydiagram{1,1})}({\bf x})s_{(\emptyset,\ydiagram{1,1})}({\bf y}) + s_{(\ydiagram{1,1})}({\bf x})s_{(\ydiagram{2},\emptyset)}({\bf y}) + s_{(\ydiagram{2,1})}({\bf x})s_{(\emptyset,\ydiagram{1,1,1})}({\bf y})\\ 
        &+ s_{(\ydiagram{2,1})}({\bf x})s_{(\ydiagram{1},\ydiagram{1,1})}({\bf y}) - s_{(\ydiagram{3,1})}({\bf x})s_{(\ydiagram{1},\ydiagram{1,1,1})}({\bf y}) - s_{(\ydiagram{2,2})}({\bf x})s_{(\ydiagram{1},\ydiagram{1,1,1})}({\bf y}) - s_{(\ydiagram{2,2})}({\bf x})s_{(\ydiagram{2},\ydiagram{1,1})}({\bf y})\\ 
        &+ s_{(\ydiagram{3,2})}({\bf x})s_{(\ydiagram{2},\ydiagram{1,1,1})}({\bf y}).
    \end{align*} This polynomial has $10$ terms. However, the $K$-polynomial for $1\times w = 136245$ is:
    \begin{align*}
        &s_{(\emptyset)}({\bf x})s_{(\emptyset,\emptyset)}({\bf y}) - s_{(\ydiagram{1,1})}({\bf x})s_{(\ydiagram{1,1},\emptyset)}({\bf y}) - s_{(\ydiagram{1,1,1})}({\bf x})s_{(\emptyset,\ydiagram{1,1,1})}({\bf y}) \textcolor{red}{- s_{(\ydiagram{1,1,1})}({\bf x})s_{(\ydiagram{1},\ydiagram{1,1})}({\bf y})} + s_{(\ydiagram{1,1,1})}({\bf x})s_{(\ydiagram{2,1},\emptyset)}({\bf y})\\ 
        &+ s_{(\ydiagram{2,1,1})}({\bf x})s_{(\ydiagram{1},\ydiagram{1,1,1})}({\bf y}) + s_{(\ydiagram{2,1,1})}({\bf x})s_{(\ydiagram{1,1},\ydiagram{1,1})}({\bf y}) - s_{(\ydiagram{3,1,1})}({\bf x})s_{(\ydiagram{1,1},\ydiagram{1,1,1})}({\bf y}) - s_{(\ydiagram{2,2,2})}({\bf x})s_{(\ydiagram{2,1},\ydiagram{1,1,1})}({\bf y})\\ 
        &- s_{(\ydiagram{2,2,2})}({\bf x})s_{(\ydiagram{2,2},\ydiagram{1,1})}({\bf y}) + s_{(\ydiagram{3,2,2})}({\bf x})s_{(\ydiagram{2,2},\ydiagram{1,1,1})}({\bf y}).
    \end{align*} This polynomial has $11$ terms (with the ``new'' term highlighted in red), so $N_{25134} \geq 1$. The equivariant expansions of the $K$-polynomials for $1^2\times w$ and $1^3\times w$ also have $11$ terms. Taking $1$-stabilizations of all partition-tuples in the expansion for $1^2\times w$ gives the partition-tuples appearing in the expansion for $1^3\times w$, suggesting that $N_{25134} = 2$.
\end{remark}
This paper represents a first attempt at using $\mathbf{L}_{\mathbf{I}|\mathbf{J}}$-equivariant methods to analyze the Hilbert series of matrix Schubert varieties. In future work, we hope to further develop our perspective and make progress towards a full solution to Problem~\ref{prob:3Qs}(2). 

\section*{Acknowledgements}
We thank Andrew Hardt for a helpful conversation about \cite{KM:adv} and Shiliang Gao for pointing us to the motivating reference \cite{Sturmfels} on standard monomials and Hilbert series. We are grateful to the anonymous referee whose comments improved the exposition of the paper and motivated the addition of the Appendix. We thank Liam Buttitta, Claudiu Raicu, Steven Sam, Linus Setiabrata, and Keller VandeBogert for helpful conversations about the matterial in the Appendix. AS was supported by a Susan C.~Morisato IGL graduate student scholarship and an NSF graduate fellowship.
AY was supported by a Simons Collaboration grant. The authors were partially supported by an NSF RTG in Combinatorics (DMS 1937241).


\begin{thebibliography}{99}

\bibitem{Abhyankar}
Abhyankar, Shreeram Shankar. Combinatoire des tableaux de Young, varietes determinantielles et calcul de
fonctions de Hilbert. Rend Sere. Mat. Univers. Politech. Torino 42 (1984), 65--88.

\bibitem{Bergeron.Billey}
Bergeron, Nantel; Billey, Sara. RC-graphs and Schubert polynomials, Experiment. Math. {\bf 2} (1993), no.~4, 257--269.

\bibitem{Bertiger}
Bertiger, Anna. Generating the ideals defining unions of Schubert varieties. Int. Math. Res. Not. IMRN 2015, no. 21, 10847--10858.

\bibitem{BKTY}
Buch, Anders; Kresch, Andrew; Tamvakis, Harry; Yong, Alexander. Schubert polynomials and quiver formulas, Duke Math. J. {\bf 122} (2004), no.~1, 125--143.

\bibitem{BS}
Bump, Daniel; Schilling, Anne. Crystal bases. Representations and combinatorics. World Scientific Publishing Co. Pte. Ltd., Hackensack, NJ, 2017. xii+279 pp. 

\bibitem{Chou.Yu}
Chou, Chen-An; Yu, Tianyi. Grothendieck polynomials of inverse fireworks permutations, European J. Combin. {\bf 127} (2025), Paper No. 104158, 14 pp.

\bibitem{CLO}
Cox, David; Little, John; O'Shea, Donal. Using algebraic geometry. Graduate Texts in Mathematics, 185. Springer-Verlag, New York, 2005. xii+572 pp.

\bibitem{bicrystal2}
Danilov, V. I.; Koshevoi, G. A.. Arrays and the combinatorics of Young tableaux. Russ. Math. Surv 60 (2005), 269--334.

\bibitem{dCP}
de Concini, C.; Procesi, C. A characteristic free approach to invariant theory. Advances in Math. 21 (1976), no. 3, 330--354.

\bibitem{DRS}
Doubilet, Peter; Rota, Gian-Carlo; Stein, Joel. On the foundations of combinatorial theory. IX. Combinatorial methods in invariant theory. Studies in Appl. Math. 53 (1974), 185--216.

\bibitem{EFW}
Eisenbud, David; Fl\o ystad, Gunnar; Weyman, Jerzy. The existence of equivariant pure free resolutions, Ann. Inst. Fourier (Grenoble) {\bf 61} (2011), no.~3, 905--926.

\bibitem{FKGroth}
Fomin, Sergey; Kirillov, Anatol. Grothendieck polynomials and the Yang-Baxter equation, in {\it Formal power series and algebraic combinatorics/S\'eries formelles et combinatoire alg\'ebrique}, 183--189, DIMACS, Piscataway, NJ.

\bibitem{Fomin.Kirillov}
Fomin, Sergey; Kirillov, Anatol. The Yang-Baxter equation, symmetric functions, and Schubert polynomials, Discrete Math. {\bf 153} (1996), no.~1-3, 123--143.

\bibitem{Fulton:duke} 
Fulton, William. Flags, Schubert polynomials, degeneracy loci, and determinantal formulas. Duke Math. J. 65 (1992), no. 3, 381--420.

\bibitem{Fulton} W.~Fulton, \emph{Young tableaux. With applications to representation theory and geometry.} London Mathematical Society Student Texts, 35. Cambridge University Press, Cambridge, 1997. {\rm x}+260 pp.

\bibitem{Galigo} Galigo, Andre. Computations of some Hilbert functions related with Schubert calculus. In: E. Casas-Alvero, G.E. Welters and S. Xamb\'o-Descamps, Eds., Algebraic Geometry, Sitges (Barcelona), Lecture Notes in Math., Vol. 1124. Springer, New York (1983), 79--97.

\bibitem{Gao.Yong}
Gao, Shiliang; Yong, Alexander. Minimal equations for matrix Schubert varieties. J. Commut. Algebra {\bf 16} (2024), no.~3, 267--273.

\bibitem{Ghorpade} Ghorpade, Sudhir. Young bitableaux, lattice paths, and Hilbert functions. J. Statistical Planning and Inference 54 (1996), 55--66.

\bibitem{Harris}
Harris, Joe. Algebraic geometry. A first course. Graduate Texts in Mathematics, 133. Springer-Verlag, New York, 1992. {\rm xx}+328 pp. 

\bibitem{Herzog.Trung} Herzog, J\"urgen; Trung, Ng\^oVi\^et. Gr\"obner bases and multiplicity of determinantal and pfaffian ideals. Adv.
Math. (1) 96 (1992), 1-37.

\bibitem{HY1} 
Hodges, Reuven; Yong, Alexander. Coxeter combinatorics and spherical Schubert geometry. J. Lie Theory 32 (2022), no. 2, 447--474.

\bibitem{Howe}
Howe, Roger. Perspectives on invariant theory: Schur duality, multiplicity-free actions and beyond. The Schur lectures (1992) (Tel Aviv), 1--182, Israel Math. Conf. Proc., 8, Bar-Ilan Univ., Ramat Gan, 1995.

\bibitem{Knutson}
Knutson, Allen. \emph{Frobenius splitting, point-counting and degeneration}, preprint, 2009. \textsf{arXiv:0911.4941v1}.

\bibitem{Knutson.Miller}
Knutson, Allen; Miller, Ezra. Gr\"obner geometry of Schubert polynomials. Ann. of Math. (2) 161 (2005), no. 3, 1245--1318. 

\bibitem{KM:adv}
Knutson, Allen; Miller, Ezra. Subword complexes in Coxeter groups. Adv. Math. 184 (2004), no. 1, 161--176.

\bibitem{KMY}
Knutson, Allen; Miller, Ezra; Yong, Alexander. Gr\"obner geometry of vertex decompositions and of flagged tableaux. J. Reine Angew. Math. 630 (2009), 1--31. 

\bibitem{Knutson.Tao}
Knutson, Allen; Tao, Terence. The honeycomb model of ${\rm GL}_n({\bf C})$ tensor products. I. Proof of the saturation conjecture. J. Amer. Math. Soc. 12 (1999), no. 4, 1055--1090.

\bibitem{Kwon}
Kwon, Jae-Hoon. Crystal graphs and the combinatorics of Young tableaux. Handbook of algebra. Vol. 6, 473--504, Handb. Algebr., 6, Elsevier/North-Holland, Amsterdam, 2009. 

\bibitem{Lascoux}
Lascoux, Alain. Syzygies des vari\'et\'es d\'eterminantales, Adv. in Math. {\bf 30} (1978), no.~3, 202--237.

\bibitem{LenartGroth}
Lenart, Cristian. Noncommutative Schubert calculus and Grothendieck polynomials, Adv. Math. {\bf 143} (1999), no.~1, 159--183.

\bibitem{LenartSchub}
Lenart, Cristian. A unified approach to combinatorial formulas for Schubert polynomials, J. Algebraic Combin. {\bf 20} (2004), no.~3, 263--299.

\bibitem{Miller.Sturmfels}
Miller, Ezra; Sturmfels, Bernd. \emph{Combinatorial commutative algebra}. Graduate Texts in Mathematics, 227. Springer-Verlag, New York, 2005. xiv+417 pp.

\bibitem{Milne}
Milne, James. \emph{Algebraic groups}. Cambridge Studies in Advanced Mathematics, 170. Cambridge University Press, Cambridge, 2017, 585 pp.

\bibitem{MTY}
Monical, Cara; Tokcan, Neriman; Yong, Alexander. Newton polytopes in algebraic combinatorics. Selecta Math. (N.S.) 25 (2019), no. 5, Paper No. 66, 37 pp. 

\bibitem{Procesi}
Procesi, Claudio. Lie groups. An approach through invariants and representations. Universitext. Springer, New York, 2007. xxiv+596 pp.

\bibitem{PSW}
Pechenik, Oliver; Speyer, David; Weigandt, Anna. Castelnuovo--Mumford regularity of matrix Schubert varieties. Selecta Math. (N.S.) {\bf 30} (2024), no.~4, Paper No. 66. 44 pp.

\bibitem{Shimozono}
Shimozono, Mark. Crystals for Dummies. https://www.aimath.org/WWN/kostka/crysdumb.pdf, 2005.

\bibitem{ECII}
Stanley, Richard P. Enumerative combinatorics. Vol. 2. With a foreword by Gian-Carlo Rota and appendix 1 by Sergey Fomin. Cambridge Studies in Advanced Mathematics, 62. Cambridge University Press, Cambridge, 1999. xii+581 pp.

\bibitem{Steinberg}
Steinberg, Robert. Lectures on Chevalley groups. Notes prepared by John Faulkner and Robert Wilson. Yale University, New Haven, CT, 1968. {\rm iii}+277 pp. 

\bibitem{Sturmfels}
Sturmfels, Bernd. Gr\"obner bases and Stanley decompositions of determinantal rings. Math. Z. 205 (1990), 137--144. https://doi.org/10.1007/BF02571229

\bibitem{bicrystal1}
van Leeuwen, Marc. Double crystals of binary and integral matrices. Elec. J. of Combinatorics 13 (2006), no. 1, R86.

\bibitem{WeigandtASM}
Weigandt, Anna. Prism tableaux for alternating sign matrix varieties, S\'em. Lothar. Combin. {\bf 80B} (2018), Art. 52, 12 pp.

\bibitem{Weigandt}
Weigandt, Anna. Changing basis with pipe dream combinatorics, preprint, 2025. \textsf{arXiv:2506.07306}.

\bibitem{Weyman} Weyman, Jerzy. \emph{Cohomology of vector bundles and syzygies}. Cambridge Tracts in Mathematics, 149. Cambridge University Press, Cambridge, 2003. xiv+371 pp.

\bibitem{WY.Survey}
Woo, Alexander; Yong, Alexander. Schubert geometry and combinatorics, preprint, 2023. \textsf{arXiv:2303.01436}.

\bibitem{Zelevinsky} 
Zelevinsky, Andrei. Littlewood-Richardson semigroups. New perspectives in algebraic combinatorics (Berkeley, CA, 1996–97), 337--345, Math. Sci. Res. Inst. Publ., 38, Cambridge Univ. Press, Cambridge, 1999.

\end{thebibliography}
\end{document}